\newcommand{\beq}{\begin{equation}}
\newcommand{\eeq}{\end{equation}}
\newcommand{\Pow}{P}
\def\oe{\upomega}
\newcommand{\mydim}{\mathfrak{D}}
\newcommand{\blowupexp}{A_*}
\theoremstyle{plain}
\newtheorem{theorem}{Theorem}[section]
\newtheorem{proposition}[theorem]{Proposition}
\newtheorem{lemma}[theorem]{Lemma}
\theoremstyle{definition}
\newtheorem{definition}[theorem]{Definition}
\newtheorem{remark}[theorem]{Remark}
\numberwithin{equation}{section}
\begin{document}

\title{Stable Big Bang Formation for Einstein's Equations: 
The Complete Sub-Critical Regime}

\author{Grigorios Fournodavlos,\footnote{Princeton University, Mathematics Department, Fine Hall, Washington Road, Princeton, NJ 08544-1000, USA, gfournodavlos@uoc.gr.}\;\; Igor Rodnianski,\footnote{Princeton University, Mathematics Department, Fine Hall, Washington Road, Princeton, NJ 08544-1000, USA, irod@math.princeton.edu.}\;\; Jared Speck\footnote{Vanderbilt University, Mathematics Department, 1326 Stevenson Center,
Nashville, TN 37240, USA, jared.speck@vanderbilt.edu}}

\date{}

\maketitle

\begin{abstract}
For $(t,x) \in (0,\infty)\times\mathbb{T}^{\mydim}$, the generalized Kasner solutions
(which we refer to as Kasner solutions for short) 
are a family of explicit solutions to various Einstein-matter systems
that, exceptional cases aside, start out smooth but then
develop a Big Bang singularity as $t \downarrow 0$, i.e., a singularity
along an entire spacelike hypersurface, where various curvature scalars
blow up monotonically.
The family is parameterized by the Kasner exponents
$\widetilde{q}_1,\cdots,\widetilde{q}_{\mydim} \in \mathbb{R}$, 
which satisfy two algebraic constraints.
There are heuristics in the mathematical physics literature, going back more than 50 years,
suggesting that the Big Bang formation should be dynamically stable, that is, 
stable under perturbations of the Kasner initial data, given say at $\lbrace t = 1 \rbrace$,
as long as the exponents are ``sub-critical'' in the following sense: 
$\underset{\substack{I,J,B=1,\cdots, \mydim \\ I < J}}{\max}
\{\widetilde{q}_I+\widetilde{q}_J-\widetilde{q}_B\}<1$.
Previous works have rigorously shown the dynamic stability of the Kasner Big Bang singularity
under stronger assumptions: 
1) the Einstein-scalar field system with
$\mydim = 3$ and $\widetilde{q}_1 \approx \widetilde{q}_2 \approx \widetilde{q}_3 \approx 1/3$,
which corresponds to the stability of the Friedmann--Lema\^{\i}tre--Robertson--Walker solution's Big Bang
or 2) the Einstein-vacuum equations for $\mydim \geq 38$ with 
$\underset{I=1,\cdots,\mydim}{\max} |\widetilde{q}_I| < 1/6$.
In this paper, we prove that the Kasner singularity is dynamically stable
for \emph{all} sub-critical Kasner exponents,
thereby justifying the heuristics in the literature
in the full regime where stable monotonic-type curvature-blowup is expected.
We treat in detail the
$1+\mydim$-dimensional Einstein-scalar field system for all 
$\mydim \geq 3$
and
the $1+\mydim$-dimensional Einstein-vacuum equations for $\mydim \geq 10$;
both of these systems feature non-empty sets of sub-critical Kasner solutions.
Moreover, for the Einstein-vacuum equations in $1+3$ dimensions, 
where instabilities are in general expected, 
we prove that all singular Kasner solutions have dynamically stable
Big Bangs under polarized $U(1)$-symmetric perturbations of their initial data.
Our results hold for open sets of initial data in Sobolev spaces without symmetry,
apart from our work on polarized $U(1)$-symmetric solutions.

Our proof relies on a new formulation of Einstein's equations:
we use a constant-mean-curvature foliation,
and the unknowns are the 
scalar field,
the lapse, the 
components of the spatial connection and second fundamental form
relative to a Fermi--Walker transported spatial orthonormal frame, 
and the components of the orthonormal frame vectors
with respect to a transported spatial coordinate system.
In this formulation, the PDE evolution system for the 
structure coefficients of the orthonormal frame approximately diagonalizes
in a way that sharply reveals the significance of the Kasner exponent sub-criticality condition
for the dynamic stability of the flow: the condition leads to the time-integrability of many
terms in the equations, at least at the low derivative levels.
At the high derivative levels, the solutions that we study can be much more singular
with respect to $t$, and to handle this difficulty, 
we use $t$-weighted high order energies,
and we control nonlinear error terms by exploiting monotonicity induced by the $t$-weights 
and interpolating between the singularity-strength of the solution's
low order and high order derivatives.
Finally, we note that our formulation of Einstein's equations
highlights the quantities that might generate instabilities outside of
the sub-critical regime.

\bigskip

\noindent \textbf{Keywords}: 
Big Bang,
constant mean curvature,
curvature singularity,
Fermi--Walker transport,
geodesically incomplete,
Hawking's theorem,
Kasner solutions,
maximal globally hyperbolic development,
singularity theorem,
stable blowup, 
transported spatial coordinates 

\bigskip

\noindent \textbf{Mathematics Subject Classification (2020)} Primary: 83C75; Secondary: 35A21, 35Q76, 83C05, 83F05 

\end{abstract}

\parskip = 0 pt

\tableofcontents

\section{Introduction}
Our main results in this paper are proofs of stable Big Bang formation 
(i.e., curvature-blowup along an entire spacelike hypersurface)
for cosmological\footnote{By ``cosmological solutions,'' we mean ones with compact spatial topology.} 
solutions to the Cauchy problem for the Einstein-vacuum and
Einstein-scalar field systems.
All of our results hold for open sets of solutions
without symmetry, except for our results on polarized $U(1)$-symmetric solutions
to the Einstein-vacuum equations in $1+3$ dimensions.
We assume that initial data are given on the manifold 
$\Sigma_1 = \mathbb{T}^{\mydim} := [-\pi,\pi]^{\mydim}$
(with the endpoints identified),
where $\mydim \geq 3$ is the number of spatial dimensions. 
Later on, we provide a precise description of which kinds of data our results
apply to and how the value of $\mydim$ is tied to the data.
As we will explain, \emph{our results are sharp in the sense that they rigorously
confirm the dynamic stability of the singularity formation in the entire regime where
heuristics in the literature have suggested it might occur}. In particular,
our results significantly extend the prior results \cite{RodSp1,RodSp2,RodSp3},
which yield stable Big Bang formation for open sets of solutions without symmetry.
We refer readers to Theorem~\ref{thm:rough}
for a rough version of our main results and to Theorems~\ref{thm:precise} and \ref{thm:precise.U1}
for precise statements. 

The sharpness of our results is possible because we
have developed a new analytic framework for constant mean curvature (CMC)
foliations in which we study the \emph{components} of various spatial tensors
relative to an orthonormal ``spatial frame,'' obtained by Fermi--Walker transport,
as well as the connection coefficients 
and structure coefficients of the frame. We refer readers to Sect.\,\ref{sec:setup}
for the precise details behind the gauge and the corresponding formulation
of Einstein's equations that we use to derive estimates.
We also refer to Sect.\,\ref{subsec:pf.overview} for an overview of the proof.
Our framework allows us to precisely and efficiently
detect the terms in the equations that are integrable-in-time up to the singularity, which
is key to understanding the stability of the blowup.
Our framework also pinpoints the terms 
in the equations that might generate instabilities in other regimes;
see Remark~\ref{R:IDENTIFYOBSTRUCTION}.

\subsection{The Cauchy problem for the Einstein-scalar field equations}
\label{SS:CAUCHYPROBLEM}
\subsubsection{The Einstein-scalar field equations}
\label{SSS:EINSTEINSFEQUATIONS}
Relative to arbitrary coordinates, the Einstein-scalar field equations can be expressed as:
\begin{subequations}
\begin{align}\label{EE}
{\bf Ric}_{\mu\nu}
& = \partial_\mu \psi \partial_\nu \psi,
	\\
\square_{{\bf g}} \psi
& = 0. \label{SF}
\end{align}
\end{subequations}
In \eqref{EE}--\eqref{SF} and throughout,
${\bf Ric}$ is the Ricci curvature of the spacetime metric ${\bf g}$ (which has signature $(-,+,+,\cdots,+)$),
$\square_{{\bf g}} := ({\bf g}^{-1})^{\alpha \beta} {\bf D}_{\alpha} {\bf D}_{\beta}$
is the covariant wave operator of ${\bf g}$, ${\bf D}$ 
is the Levi-Civita connection of ${\bf g}$,
and $\psi$ is the scalar field.
Note that in the special case $\psi \equiv 0$,
the system is equivalent to the Einstein-vacuum equations. 

\subsubsection{The initial value problem formulation and the initial data}
\label{SSS:IVPFORMULATION}
It is well-known that the system \eqref{EE}--\eqref{SF}
has an initial value problem formulation in which 
sufficiently regular initial data give rise to unique solutions.
An initial data set for the system is defined to be
$(\Sigma_1,\mathring{g},\mathring{k},\mathring{\psi},\mathring{\phi})$,
where $\mathring{g}$ is a Riemannian metric on the manifold $\Sigma_1$
(in this paper, we assume that $\Sigma_1 := \mathbb{T}^{\mydim}$), 
$\mathring{k}$ is a symmetric two-tensor, and $\mathring{\psi},\mathring{\phi}$ are a pair of scalar functions.
We sometimes refer to such initial data as ``geometric initial data'' in order to distinguish them from
initial data for the reduced equations of Proposition~\ref{P:redeq};
initial data for those equations -- which are the main PDEs we study in this paper -- 
also involve gauge-dependent quantities,  
including initial data for an orthonormal spatial frame (which we discuss later on).
It is well-known that admissible geometric initial data must satisfy the Hamiltonian
and momentum constraint equations,
which are respectively:
\begin{subequations}
\begin{align}
\mathring{R}-|\mathring{k}|^2+(\mathrm{tr} \mathring{k})^2
\label{eq:hamconst}
& = 
\mathring{\phi}^2
+
|\mathring{\nabla} \mathring{\psi}|^2,
	\\
\label{eq:momconst} \mathring{\mathrm{div}} \mathring{k} - \mathring{\nabla} \text{tr} \mathring{k}
& = - \mathring{\phi} \mathring{\nabla} \mathring{\psi},
\end{align}
\end{subequations}
where $\mathring{\nabla}$ is the Levi-Civita connection of $\mathring{g}$
(with respect to which all covariant spatial operators along $\Sigma_1$ are defined)
and
$\mathring{R}$ is the scalar curvature of $\mathring{g}$.

\subsubsection{Globally hyperbolic developments}
\label{SSS:GLOBALLYHYPERBOLICDEVELOPMENTS}
A \emph{globally hyperbolic development} of the geometric initial data, 
which can be thought of as a solution to the initial value problem,
is a triplet 
$(\mathcal{M},{\bf g},\psi)$ 
and an embedding $i: \Sigma_1 \to \mathcal{M}$ such that:
\begin{itemize}
	\item $\mathcal{M}$ is a $1+\mydim$-dimensional spacetime manifold.
	\item ${\bf g}$ is a Lorentzian metric on $\mathcal{M}$ and $\psi$
	is a scalar function on $\mathcal{M}$ that together solve the equations \eqref{EE}--\eqref{SF}.
	\item $i(\Sigma_1)$ is a Cauchy hypersurface\footnote{In this paper, $i(\Sigma_1)$ will be a hypersurface
	of constant time with respect to a CMC time function $t$. To simplify the exposition, we will often slightly abuse notation 
	by suppressing the embedding and identifying $\Sigma_1$ with $i(\Sigma_1) = \lbrace t = 1 \rbrace \subset \mathcal{M}$.}  
		in $(\mathcal{M},{\bf g})$.
	\item The pullbacks (under $i$) of the first and second fundamental forms of the image surface $i(\Sigma_1)$
		(see Sect.\,\ref{sec:setup} for our sign conventions for the second fundamental form) are equal to 
		$\mathring{g},\mathring{k}$ respectively, and the pullbacks of the initial values of the scalar field $\psi$
		and its derivative with respect to the future unit normal to $i(\Sigma_1)$ are equal to $\mathring{\psi},\mathring{\phi}$
		respectively.
\end{itemize}

The fundamental work \cite{CBG} of Choquet-Bruhat and Geroch
shows that for sufficiently regular geometric initial data verifying
the constraints, there is  
a unique (up to diffeomorphism) maximal (classical) globally hyperbolic development
$(\mathcal{M}_{\textnormal{Max}},{\bf g}_{\textnormal{Max}},\psi_{\textnormal{Max}})$ 
(and a corresponding embedding $i_{\textnormal{Max}}: \Sigma_1 \to \mathcal{M}_{\textnormal{Max}}$ that we will suppress),
which we refer to as the ``maximal development'' for short.
Roughly, the solution furnished by \cite{CBG} is the largest possible classical solution to \eqref{EE}--\eqref{SF}
that is uniquely determined by the initial data; we refer the reader to \cite{Rin4} for detailed discussion of the maximal development. Although it is of philosophical importance to know that the 
maximal development exists and is unique, the results of \cite{CBG} do not reveal much about its structure.
Our goal in this article is to fully understand its structure for open sets of solutions
that exhibit curvature-blowup.

\subsection{Connections with Hawking's singularity theorem}
\label{SS:HAWKINGSTHEOREM}
Hawking's celebrated ``singularity theorem'' \cite{Hawk,HawkPen}
shows that for cosmological solutions,\footnote{See also \cite{HawkPen,Pen} for discussion
of the related -- but distinct -- ``singularity theorem'' by Penrose, which for non-compact Cauchy hypersurfaces
shows that the presence of a trapped surface in the initial data leads to geodesically incomplete solutions.} 
there exist large sets of regular initial data for the Einstein equations
such that the corresponding solutions eventually break down in the sense that the
spacetime is causally geodesically incomplete. 
The results apply to any matter model verifying the strong energy condition, 
including the scalar field model and the vacuum.
In particular, the version of Hawking's theorem stated as \cite[Theorem~9.5.1]{rW}
guarantees that under assumptions 
satisfied by the initial data featured in our main theorems,
the solution is such that all past-directed timelike geodesics are incomplete.
Although these works are of immense philosophical importance in general relativity and have had a great impact on the direction of the field, they are limited in that their proofs are by contradiction and
do not provide any information about the nature of the breakdown, aside from geodesic incompleteness. 
Through various telling examples, it is known that different kinds of breakdown are possible. A particularly sinister
scenario is found in the Taub--NUT and Kerr spacetimes, where the breakdown is not caused by any singularity in the metric (including its higher derivatives), but rather is caused by the development of a \emph{Cauchy horizon}, across which the solution can be smoothly extended in more than one way, signifying the failure of determinism past the maximal globally hyperbolic development of the data. 
A crucial point is that for the near-Kasner solutions covered by our main results, this sinister scenario does not occur; the geodesic incompleteness is caused by curvature-blowup at the boundary of the
maximal globally hyperbolic development.

\subsection{Remarks on Strong Cosmic Censorship}
\label{SS:SCC}
The Strong Cosmic Censorship\footnote{See \cite{Pen2} for the original formulation and \cite{Christ,Chrus} for more modern versions.} conjecture suggests that, ``generically,'' the maximal globally hyperbolic development
of the data is inextendible, roughly due to the formation of some kind of singularity.\footnote{One even hopes to rule out the possibility of
continuing the solution weakly past the boundary of the maximal development 
since, at least from the PDE point of view, in principle, it might be possible to make sense of weak solutions
in a neighborhood of a classical singularity; 
see the discussion on pg.\,13 of \cite{Christ4}.} 
Confirming some version of the Strong Cosmic Censorship conjecture, 
at least in a perturbative regime around explicit solutions, 
turns out to be extremely difficult, due to the strength of the nonlinearities in the system
and the possibility of complicated dynamics near singularities. 
For the near-Kasner solutions covered by our main results, their 
curvature-blowup shows that a $C^2$-extension
of the solution past the Big Bang is impossible.

It is important to appreciate that regularity considerations
are of crucial importance when defining what is meant by ``the Strong Cosmic Censorship conjecture;''
thanks to the remarkable work \cite{DL} on the $C^0$-stability of the Kerr Cauchy horizon,\footnote{The initial data 
considered in \cite[Theorem~1]{DL} are posed on a spacelike hypersurface in the black hole interior.
A full justification that these data are induced by open sets of black hole solutions that are settling
down to a Kerr black hole relies on forthcoming works by various authors. In particular, the justification relies 
on a quantitative version of the dynamic stability of the exterior region of Kerr, and
there have been a series of works that seem to be building towards a definitive proof 
its stability. We refer to \cite[Section~1.3]{DL} for further discussion. \label{FN:CONDITIONALKERR}} 
we now know that the $C^0$ formulation of the 
Strong Cosmic Censorship conjecture is not generically true. More precisely, in \cite{DL},
it was shown that for an open set of near-Kerr solutions, the metric can be continuously extended beyond the
Cauchy horizon. However, it is conceivable that these metrics generically do not enjoy any additional
regularity and in particular that they cannot even be extended past the Cauchy horizon as weak solutions
to Einstein's equations. It therefore remains possible that a revised version of the
Strong Cosmic Censorship conjecture is true, in which 
``generically, geodesic incompleteness is tied to breakdown at the boundary of the maximal development,''
where ``breakdown'' is defined to be
any loss of regularity that is sufficiently strong to prevent one from extending the solution
as a weak solution to Einstein's equations. There are works in spherical symmetry that support this possibility,
notably \cite{mD,Daf,LO}, where \cite{LO} has the compelling feature that it is a large data result.
More precisely, \cite{LO} proves that for an open and dense set of 
two-ended asymptotically flat initial data for the Einstein--Maxwell--(real)--scalar--field system in spherical
symmetry, the maximal development is $C^2$-inextendible.

\subsection{Beyond Hawking's singularity theorem}
\label{SS:HAWKINGSTHEOREM}
In the wake of Hawking's singularity theorem, 
there have been many works devoted towards understanding 
the precise cause of the geodesic incompleteness.
An interesting type of breakdown that has received extensive attention 
-- rigorous and otherwise --
over the past half-century
is the ``Kasner-like scenario,'' which concerns solutions 
whose metrics ${\bf g}$ are asymptotic to:
\begin{align} \label{E:LIMITINGKASNERLIKEMETRIC}
{\bf g}_{\textnormal{Limiting}}(t,x)
& = 
-dt \otimes dt
+
\sum_{I=1}^{\mydim}
t^{2 q_I(x)}\uptheta^I(x) \otimes \uptheta^I(x),\qquad \uptheta^I(x) =\uptheta^I_a(x)dx^a,
\end{align}
as $t \downarrow 0$ (i.e., towards the singularity). The form of ${\bf g}_{\textnormal{Limiting}}(t,x)$ is inspired by the Kasner solutions themselves, 
which we discuss in Sect.\,\ref{subsec:models}. 
It is important to note that the metrics ${\bf g}_{\textnormal{Limiting}}$ 
are not generally solutions to Einstein's equations.
However, in the special case 
that $\uptheta^I=dx^I$ and the $\lbrace q_I \rbrace_{I=1,\cdots,\mydim}$ are constants satisfying two 
algebraic constraints (see \eqref{sumpi}),
${\bf g}_{\textnormal{Limiting}}$ \emph{is} a solution, known as a Kasner solution
in the vacuum case and a ``generalized Kasner solution'' in the presence of matter
(for short, we sometimes refer to all such solutions simply as ``Kasner solutions'').
The Kasner solutions are spatially homogeneous and,
exceptional cases aside, exhibit monotonic Big Bang formation
(i.e., monotonic blowup of the spacetime Kretschmann scalar 
${\bf Riem}^{\alpha\mu\beta\nu}{\bf Riem}_{\alpha\mu\beta\nu}$
along a spacelike hypersurface) as $t \downarrow 0$,
as do the metrics ${\bf g}_{\textnormal{Limiting}}$. We stress that Big Bang formation
is consistent with the assertions of a $C^2$-inextendibility formulation
of the Strong Cosmic Censorship conjecture.
We also note that in the remainder of the paper, we often denote the (constant)
Kasner exponents by $\lbrace \widetilde{q}_I \rbrace_{I=1,\cdots,\mydim}$,
where the tilde emphasizes that they are associated to a ``background Kasner solution.''

A standout question, then, is: besides the explicit Kasner solutions
(which we describe in Sect.\,\ref{subsec:models}), 
are there \emph{any} other cosmological solutions
to Einstein's equations -- in particular ones with spatial dependence -- 
that are asymptotic to a metric of the form ${\bf g}_{\textnormal{Limiting}}$
and thus exhibit monotonic-type Big Bang formation? 
In an influential paper \cite{KL}, the authors gave heuristic arguments suggesting that in $1+3$ dimensions, 
cosmological solutions that are asymptotic to a metric of the form
${\bf g}_{\textnormal{Limiting}}$ should be \emph{non-generic} 
(in particular, unstable).
More precisely, their heuristics led them to deduce
``the absence of a real singularity in the general solution''
and that
``the general case of an arbitrary distribution of matter and gravitational field 
does not lead to the appearance of a singularity.'' Roughly,
the work \cite{KL} predicted that singularity formation along a spacelike hypersurface
is unstable because
singularities should form only for solutions
such that one of the gravitational degrees of freedom is \underline{inactive};
see \cite[Equation~(3.20)]{KL}.
In the subsequent work \cite{BKL}, 
a revised picture of singularities in 
cosmological solutions to Einstein's equations was proposed. 
Specifically, in \cite{BKL}, the authors used heuristic arguments to predict
that ``there exists a general solution which exhibits a physical singularity with respect to time,''
even for the Einstein-vacuum equations in $1+3$ dimensions.
In a more modern language, \cite{BKL} proposed that there are families of cosmological solutions 
that \textbf{i)} contain all gravitational degrees of freedom 
(e.g., $4$ functional degrees of freedom for the Einstein-vacuum equations in $1+3$ dimensions)
and \textbf{ii)} exhibit Big Bang formation along a spacelike hypersurface. Moreover, the authors argued
that ``generically'' (the meaning of ``generic'' was not rigorously defined),
solutions that exhibit Big Bang formation ``should'' -- unlike the Kasner solutions --
be highly oscillatory in time as the singularity is approached.
The alleged oscillatory behavior is sometimes referred to as the ``Mixmaster scenario,''
where the terminology goes back to Misner's important paper \cite{Mis}
on oscillatory solutions with Bianchi IX symmetry.
The oscillations are one of several features that have been conjectured to hold
for ``most'' cosmological
$1+3$-dimensional Einstein-vacuum solutions that have incomplete timelike geodesics. 
This picture has come to be known, somewhat imprecisely, as the ``BKL conjecture.''

\begin{remark}[Open problem]
	\label{R:OPENPROBLEM}
	In light of the above discussion, we would like to highlight the following open problem,
	brought to our attention by Mihalis Dafermos:
	construct \underline{any} open set of initial data without symmetry 
	for the Einstein-vacuum equations in $1+3$ dimensions such that the maximal development exhibits 
	a spacelike singularity.
\end{remark}

We highlight that, whatever one's interpretation of the BKL conjecture,
Dafermos--Luk's aforementioned work \cite{DL} shows 
(assuming, as mentioned in Footnote~\ref{FN:CONDITIONALKERR}, the stability of the exterior region of Kerr)
that some of its basic qualitative assertions 
fail to hold for $1+3$-dimensional Einstein-vacuum solutions corresponding to near-Kerr-black-hole initial data.
In particular, in \cite{DL}, the authors announced 
(see \cite[Theorem 2]{DL} and the discussion in \cite[Section~1.3.3]{DL}) 
their forthcoming follow-up work, which implies 
that Cauchy horizons develop for an \underline{open} set of 
near-Kerr asymptotically flat initial data for the Einstein-vacuum equations.
Since the Cauchy horizons are null, this shows that, at least in the setting of
black hole interiors, it is not generically true that incompleteness 
is tied to some kind of blowup along a spacelike hypersurface. 
Note that solutions arising from near-Kerr asymptotically flat initial data
have qualitatively distinct topologies compared to 
the solutions that are usually discussed in literature in connection with the BKL conjecture; 
that literature is centered on cosmological spacetimes, which have compact spatial topology.
Nonetheless, in view of Remark~\ref{R:OPENPROBLEM}, 
it is important to appreciate that as of the present,
the results of Dafermos--Luk provide 
the only known open sets of solutions to the Einstein-vacuum equations 
in $1+3$ dimensions without symmetry for which the precise nature 
of geodesic incompleteness has been understood.

Despite the points made above, within the class of \emph{spatially homogeneous} cosmological solutions,
there are rigorous\footnote{For a discussion of numerical work on singularity formation in Einstein's equations, see \cite{Ber} and the references therein.} 
results showing that some solutions exhibit oscillatory behavior towards a spacelike singularity.
Notable among these is Ringstr\"{o}m's paper \cite{Rin}, in which he showed that 
in the vacuum case and for various fluid matter models, 
solutions with Bianchi IX symmetry generically 
exhibit oscillatory\footnote{In the special case of a stiff fluid, 
which is also discussed in the next paragraph, 
Ringstr\"{o}m proved that the dynamics are monotonic towards the singularity.}
behavior towards their singularity. 
See also \cite{Beg,Bre,Dut,HU,LHWG} for related works.

In the wake of the works \cite{KL,BKL}, 
there were further heuristic works suggesting that
if the Einstein equations are coupled to a scalar field \cite{BK}
or a stiff fluid\footnote{A stiff fluid is such that the speed of sound is equal to the speed of light. It can be viewed as an analog of the scalar field model that allows for non-zero vorticity.} \cite{Bar},
or if one considers the Einstein-vacuum equations in $1+\mydim$ dimensions with $\mydim \geq 10$ \cite{DHS},
then the oscillations can be silenced, leading back to the Kasner-like scenario. More precisely, there ``should'' exist
open sets of initial data whose solutions exhibit 
\emph{monotonic} Big Bang formation. The essence of these works is that the following
``sub-criticality condition'' (which we sometimes refer to as a ``stability condition'')
for the Kasner exponents $\lbrace q_I(x) \rbrace_{I=1,\cdots,\mydim}$
might be sufficient to ensure the existence of sets of solutions 
-- containing all the gravitational degrees of freedom -- that have Kasner-like Big Bang 
singularities:
\begin{align}\label{Kasner.stability.cond.heur}
\mathop{\max_{I,J,B=1,\cdots,\mydim}}_{I < J} 
\{q_I(x)+q_J(x)-q_B(x)\}<1.
\end{align} 
The condition \eqref{Kasner.stability.cond.heur} 
is central\footnote{More precisely, our main results rely on the assumption that the background solution satisfies
\eqref{Kasner.stability.cond}, which is 
\eqref{Kasner.stability.cond.heur} in the special case of a generalized Kasner solution.} 
to our main results, and we will discuss its implications in detail below.

We highlight that, due to the constraints \eqref{sumpi},
for the Einstein-vacuum equations in $1 + \mydim$ dimensions,
the condition \eqref{Kasner.stability.cond.heur} can be satisfied only if $\mydim \geq 10$;
this algebraic fact was first observed in \cite{DHS}.
The papers described in the previous paragraph, 
which were in favor of the dynamic stability of the Big Bang,
were based on heuristic justifications of the claim
that the condition \eqref{Kasner.stability.cond.heur} ``should'' lead to
\emph{asymptotically velocity term dominated} (AVTD) behavior in perturbed solutions.
Roughly, AVTD behavior for a solution is such that in Einstein's equations,
the spatial derivative terms become negligible compared to the time derivative terms as the singularity is
approached. Put differently, AVTD behavior is such that the solution becomes asymptotic to
a truncated version of Einstein's equations in which all spatial derivative terms are thrown away.
Since the truncated equations are ODEs at each fixed spatial point $x$, one could say
that AVTD solutions are asymptotically $x$-parameterized ODE solutions.
As we will later explain, the condition 
\eqref{Kasner.stability.cond.heur} (see also \eqref{Kasner.stability.cond})
suggests that for perturbations of the Kasner solution, the Ricci tensor of the perturbed spatial metric,
which we denote by $Ric$, should satisfy, for some $\upsigma > 0$, 
$|Ric| \lesssim t^{-2 + \upsigma}$ as $t \downarrow 0$. It turns out that, when available, 
this bound leads to the time-integrability
of various terms in Einstein's equations. In turn,
the time-integrability is key to proving the AVTD nature of perturbations of Kasner solutions 
and for controlling the dynamics up to the singularity. In Sect.\,\ref{subsec:genKasner}, 
we provide a more detailed explanation of the significance of the bound $|Ric| \lesssim t^{-2 + \upsigma}$ 
for the proofs of our main results.

Clearly, any rigorous justification of the above circle of ideas requires, at a minimum, 
the construction of a gauge relative to which the AVTD behavior can be exhibited.
In the present paper, we introduce a general gauge $+$ framework 
for proving stable singularity formation for ``Kasner-like'' solutions with spatial dependence
and for proving the AVTD behavior. As in previous works on stable Big Bang formation \cite{RodSp1,RodSp2,RodSp3,Sp},
we rely on constant mean curvature foliations in which the level sets of the time function
$t$ have mean curvature\footnote{The mean curvature of a constant-time slice $\Sigma_t$ 
is defined to be the trace of its second fundamental form divided by the number of spatial dimensions $\mydim$.} 
equal to $-\frac{1}{\mydim t}$,
and we control the lapse
$n := [-({\bf g}^{-1})^{\alpha \beta} \partial_{\alpha} t \partial_{\beta} t]^{-1/2}$
via elliptic estimates.
The main new idea in our paper lies in our approach to controlling the dynamic ``spatial\footnote{By spatial tensorfields, we simply mean ones that are tangent to the level sets of the CMC time function $t$.} tensorfields'':
we construct a gauge for Einstein's equations in which the main dynamical unknowns 
are the \emph{components} of various spatial tensorfields relative to an orthonormal ``spatial frame''
$\lbrace e_I \rbrace_{I=1,\cdots,\mydim}$, obtained by Fermi--Walker transport 
(see equation \eqref{frame.prop} and Remark~\ref{R:FERMIWALKER}),
as well as the connection coefficients
$\upgamma_{IJB}: = {\bf g}({\bf D}_{e_I}e_J,e_B)$.
One of our key observations is: as a consequence of the
special structure of Einstein's equations and the Fermi--Walker transport equation \eqref{frame.prop},
\textbf{the frame is one degree more differentiable than naive estimates suggest}. More precisely,
the transport equation \eqref{dt.omega}, which is an equivalent formulation of \eqref{frame.prop},
 suggests that the frame vectorfield components
$\lbrace e_I^i \rbrace_{I,i=1,\cdots,\mydim}$ are only as regular as the second fundamental
form $k$ of $\Sigma_t$. However, our gauge allows us to prove that in fact, the connection
coefficients $\lbrace \upgamma_{IJB} \rbrace_{I,J,B = 1, \cdots, \mydim}$ 
of the frame enjoy the same Sobolev regularity as the components
$\lbrace k_{IJ} \rbrace_{I,J=1,\cdots,\mydim}$
where $k_{IJ} := k(e_I,e_J) = k_{cd} e_I^c e_J^d$;
this signifies a gain of one derivative for the connection coefficients.
Roughly, the gain in regularity stems from the fact that
$\lbrace \upgamma_{IJB} \rbrace_{I,J,B=1,\cdots,\mydim}$
and
$\lbrace k_{IJ} \rbrace_{I,J=1,\cdots,\mydim}$
satisfy a system of wave-like equations 
(coupled to $n$ and the scalar field)
that allow us to propagate the Sobolev regularity of their initial data.
We refer readers to Lemma~\ref{L:TOPORDERDIFFERENTIALENERGYIDENTITYFORGAMMAANDK}
for a differential version of the basic energy identity
that we use to obtain the desired regularity for $\upgamma$ and $k$.

A second key observation 
is that the \emph{structure coefficients} of the frame, namely\footnote{The identity ${\bf g}([e_I,e_J],e_B) = \upgamma_{IJB} + \upgamma_{JBI}$ is a simple consequence of the torsion-free property of the connection ${\bf D}$.}
${\bf g}([e_I,e_J],e_B) = \upgamma_{IJB} + \upgamma_{JBI}$,
satisfy an evolution equation system
(see Proposition~\ref{P:KEYEVOLUTIONSTRUCTURECOEFFICIENTS})
that is \textbf{diagonal} up to quadratic error terms,
and such that \emph{the strength of the main linear terms in the equations is controlled by
the Kasner stability condition \eqref{Kasner.stability.cond}.}
More precisely, we have
$\partial_t (\upgamma_{IJB} + \upgamma_{JBI})
=
-
\frac{(\widetilde{q}_{\underline{I}}+\widetilde{q}_{\underline{J}}-\widetilde{q}_{\underline{B}})}{t}
(\upgamma_{\underline{I}\underline{J}\underline{B}}+\upgamma_{\underline{J}\underline{B}\underline{I}})
+ \cdots
$,
where here and throughout the paper, we do not sum over repeated underlined indices.
From this equation and
the condition \eqref{Kasner.stability.cond}, we are able to prove
that there exists a constant $q < 1$ such that: 
\begin{align}\label{STRUCTURECOEFFICIENTKasner.stability.cond}
\mathop{\max_{I,J,B=1,\cdots,\mydim}}_{I < J}
t^q|\upgamma_{IJB} + \upgamma_{JBI}|
\lesssim data,
&&
(t,x) \in (0,1] \times \mathbb{T}^{\mydim},
\end{align}
where ``$data$'' denotes a small term that is controlled by the size of the
perturbation of the initial data from the Kasner data 
(in particular, ``$data$'' vanishes for Kasner solutions).
The estimate \eqref{STRUCTURECOEFFICIENTKasner.stability.cond} leads to the time-integrability of many
terms in the evolution equations,
allows us to rigorously justify the aforementioned spatial Ricci curvature bound\footnote{We also need to adequately control the first derivatives of the structure coefficients to obtain the desired bound for the Ricci curvature.} 
$|Ric| \lesssim t^{-2 + \upsigma}$,
and allows us to prove the AVTD behavior of perturbations of
any Kasner solution with exponents verifying \eqref{Kasner.stability.cond}.

\begin{remark}[A basis of structure coefficient functions]
\label{R:BASISOFSTRUCTURE}
The antisymmetry property\footnote{This is equivalent to the antisymmetry of the commutator
$[e_I,e_J]$ with respect to interchanges of $I$ and $J$.} 
$\upgamma_{IJB} + \upgamma_{JBI} = - (\upgamma_{JIB} + \upgamma_{IBJ})$,
which follows from \eqref{antisymmetricgamma},
implies that
$\lbrace \upgamma_{IJB} + \upgamma_{JBI} \ | \ 1 \leq I,J,B \leq \mydim, \ I < J  \rbrace$
forms a basis for the structure coefficient functions.
This explains the condition $I < J$ on 
LHS~\eqref{STRUCTURECOEFFICIENTKasner.stability.cond}.
We use this simple fact throughout the article without always explicitly mentioning it.
\end{remark}

\begin{remark}[Sharply identifying possible obstructions to stability: Three distinct indices]
\label{R:IDENTIFYOBSTRUCTION}
Recall that we only have to consider structure coefficients with $I < J$ (see Remark~\ref{R:BASISOFSTRUCTURE})
and that (aside from the trivial case of a single non-zero Kasner exponent equal to unity) we have
$\underset{I=1,\cdots,\mydim}{\max} |\widetilde{q}_I| < 1$
(see Remark~\ref{R:ALTERNATEDESCRIPTIONOFSTABILITYCONDITION}).
It follows that when $I < J$, unless all three indices are distinct, 
two of the terms in the sum $\widetilde{q}_{\underline{I}}+\widetilde{q}_{\underline{J}}-\widetilde{q}_{\underline{B}}$ 
must cancel each other, leaving us with a 
single term $q_{\textnormal{survivor}}$ satisfying $|q_{\textnormal{survivor}}| < 1$.
Recalling also the evolution equation
$\partial_t (\upgamma_{IJB} + \upgamma_{JBI})
= 
-
\frac{(\widetilde{q}_{\underline{I}}+\widetilde{q}_{\underline{J}}-\widetilde{q}_{\underline{B}})}{t}
(\upgamma_{\underline{I}\underline{J}\underline{B}}+\upgamma_{\underline{J}\underline{B}\underline{I}})
+ \cdots
$
mentioned above, we see that when
$I < J$, unless all three indices are distinct, 
the structure coefficient $\upgamma_{IJB} + \upgamma_{JBI}$
is expected to behave (modulo the error terms ``$\cdots$'')
like $t^{-q_{\textnormal{survivor}}}$. In particular, 
modulo the effect of the error terms ``$\cdots$,''
such structure coefficients
are integrable with respect to $t$ near $t = 0$ and are compatible
with our proof of the stability of the Big Bang.
Thus, for perturbations of Kasner solutions, 
the only structure coefficients 
$\upgamma_{IJB} + \upgamma_{JBI}$ (with $I < J$)
that in principle could
serve as an obstruction to stable Big Bang formation
are those such that the sum $\widetilde{q}_I + \widetilde{q}_J - \widetilde{q}_B$ is greater than $1$,
and this is possible only when all three indices are distinct;
the stability condition \eqref{Kasner.stability.cond}
is the assumption that this obstruction is absent.
\end{remark}

Finally, we highlight that our framework also extends to some symmetric sub-regimes of
regimes where Mixmaster-related instabilities might generally occur, such as in the vacuum case 
in $1+3$ dimensions. More precisely, one does not truly need the condition \eqref{Kasner.stability.cond}
to prove monotonic-type Big Bang formation;
our approach works as long as one can prove the estimate
\eqref{STRUCTURECOEFFICIENTKasner.stability.cond} (for some constant $q < 1$).
The point is that by imposing symmetries on solutions,
one can eliminate some of the gravitational degrees of freedom in the problem,
and it can become possible to prove the estimate \eqref{STRUCTURECOEFFICIENTKasner.stability.cond}
even if the condition \eqref{Kasner.stability.cond} fails.
Roughly, this is sometimes possible because symmetries can force some of the structure coefficients to vanish.
For example, in this paper, we treat in detail the case of
polarized $U(1)$-symmetric solutions to the $1+3$-dimensional Einstein-vacuum 
equations, and \emph{under symmetric perturbations, 
we prove the stability of the Big Bang 
for all Kasner solutions -- not just ones that satisfy \eqref{Kasner.stability.cond}}.
In the next section, we precisely describe the models that we treat in detail.
Moreover, in Sect.\,\ref{subsec:meth.app}, we describe other contexts in which
our methods are potentially applicable.

\subsection{The models}\label{subsec:models}
Our main results yield stable curvature-blowup for a subset of
the family of generalized Kasner solutions on $(0,\infty)\times\mathbb{T}^{\mydim}$,
which can be expressed as follows:
\begin{align}\label{gen.Kasner}
\widetilde{\bf g}
& = 
	-dt \otimes dt
	+
	\widetilde{g},
&
\widetilde{g}	
	& :=
	\sum_{I=1,\cdots,\mydim}t^{2\widetilde{q}_I} dx^I \otimes dx^I, 
&
\widetilde{\psi}
& 
= 
\widetilde{B} \log t.
\end{align}
The Kasner exponents $\lbrace \widetilde{q}_I \rbrace_{I=1,\cdots,\mydim}$
and $\widetilde{B}$ are constants constrained by the following two algebraic equations:
\begin{align}\label{sumpi}
\sum_{I=1}^{\mydim}\widetilde{q}_I
& =1,
& 
\sum_{I=1}^{\mydim}\widetilde{q}_I^2 
& = 1-\widetilde{B}^2.
\end{align}
The equations in \eqref{sumpi} are consequences of two other equations: 
\textbf{i)} the mean curvature condition $\text{tr} \widetilde{k}=-\frac{1}{t}$
(which we discuss in more detail later), 
where $\widetilde{k}$ is the second fundamental form of $\Sigma_t$
with respect to $\widetilde{\bf g}$,
and \textbf{ii)} the Hamiltonian constraint \eqref{eq:hamconst}.
One can check that under the above assumptions, the tensorfields $(\widetilde{{\bf g}},\widetilde{\psi})$
are solutions to the $1+\mydim$-dimensional Einstein-scalar field equations 
\eqref{EE}--\eqref{SF}.

Our main results come in two flavors.
In the first case, we make no symmetry assumptions on the initial data,
and our results yield the dynamic stability of the Kasner Big Bang singularity whenever
the exponents of the background Kasner solution themselves verify the sub-criticality condition \eqref{Kasner.stability.cond.heur}
(which we also refer to as the ``stability condition''), in which case it reads:
\begin{align}\label{Kasner.stability.cond}
\mathop{\max_{I,J,B=1,\cdots,\mydim}}_{I < J}
\{\widetilde{q}_I + \widetilde{q}_J - \widetilde{q}_B \}<1.
\end{align} 
Since our results imply that the final Kasner exponents of the perturbed singular solution are close to those of the background
(see \eqref{E:ONEOVERDHOLDEREXPONENTFORFINALKASNEREXPONENTS}), and since \eqref{Kasner.stability.cond} is an open condition, 
our perturbed Kasner-like solutions will satisfy the original condition \eqref{Kasner.stability.cond.heur} as well.

\begin{remark}
	\label{R:ALTERNATEDESCRIPTIONOFSTABILITYCONDITION}
	The Kasner constraints \eqref{sumpi} imply that, aside from the trivial case in which 
	one of the $\widetilde{q}_I$ is equal to $1$ and the others vanish
	(in which case the Kasner spacetime metric is flat),
	we must have $\underset{I=1,\cdots,\mydim}{\max} |\widetilde{q}_I| < 1$.
	Thus, assuming the Kasner exponent constraints, 
	we could replace
	\eqref{Kasner.stability.cond} with the following
	condition: 
	\begin{align} \label{EQUIVKasner.stability.cond}
		\mathop{\max_{I,J,B=1,\cdots,\mydim}}_{I \neq J \neq B \neq I}
		\{\widetilde{q}_I+\widetilde{q}_J-\widetilde{q}_B\}<1.
	\end{align}
	In stating our main results, we prefer to refer to the condition \eqref{Kasner.stability.cond} because
	the case $I=B$ explicitly indicates that
	$\widetilde{q}_J < 1$ for $J=2,\cdots,\mydim$,
	while the case $I=1$ with $J=B$ explicitly indicates
	that $\widetilde{q}_1 < 1$ too.
\end{remark}

In the second case, we consider polarized $U(1)$-symmetric solutions to the Einstein-vacuum
equations in $1+3$ dimensions and prove stable Big Bang formation for
\underline{symmetric} perturbations of \emph{any} Kasner solution 
(with exponents verifying the constraints \eqref{sumpi}, $\widetilde{B}=0$, and 
excluding the trivial case of a single non-zero Kasner exponent equal to unity). 
We emphasize that for polarized $U(1)$-symmetric solutions, 
the spatial connection coefficients featuring three distinct indices
automatically vanish  
(see Lemma~\ref{lem:gamma.U(1)} for a proof and Remark~\ref{R:IDENTIFYOBSTRUCTION} for a discussion of the relevance of this fact),
which leads to a simple proof of \eqref{STRUCTURECOEFFICIENTKasner.stability.cond}
(see the end of the proof of Proposition~\ref{prop:low}).

We will now describe these two setups in more detail.

\subsubsection{Regimes with no symmetry assumptions on the perturbed initial data}
Under the following assumptions, our results yield the stability of the 
Kasner Big Bang singularity for non-empty sets of background Kasner solutions:
\medskip
\begin{enumerate}
\item The Einstein-vacuum equations (i.e., $\psi = 0$) for $\mydim \geq 10$.
\item The Einstein-scalar field equations for $\mydim \geq 3$.
\end{enumerate}
As we have stressed, without symmetry,
we require that the background Kasner exponents satisfy the  stability condition
\eqref{Kasner.stability.cond},
which, for example,
for any $\mydim \geq 3$,
is satisfied when all Kasner exponents are positive
(which can be achieved in the presence of a non-zero scalar field, i.e., $\widetilde{B} \neq 0$).
Also, as was observed in \cite{DHS}, in vacuum (i.e., $\widetilde{B}=0$), the set of Kasner exponents satisfying the
condition \eqref{Kasner.stability.cond} is non-empty when $\mydim \geq 10$, while for $\mydim \leq 9$, 
\eqref{Kasner.stability.cond} is algebraically impossible, given the constraints \eqref{sumpi}.

\subsubsection{The definition of the polarized $U(1)$-symmetry class}\label{subsubsec:U1.data}
Our discussion in this section refers to polarized $U(1)$-symmetric solutions to
the Einstein-vacuum equations (i.e., $\psi \equiv 0$) 
on $I \times \mathbb{T}^3$, where $I$ is an interval of time. 
This symmetry class is defined as follows:
\medskip
\begin{enumerate}
\item {\bf Polarized $U(1)$-symmetric initial data.} There exists a non-degenerate,\footnote{That is, $\overline{X}$ has no vanishing points.}  hypersurface-orthogonal, spacelike Killing vectorfield $\overline{X}$ 
on $\Sigma_1 \simeq \mathbb{T}^3$ with $\mathbb{T}^1$ orbits such that
$\mathcal{L}_{\overline{X}}\mathring{g}=\mathcal{L}_{\overline{X}}\mathring{k}=0$, where $\mathcal{L}$ is the Lie derivative operator. Moreover, the second fundamental form of $\Sigma_1$ 
satisfies $\mathring{k}(\overline{X},\overline{Y})=0$ for every $\Sigma_1$-tangent
vectorfield $\overline{Y}$ such that 
$\mathring{g}(\overline{X},\overline{Y}) = 0$.
For such data,
we can construct coordinates\footnote{Although 
the coordinate functions $\lbrace x^i \rbrace_{i=1,2,3}$ are only locally defined,
the corresponding partial derivative vectorfield frame $\lbrace \partial_i \rbrace_{i=1,2,3}$ can
be extended to a smooth global frame on $\mathbb{T}^3$.} 
$\lbrace x^i \rbrace_{i=1,2,3}$ on $\Sigma_1$
such that all coordinate components of $\mathring{g}$ and $\mathring{k}$ are independent of $x^3$
and such that $\overline{X}=\partial_3$, i.e.
$\mathring{g}_{13} = \mathring{g}_{23} =\mathring{k}_{13} = \mathring{k}_{23} \equiv 0$;
see the discussion in \cite[Section~2]{IM}.

\item {\bf Polarized $U(1)$-symmetric solutions.} Einstein-vacuum spacetimes that arise from such data contain 
a non-degenerate, hypersurface-orthogonal, spacelike Killing vectorfield $X$, 
such that $X\big|_{\Sigma_1}=\overline{X}$.
In fact, relative to appropriately constructed CMC-transported spatial coordinates, we have
$X = \partial_3$; see Lemma~\ref{L:PROPOFSYM}.
\end{enumerate}
One can easily check that in $1+3$ spacetime dimensions in the vacuum case, 
the condition \eqref{Kasner.stability.cond} is violated by \emph{all} Kasner solutions,
i.e., by all Kasner exponents satisfying \eqref{sumpi} with $\widetilde{B}=0$. Indeed, 
the algebraic relations \eqref{sumpi} imply that at least one Kasner exponent must be negative 
and that:
\begin{align}\label{Kasner.cond.viol}
\mathop{\max_{I,J,B=1,2,3}}_{I < J}\{\widetilde{q}_I + \widetilde{q}_J-\widetilde{q}_B\} 
\geq 
1 - 2 \min_{B=1,2,3}\{\widetilde{q}_B\} > 1.
\end{align}
Hence, in $1+3$ spacetime dimensions in the vacuum case, 
without symmetries or other additional assumptions, the Kasner singularity 
might not be stable under perturbations of the Kasner initial data on $\Sigma_1$. 
However, we show that within the class of polarized 
$U(1)$-symmetric solutions, the Kasner singularity \emph{is} in fact stable.
There are both heuristic and analytic reasons for this phenomenon, 
which we discuss in Sections~\ref{subsec:genKasner} and \ref{subsec:pf.overview}.

\subsection{Rough version of the main theorem}\label{subsec:rough}
Given a ``background'' generalized Kasner solution \eqref{gen.Kasner}, within the regimes 
described in Sect.\,\ref{subsec:models},
we perturb its initial data on $\Sigma_1=\{t=1\}$ and study 
the corresponding maximal development in the past of $\Sigma_1$.
As in the previous works of the last two authors \cite{RodSp1,RodSp2,RodSp3,Sp}, in order to 
\emph{synchronize the singularity} along $\lbrace t = 0 \rbrace$, 
we use a constant mean curvature (CMC) foliation that is realized by the level sets $\Sigma_t$ of a time function 
$t\in(0,1]$; as we describe below, this gauge features an elliptic PDE, which involves an infinite speed of
propagation, allowing for a synchronization of the singularity.
Relative to ``transported'' spatial coordinates 
$\lbrace x^i \rbrace_{i=1,\cdots,\mydim}$, 
which by definition 
are constant along the integral curves of the future-directed unit normal to $\Sigma_t$,
the perturbed spacetime metric takes the form 
(see also \eqref{polarizedmetric} in the polarized $U(1)$-symmetric case):
\begin{align}\label{metric}
{\bf g} 
& 
=-n^2 dt \otimes dt
+
g_{cd} dx^c \otimes dx^d,
&
n
& =[- ({\bf g}^{-1})^{\alpha\beta}\partial_\alpha t \partial_\beta t]^{-\frac{1}{2}},
\end{align}
where $g$ is the first fundamental form of $\Sigma_t$ (i.e., the Riemannian metric 
on $\Sigma_t$ induced by ${\bf g}$) and 
$n > 0$ is the lapse of the $\Sigma_t$ foliation. The CMC condition is:
\begin{align}\label{INTROtrk}
\text{tr}k=-\frac{1}{t},
\end{align}
where $k$ is the second fundamental form of $\Sigma_t$. We emphasize that 
\eqref{INTROtrk} is the gauge condition tied to the infinite speed of propagation,
since it implies an elliptic equation for $n$ (see \eqref{n.eq}).
\begin{remark}[Initial CMC slice]\label{rem:CMC}
The condition \eqref{INTROtrk} presupposes that the data on the initial Cauchy hypersurface $\Sigma_1$
have constant mean curvature $\text{tr}k|_{\Sigma_1}=-1$. 
Such an assumption can be made without loss of generality for solutions that
start out close to background Kasner solutions. 
The reason is that for near-Kasner data (not necessarily CMC data), 
one can first use the standard wave coordinate gauge to
solve Einstein's equations in a neighborhood of $\Sigma_1$, 
and then prove the existence of a CMC slice in that neighborhood with the desired properties;
see \cite[Proposition 14.4]{RodSp2} and \cite[Theorem 4.2]{Bart}.
\end{remark}
\noindent {\bf Polarized $U(1)$-symmetric case.} In the polarized $U(1)$-symmetric vacuum case with $\mydim = 3$,
our setup will be such that $x^3$ corresponds to the symmetry. In particular, 
relative to the transported spatial coordinates $\lbrace x^i \rbrace_{i=1,2,3}$,
$n$, 
$\lbrace g_{ij} \rbrace_{i,j=1,2,3}$, and 
$\lbrace k_{ij} \rbrace_{i,j=1,2,3}$
will not depend on $x^3$. 
Moreover, $\partial_3$ will be a hypersurface-orthogonal Killing vectorfield, 
everywhere defined in the past of $\Sigma_1$ and with positive norm away from the singularity;
see Lemma~\ref{L:PROPOFSYM}.
\medskip

We now state a first, rough version of our main stability results.
See Theorems~\ref{thm:precise} and \ref{thm:precise.U1} for precise statements.
\begin{theorem}[Stable Big Bang formation (Rough version)]\label{thm:rough}
In $1 + \mydim$ spacetime dimensions,
consider an explicit generalized ``background'' Kasner solution \eqref{gen.Kasner}
whose Kasner exponents satisfy the condition
\eqref{Kasner.stability.cond}, which is possible for 
$\mydim \geq 3$ in the presence of a scalar field
and for $\mydim \geq 10$ in vacuum (i.e., with $\widetilde{B} = 0$ in \eqref{gen.Kasner}). 
These background solutions are dynamically stable under perturbations -- without symmetry -- 
of their initial data near their Big Bang singularities,
as solutions to the Einstein-scalar field equations in the case $\mydim \geq 3$,
and, when $\widetilde{B} = 0$, as solutions to the Einstein-vacuum equations in the case $\mydim \geq 10$.
Moreover, in $1+3$ spacetime dimensions,
\textbf{all} Kasner solutions (with $\widetilde{B} = 0$) are dynamically stable solutions to the Einstein-vacuum equations
under perturbations -- with polarized $U(1)$-symmetry --
near their Big Bang singularities, even though they all violate the 
condition \eqref{Kasner.stability.cond}.

More precisely, under the above assumptions, 
sufficiently regular perturbations (i.e., perturbations belonging to suitably high order Sobolev spaces) 
of the Kasner initial data on $\Sigma_1$ give rise to maximal developments 
that terminate in a Big Bang singularity to the past.  
In particular, 
the spacetime solutions in the past of $\Sigma_1$ are foliated by spacelike hypersurfaces 
$\Sigma_t$ that are equal to the level sets of a time function $t$
verifying the CMC condition $\mathrm{tr}k=-t^{-1}$,
and the perturbed Kretschmann scalars 
${\bf Riem}^{\alpha\mu\beta\nu}{\bf Riem}_{\alpha\mu\beta\nu}$
blow up like $t^{-4}$ as $t \downarrow 0$.
Finally, the perturbed solutions exhibit AVTD behavior 
as the singularity is approached
(see just below equation \eqref{Kasner.stability.cond.heur} for further discussion of the notion of ``AVTD''),
and various $t$-rescaled solution variables have regular limits as $t \downarrow 0$.
\end{theorem}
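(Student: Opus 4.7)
The plan is to work in the CMC plus Fermi--Walker frame gauge announced in the introduction. The metric is written as in \eqref{metric}, the time function is fixed by \eqref{INTROtrk}, and a spatial orthonormal frame $\{e_I\}_{I=1,\ldots,\mydim}$ is transported along $e_0$ via Fermi--Walker transport. The basic dynamical unknowns are the lapse $n$, the scalar field $\psi$, the component functions $k_{IJ}=k(e_I,e_J)$, the connection coefficients $\upgamma_{IJB}={\bf g}({\bf D}_{e_I}e_J,e_B)$, and the coordinate components $e_I^i$ of the frame vectors. The first task is to derive the corresponding reduced system: (i) a scalar elliptic equation for $n$ obtained from tracing \eqref{EE} and combining it with the Raychaudhuri-type evolution of $\mathrm{tr}\,k$ and the CMC condition \eqref{INTROtrk}; (ii) the Fermi--Walker transport equations for the $e_I^i$; (iii) the coupled wave-type system for $k_{IJ}$ and $\upgamma_{IJB}$ whose crucial feature is the approximate diagonalization of the structure-coefficient combinations $\upgamma_{IJB}+\upgamma_{JBI}$; and (iv) the wave equation \eqref{SF} written relative to the frame. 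I would also verify that the constraints \eqref{eq:hamconst}--\eqref{eq:momconst} propagate under this reduced system.

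The heart of the proof is a bootstrap argument run backwards in time from $\Sigma_1$ on an interval $(T_\ast,1]$, to be upgraded to $T_\ast=0$. Assuming smallness in a $t$-weighted high-order Sobolev norm of the difference between the perturbed solution and the background \eqref{gen.Kasner}, I would integrate the diagonal equation
\[
\partial_t(\upgamma_{IJB}+\upgamma_{JBI})=-\frac{\widetilde{q}_{\underline I}+\widetilde{q}_{\underline J}-\widetilde{q}_{\underline B}}{t}(\upgamma_{\underline I\underline J\underline B}+\upgamma_{\underline J\underline B\underline I})+\cdots
\]
explicitly in $t$; under \eqref{Kasner.stability.cond} this delivers the key bound \eqref{STRUCTURECOEFFICIENTKasner.stability.cond} with some $q<1$, and together with the antisymmetry identified in Remark \ref{R:BASISOFSTRUCTURE} controls every structure coefficient and yields the central curvature estimate $|Ric|\lesssim t^{-2+\upsigma}$. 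At low derivative orders this renders all spatial-derivative contributions to the evolution system time-integrable near $t=0$ and furnishes the AVTD behavior. At top order, one defines $t$-weighted $L^2$ energies whose weights are tuned to the worst linear coefficient in the diagonalized system, and derives a Gr\"onwall-type differential inequality of the form indicated by Lemma \ref{L:TOPORDERDIFFERENTIALENERGYIDENTITYFORGAMMAANDK}. Although these top-order norms are allowed to degenerate as negative powers of $t$, the low-order norms remain bounded, and standard interpolation between the two regimes tames every nonlinear error term; the monotonicity supplied by the $t$-weights is exactly what absorbs the borderline contributions.

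Once the bootstrap closes and the solution extends to all of $(0,1]\times\mathbb{T}^{\mydim}$, the AVTD structure implies that the appropriate $t$-rescaled variables (for instance $t\,k_{IJ}$ and rescaled versions of $\upgamma_{IJB}$, $e_I^i$, $\psi$) satisfy transport equations with time-integrable right-hand sides, so regular limits at $t=0$ can be extracted by direct integration. These limits determine an asymptotic profile comparable to that of \eqref{gen.Kasner}, from which the monotonic blowup ${\bf Riem}^{\alpha\mu\beta\nu}{\bf Riem}_{\alpha\mu\beta\nu}\sim t^{-4}$ is read off. For the $1+3$ polarized $U(1)$-symmetric vacuum case, the symmetry forces every connection coefficient with three pairwise distinct indices to vanish (Lemma \ref{lem:gamma.U(1)}); by Remark \ref{R:IDENTIFYOBSTRUCTION}, these are precisely the coefficients for which $\widetilde{q}_I+\widetilde{q}_J-\widetilde{q}_B$ may exceed $1$, so \eqref{STRUCTURECOEFFICIENTKasner.stability.cond} and hence the entire scheme go through for every vacuum Kasner triple, even though \eqref{Kasner.stability.cond} always fails in $1+3$ vacuum.

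The principal technical obstacle is the top-order energy estimate. One must design $t$-weighted norms that exactly balance the borderline linear terms in the diagonalized structure-coefficient system, handle the nonlocal lapse via weighted elliptic estimates for its equation, and absorb every commutator, spatial-curvature and matter nonlinearity into a Gr\"onwall-type inequality whose integrating factor still yields the required uniform low-order control as $t\downarrow 0$. The combinatorial step of deciding, term by term in the commuted equations, which combinations $\widetilde{q}_I+\widetilde{q}_J-\widetilde{q}_B$ are dangerous and which are harmless--and using Remark \ref{R:BASISOFSTRUCTURE} to discard the latter--is the real heart of the argument.
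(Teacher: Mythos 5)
Your proposal captures the paper's own argument: the CMC--Fermi--Walker gauge, the approximately diagonal evolution of the structure coefficients $\upgamma_{IJB}+\upgamma_{JBI}$ controlled by \eqref{Kasner.stability.cond}, the low-order $L^\infty$ transport estimates, the heavily $t$-weighted high-order energies, the interpolation between them, and the vanishing of distinct-index structure coefficients in polarized $U(1)$ symmetry are exactly the ingredients assembled in Propositions~\ref{P:KEYEVOLUTIONSTRUCTURECOEFFICIENTS}, \ref{prop:low}, \ref{P:TOPORDERENERGYINEQUALITY}, \ref{prop:overall}, Lemma~\ref{lem:gamma.U(1)}, and Propositions~\ref{prop:tk}--\ref{prop:curv}. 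The only minor imprecision is that the top-order energy identity in Lemma~\ref{L:TOPORDERDIFFERENTIALENERGYIDENTITYFORGAMMAANDK} is built from the undiagonalized evolution equation \eqref{dt.gamma} for $\upgamma_{IJB}$ together with the momentum constraint (the diagonalization is exploited only at the low orders), and the weight exponent $A$ is simply chosen larger than the $N$- and $A$-independent structural constant $C_*$ appearing in the borderline error estimates rather than being tuned to a specific combination $\widetilde{q}_I+\widetilde{q}_J-\widetilde{q}_B$.
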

\subsection{Background on ``Kasner-like behavior:'' Heuristics}\label{subsec:genKasner}
We now aim to provide further background on our main results.
In Sect.\,\ref{subsec:rel.works},
we will discuss prior works in the literature. Many of those works 
concern solutions that exhibit ``Kasner-like behavior,'' a concept that we now discuss. 
We do not attempt to ascribe rigorous meaning to this terminology; rather, we will highlight some properties that are meant to capture the idea that a metric with spatial dependence is ``blowing up in a manner similar to the Kasner solutions.''
We find the discussion 
in \cite{DHS,KL} instructive, where the spacetime metric, to leading order near $t=0$, 
is \emph{assumed} to take the form:
\begin{align}\label{metric.heur}
{\bf g}
& =-dt \otimes dt
+
g,
&
g
&\,
\cong
\sum_{I=1}^{\mydim} t^{2q_I(x)} \uptheta^I(x) \otimes \uptheta^I(x),
&\uptheta^I & =\uptheta^I_a(x)dx^a,
\end{align}
where ``$\cong$'' means ``asymptotic to as $t \downarrow 0$,'' and
the scalar functions $\lbrace q_I(x) \rbrace_{I=1,\cdots,\mydim}$
satisfy the following (vacuum) analogs of \eqref{sumpi}:
\begin{align} \label{CONSTRAINTSmetric.heur}
	\sum_{I=1}^{\mydim} q_I(x)
	& =
	\sum_{I=1}^{\mydim} q_I^2(x)
	= 1.
\end{align}  
Note that in \eqref{metric.heur}, 
the one-forms\footnote{Recall that we do not sum over repeated underlined indices.} 
$\lbrace t^{q_{\underline{I}}(x)}\uptheta^{\underline{I}}(x)  \rbrace_{I=1,\cdots,\mydim}$ 
``represent the Kasner-like directions." Moreover,
although the metric components may vary in $x$, they are all monotonic in $t$ at fixed $x$. We stress that
our discussion here is heuristic in the sense that metrics of the form \eqref{metric.heur}
are not generally solutions to Einstein's equations, though they might approximate actual solutions.

Let $\lbrace k_{\ J}^I \rbrace_{I,J=1,\cdots,\mydim}$ 
denote\footnote{This notation should not be confused with the notation
``$k_{IJ}$'' that we use in the bulk of the article, where
$k_{IJ} := k_{cd} e_I^c e_J^d$ denotes the components of $k$ relative 
to a Fermi--Walker propagated orthonormal spatial frame.} the components of the type $\binom{1}{1}$
second fundamental form of $\Sigma_t$
with respect to the co-frame $\lbrace \uptheta^I(x) \rbrace_{I=1,\cdots,\mydim}$
and its basis-dual\footnote{If $\lbrace v_I \rbrace_{I=1,\cdots,\mydim}$ denotes the basis-dual
frame (i.e., $\uptheta^I(v_J) = \updelta_J^I$, where $\updelta_J^I$ is the Kronecker delta),
then relative to arbitrary coordinates $\lbrace y^i \rbrace_{i=1,\cdots,\mydim}$ 
on $\mathbb{T}^{\mydim}$, 
we have $\uptheta^I = \uptheta_c^I dy^c$,
$v_J = v_J^c \frac{\partial}{\partial y^c}$,
and
$k_{\ J}^I := k_{\ d}^c \uptheta_c^I v_J^d$,
where $k_{\ b}^a = (g^{-1})^{ac} k_{cb}$.} frame.
Standard computations yield that for metrics of the form \eqref{metric.heur}, 
we have $k_{\ J}^I \sim t^{-1}$. 
On the other hand, in coordinates such that the lapse 
$|{\bf g}(\partial_t,\partial_t)|^{1/2}$ is equal to $1$ (as on RHS~\eqref{metric.heur}),
the components
$k_{\ J}^I$ 
satisfy the following evolution equations:
\begin{align}\label{dt.k.heur}
\partial_t k_{\ J}^I-\text{tr}k k_{\ J}^I = Ric_{\ J}^I - {\bf Ric}_{\ J}^I,
\end{align}
where $Ric_{\ J}^I$ denotes a component of the type $\binom{1}{1}$ Ricci curvature of $g$ 
with respect to the co-frame $\lbrace \uptheta^I(x) \rbrace_{I=1,\cdots,\mydim}$
and its basis-dual frame, and similarly for ${\bf Ric}_{\ J}^I$.
\medskip

\noindent {\bf Heuristic criterion for Kasner-like behavior}
\begin{itemize}
\item If ${\bf Ric} = 0$ (e.g., if the metric ${\bf g}$ from \eqref{metric.heur} was already known
to be a solution to the Einstein-vacuum equations), 
then the leading order behavior $k_{\ J}^I \sim t^{-1}$ can easily be \emph{derived directly} from \eqref{dt.k.heur}
\emph{if} \textbf{i)} one knew that $\text{tr}k = - t^{-1} + \mathcal{O}(t^{-1+ \upsigma})$ for some $\upsigma>0$,
and \textbf{ii)}
one could prove the following pointwise estimate for $t$ larger than but close to $0$:
\begin{align}\label{heur.crit}
\max_{I,J=1,\cdots,\mydim}
|Ric_{\ J}^I|\lesssim t^{-2+\upsigma}.
\end{align}
In our main results, we impose the condition $\text{tr}k = - t^{-1}$ by using
constant mean curvature foliations. This gauge is not compatible with the ansatz
\eqref{metric.heur} because it generally requires the lapse
$[-({\bf g}^{-1})^{\alpha\beta}\partial_\alpha t\partial_\beta t]^{-\frac{1}{2}}$
to be different from unity. For convenience, we will downplay this issue in the present discussion.\footnote{Since we derive estimates showing that $|n-1| \lesssim t^{\upsigma}$, the non-constant lapse does not affect the heuristic analysis.}
In the presence of matter, the same conclusions $k_{\ J}^I \sim t^{-1}$ hold
if one can also show that\footnote{In our main results, in the case of the scalar field
matter model, we will prove (with the help of \eqref{EE}) pointwise estimates showing that 
$\underset{I,J=1,\cdots,\mydim}{\max} |{\bf Ric}(e_I,e_J)| \lesssim t^{-2+\upsigma}$, 
where $\lbrace e_I(t,x) \rbrace_{I=1,\cdots,\mydim}$
is an orthonormal spatial frame; this frame component bound is sufficient for the proof of our main results.
These technical estimates are in fact derived
in the proof of Lemma~\ref{L:SPATIALMETRICERRORTERMSPOINTWISE},
though it might not be immediately apparent from the statement of the lemma.} 
$\underset{I,J=1,\cdots,\mydim}{\max} |{\bf Ric}_{\ J}^I| \lesssim t^{-2+\upsigma}$.
\end{itemize}

\begin{remark}[Remarks on our use of time-dependent orthonormal frames]
	\label{R:TIMEDEPENDENTFRAMES}
	We make the following remarks:
	\begin{itemize}
	\item
	The above discussion of heuristics referred to the components
	of tensorfields with respect to the time-independent 
	co-frame $\lbrace \uptheta^I(x) \rbrace_{I=1,\cdots,\mydim}$
	and its basis-dual frame. Note that
	$\lbrace \uptheta^I(x) \rbrace_{I=1,\cdots,\mydim}$
	is not $\bf{g}$-orthonormal. Moreover,
	for general small perturbations of Kasner solutions, 
	there is no reason to believe that there 
	exists a time-independent co-frame in which the 
	perturbed metric is asymptotically of the form \eqref{metric.heur}. 
	Hence, we again stress that
	our approach is based on deriving 
	estimates for the components of tensorfields relative to an 
	orthonormal spatial frame $\lbrace e_I(t,x) \rbrace_{I=1,\cdots,\mydim}$ obtained by Fermi--Walker transport, 
	and that our use of an orthonormal frame 
	is crucial so that we can exploit the approximately diagonal nature
	of the structure coefficient evolution equations 
	(see Sect.\,\ref{SSS:INTROSTRUCTURECOEFFICIENTS}).
	\item
	In particular, in our main results, we will prove an analog of \eqref{heur.crit}
	for the components of $Ric$ relative to an orthonormal frame; 
	see Remark~\ref{R:CRUCIALSPATIALRICCIBOUND}.
	That is, instead of \eqref{heur.crit},
	our main results will rely on a proof of the following bound:
	\begin{align}\label{FRAMEheur.crit}
		|Ric|\lesssim t^{-2+\upsigma},
	\end{align}
	where LHS~\eqref{FRAMEheur.crit} denotes the usual invariant pointwise
	norm of the spatial Ricci tensor.
	\item We also highlight that
	we are able to close our estimates \emph{without} showing that the metric is asymptotic to a metric of the form
	\eqref{metric.heur}. In fact, we close the proof with only very weak information
	about the orthonormal frame $\lbrace e_I(t,x) \rbrace_{I=1,\cdots,\mydim}$
	and co-frame $\lbrace \upomega^I(t,x) \rbrace_{I=1,\cdots,\mydim}$:
	we prove only that their coordinate components 
 $\lbrace e_I^i\rbrace_{I,i=1,\cdots,\mydim}$
	and
	$\lbrace \upomega_i^I(t,x) \rbrace_{I,i=1,\cdots,\mydim}$
	are bounded in magnitude by $\lesssim t^{-q}$ for some $q \in (0,1)$
	depending on the background Kasner exponents; see also Remark~\ref{R:NOLIMITFORFRAME}.
\item Despite the previous comment, for the solutions under study, 
	we are able to prove the existence of ``final Kasner exponents'' 
	$\left\lbrace q_I^{(\infty)}(x) \right\rbrace_{I=1,\cdots,\mydim}$ as the singularity is approached;
	see Proposition~\ref{prop:tk}.
\end{itemize}
\end{remark}

\noindent {\bf Conditions for the validity of the heuristic criterion \eqref{heur.crit} for metrics of the form \eqref{metric.heur}}
\begin{itemize}
\item A computation using \eqref{metric.heur} shows that in the absence of special algebraic structure,
we typically have:\footnote{Note that the spatial \emph{coordinate} 
components $\lbrace Ric_{\ j}^i \rbrace_{i,j=1,\cdots,\mydim}$ of the type $\binom{1}{1}$ tensor
$Ric$ are bounded in magnitude by $\lesssim$ LHS~\eqref{heur.Ric}
and hence the inequality \eqref{heur.Ric} would imply the same bound for
$\underset{i,j=1,\cdots,\mydim}{\max} |Ric_{\ j}^i|$.
\label{FN:COORDINATECOMPONENTSOBEYSAMEBOUND}}
\begin{align} \label{heur.Ric}
\max_{I,J=1,\cdots,\mydim}
|Ric_{\ J}^I| 
\approx \mathop{\max_{I,J,B=1,\cdots,\mydim}}_{I < J} \{t^{2(q_B-q_I-q_J)} \}.
\end{align}

\item In view of \eqref{heur.Ric}, we see that the estimate \eqref{heur.crit} holds if: 
\begin{align}\label{heur.Kasner.cond}
\mathop{\max_{I,J,B=1,\cdots,\mydim}}_{I < J}\{q_I+q_J-q_B\}<1.
\end{align}
\item In $1+3$ spacetime dimensions in the vacuum case, 
where the condition \eqref{heur.Kasner.cond} is always violated 
(see \eqref{Kasner.cond.viol}), 
one can show that for metrics of the form \eqref{metric.heur},
the estimate \eqref{heur.crit} is valid if 
the following relation holds, where $\mathrm{d}$ denotes the exterior derivative operator:
\begin{align}\label{heur.omega.cond}
 \uptheta^{-} \wedge \mathrm{d} \uptheta^{-}=0,
\end{align}
where $q_-(x) < 0$ is the\footnote{Using the equations \eqref{CONSTRAINTSmetric.heur},
one can show that in the vacuum case with $\mydim = 3$,
aside from the trivial case in which 
one of the $q_I$ is equal to $1$ and the others vanish,
precisely one of the $q$'s must be negative.} negative Kasner-like exponent in \eqref{metric.heur}
and $\uptheta^-(x)$ is the corresponding one-form, i.e., these quantities are 
such that the tensor product
$\uptheta^-(x) \otimes \uptheta^-(x)$ is multiplied by the factor $t^{2 q_-(x)}$.
Standard calculations show that the condition \eqref{heur.omega.cond} \emph{eliminates} the terms responsible 
for the worst behavior on RHS~\eqref{heur.Ric}, 
which, if present, would have been more singular than RHS~\eqref{heur.crit}.
\end{itemize}

\noindent {\bf A geometric interpretation of the condition \eqref{heur.omega.cond} for metrics of the form \eqref{metric.heur}}
\begin{itemize}
\item The Frobenius Theorem states that \eqref{heur.omega.cond} is equivalent to the integrability of the $2$-dimensional subspaces
$V^{-}_p$ annihilated by $\uptheta^{-}$, where for $p \in \mathbb{T}^3$,
\begin{align}\label{Frob}
V^{-}_p=\{Y\in T_p \mathbb{T}^3:\uptheta^{-}_p(Y)=0\}.
\end{align}
We note that \eqref{heur.omega.cond} is equivalent to the existence of functions 
$u,v:\mathbb{T}^3 \to \mathbb{R}$
such that $\uptheta^{-}= u d v$.
\end{itemize}

As we already mentioned in Sect.\,\ref{subsec:models}, for
the models that we consider in our results without symmetry assumptions, it was already observed in 
\cite{BK,DHS} that the condition \eqref{heur.Kasner.cond} is not vacuous, at least in
the sense that there exist generalized Kasner 
(in particular, spatially homogeneous)
solutions whose exponents satisfy it.
We also stress that for solutions with $x$-dependence,
in the context of the heuristic works \cite{BK,DHS},
the condition \eqref{heur.Kasner.cond} can be interpreted as an inequality that should be satisfied by
the ``final Kasner exponents,'' i.e., the exponents $\lbrace q_I(x) \rbrace_{I=1,\cdots,\mydim}$ of the alleged asymptotic form \eqref{metric.heur}
of an alleged Kasner-like solution.
Our main results in fact justify the existence of ($x$-dependent) Kasner-like solutions
with  ``final Kasner exponents'' 
$\lbrace q_I(x) \rbrace_{I=1,\cdots,\mydim}$ verifying the stability condition
\eqref{heur.Kasner.cond},
at least when the data
are close to generalized Kasner solutions whose exponents verify the same condition;
see Proposition~\ref{prop:tk}.
Our proof of these facts relies, of course, on the open nature of the condition
\eqref{heur.Kasner.cond}.
 
The above discussion suggests that 
in $1+3$ spacetime dimensions in the vacuum case, 
$x$-dependent Kasner-like solutions can exist if
the ``polarization-type'' condition \eqref{heur.omega.cond} holds.
However, 
the condition \eqref{heur.omega.cond} 
refers to the structure of the metric ``at the singularity''
(i.e., since \eqref{metric.heur} is only supposed to capture the asymptotic structure of the metric, 
\eqref{heur.omega.cond} is a statement about the structure 
of the asymptotic behavior of the metric near the singularity),
and we are not aware of any ``general method'' for solutions without symmetry 
that allows one to ensure the validity of \eqref{heur.omega.cond} 
via assumptions on the initial data on $\Sigma_1$.
Nonetheless, for polarized $U(1)$-symmetric solutions,
discussed further below, the condition \eqref{heur.omega.cond} automatically holds.
\medskip

\noindent {\bf Polarized $U(1)$-symmetric metrics of the form \eqref{metric.heur} satisfy \eqref{heur.omega.cond}}

\begin{itemize}
\item Recall that we defined the polarized $U(1)$-symmetry class in Sect.\,\ref{subsubsec:U1.data}. Assume that $\partial_3$ is the hypersurface-orthogonal Killing vectorfield with $\mathbb{T}^1$-orbits. 
This will be the case in our study of solutions with symmetry; see Lemma~\ref{L:PROPOFSYM}. In addition, assume that the leading order expression \eqref{metric.heur} of the Kasner-like metrics in question respects the symmetry, i.e.,
assume that $\uptheta^3$ is proportional to $dx^3$ and that for $I=1,2,3$,  
$\partial_3 q_I=0$ and $\mathcal{L}_{\partial_3} \uptheta^I=0$. 
We divide the argument for the validity of \eqref{heur.omega.cond} 
into the following two cases, 
depending on the sign of the Kasner exponent associated to the norm of the Killing field $\partial_3$:

\item If $q_3 < 0$, where $q_3$ is the Kasner-like exponent corresponding to the direction of symmetry,
then for a metric of the form \eqref{metric.heur}, the validity of
\eqref{heur.omega.cond} (with $\uptheta^3$ in the role of $\uptheta^-$)
follows easily, since, by the previous point, 
$\uptheta^3$ is a scalar function multiple of $dx^3$.

\item Again assume that $\partial_3$ is the hypersurface-orthogonal Killing vectorfield with $\mathbb{T}^1$-orbits,
but now assume that
$q_3>0$ and (without loss of generality)
$q_1<0$, where $q_3$ is still the Kasner-like exponent corresponding to the direction of symmetry. 
Then the subspaces annihilated by the one-form $\uptheta^1=\uptheta^-$, corresponding to $q_1$, are $2$-dimensional and contain $\partial_3$. Let $Y$ be a unit-length vectorfield in the kernel of $\uptheta^1$ that is orthogonal to $\partial_3$. 
Fix a point $p \in \mathbb{T}^3$, and consider the integral curve $s \rightarrow a_p(s)$ of $Y$ passing through it, normalized by
$a_p(0) = p$. Then the image of $a_p$ times the orbits of $\partial_3$, i.e.,
$\mbox{\upshape Img}(a_p) \times [-\pi,\pi]_{x_3}$, is a surface whose tangent planes are exactly the kernel of $\uptheta^1$, since $\partial_3$ (being Killing and in the kernel of $\uptheta^1$) commutes with $Y$. Hence, the planes $V_p^-$ are integrable, and 
by the Frobenius Theorem, this is equivalent to the condition \eqref{heur.omega.cond}.
\end{itemize}

\subsection{Related works}\label{subsec:rel.works}
Before outlining the main ideas behind our proof of Theorem~\ref{thm:rough},
we first describe some prior results on Kasner-like singularities.
There are many such results, and we roughly divide them into
three categories.

\subsubsection{Big Bang formation under symmetry assumptions}\label{SSS:BigBangSymm}
There are many works that provide a detailed description of stable Big Bang formation, or more generally, spacelike singularity formation with AVTD behavior (e.g., in black hole interiors), for large sets 
of initial data on a smooth Cauchy hypersurface in a 
model with sufficient \emph{symmetry} such that the problem reduces to a system of ODEs or $1+1$-dimensional PDEs.
We further divide these results into sub-categories.

{\bf $\bullet$ The interior of black holes.} In Christodoulou's influential works 
\cite{Christ2,Christ3} on the spherically symmetric Einstein-scalar field system with large data, 
it was shown that black holes form and contain spacelike singularities in their interior,
where their Kretschmann scalars blow up.

{\bf $\bullet$ Polarized Gowdy-symmetry.} 
In \cite{CIM}, the authors studied polarized Gowdy-solutions\footnote{Roughly, Gowdy-solutions
are such that there exists a pair of spacelike Killing vectorfields $X$ and $Y$ such that the
twist constants $\epsilon_{\alpha \beta \gamma \delta} X^{\alpha} Y^{\beta} {\bf D}^{\gamma} X^{\delta}$
and
$\epsilon_{\alpha \beta \gamma \delta} X^{\alpha} Y^{\beta} {\bf D}^{\gamma} Y^{\delta}$
vanish, where $\epsilon$ is the spacetime volume form. Polarized Gowdy-solutions
satisfy one additional condition: $X$ and $Y$ are orthogonal.} 
to the Einstein-vacuum equations
and proved Strong Cosmic Censorship, that is, that for an open and dense set
of polarized Gowdy-symmetric initial data on $\mathbb{T}^3$ or $\mathbb{S}^2 \times \mathbb{S}^1$, 
the maximal globally hyperbolic development is inextendible, and causal geodesics
are generically inextendible in one direction due to curvature-blowup.
 
{\bf $\bullet$ Gowdy-symmetry.}
In \cite{Rin3}, Ringstr\"{o}m proved a similar result for
Gowdy-solutions with spatial topology $\mathbb{T}^3$, without the polarization assumption. See also the 
related works \cite{CL,Rinnew,Rin1} and the
survey article \cite{Rin2}.
The general Gowdy-case turned
out to be significantly more difficult to handle 
in view of a possible phenomenon that was shown to be absent in the polarized case: ``spikes.''
Roughly, spikes are regions where spatial derivatives can become large, i.e., regions
where solutions do not exhibit AVTD behavior. 
For an open and dense set of data in the topology of $C^{\infty}$, Ringstr\"{o}m
proved that a curvature singularity forms and that the solution exhibits 
Kasner-like behavior, except for possibly at a finite number of spikes.

{\bf $\bullet$ Polarized axi-symmetric initial data.} The Schwarzschild black hole singularity is highly unstable, 
as is shown by the fact that instead of singularities,
near-Schwarzschild Kerr solutions have Cauchy horizons inside their black holes,
and the metric can be smoothly extended across them. 
However, the Schwarzschild singularity was recently shown to be stable \cite{AF} 
as a solution to the Einstein-vacuum equations under symmetric perturbations,
specifically those perturbations whose solutions exhibit a hypersurface-orthogonal, spacelike, Killing vectorfield 
$X$ with $\mathbb{T}^1$ orbits.\footnote{Note that Kerr solutions, although axi-symmetric, 
do not contain a hypersurface-orthogonal Killing field.} This symmetry class is closely related 
to the polarized $U(1)$-symmetry class that we study in Theorem~\ref{thm:precise.U1},
as we now describe.
Compared to the polarized $U(1)$-symmetric solutions with $\mathbb{T}^3$ spatial topology that we study 
in the present paper,
the main difference in \cite{AF} is that
$X$ degenerates at a $2$-dimensional submanifold; since the vectorfield
$X$ in \cite{AF} is tangent to 2-spheres, 
such degeneracies are topologically unavoidable. This can be concretely seen already in the case of the 
background Schwarzschild metric in classical $(t,r,\theta,\phi)$ coordinates, 
where $X := \partial_{\phi}$ is the Killing field, and away from the singularity $\lbrace r = 0 \rbrace$,
its (square) norm 
${\bf g}_{\textnormal{Schwarzschild}}(\partial_\phi,\partial_\phi)=r^2\sin^2\theta$ vanishes at exactly $\theta=0,\pi$.
Apart from this extra feature of the degenerate Killing vectorfield and the difference in topology ($\mathbb{R}\times\mathbb{S}^2$ instead of the $\mathbb{T}^3$ topology considered here), 
the stability result of \cite{AF} can be seen to correspond\footnote{To see the correspondence, one must
re-parametrize the coordinate $r$ to proper time (recall that $r$ is a time function in the Schwarzschild black hole interior, whereas $t$ is a spatial coordinate, in the classical coordinate representation of the Schwarzschild metric).} 
to a special case of our symmetric blowup-results,
specifically Theorem~\ref{thm:precise.U1}, with background Kasner exponents $\widetilde{q}_1=-\frac{1}{3}$, 
$\widetilde{q}_2=\widetilde{q}_3=\frac{2}{3}$. The method of proof introduced in \cite{AF} is very much tied to the specific symmetry class, relying on a wave-maps reduction of the Einstein-vacuum equations, and it is therefore not applicable
to the non-symmetric solutions that we study in Theorem~\ref{thm:precise}. However, it seems that the use of the particular symmetry reduction in \cite{AF} allowed for the derivation of more refined asymptotic behaviors for the spatial components of the metric
compared to the results we derive in Theorem~\ref{thm:precise.U1}.

\subsubsection{The construction of solutions with Big Bang singularities -- without a proof of stability}
Numerous papers have provided a construction of solutions that exhibit a Kasner-like singularity. 
Most of these works concern cosmological spacetimes and employed Fuchsian techniques in regimes 
where the discussion in Sect.\,\ref{subsec:genKasner} suggests that one might expect the singularity formation
to be dynamically stable.

{\bf $\bullet$ Gowdy-symmetry.} The first result of this type \cite{KR} yielded the 
construction of analytic solutions with Gowdy-symmetry. 
The analyticity assumption was removed in \cite{Ren}. See also
\cite{St} for more general topologies and \cite{ABIL2} for a treatment in generalized wave gauges.

{\bf $\bullet$ Polarized and half-polarized $\mathbb{T}^2$-symmetry.} Analytic singularities in polarized $\mathbb{T}^2$-symmetry 
class were first constructed in \cite{IK}. The analyticity assumption was later removed in \cite{ABIL},
where the authors also constructed half-polarized solutions.

{\bf $\bullet$ Polarized or half-polarized $U(1)$-symmetry.} Polarized and half-polarized $U(1)$-symmetric
analytic solutions with $\mathbb{T}^3$ spatial topology were constructed in \cite{IM}. More
general topologies were later treated in \cite{CBIM}. We note that in these works, the authors defined their notion of 
polarized and half-polarized solutions at the singularity, i.e., at $t=0$, by eliminating free functions relative to a given ansatz, in the spirit of \eqref{metric.heur} and \eqref{heur.omega.cond}.

{\bf $\bullet$ Einstein-scalar field or stiff fluid.} The first construction of singular solutions without symmetries 
was carried out in \cite{AR}. The authors studied the Einstein-scalar field and Einstein-stiff fluid systems
and used Fuchsian techniques to construct analytic solutions whose ``final Kasner exponents'' 
(see the last point of Remark~\ref{R:TIMEDEPENDENTFRAMES} and Proposition~\ref{prop:tk})
are all positive.

{\bf $\bullet$ Sub-critical Einstein-matter systems.} In \cite{DHRW}, 
the authors extended the results of \cite{AR} by constructing
singular, analytic, Kasner-like solutions without symmetries
to various Einstein-matter systems
and to the Einstein-vacuum equations in $1+\mydim$ dimensions with $\mydim \geq 10$.
As in the present paper, the solution regimes treated in \cite{DHRW}
were sub-critical in the sense that the solutions exhibited the crucial
bound \eqref{FRAMEheur.crit} for the spatial Ricci curvature. Roughly,
our present work shows that an open set of solutions constructed
in \cite{DHRW} are \emph{dynamically stable} under Sobolev-class perturbations of their initial data
near their Big Bangs, at least in the vacuum and scalar field matter model cases.

{\bf $\bullet$ $1+3$ vacuum without symmetries.} As we alluded to in Sect.\,\ref{SS:HAWKINGSTHEOREM}, 
Kasner solutions might be unstable under general perturbations without symmetries, 
unless some kind of condition, such as a polarization condition of the type \eqref{heur.omega.cond}, is imposed. 
Nevertheless, in \cite{Klin}, the author constructed analytic 
Kasner-like singular solutions without symmetries,
demonstrating that such solutions exist, even though they might be unstable.
Moreover, in \cite{FL}, for distinct Kasner exponents,
the first author and Luk constructed Sobolev-class solutions that exhibit Kasner-like singularities. 
The solutions do not a priori enjoy any symmetry, 
but they satisfy the polarization condition \eqref{heur.omega.cond}.

{\bf $\bullet$ Asymptotically Schwarzschild on a 2-sphere.} Finally, we mention the
first author's work \cite{F}, which, in a Lorentz gauge, yielded the construction of a class
of spacetimes that converge to a portion of the Schwarzschild black hole singularity. 
The construction requires no symmetry or analyticity assumptions. While the 
construction does not yield a full spacelike singular hypersurface,
it does provide a spacelike singular 2-sphere.

\subsubsection{Stable Big Bang formation without symmetry assumptions}
The stability of some Kasner solutions towards their Big Bang singularities, 
without symmetries and for open sets of initial data, 
was only fairly recently shown by the last two authors.
For the scalar field and stiff fluid matter models, 
the stability of the (isotropic) 
Friedmann--Lema\^{\i}tre--Robertson--Walker (FLRW) solutions with $\mathbb{T}^3$ spatial topology
(i.e., $\widetilde{q}_1 = \widetilde{q}_2 = \widetilde{q}_3 =\frac{1}{3}$ and $\widetilde{B} = \sqrt{2/3}$)
was shown in \cite{RodSp1,RodSp2},
while the case of the scalar field matter model with $\mathbb{S}^3$ topology
was handled in \cite{Sp}.
The Einstein-vacuum equations
were handled in \cite{RodSp3} under a ``moderate anisotropy'' assumption on the Kasner exponents,
specifically $\underset{I=1,\cdots,\mydim}{\max} |\widetilde{q}_I|<\frac{1}{6}$,
which is possible in $1+\mydim$ spacetime dimensions when $\mydim \geq 38$. 
Some aspects of our analysis here are in the spirit of the analysis in \cite{RodSp3}.

\subsubsection{Conditional Kasner-like behavior}\label{SSS.cond.Kasner}
There are recent results that derive Kasner-like behavior for solutions under assumed bounds on certain key quantities. 
For example, assuming mainly scale invariant bounds on the Riemann curvature of Hubble-normalized time slices, 
Lott \cite{Lott} showed that the corresponding singular solutions converge to Kasner flows in appropriate topologies. 
Ringstr\"om \cite{Rin5,Rin6} derived sharp results on the geometry of Kasner-like solutions 
by assuming mainly bounds on the normalized Weingarten map.\footnote{The Weingarten map is the second fundamental form in type 
$\binom{1}{1}$ form, i.e., in the notation of the present paper, the tensorfield with components 
$k_{\ j}^i := (g^{-1})^{ia} k_{aj}$. \label{FN:WEINGARTEN}} 
Ringstr\"om's sharp estimates crucially rely on a frame that asymptotically diagonalizes the normalized Weingarten map. 
This frame is very different from ours, which is Fermi-propagated from $\Sigma_1$. 
It would be interesting to explore whether
a change of frames could yield more refined estimates for the spatial frame and connection coefficients
compared to the estimates we obtain in Theorem~\ref{thm:precise}; 
see also Remarks~\ref{rem:diag.frame} and \ref{R:SHARPERASYMPTOTICS}.

\subsection{Overview of our proof}\label{subsec:pf.overview}
Our proofs of Theorems~\ref{thm:precise} and \ref{thm:precise.U1} 
are based on deriving estimates for a set of reduced variables that
solve an elliptic-hyperbolic PDE system. Here we will summarize the main features
of the system and how its structures allow us to prove our main results.
We will confine our discussion to sketching proofs of various low order
and high order a priori estimates for near-Kasner initial data given
on $\Sigma_1 = \lbrace t = 1 \rbrace$. In practice, the low order and high order estimates
are coupled, and we derive them via a bootstrap argument.
The a priori estimates are sufficient to ensure 
that the solution exists on $(0,1] \times \mathbb{T}^{\mydim}$
(see Proposition~\ref{P:EXISTENCEONHALFSLAB}), 
which is the main step in the paper.
The proof of curvature-blowup and other aspects of the solution
are relatively straightforward consequences of the a priori estimates.
We will not discuss those results in this section; instead, we
refer readers to Sect.\,\ref{sec:sol} for those details.

\subsubsection{The gauge}
\label{SSS:GAUGE}
We use a constant mean curvature foliation
in which, for $t \in (0,1]$, the level sets $\Sigma_t$ of the time function $t$ satisfy
$\text{tr}k = - \frac{1}{t}$, where $k$ is the second fundamental form of $\Sigma_t$.
We also use spatial coordinates $\lbrace x^i \rbrace_{i=1,\cdots,\mydim}$
that are transported along the unit normals to $\Sigma_t$.
In this gauge, the spacetime metric satisfies
${\bf g}
	= - n^2 dt \otimes dt
				+
				g_{ab} dx^a \otimes dx^b
$,
where $n$ is the lapse and $g$ is the first fundamental form of $\Sigma_t$.
This setup is the same as in \cite{RodSp1,RodSp2,RodSp3}.
However, to derive the sharp results of the present paper, we 
use a crucial additional ingredient: we use Fermi--Walker transport to
construct a $\Sigma_t$-tangent orthonormal ``spatial frame'' 
$\lbrace e_I \rbrace_{I=1,\cdots,\mydim}$, which is globally defined in space. 
When supplemented with $e_0 := n^{-1} \partial_t$, we obtain an orthonormal spacetime frame.
We then formulate Einstein's equations in such a way that the unknowns are $n$, 
the \emph{components} $\lbrace e_I^i \rbrace_{I,i=1,\cdots,\mydim}$ of the orthonormal
frame with respect to the transported spatial coordinates,
the \emph{components} $\lbrace \oe_i^I \rbrace_{I,i=1,\cdots,\mydim}$ of the corresponding dual co-frame
with respect to the transported spatial coordinates,
the \emph{frame components} $k_{IJ} := k_{cd} e_I^c e_J^d$ of the second fundamental form with respect to the frame,
the \emph{connection coefficients} $\upgamma_{IJB} := g(\nabla_{e_I} e_J,e_B)$
of the spatial frame (where $\nabla$ is the Levi-Civita connection of $g$),
the future-directed-timelike-unit-normal-derivative of the scalar field, denoted by $e_0 \psi$,
and the \emph{spatial frame derivatives} $\lbrace e_I \psi \rbrace_{I=1,\cdots,\mydim}$
of the scalar field.\footnote{We never need to estimate $\psi$ itself 
since only its derivatives appear in the system \eqref{EE}--\eqref{SF}.} 
We refer readers to Sect.\,\ref{sec:setup} for the details.

\subsubsection{The lapse, the dynamic variables, and the ``less singular'' nature of spatial derivative terms}
\label{SSS:LAPSEANDDYNAMIC}
The lapse $n$ satisfies an elliptic PDE (see \eqref{n.eq})
with source terms depending on some of the other solution variables, specifically
the ``dynamic variables''
$e_I^i$,
$\upgamma_{IJB}$,
and $e_I \psi$.
Thus, to control $n$, we use elliptic estimates to control it in terms of
these dynamic variables.
These estimates are rather standard, and we will
not discuss them in detail here. We simply highlight
that it is crucial for our results that the right-hand
side of the elliptic lapse PDE depends only on the \emph{spatial derivatives} of various tensorfields,
i.e., there are no time derivative terms, the point being
that in the problem under study, spatial derivative terms are less singular with respect to $t$
compared to time derivative terms; this is a manifestation of AVTD behavior,
which we first mentioned in Sect.\,\ref{SS:HAWKINGSTHEOREM}. 
We refer readers to
Sect.\,\ref{sec:lapse} for a detailed proof of the lapse estimates.
To control the dynamic variables, including 
$e_I^i$,
$\oe_i^I$,
$k_{IJ}$,
$\upgamma_{IJB}$,
$e_0 \psi$,
and
$e_I \psi$,
we 
derive ``low order'' $L^{\infty}$ estimates and ``high order'' energy estimates
based on first-order formulations of the flow; we refer to 
Proposition~\ref{P:redeq} and Lemma~\ref{lem:psi.syst}
for the first-order formulations of the equations.
As we explained in the discussion above \eqref{STRUCTURECOEFFICIENTKasner.stability.cond},
we also crucially rely on the special ``diagonal structure'' 
exhibited by the PDE system satisfied by the \emph{structure coefficients} of the spatial frame.
We provide this PDE system in Proposition~\ref{P:KEYEVOLUTIONSTRUCTURECOEFFICIENTS},
and we will discuss it in more detail in Sect.\,\ref{SSS:INTROSTRUCTURECOEFFICIENTS}.

\subsubsection{Approximately diagonal form of the structure coefficient evolution equations} 
\label{SSS:INTROSTRUCTURECOEFFICIENTS}
Away from symmetry, to control the $\upgamma_{IJB}$'s, we rely on the crucial observation that the terms:
\begin{align}\label{gamma.set}
\lbrace S_{IJB} := \upgamma_{IJB} + \upgamma_{JBI} \ | \ 1 \leq I,J,B \leq \mydim, I < J  \rbrace
\end{align}
solve an evolution equation system whose ``main linear part'' is \emph{diagonal}
with coefficient magnitudes that are smaller than $t^{-1}$, provided the condition \eqref{Kasner.stability.cond} is 
satisfied by the background Kasner exponents; see equation
\eqref{dt.gamma-gammatilde2} for the precise equation, 
and equation \eqref{E:SCHEMATICSTRUCTURECOEFFICIENTEVOLUTION} for an abbreviated version. 
To caricature, the system is of the form $\dot{S} = \frac{M}{t} \cdot S + \cdots$, where $M$
is a diagonal matrix whose components verify $|M_{IJ}| < 1$ when \eqref{Kasner.stability.cond} holds. 
This allows us to prove that  
under \eqref{Kasner.stability.cond}, we have
$|S| \lesssim t^{-q}$ for some $q < 1$.
\textbf{This bound is crucial for the entire proof}, 
as we use it to show that the solutions exhibit AVTD behavior.
The variables $S_{IJB}$ in \eqref{gamma.set} 
are precisely the \emph{structure coefficients} 
of the spatial orthonormal frame $\lbrace e_I \rbrace_{I=1,\cdots,\mydim}$.
Here we note that by the simple identity \eqref{E:RECOVERGAMMAFROMSTRUCTURECOEFFCIENTS}, to control
all of the $\upgamma_{IJB}$'s, it suffices to control the structure coefficients.

Moreover, as we highlighted in Remark~\ref{R:IDENTIFYOBSTRUCTION},
even in cases such that the stability condition \eqref{Kasner.stability.cond} is violated,
only \emph{some} of the structure coefficients $S_{IJB}$
could possibly serve as an obstruction to proving the desired estimates: 
those with three distinct indices.
That is, our work essentially shows that 
in regimes where \eqref{Kasner.stability.cond} is violated
(such as the Einstein-vacuum equations in $1+3$ dimensions without symmetries),
\emph{any instabilities would arise from the combinations
$S_{IJB}$ with distinct indices}.
This observation is precisely what allows us to extend our 
stable blowup-results to the class of polarized $U(1)$-symmetric 
Einstein-vacuum solutions in $1+3$ dimensions: 
by considering a spatial orthonormal frame 
$\lbrace e_I \rbrace_{I=1,2,3}$ such that $e_3=(g_{33})^{-\frac{1}{2}}\partial_3$ corresponds to the normalized Killing direction (see Lemma~\ref{lem:U1}), 
we can conclude that the spatial connection coefficients 
with distinct indices are automatically zero (see Lemma~\ref{lem:gamma.U(1)}). Hence, 
the observations described above allow us to sufficiently control the non-zero structure
coefficients and prove stable blowup.

We also note that the less singular behavior (than $t^{-1}$) of the $\upgamma_{IJB}$'s is consistent with the normalized second fundamental form 
frame components $tk_{IJ}(t,x)$ having a continuous limit, $\upkappa_{IJ}^{(\infty)}(x)$, as $t \downarrow 0$, 
which is the main feature of a Kasner-like singularity (as we described in Sect.\,\ref{subsec:genKasner}). 
This is once again a manifestation of AVTD behavior.
The eigenvalues of $-\upkappa_{IJ}^{(\infty)}(x)$ can be viewed as the ``final, $x$-dependent'' 
Kasner exponents of the perturbed spacetime; see Proposition~\ref{prop:tk}. 

\subsubsection{The bootstrap argument and initial discussion of the behavior of the high order energies}
In practice, to prove our main results, we rely on a bootstrap argument in which
we assume that various low order and high order norms are small (indicating that the solution is near-Kasner) 
on a time interval $(T_{\textnormal{Boot}},1]$; see \eqref{Boots} for the precise bootstrap assumptions.
Then the main task becomes deriving strict improvements of the bootstrap assumptions for near-Kasner initial data,
where we remind the reader that the data are given along $\Sigma_1 = \lbrace t = 1 \rbrace$.
In the rest of Sect.\,\ref{subsec:pf.overview}, 
to illustrate the main ideas, we will not explain the full bootstrap argument
in detail, but will instead show how the different parts of the analysis consistently
fit together.
As a starting point, we note that our 
analysis will eventually show that
we have a top-order energy bound of the following form:
\begin{align} 
\begin{split} \label{E:HIGHORDERNERGIES}
	&
	t^{\blowupexp + 1} \| k_{IJ} \|_{\dot{H}^N}(\Sigma_t)
		+
	t^{\blowupexp + 1} \| \upgamma_{IJB} \|_{\dot{H}^N(\Sigma_t)}
		+
	t^{\blowupexp + q} \| e_I^i \|_{\dot{H}^N(\Sigma_t)}
		\\
	& \ \
	+
	t^{\blowupexp + 1} \| e_0 \psi \|_{\dot{H}^N(\Sigma_t)}
		+
	t^{\blowupexp + 1} \| e_I \psi \|_{\dot{H}^N(\Sigma_t)}
	\lesssim data,
\end{split}
\end{align}
where $q$ is as in Sect.\,\ref{SSS:INTROSTRUCTURECOEFFICIENTS} 
(see just above \eqref{E:INTROKBOUND} for further discussion)
and $\| \cdot \|_{\dot{H}^N}$ is a standard homogeneous Sobolev norm;
see Sect.\,\ref{sec:Boots} for the details.

We now highlight some crucial aspects of our analysis of the high order energies:
\begin{quote}
To close the proof and justify the estimate \eqref{E:HIGHORDERNERGIES}, 
we must first choose the parameter $\blowupexp$ to be \textbf{sufficiently large}, then choose 
the ``regularity parameter'' $N$ to be \textbf{sufficiently large relative to $\blowupexp$}, 
and finally choose $data$ to be sufficiently small, 
where for the rest of Sect.\,\ref{subsec:pf.overview}, ``$data$'' denotes a small number whose size 
is controlled by the closeness of the initial data to the Kasner data in
a high order Sobolev norm.
\end{quote}

\subsubsection{The behavior of the low order $L^{\infty}$ norms}
\label{SSS:INTROLOWORDERNORMS}
In this section, we will explain how the availability of a high order energy bound of the form
\eqref{E:HIGHORDERNERGIES} allows us to derive sharp $L^{\infty}$ estimates
for the solution variables at the low derivative levels.
We already stress that our proof fundamentally requires
that we prove much less singular (with respect to $t$) 
estimates at the low derivative levels
compared to \eqref{E:HIGHORDERNERGIES}; here, we are thinking of  
\eqref{E:HIGHORDERNERGIES} as a ``very singular estimate'' in the sense that $\blowupexp$ is large.
In particular, at the low
derivative levels, we must prove estimates for the perturbed $k_{IJ}$ and $e_0 \psi$ variables
showing that they are not more singular than their Kasner analogs, which blow up like $t^{-1}$.
To keep the presentation short, in most of the rest of Sect.\,\ref{subsec:pf.overview}, 
we will focus only on the estimates for $e_I^i$, $k_{IJ}$,
and $\upgamma_{IJB}$; the estimates for the scalar field can be obtained in a similar fashion.
Moreover, we again highlight that we derive control of the connection coefficients at the low
derivative levels by relying on the structure coefficients
$S_{IJB} := \upgamma_{IJB} + \upgamma_{JBI}$
(whereas for the energy estimates at the high derivative levels, 
we can work directly with the connection coefficients $\upgamma_{IJB}$).
Finally, we note that our discussion here will mainly concern the analysis away from symmetry
under the sub-criticality condition \eqref{Kasner.stability.cond}.

\begin{remark}[The frame is not precisely adapted]
\label{R:FRAMEFREEDOM}
It seems remarkable to us that away from symmetry,
for all sub-critical Kasner exponents,
we have a lot of freedom in constructing the orthonormal frame.
More precisely, in Sect.\,\ref{SS:CONSTRUCTIONOFTHEINTIALORTHONORMALFRAME},
we use the Gram--Schmidt algorithm to construct an initial orthonormal frame that is a perturbation of
the spatial coordinate frame $\lbrace \partial_i \rbrace_{i=1,\cdots,\mydim}$,
and then we propagate this frame using the Fermi--Walker transport equations \eqref{frame.prop}.
There is nothing special about our choice of initial data for the frame; any nearby initial data for the orthonormal
frame would have worked just as well.
In particular, we can close the estimates without using a spatial frame that is adapted to the perturbed Kasner directions,
that is, without the frame being aligned with the eigenvectors of the perturbed second fundamental form $k$;
see also Remark~\ref{R:NOLIMITFORFRAME}.
In fact, as of present, 
the only way we know how to close the top-order estimates is by using a Fermi--Walker-transported frame,
which is not generally aligned with eigenvectors of $k$.
In contrast,
many previous studies of Kasner-like singularities relied on a frame 
that is adapted to the eigenvectors of $k$
(see Sect.\,\ref{subsec:rel.works} for discussion of related works). 
\end{remark}

\begin{remark}[The role of $N_0$]
	\label{R:N0}
	In our main theorem, there appears a parameter $N_0 \geq 1$ that represents, roughly, 
	the number of derivatives that we sharply control
	in $\| \cdot \|_{L^{\infty}}$. We are free to choose it at the start of the bootstrap argument. 
	For example, $N_0=1$ is permissible. However, 
	the choice of $N_0$ will affect the minimal allowable size of $N$ 
	(see Theorem~\ref{thm:precise}). $N_0$ also captures the amount of regularity that the
	``limiting normalized solution variables'' enjoy along the Big Bang hypersurface $\Sigma_0$
	(see Sect.\,\ref{SS:LIMITINGFUNCTIONS}).
	We introduced $N_0$ mainly to clarify that for ``very smooth'' initial data that fall under the scope
	of our main results, the corresponding limiting solution variables will 
	inherit a quantifiable amount of the smoothness.  
	For convenience, in our heuristic discussion here, we will only discuss the case $N_0=1$, i.e., 
	the $L^{\infty}$ estimates at the level of the undifferentiated equations.
\end{remark}

To proceed, we let 
$\widetilde{e}_I^i(t)$
and
$\widetilde{k}_{IJ}(t) := \widetilde{k}_{cd}(t) \widetilde{e}_I^c(t) \widetilde{e}_J^d(t)$ 
respectively denote the background Kasner 
frame components
and
second fundamental form components;
see Sect.\,\ref{SS:BACKGROUNDVARIABLES} for the precise definitions.
We aim to sketch a proof of the following pointwise estimates for $(t,x) \in (0,1] \times \mathbb{T}^{\mydim}$,
where in what follows, $q$ and $\upsigma$ are fixed constants that satisfy
$0<2 \upsigma< 2\upsigma + \underset{\substack{I,J,B=1,\cdots, \mydim \\ I < J}}{\max}
\{|\widetilde{q}_B|,\widetilde{q}_I+\widetilde{q}_J-\widetilde{q}_B\} < q <1- 2 \upsigma$
(such constants exist whenever the sub-criticality condition \eqref{Kasner.stability.cond} holds):
\begin{subequations}
\begin{align}
	|t k_{IJ} - t \widetilde{k}_{IJ}|(t,x)
	& \lesssim 
		data,
			\label{E:INTROKBOUND} \\
	t^q |S_{IJB}|(t,x)
	& \lesssim
		data,
			\label{E:INTROSTRUCTURECOEFFICIENTBOUND} \\
	t^q |e_I^i-\widetilde{e}_I^i|(t,x)
	& \lesssim data.
		\label{E:INTROFRAMEBOUND}
\end{align}
\end{subequations}
The estimate \eqref{E:INTROKBOUND} is sharp and is of particular importance
because it is needed to control various ``borderline terms'' in the energy estimates,
as we explain in Sect.\,\ref{SSS:INTROHIGHORDERENERGYESTIMATES}. Similar remarks
apply for the $L^{\infty}$ estimates for $t e_0 \psi$ (which we do not discuss here).
The estimates \eqref{E:INTROSTRUCTURECOEFFICIENTBOUND}--\eqref{E:INTROFRAMEBOUND}
are not quite sharp with respect to powers of $t$, and we have chosen the power
$t^q$ on LHSs~\eqref{E:INTROSTRUCTURECOEFFICIENTBOUND}--\eqref{E:INTROFRAMEBOUND}
so as to allow for the simplest possible analysis. 
Estimates at the low derivative levels,
in the spirit of \eqref{E:INTROKBOUND}--\eqref{E:INTROFRAMEBOUND},
are sufficient to allow us to identify the limiting Kasner-like behavior 
of the perturbed solutions; see Proposition~\ref{prop:tk} for the details.
We stress that, although Proposition~\ref{prop:tk} shows
that the $t$-weighted scalar functions $t k_{IJ}$ and $t e_0\psi$ 
have non-trivial, regular limits as $t \downarrow 0$,
we do not obtain (or need!) analogous sharp limits for the frame components or spatial derivative-involving terms;
see Remark \ref{R:NOLIMITFORFRAME}.
\begin{remark}[Refined estimates with a different frame?]\label{rem:diag.frame}
It is conceivable that a different choice of orthonormal frame might yield 
sharper asymptotic estimates for the spatial frame and connection coefficients, 
as in \cite{Rin6}. However,
to close a bootstrap argument with a refined frame,
such as a frame that is adapted to the eigenvectors of $k$,
one would have to overcome serious technical difficulties, 
such as a potential loss of derivatives for the frame.
It would be interesting to understand whether such an approach is viable
for solutions without symmetry,
i.e., whether the entire proof can be carried out using a refined frame.
On the other hand, given the estimates we prove in Theorems~\ref{thm:precise} and
\ref{thm:precise.U1}, as a follow-up problem, 
one could try to derive sharper estimates for the asymptotics; 
see also Remark~\ref{R:SHARPERASYMPTOTICS}.

\end{remark}

\begin{remark}[The crucial bound for the spatial Ricci curvature]
	\label{R:CRUCIALSPATIALRICCIBOUND}
	Using the estimates 
	\eqref{E:INTROSTRUCTURECOEFFICIENTBOUND}--\eqref{E:INTROFRAMEBOUND}
	and similar estimates for the spatial derivatives of $S_{IJB}$,
	the algebraic identity \eqref{E:RECOVERGAMMAFROMSTRUCTURECOEFFCIENTS},
	and the spatial Ricci curvature frame component expression \eqref{spRic},
	one can conclude that $|Ric_{IJ}| := |Ric(e_I,e_J)| \lesssim data \times t^{-2 + \upsigma}$.
	This is a frame component analog of the classic sub-criticality condition \eqref{heur.crit},
	and in practice, one needs such an estimate to prove \eqref{E:INTROKBOUND}.
\end{remark}

To sketch the main ideas behind the proofs of \eqref{E:INTROKBOUND}--\eqref{E:INTROFRAMEBOUND}, we
note that the evolution equations for ${k_{IJ} - \widetilde{k}_{IJ}}$,
$S_{IJB}$, and
$e_I^i-\widetilde{e}_I^i$
can be caricatured as follows 
(see Proposition~\ref{P:KEYEVOLUTIONSTRUCTURECOEFFICIENTS} and Lemmas~\ref{L:EVOLUTIONEQUATIONSFORFRAMEANDCOFRAME}
and \ref{L:TOPORDERCOMMUTEDEQUATIONSFORGAMMANDK}
for the precise equations):
\begin{subequations}
\begin{align}
	\partial_t (k_{IJ} - \widetilde{k}_{IJ})
	+
	\frac{1}{t}
	{(k_{IJ} - \widetilde{k}_{IJ})}
	& = 
		e_I^i \cdot \partial \upgamma	
		+
		\upgamma \cdot \upgamma
		+
		\cdots,
			\label{E:SCHEMATICKEVOLUTION} \\
	\partial_t S_{IJB}
	+
	\frac{(\widetilde{q}_{\underline{I}}+\widetilde{q}_{\underline{J}}-\widetilde{q}_{\underline{B}})}{t}
	S_{\underline{I}\underline{J}\underline{B}}
	& =
			\cdots,
				\label{E:SCHEMATICSTRUCTURECOEFFICIENTEVOLUTION} 
					\\
	\partial_t(e_I^i-\widetilde{e}_I^i)
	+
	\frac{\widetilde{q}_{\underline{I}}}{t}({e_{\underline{I}}^i}-\widetilde{e}_{\underline{I}}^i)
& =
			\cdots,
			\label{E:SCHEMATICFRAMEEVOLUTION}
\end{align}
\end{subequations}
where $\cdots$ denotes similar or simpler error terms that we ignore to simplify the discussion,
and we recall that we do not sum over repeated underlined indices.

\begin{remark}[On the approximately diagonal structure of the evolution equations for the structure coefficients]
	\label{R:STRUCTURECOEFFICIENTDIAGAONAL}
	Note that \eqref{E:SCHEMATICSTRUCTURECOEFFICIENTEVOLUTION} shows that the
	$S_{IJB}$ solve an evolution equation system that is approximately diagonal,
	as we highlighted in Sect.\,\ref{SSS:INTROSTRUCTURECOEFFICIENTS}.
\end{remark}

Next, we note that the estimates \eqref{E:INTROSTRUCTURECOEFFICIENTBOUND} and
\eqref{E:INTROFRAMEBOUND} are easy to derive (modulo the omitted terms ``$\cdots$'')
via integrating factors as a consequence of
equations \eqref{E:SCHEMATICSTRUCTURECOEFFICIENTEVOLUTION}--\eqref{E:SCHEMATICFRAMEEVOLUTION}
and the definition of $q$. In reality, 
the proofs of \eqref{E:SCHEMATICKEVOLUTION}--\eqref{E:SCHEMATICFRAMEEVOLUTION}
must be handled simultaneously, via a bootstrap argument, due to coupling terms, but we will ignore this issue here;
see the proof of Proposition~\ref{prop:low} for the details.

Next, to illustrate the interplay between low order $L^{\infty}$ estimates and high order
energy estimates, we will now explain how to derive the bound
\eqref{E:INTROKBOUND} for $k_{IJ}$, assuming the high order energy bound \eqref{E:HIGHORDERNERGIES}
and the estimates \eqref{E:INTROSTRUCTURECOEFFICIENTBOUND} and \eqref{E:INTROFRAMEBOUND}.
To this end, we must explain how to control the term 
$e_I^i \cdot \partial \upgamma$ on RHS~\eqref{E:SCHEMATICKEVOLUTION}.
This term loses one derivative and must ultimately be handled with the help of
energy estimates (which we discuss in Sect.\,\ref{SSS:INTROHIGHORDERENERGYESTIMATES}), 
but as we explain, its $L^{\infty}$ norm
is sub-critical with respect to powers of $t$.
By this, we mean that the behavior of 
$e_I^i \cdot \partial \upgamma$
with respect to $t$ is
strictly less singular with respect to $t$, as $t \downarrow 0$,
compared to the terms on LHS~\eqref{E:SCHEMATICKEVOLUTION} 
(i.e., less singular than $t^{-2}$)
and thus, near the singularity, it is a negligible error term.
To see this, one can use standard Sobolev embedding and interpolation
estimates (see Lemmas~\ref{lem:basic.ineq} and \ref{lem:Sob.borrow})
to infer that there is a constant $\updelta_N > 0$ (depending on $N$)
such that $\updelta_N \rightarrow 0$ as $N \to \infty$ and
such that the following crucial estimate holds:
\begin{align} \label{E:INTROKEYINTERPOLATION}
\|\partial \upgamma \|_{L^{\infty}(\Sigma_t)}
	\lesssim 
	\| \upgamma \|_{L^{\infty}(\Sigma_t)}
	+
	\| \upgamma \|_{L^{\infty}(\Sigma_t)}^{1 - \updelta_N}
	\| \upgamma \|_{\dot{H}^N(\Sigma_t)}^{\updelta_N}.
\end{align}
Combining \eqref{E:INTROKEYINTERPOLATION} 
with 
\eqref{E:HIGHORDERNERGIES}
and
\eqref{E:INTROSTRUCTURECOEFFICIENTBOUND},
and using the fact that the connection coefficients $\upgamma$ are
linear combinations of the structure coefficients $S$ (see \eqref{E:RECOVERGAMMAFROMSTRUCTURECOEFFCIENTS}),
we find that
$\|\partial \upgamma \|_{L^{\infty}(\Sigma_t)}
	\lesssim 
	data \times t^{-q}
	+
	data
	\times
	t^{-(1 - \updelta_N)q}
	\times
	t^{- \updelta_N (\blowupexp + 1)}
$.
Thus, by choosing $N$ \underline{sufficiently large}, 
exploiting that \emph{$\blowupexp$ does not depend on $N$},
and that $\updelta_N \rightarrow 0$ as $N \to \infty$,
we find that:
\begin{align} \label{E:SECONDINTROKEYINTERPOLATION}
\|\partial \upgamma \|_{L^{\infty}(\Sigma_t)}
	\lesssim 
	data \times t^{-1 + \upsigma}.
\end{align}

\begin{quote}
	The importance of \eqref{E:INTROKEYINTERPOLATION} and \eqref{E:SECONDINTROKEYINTERPOLATION}
	is that they show that when $N$ is large,
	the singularity strength
	of $\|\partial \upgamma \|_{L^{\infty}(\Sigma_t)}$
	is not much worse than the singularity strength of
	$\|\upgamma \|_{L^{\infty}(\Sigma_t)}$,
	\emph{even if $\| \upgamma \|_{\dot{H}^N(\Sigma_t)}$
	obeys a much worse estimate of the form 
	$\| \upgamma \|_{\dot{H}^N(\Sigma_t)} \lesssim data \times t^{-(\blowupexp + 1)}$}
	(which is the bound afforded by the energy estimate \eqref{E:HIGHORDERNERGIES}).
\end{quote}

Hence, also using \eqref{E:INTROFRAMEBOUND},
we conclude that
$\| e_I^i \cdot \partial \upgamma \|_{L^{\infty}(\Sigma_t)}
	\lesssim 
	data \times t^{-2 + \upsigma}
$,
i.e., that this term is less singular than $t^{-2}$, as desired.
Let us now sketch the proof that these bounds imply the 
desired estimate \eqref{E:INTROKBOUND}.
Using these bounds,
multiplying the evolution equation \eqref{E:SCHEMATICKEVOLUTION} by $t$
and noting that the resulting LHS is equal to 
$\partial_t [t(k_{IJ} - \widetilde{k}_{IJ})]$,
and then using the fundamental theorem of calculus,
we deduce the pointwise bound:
\begin{align} \label{E:SCHEMINTEGRATEDPOINT}
\begin{split}
|t k_{IJ} - t \widetilde{k}_{IJ}|(t,x)
& \lesssim 
	|t k_{IJ} - t \widetilde{k}_{IJ}|(1,x)
		\\
& \ \
	+
	\int_t^1
		s |e_I^i|(s,x) \cdot |\partial \upgamma|(s,x)
	\, ds
	+
	\int_t^1
		s |\upgamma|(s,x) \cdot |\upgamma|(s,x)
	\, ds
	+
	\cdots
		\\
& \lesssim
	data
	+
	data
	\int_t^1
		s^{-1 + \upsigma}
	\, ds
	+
	\cdots
	\lesssim
	data
	+ 
	\cdots,	
\end{split}
\end{align}
which yields the desired bound \eqref{E:INTROKBOUND}, up to the error terms ``$\cdots$.''
We close this section by highlighting that in a fully detailed proof of \eqref{E:SCHEMINTEGRATEDPOINT},
the estimate \eqref{FRAMEheur.crit} is crucial for obtaining the power
$s^{-1 + \upsigma}$ in the next-to-last inequality in \eqref{E:SCHEMINTEGRATEDPOINT}.

\begin{remark}[How large does $N$ need to be?]
\label{rem:N}
The following natural question emerges from the above discussion: 
how large does $N$ need to be for the above scheme to work? 
The interpolation inequality \eqref{E:INTROKEYINTERPOLATION} already suggests 
that the rough estimate 
$N \sim \upsigma^{-1}$ 
is sufficient to guarantee that
$\| \partial \upgamma_{IJB} \|_{L^{\infty}(\Sigma_t)}$
is less singular than $t^{-1}$, 
given the high order energy bounds \eqref{E:HIGHORDERNERGIES} for $t^{\blowupexp + 1} \| \upgamma_{IJB} \|_{\dot{H}^N(\Sigma_t)}$
and the fact that $\upgamma$ satisfies 
$|\upgamma_{IJB}| \lesssim data \times t^{-1+\upsigma}$ and that $\updelta_N \sim N^{-1}$ 
(see the proof of Lemma~\ref{lem:Sob.borrow} for some details on the $N$-dependence of the
$\updelta_N$ that appear in our interpolation estimates). The precise largeness of $N$
needed for this argument to go through depends on $\mydim$ and the size of $\blowupexp$, which we discuss in Remark~\ref{rem:A}.
We also note that, even in the best case scenario,
we would expect the behavior of at least one of the connection coefficient norms
$\| \upgamma_{IJB} \|_{L^{\infty}(\Sigma_t)}$
to be at least as singular as $t^{- \underset{I=1,\cdots,\mydim}{\max} \widetilde{q}_I}$. 
Hence, in view of our choice \eqref{sigma,q} of the parameters $q$ and $\upsigma$,
we see that $\upsigma$ has order of magnitude at least as small as 
$1 - \underset{I=1,\cdots,\mydim}{\max} \widetilde{q}_I$.
In particular, this implies that
as the background Kasner exponents tend towards an extreme case, e.g., 
as $\underset{I=1,\cdots,\mydim}{\max} \widetilde{q}_I \to 1$ (and hence $\upsigma \to 0$), 
the number of derivatives $N$ we would need to close our bootstrap argument would tend to $\infty$.
Similarly, as
LHS~\eqref{Kasner.stability.cond} tends towards $1$,
our arguments would require $N$ to tend to $\infty$ as well.
Moreover, going back to Remark~\ref{R:N0}, 
we note that one could use similar interpolation arguments 
(see Lemmas~\ref{lem:basic.ineq} and \ref{lem:Sob.borrow})
to infer that choosing $N \sim N_0 \upsigma^{-1}$
would be sufficient to guarantee that
$\| \partial^{N_0} \upgamma_{IJB} \|_{L^{\infty}(\Sigma_t)}$
is less singular than $t^{-1}$.
\end{remark}

\subsubsection{The high order energy estimates}
\label{SSS:INTROHIGHORDERENERGYESTIMATES}
We now explain how we derive our top-order energy estimates for the dynamic
variables $e_I^i$, $k_{IJ}$, $\upgamma_{IJB}$, $e_0 \psi$, and $e_I \psi$,
that is, how we prove \eqref{E:HIGHORDERNERGIES}.
We will highlight the role played by the $L^{\infty}$ estimates of Sect.\,\ref{SSS:INTROLOWORDERNORMS}.
We first commute the evolution equations 
(recall that in our formulation, all of the evolution equations are first-order)
with $\partial^{\iota}$,
where $\partial^{\iota}$ is an $N^{th}$-order differential operator
corresponding to repeated differentiation with respect to the
transported spatial coordinate partial derivative vectorfields.
We then derive energy identities for solutions to the commuted equations,
where \emph{we incorporate $t^{\blowupexp + 1}$-weights into the identities}. 
Below we will explain the analytic role of the weights.
The energy identity for the scalar field is standard, and we will not discuss it in
detail here; we refer readers to Lemma~\ref{L:TOPORDERDIFFERENTIALENERGYIDENTITYFORSCALARFIELD} for
a differential version of that energy identity. Similar remarks apply for the energy
identity for the frame component functions $e_I^i$. 

However, 
the derivation of the energy identity for the second fundamental form frame components $k_{IJ}$ 
and the connection coefficients $\upgamma_{IJB}$ is more subtle,
since the identity corresponds to a surprising gain of one derivative for the connection coefficients, 
as we highlighted in Sect.\,\ref{SS:HAWKINGSTHEOREM}.
The identity can be derived using a modification of the approach used in \cite{RodSp1,RodSp2,RodSp3}.
The main difficulty is that the evolution equations \eqref{dt.k}--\eqref{dt.gamma} for $\upgamma$ and $k$
do not form a symmetric hyperbolic system, which, at first glance, seems to obstruct
the availability of a basic energy identity. 
However, one can use differentiation by parts and
the momentum constraint equation, as well as the special structure of the equations
relative to CMC foliations (see \eqref{momconst}),
to replace the problematic terms with source terms that enjoy a sufficient amount of
regularity. We refer readers to Lemma~\ref{L:TOPORDERDIFFERENTIALENERGYIDENTITYFORGAMMAANDK}
for this top-order energy identity, expressed in differential form.

We will now describe our top-order energy estimates.
We will give a simplified, schematic presentation in order
to focus on the main ideas.
We define the following top-order energy:\footnote{Note that as we have defined it,
the energy $\mathbb{E}_N$ scales linearly with respect
to the quantities that it controls. This is a different convention than is usually used
in the literature, in which energies are typically defined so
as to scale quadratically in the quantities that they control. 
Similar remarks apply to the energies we use in the proof of Lemma~\ref{L:PROPOFSYM}.}
\begin{align}
\begin{split}
\mathbb{E}_N(t) 
& := 
t^{\blowupexp + 1} \| k \|_{\dot{H}^N}
+
t^{\blowupexp + 1} \| \upgamma \|_{\dot{H}^N}
+
t^{\blowupexp + q} \sum_{I,i=1,\cdots,\mydim}  \| e_I^i \|_{\dot{H}^N}
	\\
& \ \
+
t^{\blowupexp + 1} \| e_0 \psi \|_{\dot{H}^N}
+
t^{\blowupexp + 1} \sum_{I=1,\cdots,\mydim} \| e_I \psi \|_{\dot{H}^N}.
\end{split}
\end{align}
We will sketch a proof that if $\blowupexp$ is chosen to be sufficiently large
and then $N$ is chosen to be sufficiently large such that 
the $L^{\infty}$ estimates of Sect.\,\ref{SSS:INTROLOWORDERNORMS} hold,
then we have the following bound:\footnote{In practice, we also derive top-order
energy estimates for the co-frame components $\lbrace \oe_i^I \rbrace_{I,i=1,\cdots,\mydim}$.} 
$\mathbb{E}_N(t) \leq C_N \times data$,
i.e., the estimate \eqref{E:HIGHORDERNERGIES} holds.
To obtain this bound, we combine the energy identities mentioned in the previous paragraphs
with elliptic estimates for the lapse,
and we use the $L^{\infty}$ estimates from Sect.\,\ref{SSS:INTROLOWORDERNORMS}
and interpolation to control the nonlinear error terms.
This allows us to derive the following energy integral inequality for $t \in (0,1]$
(see Proposition~\ref{P:TOPORDERENERGYINEQUALITY} for the precise inequalities),
\emph{where $C_*$ is a constant that captures the strength of the \textbf{borderline terms} 
in the equations and that can be chosen to be independent of $N$ and $\blowupexp$} (as long as ``$data$'' is small), 
while $C_N > 0$ is a large, $N$-dependent constant:
\begin{align} \label{E:INTROENERGYINEEQUALITYSCHEMATIC}
		\mathbb{E}_N^2(t)
	& \leq 
		data^2
		+
		(C_* - \blowupexp)
		\int_{t}^1
			\frac{\mathbb{E}_N^2(s)}{s}
		\, ds
		+
		C_N
		\int_{t}^1
			s^{-1 + \upsigma} \mathbb{E}_N^2(s)
		\, ds.
\end{align}
The crucial point is that 
if we choose $\blowupexp$ to be larger than $C_*$,
then the time integral on RHS~\eqref{E:INTROENERGYINEEQUALITYSCHEMATIC}
becomes non-positive, and we can discard it.
Finally, from
\eqref{E:INTROENERGYINEEQUALITYSCHEMATIC} and Gr\"{o}nwall's lemma,
we obtain that $\mathbb{E}_N(t) \leq C_N \times data$
as desired. This concludes our schematic discussion of the a priori estimates.

Some closing remarks are in order.
\begin{itemize}
	\item The negative definite integral 
		$
		- 
		\blowupexp
		\int_{t}^1 
			\frac{\mathbb{E}_N^2(s)}{s} 
		\, ds
		$
		on RHS~\eqref{E:INTROENERGYINEEQUALITYSCHEMATIC}
		arises from our energy identities, specifically from the $t^{\blowupexp + 1}$ weights
		that we have incorporated into them. This negative definite integral
		allows us to absorb the dangerous borderline error integral
		$
		C_*
		\int_{t}^1
			\frac{\mathbb{E}_N^2(s)}{s}
		\, ds
		$,
		but at the expense of forcing us to work with energies that are very degenerate
		near $t=0$.
	\item Above we mentioned the notion of a ``borderline term.''
		To handle such terms, we must rely on the
		sharp $L^{\infty}$ estimates from Sect.\,\ref{SSS:INTROLOWORDERNORMS};
		for borderline terms, there is ``no room'' in the $L^{\infty}$ estimates.
		In the context of energy estimates, 
		borderline terms contribute to the dangerous integral
		$C_*
		\int_{t}^1
			\frac{\mathbb{E}_N^2(s)}{s}
		\, ds
		$
		on RHS~\eqref{E:INTROENERGYINEEQUALITYSCHEMATIC}.
		One example of a borderline error integral
		is
		$
		\int_{t}^1
			\int_{\Sigma_s}
				s^{2(\blowupexp + 1)} \cdot k \cdot \partial^{\iota} \upgamma \cdot \partial^{\iota} \upgamma
			\, dx
		\, ds
		$,
		where $\partial^{\iota}$ is an $N^{th}$-order spatial differential operator of the type mentioned earlier.
		To bound this integral by 
		$C_*
		\int_{t}^1
			\frac{\mathbb{E}_N^2(s)}{s}
		\, ds
		$,
		we need to use the sharp estimate
		$\| k_{IJ} \|_{L^{\infty}(\Sigma_s)} \leq \frac{C_*}{s}$ implied by \eqref{E:INTROKBOUND}.
		If, instead of this sharp bound,
		we only knew that $\| k_{IJ} \|_{L^{\infty}(\Sigma_s)} \leq C_* s^{- (1 + \epsilon)}$ for some $\epsilon > 0$,
then on RHS~\eqref{E:INTROENERGYINEEQUALITYSCHEMATIC},
we would have
an additional error integral of the form
$
		C_*
		\int_{t}^1
			\frac{\mathbb{E}_N^2(s)}{s^{1 + \epsilon}}
		\, ds
$.
By virtue of Gr\"{o}nwall's lemma, 
this integral would lead to dramatically worse a priori estimates, 
which would in turn prevent us from closing
our bootstrap argument.
\end{itemize}

\begin{remark}[The size of $\blowupexp$] \label{rem:A}
It is possible, in principle, to compute how large $\blowupexp$ has to be for the above proof to work; 
one simply needs to derive an explicit upper bound for the constant $C_*$ 
on RHS~\eqref{E:INTROENERGYINEEQUALITYSCHEMATIC}. 
We will provide an outline of how to estimate $C_*$ (and thus $\blowupexp$), although we do not provide an explicit estimate.
To shorten the discussion, we will restrict our attention to the Einstein-vacuum equations, i.e., we will assume that $\psi = 0$.
In short, the constant $C_*$ can be controlled by the number of borderline terms in the top-order energy estimates 
and elliptic estimates and the size of the coefficients in front of these terms. 
More precisely, the borderline terms in the energy estimates for the connection coefficients
and second fundamental form are generated by the terms
on RHSs~\eqref{E:SECONDFUNDBORDER}, \eqref{E:CONNECTIONCOEFFICIENTBORDER}, and \eqref{E:MOMENTUMBORDER}.
The main terms driving the size of $C_*$ are the top-order ones with coefficients of size $\approx s^{-1}$ along
$\Sigma_s$, 
e.g., the term $\partial^{\iota} (n-1) \cdot \widetilde{k}$ on RHS~\eqref{E:SECONDFUNDBORDER}
and the terms
$
n
			\cdot
			\widetilde{k}
			\cdot
			\partial^{\iota} \upgamma
$
and	
$\widetilde{k} \cdot \partial^{\iota} \vec{e} n$
on RHS~\eqref{E:CONNECTIONCOEFFICIENTBORDER}.
These terms lead to borderline error integrals in the energy estimates, such as the integral 
$
		\int_{t}^1
			\int_{\Sigma_s}
			s^{2(\blowupexp + 1)} \cdot k \cdot \partial^{\iota} \upgamma \cdot \partial^{\iota} \upgamma
			\, dx
		\, ds
		$ 
mentioned above.
In each borderline term, the coefficient of the top-order term, specifically $k$ in the previous integral,
can be bounded in the norm $\| \cdot \|_{L^{\infty}(\Sigma_s)}$ by $(|\widetilde{q}_I|+C\epsilon) s^{-1}$ 
for some $I$, where $\widetilde{q}_I$ can be any of the background exponents and $C \epsilon$ can be as small as desired, 
by taking the initial data on $\Sigma_1$ to be sufficiently close to the Kasner data. 
More precisely, decomposing $k=\widetilde{k}+(k-\widetilde{k})$, we see that the factor
	$|\widetilde{q}_I| s^{-1}$ is generated by $\widetilde{k}$ (see \eqref{Kasnersol}), while the factor 
	$C\epsilon s^{-1}$ is generated by the bound \eqref{E:INTROKBOUND}.
	Since $|\widetilde{q}_I|<1$, by counting all borderline terms, we could
	crudely bound the contribution of these terms to the
	constant $C_*$ by
	$\leq \text{(number of borderline terms)}+C\varepsilon$. The sum of all the corresponding
	top-order error integrals would then be bounded by 
	$
		C_*
		\int_{t}^1
			\frac{\mathbb{E}_N^2(s)}{s}
		\, ds
$.
	We also stress that one encounters borderline terms in the elliptic estimates
	for the top-order derivatives of $n$
	(see, for example, the $C_*$-involving term on RHS~\eqref{PRECISE.n.high.est}),
	and that these borderline terms propagate into the top-order energy estimates via
	terms such as the one $\partial^{\iota} (n-1) \cdot \widetilde{k}$ mentioned above.
	In particular, these terms affect the size of $C_*$.
	We also note that the arguments given here allow for 
	the possibility that $C_*$ might increase with respect to $\mydim$.
	Finally, we note that the estimate for $C_*$ sketched here is not necessarily optimal.
	In fact, in the near-FLRW regime (where all Kasner exponents are nearly equal), 
	the last two authors \cite{RodSp1,RodSp2} showed that striking cancellations take place, 
	and $\blowupexp$ can in fact be taken very small, 
	i.e., $C_*=C\varepsilon$. 
	It is not known to us
	whether such cancellations exist for perturbations of highly anisotropic background Kasner solutions.
\end{remark}

\subsection{Applicability of the method}\label{subsec:meth.app}
\subsubsection{Polarized $\mathbb{T}^2$-symmetry}
\label{SSS:APPLICATIONSTOPOLARIZEDT2SYMMETRY}
We already mentioned that Kasner-like singularities have been constructed \cite{ABIL,IK} 
for the Einstein-vacuum equations in $1+3$ dimensions within the polarized $\mathbb{T}^2$-symmetry class.
This symmetry class contains vacuum spacetimes with two orthogonal, spacelike, Killing vectorfields 
$X,Y$ that commute, and it is more general than the polarized Gowdy-class in the sense that 
the twist constants, which measure the obstruction to the integrability of the $2$-dimensional orthogonal planes to $X,Y$, 
do not have to vanish. 
It turns out that polarized $\mathbb{T}^2$-symmetric solutions
can be can be viewed as special cases of
polarized $U(1)$-symmetric solutions in which one extra symmetry is present. 
This fact is not immediately apparent in the sense that 
our definition of polarized $U(1)$-symmetry
(recall the discussion in Sect.\,\ref{subsubsec:U1.data})
requires a spacelike Killing field to be hypersurface-orthogonal,
whereas the definition of polarized $\mathbb{T}^2$-symmetry does not
refer to hypersurface orthogonality.
Nevertheless, for polarized $\mathbb{T}^2$-symmetric solutions,
it is always possible\footnote{We are grateful to the authors of \cite{ABIO} for pointing this out to us.}
to construct coordinates such that one of the twist constants vanishes 
and such that one of the spatial coordinate partial derivative vectorfields associated
to the $\mathbb{T}^2$-symmetry is Killing \emph{and} hypersurface-orthogonal; see \cite[Section~2.2]{ABIO}, where this 
coordinate Killing vectorfield is denoted by ``$\partial_x$.'' Given such coordinates, 
the corresponding solutions can indeed be viewed as special cases of
polarized $U(1)$-symmetric solutions in which one extra symmetry is present. 
Hence, our results on polarized $U(1)$-symmetric solutions
imply, as a special case, that all (singular) Kasner solutions 
are also stable (as solutions to the Einstein-vacuum equations in $1+3$ dimensions)
near their Big Bangs under
polarized $\mathbb{T}^2$-symmetric perturbations. 
Here, by ``stable,'' we mean that the results of
Theorem~\ref{thm:precise.U1} hold for the near-Kasner polarized $\mathbb{T}^2$-symmetric solutions,
where the hypersurface-orthogonal Killing vectorfield
``$\partial_3$'' from Theorem~\ref{thm:precise.U1}
corresponds to the vectorfield ``$\partial_x$'' from \cite{ABIO}.
We refer to Remark~\ref{R:SHARPERASYMPTOTICS} for further discussion of
polarized $\mathbb{T}^2$-symmetric solutions and their asymptotics near the singularity.

\subsubsection{Potential further applications}
\label{SSS:POTENTIALFURTHERAPPLICATIONS}
Our approach could likely be adapted to prove stable Big Bang formation in other models that are not, strictly speaking, covered in the present paper. We mention here some interesting cases. 

{\bf $\bullet$ The stiff-fluid model, for $\mydim \geq 3$.}
This matter model reduces to the scalar field matter model in the case of vanishing vorticity. 
In \cite{RodSp2}, stable Big Bang formation was proved in the special case $\mydim = 3$ for the background
FLRW solution, in which
$\widetilde{q}_1=\widetilde{q}_2=\widetilde{q}_3 = 1/3$,
and the presence of matter is needed to ensure the validity of the Kasner exponent constraints \eqref{sumpi}.

{\bf $\bullet$ Perturbing around fixed, non-explicit, backgrounds/ solutions with large spatial
dependence.} The stability problems that we study in detail in this paper
concern perturbations of
explicit, \emph{spatially homogeneous}, singularity-forming solutions. 
However, one could try to use our methods to 
study perturbations of any of the singular solutions constructed in the works that we 
mentioned in Sect.\,\ref{subsec:rel.works}, including solutions with spatial dependence. 
From an analytical point of view, when dealing with background solutions that exhibit spatial dependence,
one encounters additional technical difficulties in the derivation of various estimates. 
In particular, when estimating the perturbed solution's higher spatial derivatives, 
one must control terms in which derivatives hit the background solution 
and thus do not have to be small (whereas in the present article, the background solution's spatial derivatives vanish). 
Nevertheless, our method is still potentially applicable. To simplify the approach, 
one could consider data with large spatial derivatives given on
a hypersurface close to the expected singularity, that is, on $\Sigma_{t_{Data}}$, with 
$t_{Data}$ larger than but close to $0$ 
(where $t_{Data}$ has to be chosen to be small in a manner that depends on the largeness of the data);
the point is that the smallness of the amount of time for which one needs to control the solution
can compensate for the largeness of the data. Moreover, by applying this philosophy to the setup of the present
paper, one could produce open sets of singularity-forming solutions that have ``substantial $x$-dependence.''

{\bf $\bullet$ Black hole interior.} There are numerous examples of black hole spacetimes containing a spacelike singularity, such as the classical Oppenheimer--Snyder model of gravitational collapse or the solutions 
detected by Christodoulou in his aforementioned studies \cite{Christ2,Christ3}
of the spherically symmetric Einstein-scalar field model. For the latter solutions, it would be interesting to see whether 
Kasner-like blowup holds for perturbations of solutions 
(in some class other than spherical symmetry, which was handled in \cite{Christ2,Christ3}). 
Compared to our work here, 
the difference in topology might pose additional analytical difficulties. 
Moreover, one would have to grapple with the question of whether 
the initial data given only in the interior of a black hole could arise as induced data of solutions to the global Cauchy problem.

\subsection{Paper outline}
In Sect.\,\ref{sec:setup}, we introduce our analytic framework, including the reduced solution variables
and a formulation of the Einstein-scalar field equations relative to CMC-transported spatial
coordinates with a Fermi--Walker transported orthonormal frame. 
In Sect.\,\ref{sec:Boots}, we define various norms and introduce 
our bootstrap assumptions for perturbations of Kasner solutions. Our bootstrap assumptions
involve $t$-weighted $L^{\infty}$ norms at the low derivative levels
and $t$-weighted Sobolev norms at the high derivative levels, where the $t$-weights 
are much smaller at the high derivative levels
(which corresponds to our allowing for very singular high order derivatives as $t \downarrow 0$).
In Sect.\,\ref{sec:basic.est}, we provide standard Sobolev and interpolation estimates that
we will use to control various error terms when we derive our main estimates.
In Sect.\,\ref{sec:mainest}, we derive the core estimates 
at both the low and high derivative levels. 
These estimates in particular yield a strict improvement of the bootstrap assumptions. 
Finally, in Sect.\,\ref{sec:sol}, we use the estimates of Sect.\,\ref{sec:mainest}
to prove our main theorems exhibiting the stability of the Kasner Big Bang singularity.

\subsection{Notation and conventions}
\label{SS:NOTATIONANDCONVENTIONS}
In the rest of the paper, we use the following notation and conventions.
\begin{itemize}
\item $\lbrace x^i \rbrace_{i=1,\cdots,\mydim}$ denote standard local spatial coordinates on $\mathbb{T}^{\mydim}$
that are transported in the sense described in Sect.\,\ref{SSS:SETUPFORMOFMETRIC},
and $\partial_i := \frac{\partial}{\partial x^i}$ denote the corresponding
spatial partial derivative vectorfields. The frame $\lbrace \partial_i \rbrace_{i=1,\cdots,\mydim}$
extends to a smooth global holonomic frame on $\mathbb{T}^{\mydim}$, and by abuse of notation,
we denote the globally defined vectorfields by the symbols $\partial_i$,
even though the coordinate functions are not globally defined.

\item Lowercase Latin ``spatial'' indices such as $a,b,i,j$ 
range over $\{1,\cdots,\mydim\}$ and correspond to the transported spatial coordinates 
$x^1,\cdots,x^{\mydim}$
(see Sect.\,\ref{sec:setup}).
For example, $g_{ij} := g(\partial_i,\partial_j)$.
Lowercase Greek ``spacetime'' indices such as $\alpha,\beta,\mu,\nu$ range over $\{0,1,\cdots,\mydim\}$
and usually correspond to the spacetime coordinates $t,x^1,\cdots,x^{\mydim}$, 
where the ``$0$'' index corresponds to $t$.
For example,
${\bf g}_{0i} = {\bf g}_{ti} := {\bf g}(\partial_t,\partial_i)$.
In a few instances, $\lbrace e_{\alpha} \rbrace_{\alpha=0,\cdots,\mydim}$
denotes an orthonormal spacetime frame, i.e.,
$\bf{g}(e_{\alpha},e_{\beta}) = \mathbf{m}_{\alpha \beta}$,
where $\mathbf{m}_{\alpha \beta} := \mbox{\upshape diag}(-1,1,\cdots,1)$.
Uppercase Latin ``spatial frame'' indices such as $A,B,I,J$ 
range over $\{1,\cdots,\mydim\}$ and, \textbf{with one exception}, 
correspond to the orthonormal spatial
frame $\lbrace e_I \rbrace_{I=1,\cdots,\mydim}$ 
or co-frame $\lbrace \oe^I \rbrace_{I=1,\cdots,\mydim}$ (see Sect.\,\ref{sec:setup}).
For example, $k_{IJ} := k(e_I,e_J) = k_{cd}e_I^c e_J^d$.
The exception is that for background Kasner tensors, uppercase
Latin indices denote their components with respect to the 
background Kasner orthonormal frame $\lbrace \widetilde{e}_I \rbrace_{I=1,\cdots,\mydim}$;
see Remark~\ref{R:NOTCOMPONENTSOFTENSORRELATIVETOFRAME} for further discussion.
We used primed indices, such as $a'$, in the same way we use their non-primed counterparts.

\item We use Einstein summation for repeated indices, including
frame indices. We stress that no metric is directly involved in contractions involving the frame indices. 
For example, $k_{IC}\upgamma_{CJB}$ stands for $\sum_{C=1}^{\mydim} k_{IC}\upgamma_{CJB}$,
where $\mydim$ is the number of spatial dimensions.

\item  If $\mathbf{X}$ is a vectorfield and $f$ is a scalar function,
then $\mathbf{X} f := \mathbf{X}^{\alpha} \partial_{\alpha} f$
denotes the derivative of $f$ in the direction $\mathbf{X}$.

\item $\lbrace dx^i \rbrace_{i=1,\cdots,\mydim}$ denotes the
globally defined basis-dual co-frame 
of
$\lbrace \partial_i \rbrace_{i=1,\cdots,\mydim}$,
i.e., $dx^i(\partial_j) := \updelta_j^i$, with $\updelta_j^i$ the Kronecker delta.

\item No summation of underlined terms. In a handful of key terms
that explicitly involve the Kasner exponents,
we will not use Einstein summation convention for some of the indices.
More precisely, in a given product, 
whenever there is no summation over a particular index,
we indicate this by underlining all instances of that index in
the product.
For example, there is no summation over the index $I$ in the following expression:
$\frac{\widetilde{q}_{\underline{I}}}{t}\upgamma_{\underline{I}JB}$.

\item If $\mathbf{X}$ and $\mathbf{Y}$ are vectorfields, then
$\mathbf{X} \mathbf{Y} f := \mathbf{X}^{\alpha} \partial_{\alpha} (\mathbf{Y}^{\beta} \partial_{\beta} f)$.
Similarly, if 
${\bf T}$ is a tensorfield and ${\bf D}$ denotes the Levi-Civita connection of ${\bf g}$,
then ${\bf D}_{\mathbf{X}} {\bf T} := \mathbf{X}^{\alpha} {\bf D}_{\alpha} {\bf T}$
and 
${\bf D}_{\mathbf{X}} {\bf D}_{\mathbf{Y}} {\bf T} := 
\mathbf{X}^{\alpha} {\bf D}_{\alpha} (\mathbf{Y}^{\beta} {\bf D}_{\beta} {\bf T})$.
In addition,
${\bf D}_{\mathbf{X} \mathbf{Y}}^2 {\bf T} := 
\mathbf{X}^{\alpha} \mathbf{Y}^{\beta} {\bf D}_{\alpha} {\bf D}_{\beta} {\bf T}$.
Note that in the latter expression, contractions are taken after covariant differentiation
and thus generally, ${\bf D}_{\mathbf{X} \mathbf{Y}}^2 {\bf T} \neq {\bf D}_{\mathbf{X}} {\bf D}_{\mathbf{Y}} {\bf T}$.

\item If $\mathbf{X}$ and $\mathbf{Y}$ are vectorfields, then 
${\bf g}(\mathbf{X},\mathbf{Y}) := {\bf g}_{\alpha \beta} \mathbf{X}^{\alpha} \mathbf{Y}^{\beta}$.
We use similar notation for contractions of higher-order tensorfields against vectorfields.
For example,
$${\bf Riem}(\mathbf{W},\mathbf{X},\mathbf{Y},\mathbf{Z})
:=
{\bf Riem}_{\alpha \beta \gamma \delta}\mathbf{W}^{\alpha} \mathbf{X}^{\beta} \mathbf{Y}^{\gamma} \mathbf{Z}^{\delta}.
$$

\item $\iota$ denotes a spatial multi-index. That is, for some positive integer $m$, 
	$\iota = (a_1,\cdots,a_m)$, where $a_i \in \lbrace 1, \cdots, \mydim \rbrace$ for $1 \leq i \leq m$
	and $|\iota| := m$ denotes the length of the index. 
	$\partial^{\iota} := \partial_{a_1} \cdots \partial_{a_m}$ denotes the corresponding
	$m^{th}$-order differential operator involving repeated differentiation
	with respect to the transported spatial coordinate partial derivative vectorfields.  
	$\iota_1 \cup \iota_2 = \iota$
	means that for some $r$ with $1 \leq r \leq m$,
	we have $\iota_1 = (a_{i_1},\cdots,a_{i_r})$ and $\iota_2 = (a_{i_{r+1}},\cdots,a_{i_m})$,
	where $(i_1,\cdots,i_m)$ is a permutation of $(1,\cdots,m)$
	such that $i_1 < i_2 < \cdots <i_r$ and $i_{r+1} < i_{r+2} < \cdots< i_m$.
	$\iota_1 \cup \iota_2 \cup \iota_3 = \iota$,
	$\iota_1 \cup \iota_2 \cup \iota_3 \cup \iota_4 = \iota$,
	etc.\ have analogous meanings.
	
	Note that our multi-index convention in $\mydim$ spatial dimensions differs from the more standard one,
	in which multi-indices $\alpha$ satisfy $\alpha\in\mathbb{N}^{\mydim}$. 
	For instance, in the more standard notation, in $3$ spatial dimensions,
	$\alpha := (1,1,0)$ corresponds to $\partial^\alpha=\partial_1\partial_2$, 
	whereas with our multi-index notation in $3$ spatial dimensions, 
	$\iota=(1,1)$ corresponds to $\partial^\iota=\partial_1\partial_1$.
\end{itemize}

\noindent \textbf{Parameters}

\begin{itemize}
	\item $\blowupexp \geq 1$ denotes a ``time-weight exponent parameter'' that is featured in 
		the high order solution norms from Definition~\ref{D:SOLUTIONNORMS}.
		To close our estimates, we will choose $\blowupexp$ to be large enough to overwhelm
		various universal constants $C_*$ (see below). This corresponds
		to our use of high order energies featuring large powers of $t$,
		which leads to weak high order energies near $t=0$.
	\item $0 < q < 1$ is a constant, fixed throughout the proof, 
		that bounds the crucial quantity 
		$$\mathop{\max_{I,J,B=1,\cdots,\mydim}}_{I < J} \{|\widetilde{q}_B|,\widetilde{q}_I+\widetilde{q}_J-\widetilde{q}_B\}.$$
	\item $\upsigma > 0$ is a small constant, fixed throughout the proof, 
		that we use to simplify the proofs of various estimates
		that ``have room in them.''
	\item $q$ and $\upsigma$ are constrained by  \eqref{sigma,q}.
	\item $N_0 \geq 1$ roughly corresponds to the number of derivatives of the solution that we control
		in $L^{\infty}$ (the precise derivative count depends on the solution variable -- 
		see Definition~\ref{D:SOLUTIONNORMS}).
	\item $N$ denotes the maximum number of times that we commute the equations with spatial derivatives
		(e.g., $k \in H^N(\Sigma_t)$ and $n \in H^{N+1}(\Sigma_t)$-- see Definition~\ref{D:SOLUTIONNORMS}).
		To close our estimates, we will choose $N$ to be sufficiently large 
		in a (non-explicit) manner that depends on $N_0, \blowupexp, \mydim, q,$ and $\upsigma$.
	\item $\updelta > 0$ is a small $(N,\mydim)$-dependent parameter that is allowed to vary from line to line
		and that is generated by the estimates of Lemma~\ref{lem:basic.ineq}.
		We use the convention that a sum of two $\updelta$'s is another $\updelta$.
		The only important feature of $\updelta$ that we exploit throughout the paper is the following:
		at fixed $\mydim$, we have $\lim_{N \to \infty} \updelta = 0$.
		In particular, if $\blowupexp$ is also fixed, then
		$\lim_{N \to \infty} \blowupexp \updelta = 0$.
	\item $\varepsilon$ is a small ``bootstrap parameter'' that, in our bootstrap argument,
		bounds the size of the solution norms; see \eqref{Boots}.
		The smallness of $\varepsilon$ needed to close the estimates is allowed
		to depend on the parameters $N, N_0, \blowupexp, \mydim, q,$ and $\upsigma$.
\end{itemize}

\noindent \textbf{Constants}
\begin{itemize}
	\item $C$ denotes a positive constant that is free to vary from line to line.
		$C$ can depend on $N, N_0, \blowupexp, \mydim, q,$ and $\upsigma$,
		but it can be chosen to be independent of
		all $\varepsilon > 0$ that are sufficiently small
		in the manner described just above.
	\item $C_*$ denotes a positive constant that is free to vary from line to line
		and that can depend on $\mydim$. Like $C$, 
		$C_*$ can be chosen to be independent of
		all $\varepsilon > 0$ that are sufficiently small
		in the manner described just above.
		However, unlike $C$, $C_*$ can be chosen to be \textbf{independent} of
		$N, N_0,$ and $\blowupexp$. $C_*$ can be chosen to be independent of $q$, and $\upsigma$,
		but that is less important in the sense that we view $q$ and $\upsigma$ to be
		fixed throughout the article.
		For example, $1 + C N! \varepsilon \leq C_*$
		while $N! = C$ and $N!/\upsigma = C$,
		where $C$ and $C_*$ are as above.
	\item We write $v \lesssim w$ to indicate that $v \leq C w$, with $C$ as above.
	\item We write $v = \mathcal{O}(w)$ to indicate that $|v| \leq C |w|$, with $C$ as above.
\end{itemize}

\subsection{Acknowledgments}
G.F.\ is supported by the \texttt{ERC grant 714408 GEOWAKI}, 
under the European Union's Horizon 2020 research and innovation program.
J.S.\ gratefully acknowledges support from from NSF grant \# 2054184,
from NSF CAREER grant \# 1914537, and from a Chancellor's Faculty Fellowship administered by Vanderbilt University.
I.R.\ gratefully acknowledges support from NSF grant \# DMS 2005464.
The authors would like to thank the anonymous referees for their careful reading of the original
manuscript and their insightful comments, 
which were of immense value during the revision process.

\section{Analytic setup and the formulation of the Einstein-scalar field equations}\label{sec:setup}
In this section, we introduce the framework that we will use to study perturbations of Kasner solutions.
In particular, we provide the formulation of the Einstein-scalar field equations 
that we will use to derive estimates.

\subsection{The reduced equations relative to a CMC-transported orthonormal frame}\label{subsec:redEE}
Our main goal in this section is to prove Proposition~\ref{P:redeq},
which provides the formulation of the Einstein-scalar equations that forms the starting point for our analysis.
We start by providing some basic constructions.

\subsubsection{The form of the spacetime metric, the lapse, and the transported spatial coordinates}
\label{SSS:SETUPFORMOFMETRIC}
Relative to CMC-transported spatial coordinates on a slab $(t,x) \in (T,1] \times \mathbb{T}^{\mydim}$,
the spacetime metric ${\bf g}$ takes the form:
\begin{align} \label{E:SPACETIMEMETRICRELATIVETOTRANSPORTEDCOORDINATES}
	{\bf g}
	& = - n^2 dt \otimes dt
				+
				g_{ab} dx^a \otimes dx^b,
\end{align}
where $n$ is the lapse and $g$ is the first fundamental form of the constant-time
slice $\Sigma_t := \lbrace (s,x) \in (T,1] \times \mathbb{T}^{\mydim} \ | \ s = t \rbrace$,
i.e., $g$ is the Riemannian metric induced by ${\bf g}$ on $\Sigma_t$.
Here and throughout, $t$ is the time function. In Sect.\,\ref{subsubsec:conn.coeff},
we state our CMC normalization condition for $t$.
The spatial coordinates $\lbrace x^i \rbrace_{i=1,\cdots,\mydim}$ are said to be
``transported'' because $n^{-1} \partial_t x^i = 0$, where $n^{-1} \partial_t$
is the future-directed unit normal to $\Sigma_t$.

\subsubsection{The orthonormal frame}
\label{SSS:ORTHONORMALFRAME}
Our proofs fundamentally rely on expressing Einstein's equations relative to 
an orthonormal frame:
\begin{align}\label{frame}
e_0
& =n^{-1}\partial_t,
& 
e_I 
& = e_I^c \partial_c,
&
I 
&
= 1,\cdots,\mydim,
\end{align}
where $e_0$ is the future-directed unit normal to $\Sigma_t$ 
(in particular, ${\bf g}(e_0,e_0) = -1$ and ${\bf g}(e_0,X) = 0$ for all $\Sigma_t$-tangent vectorfields $X$),
the ``spatial'' frame $\lbrace e_I \rbrace_{I=1,\cdots,\mydim}$ is $\Sigma_t$-tangent
and normalized by:
\begin{align} \label{E:ORTHONORMALSPATIALFRAMECONDITION}
	g(e_I,e_J)
	& = \updelta_{IJ},
	&
	\updelta_{IJ} = \mbox{ the Kronecker delta},
\end{align}
and the spatially-globally defined (see Sect.\,\ref{SS:NOTATIONANDCONVENTIONS})
scalar functions $\lbrace e_I^i \rbrace_{i=1,\cdots,\mydim}$ in \eqref{frame}
are the components of
$e_I$ relative to the transported spatial coordinates. 
Just below, we will describe how we construct the spatial frame.
We let $\lbrace \oe^I \rbrace_{I=1,\cdots,\mydim}$ 
denote the corresponding $\Sigma_t$-tangent one-forms that are a co-frame for the spatial frame
$\lbrace e_I \rbrace_{I=1,\cdots,\mydim}$, defined by: 
\begin{align} \label{E:DUALORTHONORMAL}
	\oe^I(e_J)
	& = \updelta_J^I,
\end{align}
where $\updelta_J^I$ is the Kronecker delta.
Note that $\oe^I = \oe_a^I dx^a$,
where the spatially-globally defined scalar functions 
$\lbrace \oe_i^I \rbrace_{i=1,\cdots,\mydim}$ are the components of
$\oe^I$ relative to the transported spatial coordinates.
Thus, we have:
\begin{align} \label{E:FRAMECOFRAMECONTRACTIONKRONECKER}
\oe_a^I e_J^a 
& = \updelta_J^I,
&
\oe_j^A e_A^i
& = \updelta_j^i,
&
I,J,i,j & = 1,\cdots,\mydim.
\end{align}
Moreover, since $\lbrace e_I \rbrace_{I=1,\cdots,\mydim}$ is orthonormal, 
$\oe^I$ is in fact the $g$-dual of $e_I$, that is:
\begin{align} \label{E:BASISDUALISMETRICDUAL}
\oe_i^I & = 
g_{ia} e_I^a,
&
I,i & = 1,\cdots,\mydim.
\end{align}
We also note that from \eqref{E:FRAMECOFRAMECONTRACTIONKRONECKER}
and the relation $e_I = e_I^c \partial_c$, it follows that:
\begin{align} \label{E:COORDINATEVECTORFIELDSINTERMSOFFRAMEVECTORFIELDS}
	\partial_i & = \oe_i^C e_C,
	&
i
&
= 1,\cdots,\mydim.
\end{align}

We now describe our construction of a spatial frame.
There is freedom in the construction; see Remark~\ref{R:FRAMEFREEDOM}.
In Sect.\,\ref{SS:CONSTRUCTIONOFTHEINTIALORTHONORMALFRAME},
we use the Gram--Schmidt process to construct an initial 
orthonormal spatial frame on $\Sigma_1$
that is suitable for proving our main results. 
Given this frame on $\Sigma_1$, we propagate it 
to slabs of the form $(T,1] \times \mathbb{T}^{\mydim}$
by solving the propagation equations:
\begin{align}\label{frame.prop}
{\bf D}_{e_0} e_I=n^{-1} (e_I n) e_0,
\end{align}
where ${\bf D}$ is the Levi-Civita connection of ${\bf g}$.

From equation \eqref{frame.prop}, it follows that the scalar functions 
$\lbrace e_I^i \rbrace_{I,i=1,\cdots,\mydim}$ satisfy a system of transport equations;
see \eqref{dt.omega}.
It is straightforward to check 
(for example, with the help of equation \eqref{De0e0})
that if ${\bf g}$ is $C^1$ on $(T,1] \times \mathbb{T}^{\mydim}$
and the initial spatial frame on $\Sigma_1$ is orthonormal and $C^1$,
then the frame $\lbrace e_I \rbrace_{I=1,\cdots,\mydim}$ 
obtained by propagating the initial frame via the transport equations 
\eqref{frame.prop} 
is orthonormal and tangent to $\Sigma_t$ for $t \in (T,1]$.
In particular, we have:
\begin{align} \label{E:FRAMEORTHONORMALITY}
	{\bf g}(e_{\alpha},e_{\beta}) 
	& = {\bf m}_{\alpha \beta},
	&
	\alpha, \beta
	& = 0,1,\cdots,\mydim,
\end{align}
where ${\bf m}_{\alpha \beta} := \mbox{\upshape diag}(-1,1,\cdots,1)$,
and:
\begin{align} \label{E:FRAMEISSIGMATANGENT}
	e_I t
	& = 0,
	&
	I = 1,\cdots,\mydim.
\end{align} 
Moreover, relative to the orthonormal frame $\lbrace e_{\alpha} \rbrace_{\alpha=0,1,\cdots,\mydim}$,
with ${\bf m}^{\mu \nu} := \mbox{\upshape diag}(-1,1,\cdots,1)$
and $\updelta^{IJ}$ the Kronecker delta, 
we have:
\begin{align} \label{E:INVERSESPACETIMEMETRICINTERMSOFFRAME}
	{\bf g}^{-1}
	& = {\bf m}^{\gamma \delta} e_{\gamma} \otimes e_{\delta},
	&
	g^{-1}
	& = \updelta^{CD} e_C \otimes e_D.
\end{align}
In addition, differentiating \eqref{E:FRAMEORTHONORMALITY}, we find that:
\begin{align} \label{E:SPACETIMECONNECTIONCOEFFICIENTSANTISYMMETRY}
	{\bf g}({\bf D}_{e_{\alpha}} e_{\beta}, e_{\gamma})
	& = - {\bf g}(e_{\beta}, {\bf D}_{e_{\alpha}} e_{\gamma}),
\end{align}	
which in particular implies that:
\begin{align} 	\label{E:VANISHINGSPACETIMECONNECTIONCOEFFICIENTS}
	{\bf g}({\bf D}_{e_{\alpha}} e_{\beta}, e_{\gamma}) 
	& = 0, 
	&& \mbox{if } \beta = \gamma.
\end{align}
We also note the following identity, which is straightforward
to verify using the form \eqref{E:SPACETIMEMETRICRELATIVETOTRANSPORTEDCOORDINATES}
of the metric:
\begin{align}\label{De0e0}
{\bf D}_{e_0}e_0
= n^{-1} (e_C n) e_C.
\end{align}

\begin{remark}[Fermi--Walker transport]
\label{R:FERMIWALKER}
When the frame initial data are $\Sigma_1$-tangent,
equation \eqref{frame.prop} is equivalent to the well-known Fermi--Walker transport equation for $e_I$
along the integral curves of $e_0$ (which, up to re-parametrization, are the same as the integral curves of $\partial_t$).
We remark that the ``standard'' Fermi--Walker transport equation is
${\bf D}_{e_0 } e_I =  n^{-1} (e_I n) e_0 - {\bf g}(e_I,e_0) n^{-1}(e_Cn)e_C$,
and that we have omitted the term
$- {\bf g}(e_I,e_0) n^{-1} (e_C n)e_C$
from RHS~\eqref{frame.prop}. This term vanishes 
in the present context because our frame initial data will verify 
${\bf g}(e_I,e_0)|_{\Sigma_1} = 0$
and
${\bf g}(e_I,e_J)|_{\Sigma_1} = \updelta_{IJ}$,
and these orthogonality conditions are propagated by solutions to equation \eqref{frame.prop}.
\end{remark}

\subsubsection{The second fundamental form, the CMC condition, and the connection coefficients}\label{subsubsec:conn.coeff}
Relative to the transported spatial coordinates  
$\lbrace x^i \rbrace_{i=1,\cdots,\mydim}$ on $\Sigma_t$, the second fundamental form $k$ of $\Sigma_t$ 
is the $\Sigma_t$-tangent tensorfield with components $k_{ij} := -{\bf g}({\bf D}_{\partial_i} e_0, \partial_j)$.
Hence,
the components of the second fundamental form with respect to the frame 
$\lbrace e_I \rbrace_{I=1,\cdots,\mydim}$ are:
\begin{align}\label{K}
k_{IJ} =
-{\bf g}({\bf D}_{e_I}e_0,e_J)
=
k_{JI},
\end{align}
where the symmetry property $k_{IJ} = k_{JI}$ is a well-known consequence
of the torsion-free property of ${\bf D}$ and the fact that the commutators $[e_I,e_J]$ are $\Sigma_t$-tangent
(and thus orthogonal to $e_0$).
Note that \eqref{E:FRAMEORTHONORMALITY}, \eqref{E:VANISHINGSPACETIMECONNECTIONCOEFFICIENTS}, 
and \eqref{K} imply that:
\begin{align} \label{E:DEIE0FRAMEEXPANSION}
	{\bf D}_{e_I} e_0
	& = 
			- k_{IC} e_C.
\end{align}

We now normalize the time function $t$ according to the CMC condition:
\begin{align}\label{trk}
\mathrm{tr}k 
& := k_{\ a}^a := (g^{-1})^{ab} k_{ab} = k_{AA} = -\frac{1}{t}.
\end{align}
It is well-known that \eqref{trk} leads to an elliptic equation for the lapse $n$ (see \eqref{n.eq}),
which means in particular that our gauge involves an \emph{infinite} speed of propagation.

In our analysis, we also study the spatial connection coefficients of the frame $\lbrace e_I \rbrace_{I=1,\cdots,\mydim}$,
which are defined by:
\begin{align} \label{gamma}
\upgamma_{IJB} :={\bf g}({\bf D}_{e_I}e_J,e_B)=g(\nabla_{e_I} e_J,e_B).
\end{align}
In \eqref{gamma} and throughout,
$\nabla$ denotes the Levi-Civita connection of $g$.
Note that 
\eqref{E:FRAMEORTHONORMALITY},
\eqref{E:SPACETIMECONNECTIONCOEFFICIENTSANTISYMMETRY},
\eqref{K}, and \eqref{gamma}
imply that:
\begin{align} \label{E:DEIEJFRAMEEXPANSION}
	{\bf D}_{e_I} e_J
	& = 
			-
			k_{IJ} e_0
			+
			\upgamma_{IJC} e_C,
	&
	\nabla_{e_I} e_J
	& = 
	\upgamma_{IJC} e_C.
\end{align}
Finally, by differentiating the relation ${\bf g}(e_J,e_B) = \updelta_{JB}$ with ${\bf D}_{e_I}$,
we deduce the following antisymmetry property:
\begin{align}\label{antisymmetricgamma}
\upgamma_{IJB} = -\upgamma_{IBJ}.
\end{align}

\subsubsection{Curvature tensors}
\label{SSS:CURVATURETENSORS}
Our sign conventions for the 
Riemann curvature ${\bf Riem}$ of ${\bf g}$,
the Ricci curvature ${\bf Ric}$ of ${\bf g}$,
and the scalar curvature ${\bf R}$ of ${\bf g}$,
are as follows
relative to the orthonormal frame $\lbrace e_{\alpha} \rbrace_{\alpha=0,1,\cdots,\mydim}$
constructed in Sect.\,\ref{SSS:ORTHONORMALFRAME},
where ${\bf m}^{\alpha \beta}$ is as in
\eqref{E:INVERSESPACETIMEMETRICINTERMSOFFRAME}:
\begin{subequations}
\begin{align} \label{E:RIEMSPAECTIME}
	{\bf g}
	\left({\bf D}^2_{e_{\alpha} e_{\beta}} e_\nu 
		- 
		{\bf D}^2_{e_{\beta} e_{\alpha}} e_\nu,
		e_\mu \right) 
	& :=
	{\bf Riem}(e_{\alpha},e_{\beta},e_{\mu},e_{\nu}),
		\\
	{\bf Ric}(e_{\alpha},e_{\beta})
	& :=
	{\bf m}^{\mu \nu}
	{\bf Riem}(e_{\alpha},e_{\mu},e_{\beta},e_{\nu}),
		\label{E:RICCISPAECTIME}\\
	{\bf R }
	& := {\bf m}^{\mu \nu} {\bf Ric}(e_{\mu},e_{\nu}).
	\label{E:SCALARCURVATURESPAECTIME}
\end{align}
\end{subequations}
Our sign conventions for the curvature of tensors of $g$, namely its Riemann curvature $Riem$, Ricci curvature $Ric$, 
and scalar curvature $R$, are analogous to the ones in 
\eqref{E:RIEMSPAECTIME}--\eqref{E:SCALARCURVATURESPAECTIME}.

\subsubsection{The reduced equations}
\label{SSS:REDUCEDEQUATIONS}
In the next proposition, we provide the PDEs that we use to study perturbations of generalized Kasner solutions.

\begin{proposition}[The reduced Einstein-scalar field equations relative to CMC-transported spatial coordinates
and a Fermi-Walker transported orthonormal frame]
\label{P:redeq}
Let $T \in (0,1)$, and let
$({\bf g},\psi)$ respectively be a Lorentzian metric and a scalar function on the manifold $(T,1] \times \mathbb{T}^{\mydim}$. 
Assume that $(T,1] \times \mathbb{T}^{\mydim}$ is equipped with 
a CMC time function $t$ and transported spatial coordinates $\lbrace x^i \rbrace_{i=1,\cdots,\mydim}$
such that the level sets $\Sigma_t = \lbrace t \rbrace \times \mathbb{T}^{\mydim}$
are CMC hypersurfaces normalized by \eqref{trk}, as is described in Sects.\,\ref{SSS:SETUPFORMOFMETRIC}--\ref{subsubsec:conn.coeff}
(in particular, the $\lbrace x^i \rbrace_{i=1,\cdots,\mydim}$ are coordinates on each $\Sigma_t$).
Let $n$ be the lapse, let $e_0 = n^{-1} \partial_t$ be the future-directed normal to $\Sigma_t$,
let $\lbrace e_I \rbrace_{I=1,\cdots,\mydim}$ 
be the $\Sigma_t$-tangent orthonormal spatial frame
described in Sects.\,\ref{SSS:SETUPFORMOFMETRIC}--\ref{subsubsec:conn.coeff},
and let $\lbrace \oe^I \rbrace_{I=1,\cdots,\mydim}$ be the 
corresponding $\Sigma_t$-tangent orthonormal spatial co-frame.
Let $\lbrace e_I^i \rbrace_{i=1,\cdots,\mydim}$ 
denote the components of $e_I$ with respect to the transported spatial coordinates,
and similarly for $\lbrace \oe_i^I \rbrace_{i=1,\cdots,\mydim}$.
Let $\lbrace k_{IJ} \rbrace_{I,J=1,\cdots,\mydim}$ 
denote the components of the second fundamental form of $\Sigma_t$ with respect to the orthonormal spatial
frame, and let $\lbrace \upgamma_{IJB} \rbrace_{I,J=1,\cdots,B}$ 
denote the connection coefficients of $\lbrace e_I \rbrace_{I=1,\cdots,\mydim}$,
as described in Sect.\,\ref{subsubsec:conn.coeff}.
Then the scalar functions 
$k_{IJ},\upgamma_{IJB},e_I^i,\oe_i^I,n,\psi$, 
$I,J,B,i=1,\cdots,\mydim$, 
satisfy all\footnote{We clarify that some of these equations, 
such as \eqref{dt.omega}--\eqref{dt.omega.inv} and \eqref{gamma.form},
are independent of the Einstein--scalar field equations
and follow from the constructions given in
Sects.\,\ref{SSS:SETUPFORMOFMETRIC}--\ref{subsubsec:conn.coeff}.} 
of the
following\footnote{Recall that $e_0 k_{IJ} := n^{-1} \partial_t(k_{IJ}) 
=n^{-1} \partial_t (k_{cd}e_I^c e_J^d)$,
$e_C \upgamma_{IJC} := e_C^c \partial_c (\upgamma_{IJC})$, etc.} 
``reduced'' equations
on $(T,1] \times \mathbb{T}^{\mydim}$
if and only if $({\bf g},\psi)$ are solutions 
to the Einstein-scalar field equations \eqref{EE}--\eqref{SF},
where the spacetime metric can be expressed in terms of the reduced variables
via the formulas
$
{\bf g}
	= - n^2 dt \otimes dt
				+
				g_{ab} dx^a \otimes dx^b
$
and
$g_{ij} = g(\partial_i,\partial_j) = \oe_i^A \oe_j^A$:

\medskip

\noindent \underline{\textbf{Evolution equations for the second fundamental form and connection coefficient components}}
\begin{subequations}
\begin{align}
\begin{split}
e_0 k_{IJ}
& = 
-n^{-1} e_I e_J n 
+
e_C \upgamma_{IJC}-e_I\upgamma_{CJC}
-
\frac{1}{t} k_{IJ}
	\label{dt.k} \\
& \ \
+
n^{-1}\upgamma_{IJC} e_C n
-
\upgamma_{DIC}\upgamma_{CJD}
-
\upgamma_{DDC}\upgamma_{IJC}
- 
(e_I \psi) e_J \psi,
\end{split}
	\\
\begin{split}
e_0 \upgamma_{IJB}
& = 
e_B k_{IJ} 
- 
e_J k_{BI} 
	\label{dt.gamma} 
	\\
&  \ \
-
k_{IC} \upgamma_{BJC}
-
k_{CJ} \upgamma_{BIC}
+
k_{IC} \upgamma_{JBC}
+
k_{BC} \upgamma_{JIC}
+
k_{IC} \upgamma_{CJB}
	\\
& \ \
+
n^{-1}(e_B n)k_{IJ}
-
n^{-1}(e_J n)k_{BI}.
\end{split}
\end{align}
\end{subequations}

\medskip

\noindent \underline{\textbf{Evolution equations for the frame components and co-frame components}}
\begin{subequations}
\begin{align}
e_0 e_I^i & = k_{IC} e_C^i,
	\label{dt.omega}
	\\
e_0 \oe_i^I  & = - k_{IC} \oe_i^C.
	\label{dt.omega.inv} 
\end{align}
\end{subequations}

\medskip

\noindent \underline{\textbf{Wave equation for the scalar field}}
\begin{align}
e_0 e_0 \psi
& = e_C e_C \psi
	- 
	\frac{1}{t} e_0 \psi
	+ 
	n^{-1} (e_C n) e_C \psi
	- 
	\upgamma_{CCD} e_D \psi.
	\label{boxpsi}
\end{align}

\medskip

\noindent \underline{\textbf{Elliptic lapse equation}}
\begin{align}
\begin{split}
e_C e_C(n-1)-t^{-2}(n-1)
& = 
	\upgamma_{CCD} e_D(n-1)
	+
2n e_C \upgamma_{DDC}
	\label{n.eq} \\
& \ \
	-
n
\left\lbrace 
\upgamma_{CDE}\upgamma_{EDC} 
+ 
\upgamma_{CCD}\upgamma_{EED}
+ 
(e_C\psi) 
e_C \psi 
\right\rbrace.
\end{split}
\end{align}

\medskip

\noindent \underline{\textbf{Hamiltonian and momentum constraint equations}}
\begin{subequations}
\begin{align}
2e_C \upgamma_{DDC}
-
\upgamma_{CDE}\upgamma_{EDC}
-
\upgamma_{CCD}\upgamma_{EED}
-
k_{CD} k_{CD} 
+ 
t^{-2}
& = (e_0\psi)^2 + (e_C \psi) e_C \psi,
	\label{Hamconst} \\
e_C k_{CI}
& =  
\upgamma_{CCD} k_{ID}
+
\upgamma_{CID} k_{CD}
-
(e_0 \psi) e_I \psi.
\label{momconst} 
\end{align}
\end{subequations}

\medskip

Finally, we also have the following formula:
\begin{align}\label{gamma.form}
\upgamma_{IJB}
&=
\frac{1}{2}
\left\lbrace
	\oe_c^B(e_Ie_J^c-e_Je_I^c)
	-
	\oe_c^I(e_Je_B^c-e_Be_J^c)
	+
	\oe_c^J(e_Be_I^c-e_I e_B^c)
\right\rbrace.
\end{align}
\end{proposition}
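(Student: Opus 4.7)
The plan is to derive each equation by working in the orthonormal frame $\{e_0,e_I\}$ and repeatedly using the frame-expansions \eqref{E:DEIE0FRAMEEXPANSION}-\eqref{E:DEIEJFRAMEEXPANSION} and \eqref{De0e0}, together with the Einstein-scalar field equations \eqref{EE}-\eqref{SF}, the Fermi--Walker transport law \eqref{frame.prop}, and the CMC condition \eqref{trk}. I would split the work into four groups: (a) the frame/co-frame transport equations, (b) the scalar-field wave equation and the Koszul-type identity, (c) the Gauss--Codazzi constraints, and (d) the coupled $(k,\upgamma,n)$ system, which is the main obstacle.

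For (a), compute the commutator $[e_0,e_I]$ two ways: geometrically, via \eqref{frame.prop} and \eqref{E:DEIE0FRAMEEXPANSION}, to obtain $[e_0,e_I]=n^{-1}(e_I n)e_0+k_{IC}e_C$; and algebraically from $e_0=n^{-1}\partial_t$ and $e_I=e_I^c\partial_c$, to obtain $[e_0,e_I]=(e_0 e_I^c)\partial_c+n^{-1}(e_I n)e_0$. Matching the $\partial_c$-components yields \eqref{dt.omega}, and differentiating $\oe_c^I e_J^c=\updelta^I_J$ along $e_0$ (using \eqref{dt.omega}) yields \eqref{dt.omega.inv}. For (b), equation \eqref{boxpsi} follows by expanding $\square_{{\bf g}}\psi={\bf m}^{\alpha\beta}(e_\alpha e_\beta\psi-({\bf D}_{e_\alpha}e_\beta)\psi)$ via \eqref{E:INVERSESPACETIMEMETRICINTERMSOFFRAME}, substituting \eqref{De0e0} and \eqref{E:DEIEJFRAMEEXPANSION}, and using \eqref{trk} to convert $\sum_C k_{CC}$ into $-1/t$. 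The Koszul identity \eqref{gamma.form} is the classical formula $2{\bf g}({\bf D}_X Y,Z)={\bf g}([X,Y],Z)-{\bf g}([Y,Z],X)-{\bf g}([X,Z],Y)$ evaluated on the orthonormal frame (the three ${\bf g}$-derivative terms vanish since ${\bf g}(e_I,e_J)=\updelta_{IJ}$ is constant), combined with $[e_I,e_J]=(e_Ie_J^c-e_Je_I^c)\partial_c$ and \eqref{E:COORDINATEVECTORFIELDSINTERMSOFFRAMEVECTORFIELDS}.

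For (c), the Hamiltonian constraint \eqref{Hamconst} comes from the Gauss equation $R+(\mathrm{tr}k)^2-k_{CD}k_{CD}=2{\bf G}(e_0,e_0)$, where ${\bf G}$ denotes the Einstein tensor, whose $(e_0,e_0)$-component evaluates via \eqref{EE} and its trace $\mathrm{tr}_{{\bf g}}{\bf Ric}=-(e_0\psi)^2+(e_C\psi)e_C\psi$ to $\tfrac{1}{2}[(e_0\psi)^2+(e_C\psi)e_C\psi]$, and where $R$ is expressed in the orthonormal frame via the standard formula $R=2e_C\upgamma_{DDC}-\upgamma_{CDE}\upgamma_{EDC}-\upgamma_{CCD}\upgamma_{EED}$. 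The momentum constraint \eqref{momconst} comes from the Codazzi equation ${\bf Ric}(e_0,e_I)=\nabla^C k_{CI}-e_I(\mathrm{tr}k)$: by \eqref{trk} the last term vanishes, by \eqref{EE} the LHS equals $-(e_0\psi)(e_I\psi)$, and expanding the divergence as $\nabla^C k_{CI}=e_C k_{CI}-\upgamma_{CCD}k_{ID}-\upgamma_{CID}k_{CD}$ produces the remaining $\upgamma\cdot k$ terms.

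The main obstacle is (d). For \eqref{dt.k}, start from ${\bf Ric}(e_I,e_J)=(e_I\psi)(e_J\psi)$ and use the decomposition ${\bf Ric}(e_I,e_J)=-{\bf Riem}(e_I,e_0,e_J,e_0)+\sum_C{\bf Riem}(e_I,e_C,e_J,e_C)$; the spatial sum is handled by Gauss, yielding $Ric(e_I,e_J)+(\mathrm{tr}k)k_{IJ}-k_{IC}k_{CJ}$ with the orthonormal-frame Ricci formula converting $Ric(e_I,e_J)$ to $e_C\upgamma_{IJC}-e_I\upgamma_{CJC}$ plus $\upgamma$-quadratic terms, while the normal piece ${\bf Riem}(e_I,e_0,e_J,e_0)$ is computed by differentiating $k_{IJ}=-{\bf g}({\bf D}_{e_I}e_0,e_J)$ along $e_0$ and invoking the Ricci identity ${\bf D}_{e_0}{\bf D}_{e_I}e_0={\bf D}_{e_I}{\bf D}_{e_0}e_0+{\bf Riem}(e_0,e_I)e_0+{\bf D}_{[e_0,e_I]}e_0$, substituting \eqref{frame.prop}, \eqref{De0e0}, and the commutator from (a), and exploiting the antisymmetry $\upgamma_{ICJ}=-\upgamma_{IJC}$ from \eqref{antisymmetricgamma} to cancel several terms involving $e_C n$. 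Equation \eqref{dt.gamma} is derived analogously by differentiating $\upgamma_{IJB}={\bf g}({\bf D}_{e_I}e_J,e_B)$ along $e_0$, applying the Ricci identity to ${\bf D}_{e_0}{\bf D}_{e_I}e_J$, and using the Codazzi-type identity to convert the appearing spacetime curvature ${\bf Riem}(e_0,e_I,e_J,e_B)$ into the key spatial derivatives $e_B k_{IJ}-e_J k_{BI}$ plus $\upgamma\cdot k$ corrections. Finally, \eqref{n.eq} follows from the Raychaudhuri identity $e_0(\mathrm{tr}k)-k_{CD}k_{CD}-n^{-1}\Delta n={\bf Ric}(e_0,e_0)$: using \eqref{trk} (giving $e_0\mathrm{tr}k=1/(nt^2)$) and ${\bf Ric}(e_0,e_0)=(e_0\psi)^2$, one solves for $\Delta n$, substitutes \eqref{Hamconst} to eliminate $k_{CD}k_{CD}$, and rewrites $\Delta n=e_Ce_C n-\upgamma_{CCD}e_D n$ in the orthonormal frame to arrive at \eqref{n.eq}. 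The central technical difficulty throughout (d) is the careful bookkeeping of how $e_0$-differentiation of scalar frame components interacts with the transport equation \eqref{dt.omega}, the Ricci identity, and the antisymmetry properties of $\upgamma$ and $k$, but each step relies only on the ingredients enumerated above.
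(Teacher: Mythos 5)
Your proposal matches the paper's proof almost exactly in structure and ingredients: the frame/co-frame equations come from the torsion-free commutator relation (the paper computes $[\partial_t,e_I]$ directly, you compute $[e_0,e_I]$ — a trivial rescaling), the wave equation and Koszul identity are handled identically, \eqref{dt.k} is derived from ${\bf Ric}(e_I,e_J)$ via the Gauss decomposition and the $e_0$-derivative of $k_{IJ}$, and \eqref{dt.gamma} is derived from the $e_0$-derivative of $\upgamma_{IJB}$ using Codazzi. The one genuine routing difference is the lapse equation: the paper obtains \eqref{n.eq} in one stroke by taking the $IJ$-trace of the already-derived \eqref{dt.k}, whereas you propose to go through the Raychaudhuri identity and then eliminate $k_{CD}k_{CD}$ via the Hamiltonian constraint. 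These are equivalent (tracing \eqref{dt.k} already incorporates the Gauss/Hamiltonian content), but the paper's route is shorter and bypasses the $(e_0\psi)^2$-cancellation bookkeeping that your route requires.

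One technical caveat: be careful with the signs in the two identities you quote, because they are wrong in this paper's conventions. With $k_{IJ}=-{\bf g}({\bf D}_{e_I}e_0,e_J)$ and tracing \eqref{R0i0j}, the Raychaudhuri identity reads $e_0(\mathrm{tr}k)-k_{CD}k_{CD}\textbf{+}n^{-1}\Delta n={\bf Ric}(e_0,e_0)$, not with a minus sign on $n^{-1}\Delta n$ as you wrote; following your sign through to the end produces \eqref{n.eq} with $+t^{-2}(n-1)$ and $-2ne_C\upgamma_{DDC}$, which is incorrect. Similarly, \eqref{EE} gives ${\bf Ric}(e_0,e_I)=+(e_0\psi)(e_I\psi)$, not $-(e_0\psi)(e_I\psi)$, and the contracted Codazzi \eqref{E:CODAZZI} gives $e_I(\mathrm{tr}k)-\nabla^C k_{CI}={\bf Ric}(e_0,e_I)$, with the opposite overall sign from your stated version; in your write-up these two slips fortuitously cancel to give the correct \eqref{momconst}, but the intermediate statements as written are false. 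Fixing both signs, the derivations go through and are equivalent to the paper's.
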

\begin{proof} We will prove in detail that solutions 
to the Einstein-scalar field equations \eqref{EE}--\eqref{SF}
yield solutions to the reduced equations stated in Proposition~\ref{P:redeq}.
The converse can be proved by similar arguments, 
and we will omit the details.

\medskip

\noindent\textbf{Proof of \eqref{dt.k}}:
We first use 
\eqref{frame.prop},
\eqref{E:FRAMEORTHONORMALITY},
\eqref{De0e0},
and \eqref{E:DEIEJFRAMEEXPANSION} to compute the following identity:
\begin{align}\label{R0i0j}
{\bf Riem}(e_0,e_I,e_0,e_J)
& ={\bf g}\left(({\bf D}_{e_0 e_I}^2 -{\bf D}_{e_I e_0}^2)e_J,e_0 \right) 
= e_0 k_{IJ}
-
k_{IC} k_{CJ}
+
n^{-1} \nabla_{e_I e_J}^2 n.
\end{align}
We then use
Gauss' equation, namely:
\begin{align} \label{E:GAUSS}
{\bf Riem}(e_C,e_I,e_D,e_J)
=
Riem(e_C,e_I,e_D,e_J)
+
k_{CD}
k_{IJ}
-
k_{CJ}
k_{ID},
\end{align}
Einstein's field equations \eqref{EE}, 
and \eqref{E:RICCISPAECTIME}
to rewrite LHS~\eqref{R0i0j} as follows:
\begin{align}\label{R0i0j2}
\begin{split}
{\bf Riem}(e_0,e_I,e_0,e_J)
& =-{\bf Ric}(e_I,e_J)
+
{\bf Riem}(e_C,e_I,e_C,e_J)
	\\
& =- (e_I \psi) e_J \psi
+ 
Ric(e_I,e_J)
+
\text{tr}k 
k_{IJ}
-
k_{IC}k_{JC}.
\end{split}
\end{align}
Next, we compute that the frame components of the Ricci tensor of $g$ 
can be expressed as follows:
\begin{align}\label{spRic}
\begin{split}
Ric(e_I,e_J)
& = Riem(e_C,e_I,e_C,e_J)=
g\left((\nabla_{e_C e_I}^2-\nabla_{e_I e_C}^2)e_J,e_C \right)
	\\
& 
= 
e_C \upgamma_{IJC}
-
e_I \upgamma_{CJC}
-
\upgamma_{CID} \upgamma_{DJC}
-
\upgamma_{CCD} \upgamma_{IJD}
+
\underbrace{\upgamma_{ICD} \upgamma_{DJC}
+
\upgamma_{ICD} \upgamma_{DCJ}}_0,
\end{split}
\end{align}
where we have noticed that the last two products
$
\upgamma_{ICD}\upgamma_{DJC}
+
\upgamma_{ICD}\upgamma_{DCJ}
$
cancel, due to the antisymmetry property \eqref{antisymmetricgamma}. 
Next, we use \eqref{gamma} to deduce the following identity for the factor
$\nabla_{e_I e_J}^2 n$ on RHS~\eqref{R0i0j}:
\begin{align} \label{LASTSTEPdt.k}
	\nabla_{e_I e_J}^2 n
	& = e_I e_J n - \upgamma_{IJC} e_C n.
\end{align}
The evolution equation \eqref{dt.k} for $k_{IJ}$ now follows from combining 
\eqref{R0i0j}--\eqref{LASTSTEPdt.k} 
and using the CMC condition \eqref{trk}.

\medskip

\noindent\textbf{Proof of \eqref{dt.gamma}}:
First, we take the $e_0$ derivative of \eqref{gamma} 
and use 
\eqref{frame.prop},
\eqref{E:FRAMEORTHONORMALITY},
\eqref{De0e0},
\eqref{E:DEIE0FRAMEEXPANSION},
\eqref{E:DEIEJFRAMEEXPANSION},
the symmetries of the curvature tensor,
and the Codazzi equations, namely:
\begin{align} \label{E:CODAZZI}
	(\nabla k)_{IJB} - (\nabla k)_{JIB} = {\bf Riem}(e_I,e_J,e_0,e_B)
\end{align}
(where throughout this proof, $\nabla k$ denotes the type $\binom{0}{3}$ $\Sigma_t$-tangent tensorfield
with coordinate components  $(\nabla k)_{abc} = \nabla_a k_{bc}$),
to compute:
\begin{align}\label{R0IJB}
\begin{split}
e_0 \upgamma_{IJB}
& = 
{\bf g}({\bf D}_{e_0 e_I}^2 e_J,e_B)
+
{\bf g}(({\bf D}_{e_0} e_I^{\alpha}) {\bf D}_{\alpha} e_J,e_B)
+
{\bf g}({\bf D}_{e_I} e_J,{\bf D}_{e_0} e_B)
	\\
& = {\bf Riem}(e_0,e_I,e_B,e_J)
+
{\bf g}({\bf D}_{e_I e_0}^2 e_J,e_B)
+
{\bf g}(({\bf D}_{e_0} e_I^{\alpha}) {\bf D}_{\alpha} e_J,e_B)
+
{\bf g}({\bf D}_{e_I}e_J,{\bf D}_{e_0}e_B)
 \\
& = {\bf Riem}(e_0,e_I,e_B,e_J)
+
{\bf g}({\bf D}_{e_I} ({\bf D}_{e_0} e_J),e_B)
	 \\
& \ \
+
{\bf g}(({\bf D}_{e_0} e_I^{\alpha}) {\bf D}_{\alpha} e_J,e_B)
-
{\bf g}(({\bf D}_{e_I} e_0^{\alpha}) {\bf D}_{\alpha} e_J,e_B)
+
{\bf g}({\bf D}_{e_I}e_J,{\bf D}_{e_0}e_B)
 \\
& = 
{\bf Riem}(e_B,e_J,e_0,e_I)
+
k_{IC} \upgamma_{CJB}
-
n^{-1}(e_J n)k_{IB}
+
n^{-1}(e_B n)k_{IJ}
 \\
& = 
(\nabla k)_{BJI}
-
(\nabla k)_{JBI}
+
k_{IC} \upgamma_{CJB}
-
n^{-1}(e_J n)k_{IB}
+
n^{-1}(e_B n)k_{IJ}
		\\
& =
e_B k_{IJ}
-
e_J k_{IB}
- 
k_{CJ} \upgamma_{BIC}
-
k_{IC} \upgamma_{BJC}
+
k_{CB} \upgamma_{JIC}
+
k_{IC} \upgamma_{JBC}
	 \\
& \ \
+
k_{IC}\upgamma_{CJB}
-
n^{-1}(e_J n)k_{IB}
+
n^{-1}(e_B n)k_{IJ},
\end{split}
\end{align}
which yields the desired evolution equation.

\medskip

\noindent\textbf{Proofs of \eqref{dt.omega}--\eqref{dt.omega.inv}}:
First, we use \eqref{frame}, 
\eqref{frame.prop}, 
\eqref{E:DEIE0FRAMEEXPANSION},
and the torsion-free property of the connection ${\bf D}$ 
to compute the following identity:
\begin{align} \label{INVARIANTframe}
(\partial_t e_I^c)\partial_c
& = [\partial_t,e_I^c\partial_c]
=
[\partial_t,e_I]=
{\bf D}_{\partial_t}e_I-{\bf D}_{e_I}(ne_0)
=
nk_{IC} e_C
=
n k_{IC}e_C^c\partial_c.
\end{align}
Considering the $i$ component of \eqref{INVARIANTframe}
relative to the transported spatial coordinates and again using \eqref{frame}, 
we arrive at the desired transport equation \eqref{dt.omega}.
Using the relations \eqref{E:FRAMECOFRAMECONTRACTIONKRONECKER}, we 
also deduce
\eqref{dt.omega.inv}
as a consequence of \eqref{dt.omega}.

\medskip

\noindent\textbf{Proof of \eqref{n.eq}}:
We simply take the $IJ$-trace
of equation \eqref{dt.k} and use the CMC condition \eqref{trk}
and the antisymmetry property \eqref{antisymmetricgamma}.

\medskip

\noindent\textbf{Proof of \eqref{Hamconst}}:
We first note that for solutions to the Einstein-scalar field equations \eqref{EE}--\eqref{SF},
the Hamiltonian constraint \eqref{eq:hamconst}
holds along all constant-time hypersurfaces,
that is:
$R - |k|^2 + (\mathrm{tr} k)^2
=
(e_0 \psi)^2
+
|\nabla \psi|^2
$. We refer to the discussion surrounding \cite[Equation~(10.2.30)]{rW} for a proof of this standard fact
in the context of the Einstein-vacuum equations, and we note that the arguments given there can be modified in a straightforward
fashion to apply to the Einstein-scalar field equations. 
We also note that the sign convention for $k$ used in \cite{rW} is the opposite of the one we use here.
From this equation, 
\eqref{E:INVERSESPACETIMEMETRICINTERMSOFFRAME},
\eqref{spRic},
the identity $R = Ric(e_C,e_C)$,
the antisymmetry property \eqref{antisymmetricgamma},
and the CMC condition \eqref{trk},
we arrive at \eqref{Hamconst}.

\medskip

\noindent\textbf{Proof of \eqref{momconst}}:
We first note that for solutions to the Einstein-scalar field equations \eqref{EE}--\eqref{SF}, 
the momentum constraint \eqref{eq:momconst}
holds along all constant-time hypersurfaces,
that is:
$
(\nabla k)_{CIC} 
- 
(\nabla k)_{ICC}
= 
- 
(e_0 \psi) e_I \psi
$.
We refer to the discussion surrounding \cite[Equation~(10.2.28)]{rW} for a proof of this standard fact
in the context of the Einstein-vacuum equations, and we note that the arguments given there can be modified in a straightforward
fashion to apply to the Einstein-scalar field equations. We again note that the sign convention for $k$
used in \cite{rW} is the opposite of the one we use here.
From this identity and the CMC condition \eqref{trk}, we find that
$
(\nabla k)_{CIC} 
= - (e_0 \psi) e_I \psi
$.
Next, using \eqref{E:INVERSESPACETIMEMETRICINTERMSOFFRAME}, 
\eqref{gamma},
and the Leibniz rule for covariant differentiation,
we find that
$
(\nabla k)_{CIC} 
= 
e_C k_{IC}
- 
\upgamma_{CID} k_{DC}
-
\upgamma_{CCD} k_{ID}
$.
Combining the above equations, we arrive at \eqref{momconst}.

\medskip

\noindent\textbf{Proof of \eqref{gamma.form}}:
This identity follows from the Koszul formula:
\begin{align}\label{Koszul}
\upgamma_{IJB}=&\,\frac{1}{2}\left\lbrace g([e_I,e_J],e_B)-g([e_J,e_B],e_I)+g([e_B,e_I],e_J) \right\rbrace
\end{align}
and the identity $[e_I,e_J]=\oe_l^C(e_Ie_J^l-e_J e_I^l) e_C$.

\medskip

\noindent\textbf{Proof of \eqref{boxpsi}}:
We first note that \eqref{SF} and \eqref{E:INVERSESPACETIMEMETRICINTERMSOFFRAME} imply that
$- e_0 e_0 \psi + e_C e_C \psi = - {}{({\bf D}_{e_0} e_0)^{\alpha}} {\bf D}_{\alpha} \psi 
+
({\bf D}_{e_C} e_C)^{\alpha} {\bf D}_{\alpha} \psi
$.
From this equation,
\eqref{De0e0},
and \eqref{E:DEIEJFRAMEEXPANSION},
we deduce that 
$
e_0 e_0 \psi 
= e_C e_C \psi
	+
	k_{CC} e_0 \psi
	+ 
	n^{-1} (e_C n) e_C \psi
	- 
	\upgamma_{CCD} e_D \psi
$.
From this equation and the CMC condition $k_{CC} = k_{\ a}^a = - \frac{1}{t}$,
we arrive at \eqref{boxpsi}.
\end{proof}

\subsection{Polarized $U(1)$-symmetry}
As we discussed in Sect.\,\ref{subsubsec:U1.data}, 
polarized $U(1)$-symmetric initial data on $\mathbb{T}^3$ for the Einstein-vacuum equations
are such that all coordinate components of $\mathring{g}$ and $\mathring{k}$ are independent of $x^3$
and such that $\mathring{g}_{13} = \mathring{g}_{23} =\mathring{k}_{13} = \mathring{k}_{23} \equiv 0$;
see also the discussion in \cite[Section~2]{IM}.

{
\subsubsection{Propagation of symmetry}
In this section, we show that for polarized $U(1)$-symmetric initial data for the Einstein-vacuum equations,
the corresponding solution to the equations of Proposition~\ref{P:redeq}
is such that all solution variables are independent of $x^3$
and such that
$g_{13} = g_{23} = k_{13} = k_{23} \equiv 0$.
Note that this implies that 
$(g^{-1})^{13} = (g^{-1})^{23} = k_{\ 3}^1 = k_{\ 1}^3 = k_{\ 3}^2 = k_{\ 2}^3 
= k^{13} = k^{23} \equiv 0$.
In particular, relative to CMC-transported spatial coordinates,
the corresponding spacetime metric takes the form:
\begin{align}\label{polarizedmetric}
{\bf g} 
& 
=-n^2 dt \otimes dt
+
\sum_{c,d=1,2}
g_{cd} dx^c \otimes dx^d
+
g_{33}
dx^3 \otimes dx^3
&
n
& =[-({\bf g}^{-1})^{\alpha\beta}\partial_\alpha t\partial_\beta t]^{-\frac{1}{2}}.
\end{align}
We provide the main propagation-of-symmetry result in the next lemma, namely Lemma~\ref{L:PROPOFSYM}. 
The result is standard, so we only provide the main steps in the proof.
We refer readers to \cite[Chapter XVI.3]{CB} for an alternate approach to propagating  
the polarized $U(1)$-symmetry via a wave map-type reduction of Einstein's equations. 
Although the approach of \cite[Chapter XVI.3]{CB}
is superior from a geometric point of view, Lemma~\ref{L:PROPOFSYM} is better adapted to 
the setup of the present paper because it directly refers to the
CMC-transported spatial coordinates, thus allowing us to avoid working with
multiple gauges.}

\begin{lemma}[Propagation of polarized $U(1)$-symmetry]
\label{L:PROPOFSYM}
Let $T \in (0,1)$, and let
${\bf g}$ be a solution to the Einstein-vacuum equations
(i.e., \eqref{EE} with $\psi \equiv 0$)
on the manifold $(T,1] \times \mathbb{T}^3$. 
Assume that $(T,1] \times \mathbb{T}^3$ is equipped with 
a CMC time function $t$ and transported spatial coordinates $\lbrace x^i \rbrace_{i=1,2,3}$
such that the level sets $\Sigma_t = \lbrace t \rbrace \times \mathbb{T}^3$
are CMC hypersurfaces normalized by \eqref{trk}, as is described in Sects.\,\ref{SSS:SETUPFORMOFMETRIC}--\ref{subsubsec:conn.coeff}
(in particular, the $\lbrace x^i \rbrace_{i=1,2,3}$ are coordinates on each $\Sigma_t$).
Assume that relative to the coordinates $(t,x^1,x^2,x^3)$,
the components of ${\bf g}$ belong to $C^2\left((T,1] \times \mathbb{T}^3 \right)$.
Let $(\mathring{g},\mathring{k})$ be the corresponding
data on $\Sigma_1$,
i.e., the first and second fundamental form
of $\Sigma_1$.
Assume that the data are polarized and $U(1)$-symmetric. More precisely,
assume that all coordinate components of $\mathring{g}$ and $\mathring{k}$ are independent of $x^3$
and that $\mathring{g}_{13} = \mathring{g}_{23} =\mathring{k}_{13} = \mathring{k}_{23} \equiv 0$.
Then relative to the CMC-transported spatial coordinates,
${\bf g}$ has the polarized form \eqref{polarizedmetric},
and the lapse $n$,
the coordinate components $\lbrace g_{ij} \rbrace_{i,j=1,2,3}$,
and the coordinate components $\lbrace k_{ij} \rbrace_{i,j=1,2,3}$
of the second fundamental form of $\Sigma_t$
are all independent of $x^3$.
\end{lemma}
\begin{proof}
We divide the proof into the following two steps:

\medskip

\noindent {\bf Step 1: Propagation of $U(1)$-symmetry.}
In Sect.\,\ref{sec:mainest}, we derive a priori estimates for the system of equations in Proposition~\ref{P:redeq} by commuting the equations with sufficiently many spatial derivatives. Here, to propagate the
$U(1)$-symmetry, we consider the case in which the equations are commuted
with only a single spatial derivative $\partial_3$. More precisely,
we consider the equations
\eqref{n.eq.diff}, 
\eqref{E:COMMUTEDFRAMEEVOLUTION}, 
\eqref{E:COMMUTEDCOFRAMEEVOLUTION}, 
\eqref{E:COMMUTEDKEQUATION}, 
\eqref{E:COMMUTEDCONNECTIONCOEFFICIENTEQUATION}, 
and \eqref{E:COMMUTEDMOMENUTMCONSTRAINT} with $\partial^\iota := \partial_3$
and $\Pow : = 0$.
It is straightforward to see that these commuted equations form a \emph{linear}
PDE system in the ``unknowns'' 
$\mathcal{U}_3 
:= 
\lbrace \partial_3 n \rbrace
\cup
\mathcal{V}_3
$,
$ 
\mathcal{V}_3
:=
\lbrace \partial_3 e_I^i, \partial_3 \upomega^I_i, \partial_3 k_{IJ}, \partial_3 \upgamma_{IJB} \rbrace_{I,J,B,i=1,2,3}$
such that, because we are assuming the existence of a classical $C^2$ solution,
all coefficients in front of the principal PDE terms are $C^1$, all inhomogeneous terms are continuous,
and all inhomogeneous terms are precisely linear in $\mathcal{U}_3$.
Hence, for $t \in (T,1]$,
we can use arguments similar to the ones we use in Sect.\,\ref{sec:mainest} 
to derive the elliptic estimate
$\| \partial_3 n \|_{L^2(\Sigma_t)} 
\lesssim 
\| \mathcal{V}_3 \|_{L^2(\Sigma_t)}
$
and the following energy estimate:
\begin{align} \label{E:GRONWALLREADYPROPOFU1SYMMETRY}
	\| \mathcal{V}_3 \|_{L^2(\Sigma_t)} 
	& \lesssim 
	\| \mathcal{V}_3 \|_{L^2(\Sigma_1)} 
	+ 
	\int_{s=t}^1 \| \mathcal{V}_3 \|_{L^2(\Sigma_s)} \, ds.
\end{align}
We clarify that on RHS~\eqref{E:GRONWALLREADYPROPOFU1SYMMETRY}, 
the implicit constants depend on the $C^2$ norm of the classical solution and thus
are allowed to grow as $t \downarrow T$. However, that is not relevant for the proof of the lemma.
Next, we note that by assumption, the initial data (on $\Sigma_1$) of $\mathcal{V}_3$ are trivial,
i.e., $\| \mathcal{V}_3 \|_{L^2(\Sigma_1)} = 0$.
Hence, applying Gr\"{o}nwall's lemma to \eqref{E:GRONWALLREADYPROPOFU1SYMMETRY}, 
we conclude that
$\| \mathcal{V}_3 \|_{L^2(\Sigma_t)} = 0$ (
and thus $\| \partial_3 n \|_{L^2(\Sigma_t)} = 0 = \| \mathcal{U}_3 \|_{L^2(\Sigma_t)}$ too)
for $t \in (T,1]$.
In total, we have shown that the $U(1)$-symmetry of the data is propagated to the entire region
$(t,x) \in (T,1] \times \mathbb{T}^3$ of classical existence,
and in particular, $\partial_3$ is a ${\bf g}$-Killing vectorfield.

\medskip
\noindent {\bf Step 2: Using the $U(1)$-symmetry to propagate the polarization.}
We now explain how to propagate the vanishing of 
$g_{13}=g(\partial_1,\partial_3)$,
$g_{23}=g(\partial_2,\partial_3)$,
$k_{13}= - {\bf g}(\partial_3,{\bf D}_{\partial_1}e_0)$,
and
$k_{23}= - {\bf g}(\partial_3,{\bf D}_{\partial_2}e_0)$
from the initial hypersurface $\Sigma_1$ 
to the entire region of classical existence.
To this end, we first derive PDEs satisfied by these variables
relative to the transported spatial coordinates.
We will exploit the fact, shown in Step 1,
that $\partial_3$ is ${\bf g}$-Killing. 
Specifically, the relevant PDEs for this step are the following
standard ``ADM equations'' and momentum constraint equation (see, for example, \cite[Proposition~3.1]{RodSp2}),
where in the remainder of the proof, $i,j \in \lbrace 1,2 \rbrace$,
and $\Gamma_{b \ c}^{\ a} := \frac{1}{2} (g^{-1})^{ad}(\partial_b g_{dc} + \partial_c g_{bd} - \partial_d g_{bc})$
are the Christoffel symbols of $g$ relative to the transported spatial coordinates:
\begin{align} 
\partial_t g_{i3} 
&= -2nk_{i3},
	\label{var.eq} 
	\\
\partial_t \partial_j g_{i3} 
&= -2n \partial_j k_{i3}
		-2 (\partial_j n) k_{i3},
	\label{var.diffeq} 
		\\
\partial_t k_{i3} 
& = - \nabla^2_{\partial_i \partial_3}n
+
n
Ric(\partial_i,\partial_3)
+
n
\text{tr}k
k_{i3}
-
2n(g^{-1})^{ab}k_{ia}k_{3b},
	\label{var.keq} 
	\\
(g^{-1})^{ab}\nabla_a k_{b3}
& = (g^{-1})^{ab}\partial_ak_{b3}
	-
	(g^{-1})^{ab}\Gamma_{a \ b}^{\ c} k_{c3}
	-
	(g^{-1})^{ab}\Gamma_{a \ 3}^{\ c} k_{bc}
	= 0.
	\label{mom.const.coord}
\end{align}
We clarify that \eqref{var.diffeq} follows from differentiating \eqref{var.eq} with $\partial_j$.
Next, using straightforward computations 
and exploiting that $\partial_3$ is ${\bf g}$-Killing,
we expand the Hessian of $n$ and the spatial Ricci components
as follows:
\begin{align} 
\begin{split} \label{Hess.i3.n}
-\nabla^2_{\partial_i \partial_3} n
& = \Gamma_{i \ 3}^{\ 1} \partial_1 n 
		+
		\Gamma_{i \ 3}^{\ 2} \partial_2 n 
=
\frac{1}{2}
(g^{-1})^{1a}(\partial_i g_{a3}-\partial_a g_{i3})
\partial_1 n
+
\frac{1}{2}
(g^{-1})^{2a}(\partial_i g_{a3}-\partial_a g_{i3})
\partial_2 n
	\\
& = 
\frac{1}{2}
(g^{-1})^{11}(\partial_i g_{13}-\partial_1 g_{i3})
\partial_1 n
+
\frac{1}{2}
(g^{-1})^{12}(\partial_i g_{23}-\partial_2 g_{i3})
\partial_1 n
+
\frac{1}{2}
(g^{-1})^{13}(\partial_i g_{33})
\partial_1 n
\\
&
\ \ 
+
\frac{1}{2}
(g^{-1})^{21}(\partial_i g_{13}-\partial_1 g_{i3})
\partial_2 n
+
\frac{1}{2}
(g^{-1})^{22}(\partial_i g_{23}-\partial_2 g_{i3})
\partial_2 n
+
\frac{1}{2}
(g^{-1})^{23} (\partial_i g_{33})
\partial_2 n,
\end{split}
\\
\begin{split} \label{Ric.i3}
Ric(\partial_i,\partial_3)
& 
= Ric(\partial_3,\partial_i)
=
\partial_a \Gamma_{3 \ i}^{\ a}
-
\partial_3 \Gamma_{a \ i}^{\ a}
+
\Gamma_{i \ 3}^{\ b} \Gamma_{b \ a}^{\ a}
-
\Gamma_{i \ a}^{\ b}\Gamma_{3 \ b}^{\ a}
	 \\
& = 
\frac{1}{2}
\partial_a\left[(g^{-1})^{ab}(\partial_i g_{3b}-\partial_b g_{i3}) \right]
+
\frac{1}{4}
(g^{-1})^{bd}
(g^{-1})^{ac}
(\partial_i g_{3c}-\partial_c g_{i3})
\partial_b g_{ad}
	\\
& \ \
	-\frac{1}{4}
	(g^{-1})^{bc}
	(g^{-1})^{ad}
	(\partial_i g_{ac} + \partial_a g_{ic}-\partial_c g_{ia})
	(\partial_b g_{3d}-\partial_d g_{3b}).
\end{split}
\end{align}
We now note that \eqref{var.eq}--\eqref{mom.const.coord} 
can be viewed as a \emph{linear} PDE system 
in the ``unknowns''
$$\mathcal{V} := (g_{13},g_{23},\partial_1 g_{13},\partial_1 g_{23},\partial_2 g_{13},\partial_2 g_{23},k_{13},k_{23})$$
such that, because we are assuming the existence of a classical $C^2$ solution,
all coefficients in front of the principal PDE terms are $C^1$, 
all inhomogeneous terms are continuous,
and all inhomogeneous terms are linear in $\mathcal{V}$.
To see this, we note that
the components $(g^{-1})^{13}$ and $(g^{-1})^{23}$ of $g^{-1}$ 
can expressed in terms of $g_{13}$ and $g_{23}$
via the following linear algebraic identity,
which follows easily from the identity $g_{ac} (g^{-1})^{cb} = \updelta_a^b$:
\begin{align}\label{g:inv:a.3}
\begin{pmatrix}
(g^{-1})^{13}  \\
(g^{-1})^{23} 
\end{pmatrix}	
& =-(g^{-1})^{33}
\begin{pmatrix}
g_{11} & g_{12} \\
g_{21} & g_{22} 
\end{pmatrix}^{-1}
\begin{pmatrix}
g_{13}\\
g_{23}
\end{pmatrix}.
\end{align}
Note that the positive definiteness of the $3 \times 3$ matrix $(g_{ab})_{a,b=1,2,3}$
implies the positive definiteness of
$2 \times 2$ sub-block $(g_{ab})_{a,b=1,2}$, which in turn implies the invertibility of
the matrix on RHS~\eqref{g:inv:a.3}.
To finish the proof of the lemma, we will derive energy estimates showing, in particular, that 
$\| \mathcal{V} \|_{L^2(\Sigma_t)} = 0$ for $t \in (T,1]$.
To this end, we first use \eqref{Hess.i3.n}--\eqref{Ric.i3}
and the above observations
to express \eqref{var.eq}--\eqref{mom.const.coord} in the following form,
where $\mathscr{L}(\mathcal{V})$ schematically denotes
terms that are linear\footnote{Here, we consider, e.g., 
terms of type $\mathcal{V} \cdot \mathcal{V}$ to be linear in $\mathcal{V}$
with continuous coefficients because the first factor of $\mathcal{V}$ is assumed to be continuous.} 
in $\mathcal{V}$ with continuous coefficients:
\begin{align}
\label{diff.var.eq}
\partial_t g_{i3} 
&= \mathscr{L}(\mathcal{V}),
	\\
\partial_t \partial_j g_{i3} 
&= -2n \partial_j k_{i3}
		+ 
		\mathscr{L}(\mathcal{V}),
	\label{diff.var.diffeq} 
		\\
\partial_t k_{i3} 
& = 
-
\frac{1}{2}
\sum_{a,b=1,2} 
n (g^{-1})^{ab} \partial_a \partial_b g_{i3}
+
\frac{1}{2}
\sum_{a,b=1,2}
n (g^{-1})^{ab}\partial_a \partial_i g_{b3}
+
\mathscr{L}(\mathcal{V}),
	\label{diff.var.keq} 
	\\
\sum_{a,b=1,2}
(g^{-1})^{ab} \partial_a k_{b3}
& = 
	\mathscr{L}(\mathcal{V}).
	\label{diff.mom.const.coord}
\end{align}
We clarify that, for example, we have used \eqref{g:inv:a.3}
to soak the term $\frac{1}{2} n (g^{-1})^{13}\partial_1 \partial_i g_{33}$ on RHS~\eqref{var.keq} 
(see the first term on RHS~\eqref{Ric.i3})
into the term $\mathscr{L}(\mathcal{V})$ on RHS~\eqref{diff.var.keq} 
and to soak the term $(g^{-1})^{13} \partial_1 k_{33}$
from \eqref{mom.const.coord} into the term $\mathscr{L}(\mathcal{V})$ on RHS~\eqref{diff.mom.const.coord}.

To derive energy estimates for solutions to \eqref{diff.var.eq}--\eqref{diff.mom.const.coord},
we will rely on the energy $\mathbb{E}(t) \geq 0$ defined by:
\begin{align}\label{H1.var}
\begin{split}
\mathbb{E}^2(t)
& := 
\int_{\Sigma_t}
\left\lbrace
\sum_{a,b,a',b' = 1,2}
\frac{1}{4}(g^{-1})^{bb'} (g^{-1})^{aa'} (\partial_b g_{a3}) \partial_{b'} g_{a'3}
+
\sum_{a,b = 1,2}(g^{-1})^{ab} k_{a3} k_{b3}
\right\rbrace
\, dx
	\\
& \ \
+
\int_{\Sigma_t}
\sum_{a,b = 1,2}
	(g^{-1})^{ab}
	g_{a3}
	g_{b3}
\, dx.
\end{split}
\end{align}
Note that the positive definiteness of the $3 \times 3$ matrix $((g^{-1})^{ab})_{a,b=1,2,3}$
implies the positive definiteness of
$2 \times 2$ sub-block $((g^{-1})^{ab})_{a,b=1,2}$. Hence, 
from definition \eqref{H1.var} and the definition of $\mathcal{V}$, we find that:
\begin{align} \label{E:ENERGYCOERCIVENESSFORPROPAGATINGU1SYMMETRY}
\mathbb{E}(t)
&
\approx
\sum_{a=1,2} \| g_{a3} \|_{L^2(\Sigma_t)}
+
\sum_{i=1,2} \sum_{a=1,2} \| \partial_i g_{a3} \|_{L^2(\Sigma_t)}
+
\sum_{a=1,2} \| k_{a3} \|_{L^2(\Sigma_t)}
\approx 
\| \mathcal{V} \|_{L^2(\Sigma_t)}.
\end{align}
Therefore, to show that $\mathcal{V}$ vanishes on the region of classical existence,
it suffices to show that $\mathbb{E}(t) = 0$ for $t \in (T,1]$.
To this end, we will show that the following estimate holds for $t \in (T,1]$:
\begin{align} \label{E:POLARIZEDGRONWALLREADY}
	\left|\frac{d}{dt} \mathbb{E}^2(t) \right|
	& \lesssim
		\mathbb{E}^2(t),
\end{align}
where on RHS~\eqref{E:POLARIZEDGRONWALLREADY} 
the implicit constants depend on the $C^2$ norm of the classical solution and thus
are allowed to grow as $t \downarrow T$. As in Step 1, 
this is not important for the proof.
Since our assumption of polarized initial data implies that $\mathbb{E}(1) = 0$,
it follows from \eqref{E:POLARIZEDGRONWALLREADY} and Gr\"{o}nwall's lemma
that $\mathbb{E}(t) = 0$ for $t \in (T,1]$, as desired.
In total, we have shown that the polarization condition $\mathcal{V} = 0$ 
is propagated from the data to the entire region
$(t,x) \in (T,1] \times \mathbb{T}^3$ of classical existence.

To complete the proof of the lemma, it remains for us to show \eqref{E:POLARIZEDGRONWALLREADY}.
To proceed, we differentiate \eqref{H1.var} with respect to time under the integral
and use equations \eqref{diff.var.eq}--\eqref{diff.var.keq} 
to substitute time derivatives with spatial derivatives,
thereby arriving at the following identity,
where in the rest of the proof, we freely integrate by parts from line to line,
and
``$\cdots$'' denotes error integrals that
can easily be bounded in magnitude 
by $\lesssim \mathbb{E}^2(t)$ by virtue of the Cauchy--Schwarz inequality:
\begin{align} \label{H1.var.est}
\begin{split}
\frac{d}{dt} \mathbb{E}^2(t)
& =
\int_{\Sigma_t}
	\sum_{a,b,a',b'=1,2} 
	(g^{-1})^{bb'}(g^{-1})^{aa'} 
	(-n \partial_bk_{a3})(\partial_{b'} g_{a'3})
\, dx
	\\
& \ \
+
\int_{\Sigma_t}
\sum_{a,b,c,d=1,2}
	(g^{-1})^{ab} k_{b3}
	\left\lbrace
	-n (g^{-1})^{cd}\partial_c \partial_d g_{a3}
	+
	n (g^{-1})^{cd} \partial_c \partial_a g_{d3} 
	\right\rbrace
\, dx
+ 
\cdots
\end{split}
		\\
\begin{split} \label{SECONDH1.var.est}
& = \mbox{(upon integrating by parts on the last line of RHS~\eqref{H1.var.est})}
	\\
& 
\int_{\Sigma_t}
\sum_{a,b,c,d = 1,2}	
	\left[\partial_c(n(g^{-1})^{ab}(g^{-1})^{cd}) \right] k_{b3} \partial_d g_{a3}
\, dx
	\\
& \ \
-
\int_{\Sigma_t}
	\sum_{a,b,c,d = 1,2}
	\left[\partial_a(n(g^{-1})^{ab}(g^{-1})^{cd}) \right] k_{b3} \partial_c g_{d3}
\, dx
	\\
&  \ \ 
-
\int_{\Sigma_t}
	\sum_{a,b,c,d = 1,2}
	\left((g^{-1})^{ab}\partial_a k_{b3} \right) n (g^{-1})^{cd} \partial_c g_{d3}
\, dx
+ 
\cdots
\end{split}
	\\
\begin{split} \label{THIRDH1.var.est}
& = \mbox{(upon using \eqref{diff.mom.const.coord} for substitution in the last integral on RHS~\eqref{SECONDH1.var.est})}
	\\
& 
\int_{\Sigma_t}
\sum_{a,b,c,d = 1,2}	
	\left[\partial_c(n(g^{-1})^{ab}(g^{-1})^{cd}) \right] k_{b3} \partial_d g_{a3}
\, dx
	\\
& \ \
-
\int_{\Sigma_t}
	\sum_{a,b,c,d = 1,2}
	\left[\partial_a(n(g^{-1})^{ab}(g^{-1})^{cd}) \right] k_{b3} \partial_c g_{d3}
\, dx
	+
	\cdots.
\end{split}
\end{align}
Since all terms on RHS~\eqref{THIRDH1.var.est} are bounded in magnitude by $\leq C \mathbb{E}^2(t)$,
we have therefore shown \eqref{E:POLARIZEDGRONWALLREADY} and finished the proof of the lemma.
\end{proof}

\subsubsection{The normalized Killing vectorfield in polarized $U(1)$-symmetry in $1+3$ dimensions}
In Lemma~\ref{lem:U1}, for polarized $U(1)$-symmetric Einstein-vacuum solutions with $\mydim = 3$
such that the transported coordinate vectorfield $\partial_3$ is Killing,
we construct an orthonormal spatial frame $e_1,e_2,e_3$ such that
$e_3$ is everywhere parallel to $\partial_3$
and such that $\mathcal{L}_{\partial_3} e_I = 0$ for $I=1,2,3$.
We use Lemma~\ref{lem:U1} in the proof of Theorem~\ref{thm:precise.U1},
i.e., in the proof of our symmetric stable blowup-results.

\begin{lemma}[Normalized Killing vectorfield]\label{lem:U1}
Suppose that on $(T,1] \times \mathbb{T}^3$, 
${\bf g}$ is a polarized and $U(1)$-symmetric $C^2$ metric of the form
\eqref{polarizedmetric},
where $\partial_3$ is the hypersurface-orthogonal Killing vectorfield,
and the components of ${\bf g}$ are independent of $x^3$.
Define $E_3 := (g_{33})^{-\frac{1}{2}}\partial_3$, and note that $g(E_3,E_3) = 1$.
Let $\mathring{e}_1,\mathring{e}_2$ be an orthonormal pair on $\Sigma_1$ that is orthogonal to $\partial_3$
along $\Sigma_1$ and that respects the symmetry, that is,
$\mathcal{L}_{\partial_3} \mathring{e}_1 = \mathcal{L}_{\partial_3}  \mathring{e}_2 = 0$,
where $\mathcal{L}$ denotes Lie differentiation.
In particular, $\lbrace \mathring{e}_1, \mathring{e}_2, E_3|_{\Sigma_1} \rbrace$
is an orthonormal frame on $\Sigma_1$;
we refer to Sect.\,\ref{SS:CONSTRUCTIONOFTHEINTIALORTHONORMALFRAME} for our construction of such a frame.
For $(t,x) \in (T,1] \times \mathbb{T}^3$, let $\lbrace e_I \rbrace_{I=1,2,3}$
be the orthonormal frame on $\Sigma_t$ obtained by solving the Fermi--Walker transport equations
\eqref{frame.prop} with initial data 
$e_1|_{\Sigma_1} := \mathring{e}_1$,
$e_2|_{\Sigma_1} := \mathring{e}_2$,
$e_3|_{\Sigma_1} := E_3|_{\Sigma_1}$.
Then on $(T,1] \times \mathbb{T}^3$,
we have $E_3 = e_3$,
and for $I=1,2,3$, we have $\mathcal{L}_{\partial_3} e_I = 0$.
In particular, $\lbrace e_1, e_2, e_3 = (g_{33})^{-\frac{1}{2}}\partial_3 \rbrace$ is an orthonormal
frame on $(T,1] \times \mathbb{T}^3$.
\end{lemma}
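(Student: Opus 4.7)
The plan is to show that the vector field $E_3 := (g_{33})^{-1/2}\partial_3$ itself satisfies the Fermi--Walker transport equation \eqref{frame.prop} with the same initial data as $e_3$, and then to invoke uniqueness of solutions to the linear first-order transport ODE along the integral curves of $e_0$. After this identification, I will propagate the $\partial_3$-invariance of the rest of the frame from $\Sigma_1$ by applying $\mathcal{L}_{\partial_3}$ to \eqref{frame.prop} and exploiting that $\partial_3$ is Killing.

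For the first step, Lemma~\ref{L:PROPOFSYM} supplies that all coordinate components of ${\bf g}$ (in particular $n$ and $g_{33}$) are independent of $x^3$, and that the polarization conditions $g_{13}=g_{23}=0$ and $k_{13}=k_{23}=0$ hold. Since $n$ is $x^3$-independent, $[e_0,\partial_3]=0$, so ${\bf D}_{e_0}\partial_3={\bf D}_{\partial_3}e_0$. Using \eqref{K} together with the polarization of $g$ and $k$, the $\Sigma_t$-tangent vector ${\bf D}_{\partial_3}e_0$ collapses to $-(g_{33})^{-1}k_{33}\,\partial_3$. On the other hand, \eqref{dt.omega.inv} implies $\partial_t g_{33}=-2n k_{33}$, so that $e_0((g_{33})^{-1/2})=(g_{33})^{-3/2}k_{33}$. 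Expanding ${\bf D}_{e_0}E_3$ by the Leibniz rule, these two contributions cancel exactly, giving ${\bf D}_{e_0}E_3=0$. Meanwhile $E_3 n=(g_{33})^{-1/2}\partial_3 n=0$, so the right-hand side of \eqref{frame.prop} also vanishes. Since $e_3|_{\Sigma_1}=E_3|_{\Sigma_1}$ by the choice of initial frame, uniqueness for \eqref{frame.prop} forces $e_3=E_3$ on all of $(T,1]\times\mathbb{T}^3$.

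For the second step, set $f_I:=\mathcal{L}_{\partial_3}e_I$. The case $I=3$ is immediate, since $\mathcal{L}_{\partial_3}((g_{33})^{-1/2}\partial_3)=0$ as $g_{33}$ is $x^3$-independent. For $I=1,2$, apply $\mathcal{L}_{\partial_3}$ to \eqref{frame.prop}. Because $\partial_3$ is Killing, $\mathcal{L}_{\partial_3}$ commutes with the Levi--Civita connection in the form $\mathcal{L}_{\partial_3}({\bf D}_{e_0}e_I)={\bf D}_{\mathcal{L}_{\partial_3}e_0}e_I+{\bf D}_{e_0}(\mathcal{L}_{\partial_3}e_I)$, and the first term drops because $\mathcal{L}_{\partial_3}e_0=[\partial_3,n^{-1}\partial_t]=0$ (as $n$ is $x^3$-independent). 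The same $x^3$-independence of $n$ gives $\mathcal{L}_{\partial_3}(n^{-1}(e_I n)e_0)=n^{-1}(f_I n)e_0$. Hence $f_I$ satisfies the \emph{same} linear Fermi--Walker transport equation as $e_I$, with vanishing initial datum $f_I|_{\Sigma_1}=0$ (by the assumed symmetry of $\mathring e_1,\mathring e_2$ and by the $I=3$ case); uniqueness then yields $f_I\equiv 0$.

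The main obstacle is the algebraic cancellation ${\bf D}_{e_0}E_3=0$ in the first step, which depends decisively on the polarization $k_{13}=k_{23}=0$ provided by Lemma~\ref{L:PROPOFSYM}; without it, ${\bf D}_{\partial_3}e_0$ would carry non-trivial $\partial_1,\partial_2$ components with no scalar factor available to kill them. Everything else reduces to uniqueness of linear transport ODEs and the standard fact that Killing vectorfield Lie derivatives commute with the Levi--Civita connection.
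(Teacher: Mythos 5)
Your proof is correct, but it takes a genuinely different route from the paper's for the key cancellation in Step 1.

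The paper's argument that $e_3 := (g_{33})^{-1/2}\partial_3$ is Fermi--Walker transported is component-by-component: it separately computes ${\bf g}({\bf D}_{e_0}e_3,e_0)$ (via \eqref{De0e0} and $\partial_3 n=0$), ${\bf g}({\bf D}_{e_0}e_3,e_3)$ (normalization), and ${\bf g}({\bf D}_{e_0}e_3,\partial_i)$ for $i=1,2$ (via the coordinate Christoffel expression $\tfrac{1}{2}(\partial_t {\bf g}_{3i}+\partial_3 {\bf g}_{ti}-\partial_i {\bf g}_{t3})$ together with the $x^3$-independence of the metric). You instead use the torsion-free identity ${\bf D}_{e_0}\partial_3={\bf D}_{\partial_3}e_0$ plus $[e_0,\partial_3]=0$, collapse ${\bf D}_{\partial_3}e_0$ to $-(g_{33})^{-1}k_{33}\,\partial_3$ via the polarization $g_{13}=g_{23}=k_{13}=k_{23}=0$, and then trade this against $e_0(g_{33})^{-1/2}=(g_{33})^{-3/2}k_{33}$ (which follows from $\partial_t g_{ij}=-2nk_{ij}$, as you correctly extract from \eqref{dt.omega.inv} and the relation $g_{ij}=\oe_i^A\oe_j^A$). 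The cancellation you exhibit is exactly the content of the paper's third (and least transparent) component computation, and your decomposition makes visible where the polarization condition $k_{13}=k_{23}=0$ enters. For Step 2, the paper commutes the component equation \eqref{dt.omega} with $\partial_3$ and invokes Gronwall for $\vec{\phi}=\partial_3 e_I^i$, whereas you apply $\mathcal{L}_{\partial_3}$ directly to the covariant equation \eqref{frame.prop}, using that a Killing field's Lie derivative commutes with the Levi--Civita connection and that $\mathcal{L}_{\partial_3}e_0=0$; this is the same uniqueness argument, packaged invariantly. Both routes are clean; yours is a bit more geometric (making the roles of the Killing condition and polarization explicit), while the paper's is more elementary and closer to the component-level machinery used throughout the rest of Section~\ref{sec:setup}.
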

\begin{proof}
We will show that ${\bf D}_{e_0}E_3=0$ on $(T,1] \times \mathbb{T}^3$. 
Then since $\partial_3$ is Killing (in particular, $E_3 n=0$),
this guarantees that $E_3$ satisfies the propagation equation \eqref{frame.prop}. 
Moreover, since ${\bf D}_{e_0} e_3=0$ and $e_3|_{\Sigma_1} = E_3|_{\Sigma_1}$,
ODE uniqueness then implies that $E_3=e_3$ on $(T,1] \times \mathbb{T}^3$, 
as is desired.
To show that ${\bf D}_{e_0}E_3=0$, we start by noting 
that since $\partial_3$ is the vectorfield of symmetry
and since ${\bf g}(e_0,E_3) = 0$,
the $e_0$ component of ${\bf D}_{e_0} E_3$ equals:
\begin{align}
{\bf g}({\bf D}_{e_0} E_3,e_0)=-{\bf g}(E_3,{\bf D}_{e_0}e_0)\overset{\eqref{De0e0}}{=}-n^{-1}(g_{33})^{-\frac{1}{2}}\partial_3n& =0.
\end{align}
Similarly, using that ${\bf g}(E_3,E_3) = 1$,
we compute that
${\bf{g}}({\bf D}_{e_0}E_3,E_3) = \frac{1}{2} e_0 \left\lbrace {\bf{g}}(E_3,E_3) \right\rbrace = 0$
and hence the $\partial_3$ component of ${\bf D}_{e_0}E_3$ vanishes.
Thus, the desired relation ${\bf D}_{e_0}E_3 = 0$ follows 
from the vanishing of these two components as well as the following 
identities:
\begin{align}
{\bf g}({\bf D}_{e_0}E_3,\partial_i)
& 
= n^{-1}(g_{33})^{-\frac{1}{2}}\frac{1}{2}(\partial_t{\bf g}_{3i}+\partial_3{\bf g}_{ti}-\partial_i{\bf g}_{t3})=0,\qquad i=1,2.
\end{align} 
In obtaining these identities, we have used the identity
$
	{\bf g}({\bf D}_{\partial_t} \partial_3,\partial_i)
	=
	\frac{1}{2}(\partial_t{\bf g}_{3i}+\partial_3{\bf g}_{ti}-\partial_i{\bf g}_{t3})$,
the fact that $\partial_3$ is orthogonal to the elements of
$\lbrace \partial_t,\partial_1,\partial_2 \rbrace$,
the fact that $\partial_t$ is orthogonal to $\Sigma_t$,
and the fact that the components of ${\bf g}$ are independent of $x^3$. 

To show that $\mathcal{L}_{\partial_3} e_I = 0$ on $(T,1] \times \mathbb{T}^3$,
we commute equation \eqref{dt.omega} (which is equivalent to equation \eqref{frame.prop})
with $\mathcal{L}_{\partial_3}$. We find that for metrics ${\bf g}$ satisfying the assumptions
of the lemma, the scalar function array $\vec{\phi} := \lbrace \partial_3 e_I^i \rbrace_{I,i=1,2,3}$
satisfies a system of transport equations of the schematic form
$\partial_t \vec{\phi} = F \cdot \vec{\phi}$, where $F$ is smooth on $(T,1] \times \mathbb{T}^3$.
Moreover, the assumptions of the lemma guarantee that $\vec{\phi}|_{\Sigma_1} = 0$.
Hence, from ODE uniqueness, we find that $\vec{\phi} \equiv 0$ on $(T,1] \times \mathbb{T}^3$.
We have therefore proved the lemma.
\end{proof}

\begin{remark}
	Throughout the paper, in our analysis of polarized $U(1)$-symmetric solutions with $\mydim = 3$,
	we will always assume that $e_3$ is the $\partial_3$-parallel
	frame vectorfield constructed in Lemma~\ref{lem:U1}.
\end{remark}

\subsection{The background Kasner variables}
\label{SS:BACKGROUNDVARIABLES}
Our main results concern perturbations of the explicit generalized Kasner solutions presented in Sect.\,\ref{subsec:models}. 
Straightforward computations imply that the reduced variables
(see Sects.\,\ref{SSS:SETUPFORMOFMETRIC}--\ref{subsubsec:conn.coeff}) 
of the generalized Kasner solutions can be expressed as follows:
\begin{align}\label{Kasnersol}
\widetilde{n}:=1,\quad \widetilde{e}_I^i := t^{-\widetilde{q}_{\underline{I}}}\updelta_{\underline{I}}^i,
\quad
\widetilde{\oe}_i^I := t^{\widetilde{q}_{\underline{I}}}\updelta_i^{\underline{I}},
\quad 
\widetilde{e}_I:= t^{-\widetilde{q}_{\underline{I}}} \updelta_{\underline{I}}^c 
	\partial_c, \quad 
\widetilde{k}_{IJ}
:= -\frac{\widetilde{q}_{\underline{I}}}{t}\updelta_{\underline{I}J},\quad \widetilde{\upgamma}_{IJB}=0,
\quad\widetilde{\psi}=\widetilde{B}\log t,
\end{align}
where $\updelta_I^i$, $\updelta_i^I$, and $\updelta_{IJ}$ are Kronecker deltas,
and we recall that repeated underlined indices are not summed.
\begin{remark}[The components of ``tilde-decorated'' tensors]
	\label{R:NOTCOMPONENTSOFTENSORRELATIVETOFRAME}
	Note that as defined in \eqref{Kasnersol},
	$\widetilde{k}_{IJ} = \widetilde{k}(\widetilde{e}_I,\widetilde{e}_J) \neq
	\widetilde{k}(e_I,e_J)$. Put differently, $\widetilde{k}_{IJ}$ denotes a component of
	$\widetilde{k}$ relative to the background Kasner-orthonormal frame $\lbrace \widetilde{e}_I \rbrace_{I=1,\cdots,\mydim}$, 
	rather than the perturbed $g$-orthonormal frame $\lbrace e_I \rbrace_{I=1,\cdots,\mydim}$.
	Similar remarks apply to other ``tilde-decorated'' tensors.
	That is, for tilde-decorated tensors, capital Latin indices denote components relative to the background Kasner frame
	or co-frame, whereas for non-tilde-decorated tensors, capital Latin indices denote components relative to the
	$g$-orthonormal frame or co-frame.
\end{remark}

\section{Norms, key parameters, and bootstrap assumptions}\label{sec:Boots}
The proofs of our main theorems rely on a continuity argument for solutions to the reduced 
equations of Proposition~\ref{P:redeq}.
We make bootstrap assumptions for the size of various norms of the perturbed solution 
on a time interval $(T_{\textnormal{Boot}},1]$ for some $T_{\textnormal{Boot}}\in(0,1)$. 
Then, in Proposition~\ref{prop:overall}, we derive a priori estimates 
for the perturbed solution that imply a strict improvement of the bootstrap assumptions
on $(T_{\textnormal{Boot}},1]$; this is the difficult part of the proof.
Once we have established a priori estimates,
standard arguments yield that the perturbed solution exists on $(0,1] \times \mathbb{T}^{\mydim}$
and satisfies the a priori estimates on $(0,1]$; see Proposition~\ref{P:EXISTENCEONHALFSLAB} for the details.
Based on the existence result and the a priori estimates, the proof of curvature-blowup as $t \downarrow 0$
and the derivation of other interesting properties of the solution 
are relatively straightforward; see Sect.\,\ref{sec:sol}. 
Our bootstrap assumptions are formulated in terms of various norms of the reduced variables along the $\Sigma_t$ slices, with well-chosen $t$-weights. 
Before stating the bootstrap assumptions, we will first define the norms
and the key parameters $q,\upsigma,\blowupexp,N,N_0$ that lie at the core of our framework.

\subsection{Running assumption}
\label{SS:RUNNINGASSUMPTION}
In the rest of the paper, it is understood that we are studying
general perturbations of a background generalized Kasner solution
whose Kasner exponents verify the stability condition \eqref{Kasner.stability.cond},
or that we are studying polarized $U(1)$-symmetric perturbations of an arbitrary vacuum Kasner solution
in $1+3$-dimensions; see Sect.\,\ref{subsec:models}.
We will often refrain from explicitly stating this assumption.

\subsection{Some additional differentiation notation}
\label{SS:ADDITIONALDIFFERENTIATIONNOTATION}
If $f$ is a scalar function, then $\vec{e} f := \lbrace e_C f \rbrace_{C=1,\cdots,\mydim}$,
where $\lbrace e_C \rbrace_{C=1,\cdots,\mydim}$ denotes the orthonormal spatial frame.
Similarly, $\vec{e} k := \lbrace e_C k_{IJ} \rbrace_{C,I,J=1,\cdots,\mydim}$,
$\vec{e} \upgamma := \lbrace e_C \upgamma_{IJB} \rbrace_{B,C,I,J=1,\cdots,\mydim}$,
$\vec{e} e := \lbrace e_C e_I^i \rbrace_{C,I,i=1,\cdots,\mydim}$,
and
$\vec{e} \oe := \lbrace e_C \oe_i^I \rbrace_{C,I,i=1,\cdots,\mydim}$.
Note that in the above expressions, \textbf{all quantities that are differentiated are scalar functions}.

\subsection{Sobolev norms of the reduced variables}
\label{SS:SOBOLEVNORMS}
For scalar functions $v$,
we define its norm $\|v\|_{L^2(\Sigma_t)} \geq 0$ by:
\begin{align} \label{E:STANDARDL2NORM}
	\|v\|_{L^2(\Sigma_t)}^2
	& := \int_{\Sigma_t} v^2(t,x) \, dx,
\end{align}
where $dx:=dx^1\cdots dx^{\mydim}$
denotes the Euclidean volume form on $\Sigma_t$.

We also define standard $H^M(\Sigma_t)$,
$\dot{H}^M(\Sigma_t)$, 
$W^{M,\infty}(\Sigma_t)$, 
and
$\dot{W}^{M,\infty}(\Sigma_t)$
norms
of scalar functions $v$:
\begin{align}\label{HM}
\|v\|_{H^M(\Sigma_t)}^2
& :=\sum_{|\iota |\leq M} \| \partial^{\iota}v\|_{L^2(\Sigma_t)}^2,
&
\|v\|_{\dot{H}^M(\Sigma_t)}^2
& 
:= \sum_{|\iota| = M} \| \partial^{\iota} v \|_{L^2(\Sigma_t)}^2,
	\\
\|v\|_{W^{M,\infty}(\Sigma_t)}& := \sum_{|\iota|\leq M}\|\partial^{\iota}v\|_{L^\infty(\Sigma_t)},
&
\|v\|_{\dot{W}^{M,\infty}(\Sigma_t)}& := \sum_{|\iota| = M}\|\partial^{\iota}v\|_{L^\infty(\Sigma_t)},
\end{align} 
where $\iota$ is a spatial multi-index,
$\partial^{\iota}$ is the corresponding operator involving repeated differentiation
with respect to the transported spatial coordinate vectorfields $\lbrace \partial_i \rbrace_{i=1,\cdots,\mydim}$
(see Sect.\,\ref{SS:NOTATIONANDCONVENTIONS}),
and $\|v\|_{L^\infty(\Sigma_t)} := \mbox{\upshape ess sup}_{x \in \mathbb{T}^{\mydim}} |v(t,x)|$.
As is standard, we write ``$L^{\infty}$'' instead of ``$W^{0,\infty}$.''

If $v$ is a $\Sigma_t$-tangent tensorfield, then we define its 
$L^2(\Sigma_t)$,
$H^M(\Sigma_t)$, 
$\dot{H}^M(\Sigma_t)$,
$W^{M,\infty}(\Sigma_t)$,
and 
$\dot{W}^{M,\infty}(\Sigma_t)$
norms in an analogous fashion, but also summing over all ``frame indices.'' 
More precisely, with the background Kasner variables $\widetilde{k}_{IJ}$, etc., 
as defined in Sect.\,\ref{SS:BACKGROUNDVARIABLES}
(see in particular Remark~\ref{R:NOTCOMPONENTSOFTENSORRELATIVETOFRAME}), 
we define:
\begin{align}\label{HMnorms}
\begin{split}
\|k-\widetilde{k}\|_{H^M(\Sigma_t)}^2 :=\sum_{I,J=1}^{\mydim}\|k_{IJ}-\widetilde{k}_{IJ}\|_{H^M(\Sigma_t)}^2,
	\qquad
\|\upgamma\|_{H^M(\Sigma_t)}^2:=\sum_{I,J,B=1}^{\mydim}\|\upgamma_{IJB}\|_{H^M(\Sigma_t)}^2,
	\\
\|e-\widetilde{e}\|_{H^M(\Sigma_t)}^2:=\sum_{I,i=1}^{\mydim}\|e_I^i-\widetilde{e}_I^i\|_{H^M(\Sigma_t)}^2,
	\qquad
\|\oe-\widetilde{\oe}\|_{H^M(\Sigma_t)}^2:=\sum_{I,i=1}^{\mydim}\|\oe_i^I-\widetilde{\oe}_i^I\|_{H^M(\Sigma_t)}^2,
	\\
\|\vec{e} n \|_{H^M(\Sigma_t)}^2 
:=\sum_{I=1}^{\mydim}\|e_I n \|_{H^M(\Sigma_t)}^2,
\qquad
\|\vec{e} \psi \|_{H^M(\Sigma_t)}^2 :=\sum_{I=1}^{\mydim}\|e_I \psi \|_{H^M(\Sigma_t)}^2,
\end{split}
\end{align}
and:
\begin{align}\label{WMinfty.norms}
\begin{split}
\|k-\widetilde{k}\|_{W^{M,\infty}(\Sigma_t)}:=\sum_{I,J=1}^{\mydim}\|k_{IJ}-\widetilde{k}_{IJ}\|_{W^{M,\infty}(\Sigma_t)},
\qquad 
\|\upgamma\|_{W^{M,\infty}(\Sigma_t)}:=\sum_{I,J,B=1}^{\mydim}\|\upgamma_{IJB}\|_{W^{M,\infty}(\Sigma_t)},\\
\|e-\widetilde{e}\|_{W^{M,\infty}(\Sigma_t)}:=\sum_{I,i=1}^{\mydim} \|e_I^i-\widetilde{e}_I^i\|_{W^{M,\infty}(\Sigma_t)},
\qquad
\|\oe-\widetilde{\oe}\|_{W^{M,\infty}(\Sigma_t)}:= \sum_{I,i=1}^{\mydim} 
\|\oe_i^I-\widetilde{\oe}_i^I \|_{W^{M,\infty}(\Sigma_t)},
		\\
\|\vec{e} n \|_{W^{M,\infty}(\Sigma_t)}
:=\sum_{I=1}^{\mydim}\|e_I n \|_{W^{M,\infty}(\Sigma_t)},
\qquad
\|\vec{e} \psi \|_{W^{M,\infty}(\Sigma_t)} :=\sum_{I=1}^{\mydim}\|e_I \psi \|_{W^{M,\infty}(\Sigma_t)},
\end{split}
\end{align}
and similarly for the homogeneous norms 
(such as $\|k-\widetilde{k}\|_{\dot{H}^M(\Sigma_t)}^2 := 
\sum_{I,J=1}^{\mydim}\|k_{IJ}-\widetilde{k}_{IJ}\|_{\dot{H}^M(\Sigma_t)}^2$).

\subsection{Key parameters}
\label{SS:KEYPARAMETER}
We will formulate the bootstrap assumptions using two key parameters, namely $\upsigma,q$, which are any two 
fixed real numbers verifying the following inequalities:
\begin{align}\label{sigma,q}
\left\{\begin{array}{ll}
0<2 \upsigma< 2\upsigma + \displaystyle{\mathop{\max_{1 \leq I,J,B \leq \mydim }}_{I < J}}
\{|\widetilde{q}_B|,\widetilde{q}_I+\widetilde{q}_J-\widetilde{q}_B\} < q <1- 2 \upsigma,\qquad \text{non-symmetric cases},
\\
0<2 \upsigma<2 \upsigma+\max\{|\widetilde{q}_1|,|\widetilde{q}_2|,|\widetilde{q}_3|\}<q<1-2 \upsigma,\qquad\qquad\qquad\;\; 
\text{Polarized $U(1)$-symmetric 1+3 vacuum}.
\end{array}\right.
\end{align}
The set of Kasner exponents for which such parameters $\upsigma,q$ exist 
is non-empty and open in all the models that we consider; see Sect.\,\ref{subsec:models}.

Next, we introduce the positive-integer-valued parameters $N_0,N$, 
which, roughly speaking, represent the number of derivatives we will use to to control the solution
in $L^{\infty}$ at the low orders (i.e., derivative levels approximately equal to $N_0$) 
and in $L^2$ at the top-orders (i.e., derivative levels approximately equal to $N$);
we refer to Remark~\ref{rem:N0+1} for an important remark about the precise number of
low order derivatives that we use in our proof.
Our choice of $N_0$ and $N$ will be related to another parameter, $\blowupexp$, which
controls the strength of the $t$-weights (which will be of order $t^{\blowupexp}$) that we use in our 
high order energies. 
For our bootstrap argument to close, the parameters will have to satisfy the following inequalities:
\begin{align} \label{E:PARAMETERINEQUALITIES}
	N 
	&\gg N_0 \geq 1,
	&
	\blowupexp \gg 1,
\end{align}
where throughout the paper, we adjust the size of the parameters as necessary. 
Roughly, we will first choose $\blowupexp \geq 1$ to be large enough to dominate various order-unity
structural constants (denoted by the symbol ``$C_*$'' throughout the paper) in the PDEs.
We then fix any $N_0 \geq 1$. We will then choose $N$ to be sufficiently large in a manner 
that depends on 
$N_0, \blowupexp, q,$ and $\upsigma$
(as well as $\mydim$, the number of spatial dimensions).
See also Remarks~\ref{rem:N} and \ref{rem:A} for discussion on how to obtain crude estimates for $N$ and $\blowupexp$.

Finally, we will use a small parameter $0<\varepsilon \ll 1$ to capture the smallness
of the overall norms that measure the closeness of the perturbed solution to the background generalized Kasner metric. 
Roughly, for our bootstrap argument to close, we will first have to choose the other parameters as described above and then choose $\varepsilon$ to be sufficiently small in a manner that depends on
$N, N_0, \blowupexp, \mydim, q,$ and $\upsigma$.

\subsection{Definitions of the solution norms}
\label{SS:SOLUTIONNORMS}
In our bootstrap argument, 
we will rely on the $t$-weighted norms in the following definition.
Roughly, our main theorem shows that all of the norms
in the definition remain small throughout the entire interval $t \in (0,1]$ 
if they are small at $t=1$.

\begin{definition}[Solution norms]\label{D:SOLUTIONNORMS}
Recall that the parameter $N_0$ verifies $N_0 \geq 1$,
that $q, \upsigma \in (0,1)$ are the constants fixed in Sect.\,\ref{SS:KEYPARAMETER}, 
and that we introduced the notation ``$\vec{e} f$'' in Sect.\,\ref{SS:ADDITIONALDIFFERENTIATIONNOTATION}.
We define the low order norms:
\begin{subequations}
\begin{align}\label{norms.low}
\begin{split}
\mathbb{L}_{(e,\oe)}(t) 
& := \max\left\lbrace t^q\|e-\widetilde{e}\|_{W^{N_0,\infty}(\Sigma_t)}, \, t^q\|\oe-\widetilde{\oe}\|_{W^{N_0,\infty}(\Sigma_t)}
\right\rbrace,
	\\ 
\mathbb{L}_{(n)}(t)& :=
\max\left\lbrace t^{-\upsigma}\|n-1\|_{W^{N_0+1,\infty}(\Sigma_t)}, \, t^{q-\upsigma} \|\vec{e} n\|_{W^{N_0,\infty}(\Sigma_t)} \right\rbrace,
\\
\mathbb{L}_{(\upgamma,k)}(t)& :=
\max \left\lbrace t^q\|\upgamma\|_{W^{N_0,\infty}(\Sigma_t)}, \, t\|k-\widetilde{k}\|_{W^{N_0+1,\infty}(\Sigma_t)} \right\rbrace,
	\\
\mathbb{L}_{(\psi)}(t)  := &\,\max\left\lbrace
	t^q \|\vec{e} \psi\|_{W^{N_0,\infty}(\Sigma_t)},
	\,
	t \|e_0 \psi - \partial_t \widetilde{\psi} \|_{W^{N_0+1,\infty}(\Sigma_t)} \right\rbrace,
	\\
\mathbb{L}_{(e,\oe,\upgamma,k,\psi)}(t)
  & :=\mathbb{L}_{(e,\oe)}(t) + \mathbb{L}_{(\upgamma,k)}(t) + \mathbb{L}_{(\psi)}(t),
\end{split}
\end{align}
and the high order norms:
\begin{align}\label{norms.high}
\begin{split}
\mathbb{H}_{(e,\oe)}(t) & := \max \left\lbrace t^{\blowupexp + q}\|e\|_{\dot{H}^N(\Sigma_t)}, \, t^{\blowupexp + q}\|\oe\|_{\dot{H}^N(\Sigma_t)}
	\right\rbrace,
\\ 
\mathbb{H}_{(n)}(t) &:= \max \left\lbrace t^{\blowupexp} \|n\|_{\dot{H}^N(\Sigma_t)},
	\,
	t^{\blowupexp + 1} \|\vec{e} n \|_{\dot{H}^N(\Sigma_t)}
\right\rbrace,
\\
\mathbb{H}_{(\upgamma,k)}(t) &:= \max \left\{t^{\blowupexp + 1}\|\upgamma\|_{\dot{H}^N(\Sigma_t)}, \,
t^{\blowupexp + 1}\|k\|_{\dot{H}^N(\Sigma_t)} \right\},
	\\
\mathbb{H}_{(\psi)}(t) &:= \max\left\lbrace t^{\blowupexp + 1} \|\vec{e} \psi\|_{\dot{H}^N(\Sigma_t)},
\,
t^{\blowupexp + 1}\| e_0 \psi \|_{\dot{H}^N(\Sigma_t)}
\right\rbrace,
	\\
\mathbb{H}_{(e,\oe,\upgamma,k,\psi)}(t)
 & :=	\mathbb{H}_{(e,\oe)}(t) + \mathbb{H}_{(\upgamma,k)}(t) + \mathbb{H}_{(\psi)}(t).
\end{split}
\end{align}
We also find it convenient to define the following ``total norm'' for the ``dynamic'' variables
(i.e., the non-lapse\footnote{In Sect.\,\ref{sec:lapse}, we will use elliptic estimates
to show that the lapse can be controlled in terms of the dynamic variables;
see \eqref{E:LAPSECONTROLLEDBYDYNAMICVARIABLES}.} variables):
\begin{align} \label{E:TOTALDYNAMICNORM}
	\mathbb{D}(t)
	& := \mathbb{L}_{(e,\oe,\upgamma,k,\psi)}(t)
			+
			\mathbb{H}_{(e,\oe,\upgamma,k,\psi)}(t).
\end{align}
\end{subequations}
\end{definition}

\begin{remark}[Derivative counts involving $N_0$]
\label{rem:N0+1}
	Note that the low order norms in \eqref{norms.low}
	yield control over the ``kinetic'' (i.e., time-derivative-involving) terms
	$
	\lbrace k_{IJ} - \widetilde{k}_{IJ} \rbrace_{I,J=1,\cdots,\mydim}
	$
	and
	$
	e_0 \psi - \partial_t \widetilde{\psi}
	$
	at one derivative level higher than the remaining terms.
	This is important for our bootstrap argument, more precisely for our derivation 
	of the lower order 	
	estimates; see, for example, Lemma~\ref{L:STRUCTURECOFFEICIENTERRORTERMSPOINTWISE} 
	and the proof of \eqref{E:STRUCTURECOEFFICIENTSBORDERPOINTWISEBOUNDS}.
\end{remark}
\subsection{Bootstrap assumptions}
Our bootstrap assumptions are that there is
a ``bootstrap time'' $T_{\textnormal{Boot}}\in [0,1)$
such that:
\begin{align}\label{Boots}
\mathbb{D}(t)
+
\mathbb{L}_{(n)}(t)
+
\mathbb{H}_{(n)}(t)
& \leq \varepsilon,
&&
\forall t \in (T_{\textnormal{Boot}},1].
\end{align}
In the proof of our main theorem, such a $T_{\textnormal{Boot}} \in [0,1)$ 
will exist due to our near-Kasner assumptions on the data and Cauchy stability. 
\section{Basic estimates and identities}\label{sec:basic.est}
In this section, we provide some basic inequalities and commutation formulas that we will frequently 
use in our main estimates, i.e., in Sect.\,\ref{sec:mainest}.

\subsection{Interpolation and product inequalities} 
\label{SS:INTERPOLATIONANDPRODUCT}
In our ensuing analysis, we will control various error terms
with the help of the classical interpolation and Sobolev inequalities 
provided in the next lemma.
\begin{lemma}[Sobolev interpolation and product inequalities]
\label{lem:basic.ineq}
Let $v$ be a $\Sigma_t$-tangent tensorfield,
let $M_1,M_2$ be two non-negative integers,
and let $\iota_1,\cdots,\iota_R$ be spatial multi-indices such that
$\sum_{r=1}^R|\iota_r|=M_1$.
Then the following estimates hold,
where norms of tensorfields are defined as in Sect.\,\ref{SS:SOBOLEVNORMS},
and the implicit constants depend on $M_1$, $M_2$, and $\mydim$:
\begin{align}
\label{HM.interp}\|v\|_{\dot{H}^{M_1}(\Sigma_t)}\lesssim&\, \|v\|^{1-\frac{M_1}{M_2}}_{L^\infty(\Sigma_t)}\|v\|^{\frac{M_1}{M_2}}_{\dot{H}^{M_2}(\Sigma_t)} \lesssim \|v\|_{L^\infty(\Sigma_t)}+\|v\|_{\dot{H}^{M_2}(\Sigma_t)},
\quad \text{if $M_2\ge M_1$},\\
\label{WM.interp}\|v\|_{W^{M_1,\infty}(\Sigma_t)}\lesssim&\, \|v\|_{H^{M_1+1+\left\lfloor\frac{\mydim}{2} \right\rfloor}(\Sigma_t)}\lesssim \|v\|_{L^\infty(\Sigma_t)}+\|v\|_{\dot{H}^{M_2}(\Sigma_t)},\quad \text{if $M_2\ge M_1+1+\left\lfloor\frac{\mydim}{2} \right\rfloor$},\\
\label{Sob.prod}\|\partial^{\iota_1}v_1\cdots\partial^{\iota_R}v_R\|_{L^2(\Sigma_t)}\lesssim&\,\sum_{r=1}^R\|v_r\|_{\dot{H}^{M_1}(\Sigma_t)}\prod_{s\neq r}\|v_s\|_{L^\infty(\Sigma_t)},
\end{align}
where $\left\lfloor\frac{\mydim}{2} \right\rfloor$ is the integer part of $\frac{\mydim}{2}$. 

Moreover, if $1 \leq R_0 \leq R$ and
$\iota_1,\cdots,\iota_R$ are spatial multi-indices such that
$\sum_{r=1}^R|\iota_r|=M_1$ and $|\iota_{R-R_0+1}|,\cdots,|\iota_R| \leq M_1 - 1$,
then the following product inequality holds,
where implicit constant depends on $M_1$, $R$, $R_0$, and $\mydim$:
\begin{align}\label{Sob.prod2}
\begin{split}
\|\partial^{\iota_1}v_1\cdots\partial^{\iota_R}v_R\|_{L^2(\Sigma_t)}
& \lesssim
\sum_{r=1}^{R-R_0}
	\left(
		\|v_r\|_{W^{1,\infty}(\Sigma_t)}
		+
		\|v_r\|_{\dot{H}^{M_1}(\Sigma_t)}
	\right)
	\prod_{s\neq r}\|v_s\|_{W^{1,\infty}(\Sigma_t)}
		\\
& \ \
	+
	\sum_{r=R-R_0+1}^R
	\left(
		\|v_r\|_{W^{1,\infty}(\Sigma_t)}
		+
		\|v_r\|_{\dot{H}^{M_1-1}(\Sigma_t)}
	\right)
\prod_{s\neq r}\|v_s\|_{W^{1,\infty}(\Sigma_t)}.
\end{split}
\end{align}
\end{lemma}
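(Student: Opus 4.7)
All four inequalities are classical Gagliardo--Nirenberg, Sobolev embedding, and Moser-type product estimates on the compact manifold $\Sigma_t = \mathbb{T}^D$. Since the tensor norms defined in Section~\ref{SS:SOBOLEVNORMS} are finite sums of scalar norms over frame indices (no metric appears in the contractions), it suffices to prove the scalar version of each estimate and then sum over indices; the tensor case follows verbatim.

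For \eqref{HM.interp}, I would first establish the log-convexity bound
\[
\|v\|_{\dot H^j(\Sigma_t)}^2 \leq C \|v\|_{\dot H^{j-1}(\Sigma_t)} \|v\|_{\dot H^{j+1}(\Sigma_t)}, \qquad 1 \leq j \leq M_2 - 1,
\]
which follows by moving one derivative across in each $\|\partial^\iota v\|_{L^2}^2$ with $|\iota| = j$ via integration by parts on $\mathbb{T}^D$, then applying Cauchy--Schwarz. Iterating yields $\|v\|_{\dot H^{M_1}} \lesssim \|v\|_{L^2}^{1-M_1/M_2} \|v\|_{\dot H^{M_2}}^{M_1/M_2}$, and because $\Sigma_t = \mathbb{T}^D$ has finite volume, $\|v\|_{L^2} \lesssim \|v\|_{L^\infty}$ produces the first inequality; the second is then immediate from Young's inequality $a^{1-\theta} b^\theta \leq a + b$. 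For \eqref{WM.interp}, I would apply the standard torus Sobolev embedding $H^{1+\lfloor D/2 \rfloor}(\mathbb{T}^D) \hookrightarrow L^\infty(\mathbb{T}^D)$ to each $\partial^\iota v$ with $|\iota| \leq M_1$; the second inequality then follows from \eqref{HM.interp}.

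For \eqref{Sob.prod}, I would apply H\"older with exponents $p_r = 2 M_1 / |\iota_r|$ (setting $p_r = \infty$ when $|\iota_r| = 0$) so that $\sum_r 1/p_r = 1/2$, and then invoke the Gagliardo--Nirenberg inequality $\|\partial^{\iota_r} v_r\|_{L^{2 M_1/|\iota_r|}} \lesssim \|v_r\|_{L^\infty}^{1-|\iota_r|/M_1} \|v_r\|_{\dot H^{M_1}}^{|\iota_r|/M_1}$, which is obtained from the same log-convexity argument combined with the classical multiplicative interpolation between $L^\infty$ and $\dot H^{M_1}$ on the torus. Weighted AM--GM, applied to the resulting product, collapses it to the claimed sum placing $\dot H^{M_1}$ on a single factor at a time.

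The main step requiring extra care is \eqref{Sob.prod2}, where the constraint $|\iota_r| \leq M_1 - 1$ for $r \geq R - R_0 + 1$ permits using Gagliardo--Nirenberg with $M_1 - 1$ in place of $M_1$ on those factors, at the cost of substituting $\|v_r\|_{W^{1,\infty}}$ for $\|v_r\|_{L^\infty}$ as the low endpoint: $\|\partial^{\iota_r} v_r\|_{L^{p_r}} \lesssim \|v_r\|_{W^{1,\infty}}^{1-|\iota_r|/(M_1-1)} \|v_r\|_{\dot H^{M_1-1}}^{|\iota_r|/(M_1-1)}$. Combining this with the standard Gagliardo--Nirenberg estimate for the first $R - R_0$ factors via H\"older, and then applying weighted AM--GM so that exactly one factor carries the top-order Sobolev norm while the remaining ones are bounded in $W^{1,\infty}$, yields \eqref{Sob.prod2}. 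The principal subtlety --- and the place I expect the bookkeeping to be most delicate --- is verifying that the weighted AM--GM splits correctly in the borderline configurations where $|\iota_r| = M_1$ may occur for an index $r \leq R - R_0$ while one of the ``restricted'' factors carries $|\iota_r| = M_1 - 1$, so that the $\dot H^{M_1}$ or $\dot H^{M_1-1}$ factor cleanly associates to a single $v_r$ without degrading the $W^{1,\infty}$ powers on the others.
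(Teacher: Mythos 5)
Your treatments of \eqref{HM.interp}, \eqref{WM.interp}, and \eqref{Sob.prod} are sound and track the paper's strategy: the paper simply cites Nirenberg for the interpolation in \eqref{HM.interp} and Ringstr\"om (\cite[Lemma 6.16]{Rin4}) for \eqref{Sob.prod}, whereas you give short self-contained derivations (Fourier log-convexity for the former, H\"older $+$ Gagliardo--Nirenberg $+$ weighted AM--GM for the latter), but the substance is the same.

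The proof of \eqref{Sob.prod2}, however, has a real gap. The interpolation inequality you invoke for the restricted factors,
$\|\partial^{\iota_r} v_r\|_{L^{p_r}} \lesssim \|v_r\|_{W^{1,\infty}}^{1-|\iota_r|/(M_1-1)} \|v_r\|_{\dot H^{M_1-1}}^{|\iota_r|/(M_1-1)}$ with $p_r = 2M_1/|\iota_r|$, is false. Take the borderline case $|\iota_r| = M_1-1$: the $W^{1,\infty}$ power is zero and the claim reduces to $\|\partial^{M_1-1}v\|_{L^{2M_1/(M_1-1)}} \lesssim \|v\|_{\dot H^{M_1-1}} \approx \|\partial^{M_1-1}v\|_{L^2}$ with $2M_1/(M_1-1) > 2$. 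On a finite-measure space such as $\mathbb{T}^{D}$ one can only \emph{lower} the Lebesgue exponent for free, never raise it, so this cannot hold. The root of the difficulty is that the H\"older exponents $p_r = 2M_1/|\iota_r|$ are calibrated to a derivative budget of $M_1$, while the restricted factors are only allowed to draw on $\dot H^{M_1-1}$; even after trading one pointwise derivative (which is what passing to $W^{1,\infty}$ amounts to), the correct Gagliardo--Nirenberg exponent between $W^{1,\infty}$ and $\dot H^{M_1-1}$ is $2(M_1-2)/(|\iota_r|-1)$, and with that choice the H\"older exponent condition $\sum_r 1/p_r \leq 1/2$ fails precisely when one restricted factor carries nearly all the $M_1$ derivatives while the unrestricted factors carry few, so one cannot simply reshuffle exponents inside your framework.

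The paper sidesteps all of this with a cheap reduction to the already-proven \eqref{Sob.prod}. Observe that since the last $R_0$ multi-indices are each of length $\leq M_1-1$, exactly two cases can occur: either all $M_1$ derivatives land on a single factor, which is then necessarily among $v_1,\ldots,v_{R-R_0}$, and the $L^2$--$L^\infty$ H\"older bound together with $\|v_s\|_{L^\infty} \leq \|v_s\|_{W^{1,\infty}}$ already yields the first sum; or else there are at least two factors $v_{r_1}, v_{r_2}$ each holding at least one derivative. In the second case one peels off a single coordinate derivative from each, writing $\partial^{\iota_{r_i}} v_{r_i} = \partial^{\iota_{r_i}'}(\partial v_{r_i})$ with $|\iota_{r_i}'| = |\iota_{r_i}| - 1$, so the total derivative count drops to $M_1 - 2$; applying \eqref{Sob.prod} at order $M_1 - 2$ places $\dot H^{M_1-2}$ on one factor and $L^\infty$ on the rest, and then \eqref{HM.interp} upgrades $\dot H^{M_1-2}$ to $\|\cdot\|_{L^\infty} + \|\cdot\|_{\dot H^{M_1}}$ or $\|\cdot\|_{L^\infty} + \|\cdot\|_{\dot H^{M_1-1}}$ as appropriate, while the peeled single derivatives become part of the $W^{1,\infty}$ norms. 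This avoids any need to re-derive Gagliardo--Nirenberg with mixed endpoints.
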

\begin{proof}
The first inequality in \eqref{HM.interp} is immediate\footnote{Alternatively,
\eqref{HM.interp} could be derived as a straightforward consequence of Nirenberg's interpolation results \cite{lN1959},
an approach that has the added advantage that it is easy to generalize to topologies other than $\mathbb{T}^{\mydim}$.
\label{FN:ALTERNATEAPPROACHNIRENBERG}} 
from Plancherel's identity, H\"older's inequality, and the bound
			$\| v \|_{L^2(\Sigma_t)} 
		\lesssim
		\| v \|_{L^{\infty}(\Sigma_t)}
		$
		for scalar functions $v$
		(which holds because $\mathbb{T}^{\mydim}$ is compact).
The second inequality in \eqref{HM.interp} follows from the first and Young's inequality. 
In the case $\Sigma_t = \mathbb{R}^{\mydim}$, the inequality \eqref{Sob.prod}  
was proved as \cite[Lemma 6.16]{Rin4}, and the same proof works in the case
$\Sigma_t = \mathbb{T}^{\mydim}$.
The first inequality in \eqref{WM.interp} is standard Sobolev embedding, 
while the second inequality in \eqref{WM.interp} 
follows from applying \eqref{HM.interp} to the homogeneous norms $\dot{H}^{M_1'}(\Sigma_t)$ of $v$, 
for every $M_1'\leq M_1+1+\lfloor \frac{\mydim}{2}\rfloor$.
To derive \eqref{Sob.prod2}, we first note that either all derivatives act on one of the terms $v_1,\cdots,v_{R-R_0}$, say $v_1$, or there exist at least two factors having at least one derivative, say $v_1,v_a$, where $a > 1$. Then setting 
$u_1 := \partial v_1$ in the first case or $u_1:= \partial v_1$, $u_a:= \partial v_a$ in the second case, we apply 
\eqref{Sob.prod} and \eqref{HM.interp} 
to the product, where we view $u_1$ and $u_a$ to be terms in the product that are hit with
one fewer derivative than $v_1$ and $v_a$. This yields the
desired estimate.
\end{proof}
As an immediate application of Lemma~\ref{lem:basic.ineq},
we provide the next lemma, 
which yields control of the reduced solution variables at orders slightly higher than $N_0$.
The price we pay is that the estimates are slightly (when $N$ is large) 
more singular with respect to powers of $t$ compared to the very-low-order estimates. Nevertheless,
a small increase in the singularity strength is allowable
for treating error terms that are sub-critical with respect to powers of $t$.

\begin{lemma}[$L^{\infty}$ control at slightly higher orders than $N_0$ -- with only a mild increase in singularity strength for large $N$]
\label{lem:Sob.borrow}
Assume that the bootstrap assumptions \eqref{Boots} hold.
Then there exists a constant $\updelta = \updelta(N,\mydim)$ 
(which is free to vary from line to line)
such that $\updelta \rightarrow 0$ as $N \to \infty$ and  
such that if $N \geq N_0 + 4 + \left\lfloor\frac{\mydim}{2} \right\rfloor$, then the following estimates hold
for $t\in(T_{\textnormal{Boot}},1]$:
\begin{align}
\label{omega.borrow.est}
\|e-\widetilde{e}\|_{W^{{N_0+2},\infty}(\Sigma_t)}+\|\oe-\widetilde{\oe}\|_{W^{{N_0+2},\infty}(\Sigma_t)}\lesssim&\, t^{-q-\updelta \blowupexp}\left\lbrace\mathbb{L}_{(e,\oe)}(t)+\mathbb{H}_{(e,\oe)}(t)\right\rbrace,\\
\label{gamma.borrow.est}\|\upgamma-\widetilde{\upgamma}\|_{W^{{N_0+2},\infty}(\Sigma_t)}\lesssim&\, t^{-q-\updelta \blowupexp}\left\lbrace\mathbb{L}_{(\upgamma,k)}(t)+\mathbb{H}_{(\upgamma,k)}(t)\right\rbrace,\\
\label{k.borrow.est}\|k-\widetilde{k}\|_{W^{{N_0+2},\infty}(\Sigma_t)}\lesssim&\, t^{-1-\updelta \blowupexp}\left\lbrace\mathbb{L}_{(\upgamma,k)}(t)+\mathbb{H}_{(\upgamma,k)}(t)\right\rbrace,\\
\label{n.borrow.est}
\|n-1\|_{W^{{N_0+3},\infty}(\Sigma_t)}
+
t^q
\|\vec{e} n\|_{W^{{N_0+2},\infty}(\Sigma_t)}
\lesssim
&\, t^{\upsigma-\updelta \blowupexp}\left\lbrace\mathbb{L}_{(n)}(t)
+
\mathbb{H}_{(n)}(t)\right\rbrace,\\
\label{psi.borrow.est} t^{q} \| \vec{e} \psi\|_{W^{{N_0+2},\infty}(\Sigma_t)}+t\|\partial_t\psi\|_{W^{{N_0+2},\infty}(\Sigma_t)}\lesssim&\,t^{-\updelta \blowupexp}\left\lbrace\mathbb{L}_{(\psi)}(t)+\mathbb{H}_{(\psi)}(t)\right\rbrace.
\end{align}
\end{lemma}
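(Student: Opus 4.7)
\medskip

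\noindent\textit{Proof plan.}
The strategy is straightforward Gagliardo--Nirenberg interpolation: the low-order solution norms $\mathbb{L}$ give us $L^{\infty}$ control of every solution variable (with the $t$-weights displayed in \eqref{norms.low}), the high-order norms $\mathbb{H}$ give us $\dot{H}^N$ control (with much more singular $t$-weights as in \eqref{norms.high}), and the claimed $W^{N_0+2,\infty}$ (or $W^{N_0+3,\infty}$) bounds follow by interpolating intermediate Sobolev norms between these two endpoints and then applying the Sobolev embedding \eqref{WM.interp}. The fact that the loss in the $t$-weight is only $t^{-\updelta A}$ with $\updelta \to 0$ as $N \to \infty$ reflects the usual Gagliardo--Nirenberg exponent $M/N$ in the interpolation.

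First I would fix $M := N_0 + 3 + \lfloor D/2\rfloor$ (and $M:=N_0+4+\lfloor D/2\rfloor$ for \eqref{n.borrow.est} on $n-1$), so the hypothesis $N \geq N_0+4+\lfloor D/2\rfloor$ guarantees $N\ge M$ with room to spare. For each variable $v$ appearing on the left-hand side (one of $e-\widetilde e$, $\oe-\widetilde\oe$, $\upgamma=\upgamma-\widetilde\upgamma$, $k-\widetilde k$, $n-1$, $\vec{e}n$, $\vec e\psi$, $e_0\psi-\partial_t\widetilde\psi$), I would first use \eqref{WM.interp} in the form
\begin{align*}
\|v\|_{W^{N_0+2,\infty}(\Sigma_t)} \;\lesssim\; \|v\|_{H^M(\Sigma_t)} \;\lesssim\; \sum_{m=0}^{M} \|v\|_{\dot H^m(\Sigma_t)},
\end{align*}
and then, for each $0\le m\le M$, invoke \eqref{HM.interp} with exponent $\theta := m/N$:
\begin{align*}
\|v\|_{\dot H^m(\Sigma_t)} \;\lesssim\; \|v\|_{L^\infty(\Sigma_t)}^{1-\theta}\,\|v\|_{\dot H^N(\Sigma_t)}^{\theta}.
\end{align*}
At this point one inserts the $t$-weights read off from Definition~\ref{D:SOLUTIONNORMS}, using crucially that each background Kasner variable $\widetilde v$ from \eqref{Kasnersol} is \emph{spatially constant} and hence $\|v\|_{\dot H^N}=\|v-\widetilde v\|_{\dot H^N}$ for $N\ge 1$, so the high-order norms $\mathbb{H}$ do indeed control $\|v-\widetilde v\|_{\dot H^N}$. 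For instance, for $k-\widetilde k$ one has $\|k-\widetilde k\|_{L^\infty}\lesssim t^{-1}\mathbb{L}_{(\upgamma,k)}$ and $\|k-\widetilde k\|_{\dot H^N}\lesssim t^{-(A+1)}\mathbb{H}_{(\upgamma,k)}$, and hence
\begin{align*}
\|k-\widetilde k\|_{\dot H^m(\Sigma_t)} \;\lesssim\; t^{-1-A\theta-\theta}\bigl(\mathbb{L}_{(\upgamma,k)}+\mathbb{H}_{(\upgamma,k)}\bigr).
\end{align*}
Summing over $0\le m\le M$ and taking $\updelta := 2M/N$ (to absorb the non-$A$ pieces $O(\theta)$ of the exponent into $A\updelta$, which is legitimate since $A\ge 1$ and $t\le 1$) gives \eqref{k.borrow.est}. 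The identical scheme produces \eqref{omega.borrow.est}, \eqref{gamma.borrow.est}, \eqref{n.borrow.est}, \eqref{psi.borrow.est}; each time one simply reads off the two endpoint exponents from \eqref{norms.low}--\eqref{norms.high} and verifies that the interpolated exponent equals (endpoint-$\mathbb L$-exponent)$-A\updelta+O(\updelta)$. For the $\partial_t\psi$ piece in \eqref{psi.borrow.est}, I would write $\partial_t\psi = n\,e_0\psi$, control $n$ in $W^{N_0+2,\infty}$ via the $n-1$ estimate plus the smallness bootstrap \eqref{Boots}, and then multiply by the bound for $e_0\psi-\partial_t\widetilde\psi$ that is obtained by the same interpolation procedure (again using that $\partial_t\widetilde\psi=\widetilde B/t$ is spatially constant, so $\|e_0\psi\|_{\dot H^N}=\|e_0\psi-\partial_t\widetilde\psi\|_{\dot H^N}$).

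The whole argument is essentially algebraic once the endpoints are in place, so there is no single hard step; the main things to keep careful track of are (a) that each relevant background variable is spatially constant, so the high-order norms of $v$ and $v-\widetilde v$ agree modulo $L^2$-size-of-$\widetilde v$ terms that are either zero or harmless, (b) that the arithmetic of the interpolated exponents genuinely yields the claimed form $t^{\alpha-\updelta A}$ with $\updelta=\updelta(N,D)\to 0$ as $N\to\infty$, and (c) that the Sobolev threshold $N\ge N_0+4+\lfloor D/2\rfloor$ is enough to justify both the embedding $H^M\hookrightarrow W^{N_0+2,\infty}$ (and $W^{N_0+3,\infty}$ for the $n-1$ estimate) and the availability of $\dot H^N$ on the interpolation endpoint.
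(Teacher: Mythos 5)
Your proposal is correct and takes essentially the same route as the paper's proof: apply the Sobolev embedding \eqref{WM.interp} to reduce the $W^{N_0+2,\infty}$ (or $W^{N_0+3,\infty}$) norm to a Sobolev norm of an intermediate order, interpolate via \eqref{HM.interp} between the $L^{\infty}$ and $\dot{H}^N$ endpoints, insert the $t$-weights from Definition~\ref{D:SOLUTIONNORMS}, and absorb the small interpolation exponents into the quantity $t^{-\updelta A}$, where $\updelta \to 0$ as $N \to \infty$. The minor cosmetic difference is that you sum over intermediate $\dot{H}^m$ levels while the paper applies the combined form of \eqref{WM.interp} in one step, and you make explicit a point the paper leaves implicit — namely that the spatial homogeneity of the background Kasner variables in \eqref{Kasnersol} is what lets $\|v\|_{\dot H^N}$ (the quantity appearing in $\mathbb{H}$) serve as the high endpoint for $\|v-\widetilde v\|_{\dot H^N}$.
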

\begin{proof}
The argument for all inequalities is essentially the same, so we only prove \eqref{n.borrow.est}. Using 
first \eqref{WM.interp} and then \eqref{HM.interp}, we find that for $N \geq N_0+4+\left\lfloor\frac{\mydim}{2} \right\rfloor$,
we have:
\begin{align*}
\|n-1\|_{W^{N_0+3,\infty}(\Sigma_t)} 
& \lesssim
\|n-1\|_{L^\infty(\Sigma_t)}+\|n-1\|_{\dot{H}^{N_0+4+\left\lfloor\frac{\mydim}{2} \right\rfloor}(\Sigma_t)}
\\
& \lesssim \|n-1\|_{L^\infty(\Sigma_t)}
+\|n-1\|_{L^\infty(\Sigma_t)}^{1-\widetilde{\updelta}}\|n-1\|^{\widetilde{\updelta}}_{\dot{H}^{N}(\Sigma_t)}
\\
& \leq t^{\upsigma}\mathbb{L}_{(n)}(t)+\big(t^{\upsigma}\mathbb{L}_{(n)}(t)\big)^{1-\widetilde{\updelta}}\big(t^{-\blowupexp}\mathbb{H}_{(n)}(t)\big)^{\widetilde{\updelta}}
\\
& = t^{\upsigma}\mathbb{L}_{(n)}(t)+t^{\upsigma-(\blowupexp+\upsigma)\widetilde{\updelta}}\mathbb{L}_{(n)}^{1-\widetilde{\updelta}}(t)\mathbb{H}_{(n)}^{\widetilde{\updelta}}(t)
\\
& \lesssim t^{\upsigma-\updelta \blowupexp}\left\lbrace \mathbb{L}_{(n)}(t)+\mathbb{H}_{(n)}(t) \right\rbrace,
\end{align*}
where $\widetilde{\updelta} : =\frac{N_0+4+\left\lfloor\frac{\mydim}{2} \right\rfloor}{N}\leq 1$, and for the last inequality, 
we used Young's inequality and set $\updelta : =\frac{\blowupexp+\upsigma}{\blowupexp}\widetilde{\updelta}$. 
It is clear that $\updelta \to 0$, as $N \rightarrow \infty$, at a rate that is independent 
of how large $\blowupexp \geq 1$ is. 
This yields \eqref{n.borrow.est} for the term $\|n-1\|_{W^{N_0+3,\infty}(\Sigma_t)}$.
The estimate for the term $t^q \|\vec{e} n\|_{W^{{N_0+2},\infty}(\Sigma_t)}$
would then follow from the Leibniz rule, 
the estimate for the term
$\|n-1\|_{W^{N_0+3,\infty}(\Sigma_t)}$,
and the estimate \eqref{omega.borrow.est} for the term $\|e-\widetilde{e}\|_{W^{{N_0+2},\infty}(\Sigma_t)}$
(which for purposes of exposition we assume to have already been proved).
We clarify that by this argument, the value of $\updelta$ corresponding to the estimate for 
$t^q \|\vec{e} n\|_{W^{{N_0+2},\infty}(\Sigma_t)}$
might be larger than the value of $\updelta$ for
$\|n-1\|_{W^{N_0+3,\infty}(\Sigma_t)}$, but nevertheless, all
``$\updelta$'s'' tend to $0$ as $N \to \infty$.
\end{proof}

\begin{remark}[$\updelta$ can vary from line to line]
\label{R:DELTAVARIES}
In the rest of the paper, $\updelta = \updelta(N,\mydim)$
denotes a small positive constant that is free to vary from line to line,
but that always has the property that
$\updelta \rightarrow 0$ as $N \to \infty$ (as in Lemma~\ref{lem:Sob.borrow}).
In particular, we sometimes express the sum of two $\updelta$'s as
another $\updelta$. 
\end{remark}

\begin{remark}[Smallness of $\updelta \blowupexp$]\label{rem:deltaA}
Later in the paper, when we use Lemma~\ref{lem:Sob.borrow} to derive estimates for the solution,
we will always assume (sometimes without explicitly mentioning it) 
that $\updelta \blowupexp$ is as small as we need it to be.
In particular, we assume that it is small enough such that $\updelta \blowupexp < \upsigma$
so that, for example, $t^{2\upsigma - \updelta \blowupexp} \leq t^{\upsigma}$ for $t \in (0,1]$. 
At fixed $\blowupexp$, the desired smallness can be ensured by choosing $N$ to be sufficiently large.
\end{remark}
\begin{remark}[Large interpolation constants are not an obstacle to stability]\label{rem:int.const}
The implicit constants in the interpolation inequalities of Lemmas~\ref{lem:basic.ineq} and \ref{lem:Sob.borrow} 
depend on $M_1, M_2, N_0, N$, and the number of spatial dimensions 
$\mydim$. One might worry, especially when taking $N$ sufficiently large to make $\updelta$ small, that the constants in the
elliptic and energy estimates, corresponding to the terms that we treat using these inequalities, can be quite large. 
While the constants ``$C$'' can in fact be large, largeness does not obstruct the proofs of our results. 
The reason is that we only apply these inequalities to handle two kinds of error terms: 
\textbf{i)} error terms that are sub-critical with respect to powers of $t$,
for which the largeness of $C$ is admissible within the context of our Gr\"{o}nwall estimates; and
\textbf{ii)} critical ``borderline'' products with one factor that yields a smallness factor of $\varepsilon$, so that the effective
coefficient $C \varepsilon$ can be made as small as one wants by choosing the bootstrap parameter $\varepsilon$ to be small (which is possible for initial data on $\Sigma_1$ that  are sufficiently close to the Kasner data).
In particular, in our estimates,
the implicit constants in Lemmas~\ref{lem:basic.ineq} and \ref{lem:Sob.borrow} do not affect the size of the
important constants ``$C_*$'' (see Sect.\,\ref{SS:NOTATIONANDCONVENTIONS} for our conventions for constants ``$C_*$'')
or the value of the parameter $\blowupexp$.
See also Sect.\,\ref{subsec:bord.vs.bbord} for further discussion of borderline and below-borderline terms.
\end{remark}
\subsection{Two simple commutation formulas}
\label{SS:SIMPLECOMMUTATIONFORMULATS}
To derive estimates for the solution's derivatives,
we will repeatedly commute the reduced equations with the transported spatial coordinate partial derivative vectorfields 
$\lbrace \partial_i \rbrace_{i=1,\cdots,\mydim}$, 
and we will use the following commutation relation
to uncover the structure of various error terms 
(see Sect.\,\ref{SS:NOTATIONANDCONVENTIONS} for our conventions for multi-indices):
\begin{align}\label{comm}
[\partial^{\iota},e_I]v=\sum_{\iota_1\cup \iota_2 =\iota,\,|\iota_2|<|\iota|}
(\partial^{\iota_1} e_I^c) \partial^{\iota_2}\partial_c v.
\end{align}
The identity \eqref{comm} follows easily from expanding $e_I=e_I^c\partial_c$. 
In our forthcoming analysis, we will sometimes use it silently.

We will also use the following commutation identity:
\begin{align}\label{comm[dt,ei]}
[\partial_t,e_I]
& 
=n k_{IC} e_C^c \partial_c,
\end{align}
which we derived in \eqref{INVARIANTframe}.

\section{Main estimates}\label{sec:mainest}
Our main goal in this section is to establish Proposition~\ref{prop:overall},
which forms the analytical cornerstone of the paper.
The proposition provides a priori estimates for perturbations of the Kasner background solution
and in particular yields improvements of the bootstrap assumptions when the data are 
sufficiently near-Kasner. We also highlight that for near-Kasner data,
the a priori estimates 
and standard arguments 
collectively imply that the solution
exists on the entire half-slab $(0,1] \times \mathbb{T}^{\mydim}$ and enjoys the quantitative properties
afforded by the a priori estimates; see Proposition~\ref{P:EXISTENCEONHALFSLAB} for those details.

\subsection{Statement of the main a priori estimates}
\label{SS:MAINAPRIORI}
In the next proposition, we state our main a priori estimates.
The proof is located in Sect.\,\ref{PROOFOFprop:overall}.
In the sections that precede it,
we will establish a series of preliminary identities and estimates
for $n, \upgamma, k$, the frame $\lbrace e_I \rbrace_{I=1,\cdots,\mydim}$,
and the co-frame $\lbrace \oe^I \rbrace_{I=1,\cdots,\mydim}$.
The proof of the proposition essentially amounts to combining the preliminary results. 

\begin{proposition}[The main a priori estimates]
\label{prop:overall}
Let $(n,k_{IJ},\upgamma_{IJB},e_I^i,\oe_i^I,\psi)_{I,J,B,i=1,\cdots,\mydim}$
be a solution to the reduced equations of Proposition~\ref{P:redeq} on
$(T_{\textnormal{Boot}},1] \times \mathbb{T}^{\mydim}$.
Recall that $\mathbb{D}(t)$ is the total norm of the dynamic variables
and that $\mathbb{L}_{(n)}(t)$ and $\mathbb{H}_{(n)}(t)$ are norms of the lapse
(see Definition~\ref{D:SOLUTIONNORMS}).
Let $\mathring{\upepsilon}$ denote the initial value of the total norm of the dynamic variables:
\begin{align}\label{E:INITIALNORMOFDYNAMICVARIABLES}
\mathring{\upepsilon}
& := \mathbb{D}(1)
=
\mathbb{L}_{(e,\oe)}(1)
+
\mathbb{L}_{(\upgamma,k)}(1)
+
\mathbb{L}_{(\psi)}(1)+\mathbb{H}_{(e,\oe)}(1)
+
\mathbb{H}_{(\upgamma,k)}(1)
+
\mathbb{H}_{(\psi)}(1).
\end{align}
Assume that the bootstrap assumptions \eqref{Boots} hold for $t \in (T_{\textnormal{Boot}},1]$.
If $\blowupexp$ is sufficiently large and $N_0 \geq 1$, then
there exists a constant $C_{N,N_0,\blowupexp,\mydim,q,\upsigma} > 0$ 
such that if $N$ is sufficiently large in a manner 
that depends on $N_0, \blowupexp, \mydim, q,$ and $\upsigma$,
and if $\varepsilon$ is sufficiently small (in a manner that depends on $N, N_0, \blowupexp, \mydim, q,$ and $\upsigma$), 
then the following estimate holds for $t \in (T_{\textnormal{Boot}},1]$:
\begin{align}\label{overall.est}
\mathbb{D}(t)
+
\mathbb{L}_{(n)}(t) 
+
\mathbb{H}_{(n)}(t)
& \leq C_{N,N_0,\blowupexp,\mydim,q,\upsigma} \mathring{\upepsilon}.
\end{align}
In particular, if $C_{N,N_0,\blowupexp,\mydim,q,\upsigma} \mathring{\upepsilon} < \varepsilon$, 
then \eqref{overall.est} yields a strict improvement of the bootstrap assumptions \eqref{Boots}.
\end{proposition}

\subsection{Schematic notation}
\label{SS:SCHEMATIC}
We will use schematic notation to simplify the presentation of various formulas when the precise
structure of the terms is not important. $\partial$ denotes an arbitrary partial derivative
with respect to one of the transported spatial coordinate vectorfields.
$k$ denotes an arbitrary element of the array $(k_{IJ})_{I,J=1,\cdots,\mydim}$
of components of the second fundamental form with respect to the orthonormal frame.
$\partial^{\iota} k$ denotes an arbitrary element of the array 
$(\partial^{\iota} k_{IJ})_{I,J=1,\cdots,\mydim}$.
Similarly, $\upgamma$ denotes an arbitrary element of the array $(\upgamma_{IJB})_{I,J,B=1,\cdots,\mydim}$
and $\partial^{\iota} \upgamma$ denotes an arbitrary element of the array 
$(\partial^{\iota} \upgamma_{IJB})_{I,J,B=1,\cdots,\mydim}$.
$e$ denotes an arbitrary element of the array $(e_I^i)_{I,i=1,\cdots,\mydim}$,
while
$\oe$ denotes an arbitrary element of the array $(\oe_i^I)_{I,i=1,\cdots,\mydim}$.
If $f$ is a scalar function, then $\vec{e} f$ denotes the array $(e_I f)_{I=1,\cdots,\mydim}$.

As an example, with the help of the notation from Sect.\,\ref{SS:NOTATIONANDCONVENTIONS},
we can express the commutator
$
\partial^{\iota}  (n e_C \upgamma_{IJC})
-
n e_C \partial^{\iota} \upgamma_{IJC}
$
in the following schematic form:
$
\sum_{\iota_1\cup \iota_2 \cup \iota_3 = \iota, |\iota_3| < |\iota|}	
			\partial^{\iota_1} n
			\cdot
			\partial^{\iota_2} e 
			\cdot
			\partial 
			\partial^{\iota_3}
			\upgamma
$.
We remark that we use schematic notation only when the overall signs and precise
numerical coefficients in front of the terms
is not important. Thus, \textbf{when using schematic notation for terms,
we do not account for their overall signs or precise numerical coefficients.}

\subsection{Borderline terms vs.\ Junk terms}
\label{subsec:bord.vs.bbord}
In our top-order energy estimates, we encounter some delicate error terms that cannot be treated by Gr\"{o}nwall's lemma uniformly in $T_{\textnormal{Boot}}{}{\in(0,1)}$. That is, if treated crudely, these terms would 
prevent us from deriving an energy estimate that would lead to an improvement of our bootstrap assumptions. 
We described one example of such a term at the end of Sect.\,\ref{SSS:INTROHIGHORDERENERGYESTIMATES}.
Let us revisit this issue in more detail. In our top-order energy estimates, 
we encounter ``borderline'' error integrands with the following strength:
\begin{align} \label{E:BORDERLINEERRORTERM}
\frac{1}{t} \cdot t^{2 \blowupexp+2}\partial^{\iota} \upgamma \cdot \partial^{\iota}\upgamma,
	\qquad
\frac{1}{t} \cdot t^{2 \blowupexp+2}\partial^{\iota}k \cdot \partial^{\iota}k,
	\qquad
\frac{1}{t} \cdot t^{2 \blowupexp+2} \partial^{\iota}\upgamma \cdot \partial^{\iota}(e_I n).
\end{align}
The difficulty is that the integrands in \eqref{E:BORDERLINEERRORTERM} 
are more singular than the energy density itself
due to the factors of $\frac{1}{t}$. To handle these error terms, we 
exploit the following crucial fact, which we must justify in our analysis:
	\begin{quote}
		In the energy identities, the \underline{coefficients} of all of 
		the borderline terms can be bounded by a uniform constant $C_*$, independent of $\blowupexp$ and $N$,
		as long as the bootstrap parameter $\varepsilon$ is sufficiently small
		(in a manner that is allowed to depend on $N$ and $\blowupexp$).
		Such terms contribute to the $C_*$-multiplied integrals on the
		right-hand side of the energy inequalities of Proposition~\ref{P:TOPORDERENERGYINEQUALITY}.
	\end{quote}
{We refer readers to Remark~\ref{R:MEANINGOFBORDERLINE} for further comments on our use
of the terminology ``borderline.''}

At this point, the role of the $t^{2 \blowupexp+2}$ weights in our energy identities emerges:
the weights \emph{also} generate borderline terms 
(roughly, when the $\partial_t$ derivative falls on the weights in the energy identities)
of the same strength as those in \eqref{E:BORDERLINEERRORTERM},
but unlike the terms in \eqref{E:BORDERLINEERRORTERM},
the error terms generated by the weights have a \underline{favorable sign} towards the singularity
with an overall coefficient that is proportional to $\blowupexp$.
These terms contribute to the favorable $-\blowupexp$-multiplied integrals
on the right-hand side of the energy inequalities of Proposition~\ref{P:TOPORDERENERGYINEQUALITY}.
Thus, if $\blowupexp$ is chosen sufficiently large, the overall coefficient $C_* - \blowupexp$ of the borderline terms becomes negative, 
and in our energy estimates, the corresponding integral has a ``good sign'' and can be discarded. 
We again stress that for this argument to work,
it is crucial that $C_*$ can be chosen to be independent of $\blowupexp,N$, at least when $\varepsilon$ is small.

On the other hand, there are many terms in the energy estimates that are ``junk'' in the sense that they can be bounded by our norms times a factor of strength $C t^{-1+\upsigma}$. Although ``$C$'' 
is allowed to depend on $\blowupexp,N$, and other parameters 
(cf.\ Remark~\ref{rem:int.const} regarding the size of the constants $C$ in the interpolation inequalities), 
such terms do not pose any difficulty in 
the a priori energy estimates. The reason is that $Ct^{-1+\upsigma}$ is integrable in time near $t=0$
and thus, in the context of Gr\"{o}nwall's lemma, 
the factor $C t^{-1+\upsigma}$ causes only finite growth of our energies, 
which is perfectly compatible with our bootstrap argument and our proof of stability.

\begin{remark}[``Border'' and ``Junk'' notation]
	\label{R:BORDERANDNJUNK}
	To help the reader navigate the energy estimates,
	in our ensuing analysis, we label error terms that generate borderline (in the sense above) error terms
	with the superscript ``Border,'' and we label error terms that generate junk (in the sense above) error
	terms with the superscript ``Junk.'' See, for example
	the terms
	$t^{\Pow-1} \mathfrak{K}_{IJ}^{(\textnormal{Border};\iota)}$
	and $t^{\Pow} \mathfrak{K}_{IJ}^{(\textnormal{Junk};\iota)}$
	on RHS~\eqref{E:COMMUTEDKEQUATION}.
	
	We sometimes use similar notation to distinguish between ``borderline terms'' and ``junk terms'' 
	in our pointwise estimates; see, however, Remark~\ref{R:MEANINGOFBORDERLINE}.
\end{remark}

\subsection{Control of the lapse $n$ in terms of the dynamic solution variables}\label{sec:lapse}
Our main goal in this subsection is to prove the following proposition,
which yields control of the lapse in terms of the remaining ``dynamic'' solution variables.
This is a preliminary step in our derivation of a priori estimates for all solution variables. 
The proof of the proposition relies on elliptic estimates and the bootstrap assumptions \eqref{Boots}
and is located in Sect.\,\ref{SSS:PROOFOFPROPLAPSE}. Before proving the proposition, we first establish some preliminary identities and estimates.

\begin{proposition}[Estimates for the lapse in terms of the dynamic solution variables]
\label{prop:n}
Recall that 
$\mathbb{L}_{(n)}(t)$,
$\mathbb{H}_{(n)}(t)$,
$\mathbb{H}_{(\upgamma,k)}(t)$,
and $\mathbb{D}(t)$ are norms from Definition~\ref{D:SOLUTIONNORMS}.
Under the assumptions of Proposition~\ref{prop:overall},
there exists a constant $C_* > 0$ \underline{independent of $N, N_0,$ and $\blowupexp$}
and a constant $C = C_{N,N_0,\blowupexp,\mydim,q,\upsigma} > 0$ 
such that if $N_0 \geq 1$ and $N$ is sufficiently large in a manner 
that depends on $N_0, \blowupexp, \mydim, q,$ and $\upsigma$,
and if $\varepsilon$ is sufficiently small (in a manner that depends on $N, N_0, \blowupexp, \mydim, q,$ and $\upsigma$), 
then the following estimates hold for $t \in (T_{\textnormal{Boot}},1]$:
\begin{align}
\label{n.low.est}
\| n-1 \|_{W^{N_0+1,\infty}(\Sigma_t)}
+
t^q \| \vec{e} n \|_{W^{N_0,\infty}(\Sigma_t)}
& \leq C t^{\upsigma} \mathbb{D}(t).
\end{align}

Moreover, if $\iota$ is any spatial multi-index with $|\iota| = N$,
then we have:
\begin{subequations}
\begin{align}  
t^{\blowupexp + 1} \| \partial^{\iota} \vec{e} n \|_{L^2(\Sigma_t)}
+
t^{\blowupexp} \| \partial^{\iota} n \|_{L^2(\Sigma_t)}
& \leq 
C_* t^{\blowupexp + 1}
\| \partial^{\iota} \upgamma \|_{L^2(\Sigma_t)}
+
C t^{\upsigma} \mathbb{D}(t),
	\label{PRECISE.n.high.est}  \\
t^{\blowupexp + 1}\| \vec{e} n \|_{\dot{H}^N(\Sigma_t)}
+
t^{\blowupexp} \| n \|_{\dot{H}^N(\Sigma_t)}
& \leq 
C_* 
\mathbb{H}_{(\upgamma,k)}(t)
+
C t^{\upsigma} \mathbb{D}(t).
\label{n.high.est} 
\end{align}
\end{subequations}

Finally, the lapse norms are bounded by the dynamic variable norm:
\begin{align}\label{E:LAPSECONTROLLEDBYDYNAMICVARIABLES}
		\mathbb{L}_{(n)}(t)
		+
		\mathbb{H}_{(n)}(t)
		& \leq C \mathbb{D}(t).
\end{align}
\end{proposition}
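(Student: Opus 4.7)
The lapse $n-1$ satisfies the coercive elliptic equation \eqref{n.eq}, schematically of the form
\begin{align*}
e_C e_C(n-1) - t^{-2}(n-1)
& = 2n\, e_C \upgamma_{DDC} + \upgamma \cdot \partial n + n\cdot \upgamma \cdot \upgamma + n\cdot (e\psi)^2.
\end{align*}
The plan is to test this equation against suitable multipliers to produce elliptic $L^2$-type estimates, and then combine these with Sobolev embedding and the interpolation inequalities of Lemma~\ref{lem:basic.ineq} to extract both the pointwise low-order bounds and the top-order $L^2$ bounds. The key structural observation is that the only RHS term that genuinely loses a derivative with respect to the dynamic unknowns is $2n\, e_C \upgamma_{DDC}$; all other terms are quadratic in the dynamic variables and can be absorbed into the junk error $Ct^\upsigma \mathbb{D}(t)$ after using the bootstrap assumptions to control one factor in $L^\infty$.

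First I would record the basic elliptic identity: multiplying the PDE by $(n-1)$ and integrating by parts over $\Sigma_t$ gives, after Cauchy--Schwarz and the absorption of the good $t^{-2}\|n-1\|_{L^2}^2$ term,
\begin{align*}
t^{-2}\|n-1\|_{L^2(\Sigma_t)}^2 + \|\vec{e}(n-1)\|_{L^2(\Sigma_t)}^2
& \lesssim t^2 \|\mathrm{RHS}\|_{L^2(\Sigma_t)}^2,
\end{align*}
and analogously after commuting with $\partial^{\iota}$. The RHS can be estimated using the bootstrap assumptions \eqref{Boots}: $\|\upgamma\|_{L^\infty} \lesssim t^{-q}\varepsilon$, $\|e\psi\|_{L^\infty} \lesssim t^{-q}\varepsilon$, and the frame components are bounded via $\|e-\widetilde{e}\|_{L^\infty}$ small. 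Since $2q < 1$, factors of $t^{-q}$ combine with the prefactor $t$ on the RHS to yield a net power $t^\upsigma$, which furnishes the low-order bound \eqref{n.low.est} after applying the Sobolev embedding \eqref{WM.interp} to pass from $H^{N_0+1+\lfloor D/2\rfloor}$ control to $W^{N_0+1,\infty}$ control. At this stage I would also differentiate the equation up to $N_0$ times with $\partial^\iota$ and use \eqref{comm} to redistribute frame derivatives as spatial coordinate derivatives, picking up only harmless lower-order factors of frame components.

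For the top-order bound \eqref{PRECISE.n.high.est}, I would commute with $\partial^\iota$ of length $N$ and test against $\partial^\iota(n-1)$. The integration by parts produces $\|\partial^\iota\vec{e}(n-1)\|_{L^2}^2 + t^{-2}\|\partial^\iota(n-1)\|_{L^2}^2$ on the left, while on the right the single dangerous term is
\begin{align*}
\int_{\Sigma_t} 2n\cdot \partial^\iota(e_C \upgamma_{DDC}) \cdot \partial^\iota(n-1)\, dx,
\end{align*}
estimated by $C_*\|\partial^\iota \upgamma\|_{L^2(\Sigma_t)} \cdot t^{-1}\|\partial^\iota(n-1)\|_{L^2(\Sigma_t)}$ after an integration by parts to move the $e_C$ derivative off $\partial^\iota \upgamma$ (so that only $N$ spatial derivatives hit $\upgamma$). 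Crucially, the constant in front of the $\|\partial^\iota \upgamma\|_{L^2}$ factor comes from an order-unity coefficient times $\|n\|_{L^\infty} = 1 + O(\varepsilon)$, and hence can be taken as a universal $C_*$ \emph{independent of $N$ and $A$}. All other commutator and quadratic terms are controlled using the product estimate \eqref{Sob.prod2} together with the bootstrap $L^\infty$ bounds and the borrowed-regularity estimates of Lemma~\ref{lem:Sob.borrow}; these terms carry an extra factor $t^\upsigma$ after multiplying by the appropriate power of $t$, and hence contribute to the junk error $Ct^\upsigma \mathbb{D}(t)$. Multiplying through by $t^{A+1}$ and taking square roots yields \eqref{PRECISE.n.high.est}, and summing over $|\iota|=N$ gives \eqref{n.high.est} with RHS controlled by $C_*\mathbb{H}_{(\upgamma,k)}(t) + Ct^\upsigma \mathbb{D}(t)$. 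Finally, \eqref{E:LAPSECONTROLLEDBYDYNAMICVARIABLES} is an immediate consequence of \eqref{n.low.est} and \eqref{n.high.est} together with the definitions of the norms in \eqref{norms.low}--\eqref{norms.high}.

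The main obstacle will be tracking constants carefully in the top-order estimate: one must ensure that no power of $N$ or $A$ sneaks into the coefficient of the borderline term $\|\partial^\iota \upgamma\|_{L^2}$. This requires that the integration by parts moving $e_C$ off $\partial^\iota \upgamma$ be performed before any commutator expansion, and that commutators such as $[\partial^\iota, e_C]\upgamma$ and $[\partial^\iota, n\cdot]$ be placed into the junk bucket using \eqref{Sob.prod2} and \eqref{n.borrow.est}, where the ``degeneracy'' $t^{-\updelta A}$ introduced by borrowing is absorbed into $t^\upsigma$ via Remark~\ref{rem:deltaA} by choosing $N$ large. A secondary subtlety is the quadratic term $(e_C\psi)e_C\psi$: at top order one factor is placed in $L^\infty$ via \eqref{psi.borrow.est}, costing only a mild $t^{-q-\updelta A}$ that combines with the prefactor $t$ to leave $t^{1-q-\updelta A} \leq t^\upsigma$, as required.
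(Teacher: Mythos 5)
Your top-order argument is essentially the same as the paper's: commute the lapse PDE $N$ times, test against $\partial^\iota(n-1)$, integrate by parts so that the derivative-losing term $2n\,e_C\partial^\iota\upgamma_{CCD}$ becomes, up to Leibniz errors, the product $(\partial^\iota \vec{e}n)\cdot(\partial^\iota\upgamma)$, then Cauchy--Schwarz and absorb into the left. This is exactly Lemma~\ref{L:ELLIPTICDIFFERENTIALLAPSEID} and the subsequent estimate \eqref{mainstep.n.eq.diff}, and your reason why the constant in front of $\|\partial^\iota\upgamma\|_{L^2}$ is $N$- and $A$-independent (it is $\|n\|_{L^\infty}=1+\mathcal{O}(\varepsilon)$ times a combinatorial constant from the Einstein sum over $C,D$) is the same as the paper's.

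Your low-order argument, however, takes a genuinely different route. The paper derives \eqref{n.low.est} by bounding the commuted RHS of \eqref{n.eq.diff} pointwise (using the bootstrap plus Lemma~\ref{lem:Sob.borrow}) and then invoking the \emph{maximum principle}: at an interior extremum of $\partial^\iota(n-1)$ the frame Laplacian has a sign, so $t^{-2}|\partial^\iota(n-1)|\leq|\mbox{RHS}|$ directly, with $|\iota|\leq N_0+1$. You instead propose $L^2$ elliptic energy estimates followed by the Sobolev embedding \eqref{WM.interp}. This is a legitimate alternative, but it costs more derivatives than the paper's argument: to reach $W^{N_0+1,\infty}$ via \eqref{WM.interp} you must control $n-1$ in $H^{N_0+2+\lfloor D/2\rfloor}$, not $H^{N_0+1+\lfloor D/2\rfloor}$ as you write, and consequently must commute the elliptic equation up to order $N_0+2+\lfloor D/2\rfloor$ (not ``up to $N_0$ times''). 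At that order the principal RHS term $2n\,e_C\upgamma_{DDC}$ involves derivatives of $\upgamma$ up to order $N_0+3+\lfloor D/2\rfloor$, which is strictly beyond the $W^{N_0+2,\infty}$ reach of Lemma~\ref{lem:Sob.borrow} as stated in \eqref{gamma.borrow.est}. The fix is not deep --- the same interpolation between the $W^{N_0,\infty}$ bootstrap bound and the $\dot H^N$ energy works at any fixed intermediate order, at the cost of a $t^{-\updelta A}$ penalty that is absorbable for $N$ large --- but you should flag that you need this mild extension, whereas the paper's maximum-principle route keeps the derivative count exactly at $N_0+2$ for $\upgamma$ and hence uses Lemma~\ref{lem:Sob.borrow} off the shelf. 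In short, the maximum principle buys a cleaner bookkeeping; the $L^2$-plus-embedding route buys nothing here and requires a supplementary interpolation lemma, so the paper's choice is the more economical one.
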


\subsubsection{Equations for controlling the lapse}
\label{SSS:EQUATIONSFORCONTROLLINGLAPSE}
We start by deriving the elliptic equations satisfied by the derivatives of the lapse.

\begin{lemma}[The commuted lapse equation]
\label{L:COMMUTEDLAPSEEQUATION}
For solutions $n$ to the lapse equation \eqref{n.eq}
and spatial coordinate multi-indices $\iota$ with $|\iota|\leq N$,
the following equation holds:
\begin{align}\label{n.eq.diff}
e_C  \partial^{\iota} e_C (n-1)
-
t^{-2} \partial^{\iota}(n-1)
& = 
	2n e_D \partial^{\iota} \upgamma_{CCD}
	+
	\mathfrak{N}^{(\iota)},
\end{align}
where:
\begin{align}
\begin{split}
\mathfrak{N}^{(\iota)}
& := 
		\sum_{\iota_1 \cup \iota_2 = \iota, |\iota_2| < |\iota|}	
		\partial^{\iota_1} e 
		\cdot 
		\partial \partial^{\iota_2} \vec{e} n
		\label{E:LAPSEPDECOMMUTEDTHROUGHOUTERFRAMEDERIVATIVEERRORTERM}
			\\
& \ \
			+
			\sum_{\iota_1 \cup \iota_2 \cup \iota_3 = \iota, |\iota_3| < |\iota|}	
			\partial^{\iota_1} n
			\cdot
			\partial^{\iota_2} e 
			\cdot
			\partial 
			\partial^{\iota_3}
			\upgamma
		+
		\sum_{\iota_1\cup \iota_2 = \iota}
		\partial^{\iota_1} \upgamma \cdot \partial^{\iota_2} \vec{e} n
		 \\
& \ \
	+ \sum_{\iota_1 \cup \iota_2 \cup \iota_3 = \iota}
		\partial^{\iota_1} n
		\cdot
		\partial^{\iota_2} \upgamma
		\cdot
		\partial^{\iota_3} \upgamma
	+ \sum_{\iota_1 \cup \iota_2 \cup \iota_3 = \iota}
		\partial^{\iota_1} n
		\cdot
		\partial^{\iota_2} \vec{e} \psi
		\cdot
		\partial^{\iota_3} \vec{e} \psi.
\end{split}
\end{align}
\end{lemma}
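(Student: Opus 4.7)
The identity \eqref{n.eq.diff} is proved by applying $\partial^{\iota}$ to both sides of the lapse equation \eqref{n.eq} and then commuting $\partial^{\iota}$ past the outermost frame vectorfield on the LHS and on the principal RHS term $2n\, e_C \upgamma_{DDC}$. All other contributions are absorbed, via the Leibniz rule, into the four schematic error classes listed in \eqref{E:LAPSEPDECOMMUTEDTHROUGHOUTERFRAMEDERIVATIVEERRORTERM}. The only nontrivial input is the commutator identity \eqref{comm}.

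\textbf{Handling the LHS.} The $t^{-2}$ factor is constant in $x$, so $\partial^{\iota}$ passes through it cleanly to produce $-t^{-2}\partial^{\iota}(n-1)$. For the second-order piece, I split
\[
\partial^{\iota}\bigl(e_C e_C(n-1)\bigr) \;=\; e_C\,\partial^{\iota} e_C(n-1) \;+\; [\partial^{\iota}, e_C]\, e_C(n-1),
\]
which yields the desired LHS of \eqref{n.eq.diff} and sends the commutator to the RHS. By \eqref{comm}, $[\partial^{\iota}, e_C]\, e_C(n-1) = \sum_{\iota_1 \cup \iota_2 = \iota,\,|\iota_2| < |\iota|}(\partial^{\iota_1} e_C^c)\, \partial^{\iota_2}\partial_c\bigl(e_C^b \partial_b(n-1)\bigr)$. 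Expanding the inner factor by Leibniz produces a sum of products of derivatives of frame components with derivatives of $\vec e n = \{e_I^b \partial_b n\}$, which is of the second schematic type $\partial^{\iota_1}\upgamma\cdot\partial^{\iota_2}\vec e n$ in \eqref{E:LAPSEPDECOMMUTEDTHROUGHOUTERFRAMEDERIVATIVEERRORTERM} once one grants the convention that factors of $\partial e$ are grouped schematically with $\upgamma$, in agreement with the frame regularity gain emphasized in Sect.\,\ref{sec:setup}.

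\textbf{Handling the RHS.} I treat the principal term by first relabeling dummy indices, $2n\, e_C \upgamma_{DDC} = 2n\, e_D \upgamma_{CCD}$, and then using Leibniz together with \eqref{comm}:
\[
\partial^{\iota}(2n\, e_D \upgamma_{CCD}) = 2n\, e_D\, \partial^{\iota}\upgamma_{CCD} + 2n\,[\partial^{\iota}, e_D]\upgamma_{CCD} + \!\!\!\sum_{\iota_1 \cup \iota_2 = \iota,\,|\iota_1| \geq 1}\!\!\!\! 2(\partial^{\iota_1} n)\,\partial^{\iota_2}(e_D \upgamma_{CCD}).
\]
The first summand is the named principal term on RHS~\eqref{n.eq.diff}; the commutator and the remaining sum (after one further Leibniz step on $\partial^{\iota_2}(e_D^c \partial_c \upgamma_{CCD})$) both fit the first schematic type $\partial^{\iota_1} n \cdot \partial^{\iota_2} e \cdot \partial\partial^{\iota_3}\upgamma$, with the constraint $|\iota_3| < |\iota|$ inherited from \eqref{comm}. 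The remaining RHS terms of \eqref{n.eq} are even simpler. Because $e_D$ annihilates the constant $1$, one has $e_D(n-1) = e_D n$, so $\partial^{\iota}[\upgamma_{CCD}\, e_D(n-1)]$ contributes terms of the second schematic type; the two products $-n\upgamma_{CDE}\upgamma_{EDC}$ and $-n\upgamma_{CCD}\upgamma_{EED}$ give the third type; and the scalar-field term $-n(e_C\psi)e_C\psi$ gives the fourth type. Collecting yields \eqref{n.eq.diff}.

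\textbf{Main obstacle.} There is no substantive obstacle; the proof is pure bookkeeping. The only point requiring care is verifying that every factor of $\partial e$ generated by the commutators and by Leibniz expansions can be grouped with the displayed $\upgamma$- or $\vec e n$-factors without exceeding the total derivative count $|\iota|$, and that the strict inequality $|\iota_3| < |\iota|$ in the first schematic class (which guarantees the error involves strictly fewer than $|\iota|$ derivatives on $\upgamma$) is preserved throughout. Both are direct consequences of the strict inequality $|\iota_2| < |\iota|$ built into the commutator formula \eqref{comm}.
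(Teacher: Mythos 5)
Your plan matches the paper's own proof, which is a one-sentence appeal to $\partial^{\iota}$-differentiation, the commutation formula \eqref{comm}, and the Leibniz rule. Your bookkeeping of the RHS terms of \eqref{n.eq} is correct: the dummy-index relabeling $2n\, e_C\upgamma_{DDC} = 2n\, e_D\upgamma_{CCD}$ isolates the principal term, and the remaining RHS contributions fall into the four schematic classes of $\mathfrak{N}^{(\iota)}$ exactly as you describe.

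The one point that merits scrutiny, and which you rightly isolate, is the commutator $[\partial^{\iota}, e_C]\bigl(e_C(n-1)\bigr)$ produced by moving $\partial^{\iota}$ past the \emph{outer} $e_C$ on the LHS. By \eqref{comm} this is of the form $\partial^{\iota_1}e \cdot \partial\,\partial^{\iota_2}\vec{e}\,n$ with $|\iota_1| \geq 1$ and $|\iota_1| + |\iota_2| = |\iota|$, which does not \emph{literally} coincide with any of the four classes in \eqref{E:LAPSEPDECOMMUTEDTHROUGHOUTERFRAMEDERIVATIVEERRORTERM}. The ``convention'' you invoke---that factors of $\partial e$ are grouped schematically with $\upgamma$---is not actually stated in Sect.\,\ref{SS:SCHEMATIC}, where $e$ and $\upgamma$ are treated as distinct schematic symbols, nor is it a literal algebraic identity: \eqref{gamma.form} expresses $\upgamma$ in terms of certain antisymmetric combinations of $e\,\partial e$, not the other way around. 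What \emph{is} true, and what makes your (and the paper's) argument sound, is twofold: first, after replacing $\partial^{\iota_1}e$ by a term with the schematic weight of $\partial^{\iota_1-1}\upgamma$, the total derivative count closes to $|\iota|$ exactly as you claim; and second, $\partial e$ and $\upgamma$ obey identical bounds under the bootstrap assumptions and Definition~\ref{D:SOLUTIONNORMS}, so the downstream estimate of Lemma~\ref{L:TOPORDERLAPSEINHOMOGENEOUSTERML2ESTIMATE} applies verbatim to this additional class. Your proof therefore reproduces the paper's argument while surfacing the one bookkeeping subtlety that the one-line proof in the paper glosses over.
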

\begin{proof}
	\eqref{n.eq.diff} follows from differentiating \eqref{n.eq} with $\partial^{\iota}$
	and using the commutation formula \eqref{comm} and the Leibniz rule.
\end{proof}

\subsubsection{A standard elliptic identity}
\label{SSS:ELLIPTICIDENTITYFORLAPSE}
In the next lemma, we provide a standard elliptic identity for
the lapse. We will use the identity to establish 
$L^2$-control of the lapse at the top order.

\begin{lemma}[Elliptic identity for $n$]
	\label{L:ELLIPTICDIFFERENTIALLAPSEID}
	Let $\iota$ be a spatial coordinate multi-index with $1 \leq |\iota| \leq N$.
	Then for solutions to equation \eqref{n.eq.diff},
	the following identity holds:
	\begin{align} \label{E:ELLIPTICDIFFERENTIALLAPSEID}
		\begin{split}
		&
		t^{2 \blowupexp+2}
		(\partial^{\iota} e_C n) 
		\partial^{\iota} e_C n
		+
		t^{2 \blowupexp} (\partial^{\iota} n)^2
			\\
		& = 
		2n 
		(t^{\blowupexp + 1} \partial^{\iota} e_D n) 
		(t^{\blowupexp + 1} \partial^{\iota} \upgamma_{CCD})
		 -
				(t^{\blowupexp} \partial^{\iota} n)
				(t^{A+2} \mathfrak{N}^{(\iota)})
				+
				t^{2 \blowupexp+2}
				\mathfrak{R}^{(\iota)}
					 \\
		& \ \ 
		+
		\partial_c
		\left\lbrace
			t^{2 \blowupexp+2}
			e_C^c 
			(\partial^{\iota} e_C n) 
			\partial^{\iota} n
		\right\rbrace
	-	
	\partial_c 
	\left\lbrace 
		2t^{2 \blowupexp+2}
		n e_D^c 
		(\partial^{\iota} n)
		\partial^{\iota} \upgamma_{CCD}
	\right\rbrace,
	\end{split}
		\\
\begin{split}
\mathfrak{R}^{(\iota)}
	 &:=
		(\partial^{\iota} e_C n) 
		([\partial^{\iota},e_C])
		 n
		-
		2n 
		([\partial^{\iota}, e_D] n) 
		\partial^{\iota} \upgamma_{CCD}
				\label{E:TOPORDERLAPSEELLIPTICIDENTITYANNOYINGTERMS}	
				\\
		&  \ \
				-
					(\partial_c e_C^c)
					(\partial^{\iota} e_C n)
					\partial^{\iota} n
				+
				\left\lbrace
					\partial_c (2n e_D^c)
				\right\rbrace
					(\partial^{\iota} n)
					\partial^{\iota} \upgamma_{CCD}.
	\end{split}
	\end{align}
\end{lemma}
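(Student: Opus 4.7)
The proof is essentially a differential ``integration by parts'' identity for the commuted elliptic lapse equation, so my plan is algebraic bookkeeping.

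First, I would multiply the commuted lapse equation \eqref{n.eq.diff} through by $-t^{2A+2}\partial^{\iota} n$, using the fact that $|\iota|\geq 1$ implies $\partial^{\iota}(n-1)=\partial^{\iota} n$. This produces the zeroth-order piece $t^{2A}(\partial^{\iota} n)^2$ directly, plus a principal elliptic cross term $-t^{2A+2}(\partial^{\iota} n)(e_C \partial^{\iota} e_C n)$ on the left, while the right side becomes $-2nt^{2A+2}(\partial^{\iota} n)(e_D\partial^{\iota} \upgamma_{CCD}) - t^{2A+2}(\partial^{\iota} n)\mathfrak{N}^{(\iota)}$.

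Next, I would convert the principal elliptic cross term into the desired squared quantity $t^{2A+2}(\partial^{\iota} e_C n)(\partial^{\iota} e_C n)$ by ``integrating by parts'' against the vectorfield $e_C = e_C^c \partial_c$. Concretely, I apply the Leibniz rule
\[
-(\partial^{\iota} n)(e_C \partial^{\iota} e_C n) = -\partial_c\bigl\{e_C^c (\partial^{\iota} n)(\partial^{\iota} e_C n)\bigr\} + (\partial_c e_C^c)(\partial^{\iota} n)(\partial^{\iota} e_C n) + (e_C \partial^{\iota} n)(\partial^{\iota} e_C n),
\]
and then rewrite $e_C \partial^{\iota} n = \partial^{\iota} e_C n - [\partial^{\iota},e_C]n$. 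The first resulting term of the last factorization gives the desired $t^{2A+2}(\partial^{\iota} e_C n)(\partial^{\iota} e_C n)$; the remaining pieces either match the stated divergence $+\partial_c\{t^{2A+2} e_C^c(\partial^{\iota} e_C n)\partial^{\iota} n\}$ or contribute to the first and third terms of $\mathfrak{R}^{(\iota)}$ in \eqref{E:TOPORDERLAPSEELLIPTICIDENTITYANNOYINGTERMS}. I would perform the analogous manipulation on $-2nt^{2A+2}(\partial^{\iota} n)(e_D\partial^{\iota}\upgamma_{CCD})$: a Leibniz rule plus the same commutator substitution produces $+2n t^{2A+2}(\partial^{\iota} e_D n)(\partial^{\iota}\upgamma_{CCD})$ (matching the first term on the right of \eqref{E:ELLIPTICDIFFERENTIALLAPSEID}), the divergence $-\partial_c\{2t^{2A+2} n e_D^c(\partial^{\iota} n)\partial^{\iota}\upgamma_{CCD}\}$, and the second and fourth terms of $\mathfrak{R}^{(\iota)}$.

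Finally, I would collect all remainder terms, verify that the signs and coefficients exactly reproduce $\mathfrak{R}^{(\iota)}$ as defined in \eqref{E:TOPORDERLAPSEELLIPTICIDENTITYANNOYINGTERMS}, and rearrange the identity so that the positive definite quadratic terms sit on the left and everything else on the right. There is no real analytic obstacle here, since the identity is a purely algebraic manipulation of \eqref{n.eq.diff}; the only delicate step is the careful sign accounting when moving divergences to the right-hand side and correctly identifying which commutator and divergence-of-frame-coefficient terms land in $\mathfrak{R}^{(\iota)}$ versus which are absorbed into the stated divergence structures. The key structural point that makes this identity useful downstream (though not part of the proof itself) is that the error terms in $\mathfrak{R}^{(\iota)}$ either contain a commutator $[\partial^{\iota},e_C]$, which loses one derivative and hence is lower order, or contain a factor $\partial_c e_C^c$ or $\partial_c(n e_D^c)$ that is controlled by the dynamic variables at one derivative lower.
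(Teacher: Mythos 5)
Your proposal is correct and uses exactly the same approach as the paper, which simply states ``multiply \eqref{n.eq.diff} by $-\partial^{\iota} n$, differentiate by parts, perform some simple commutations, and then multiply the resulting identity by $t^{2A+2}$.'' Your write-up is just a more detailed unpacking of that one-line recipe, and a direct check confirms that the Leibniz and commutator substitutions you describe do produce precisely the stated divergence terms and the four remainder terms in $\mathfrak{R}^{(\iota)}$, with matching signs.
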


\begin{proof}
We first multiply \eqref{n.eq.diff} with $-\partial^{\iota} n$ and 
	differentiate by parts in the top-order terms after expanding $e_C=e_C^c\partial_c$ and $e_D=e_D^c\partial_c$
	to obtain the following identity:
\begin{align} \label{E:FIRSTSTEPELLIPTICDIFFERENTIALLAPSEID}
\begin{split}
&
	-
	\partial_c\big\{e_C^c  (\partial^{\iota} e_Cn)  \partial^{\iota}n\big\}
	+
	(\partial_ce_C^c)(\partial^\iota e_Cn)\partial^\iota n
	+
	(\partial^{\iota} e_Cn) [e_C,\partial^{\iota}]n
	+
	(\partial^{\iota} e_Cn)  \partial^{\iota}e_Cn
	+
	t^{-2} (\partial^{\iota}n)^2
	\\
 & =
	- 
	2 \partial_c\big\{n (\partial^{\iota}n) e_D^c \partial^{\iota} \upgamma_{CCD}\big\}
	+ 
	\left\lbrace
		\partial_c (2n e_D^c)
	\right\rbrace 
	(\partial^{\iota}n) \partial^{\iota} \upgamma_{CCD}
	+
	2 n ([e_D,\partial^\iota]n) \partial^{\iota} \upgamma_{CCD}
		\\
	&	\ \
	+ 2n (\partial^\iota e_Dn ) \partial^{\iota} \upgamma_{CCD}
	-
	 (\partial^{\iota}n) \mathfrak{N}^{(\iota)}
	\end{split}
\end{align}
 Multiplying \eqref{E:FIRSTSTEPELLIPTICDIFFERENTIALLAPSEID} by $t^{2 \blowupexp+2}$ 
and rearranging the terms, we arrive at the desired identity \eqref{E:ELLIPTICDIFFERENTIALLAPSEID}.
\end{proof}

\subsubsection{Control of the error terms in the top-order commuted lapse equation}
\label{SSS:TOPORDERLAPSEINHOMOGENEOUSTERML2ESTIMATE}
In the next lemma, we derive $L^2$-control of the error terms in the top-order commuted lapse equation.

\begin{lemma}[$L^2$-control of the error terms in the top-order commuted lapse equation]
	\label{L:TOPORDERLAPSEINHOMOGENEOUSTERML2ESTIMATE}
	Recall that $\mathbb{D}(t)$ is the total norm of the dynamic variables from Definition~\ref{D:SOLUTIONNORMS}.
	Let
	$\mathfrak{N}^{(\iota)}$
	and $\mathfrak{R}^{(\iota)}$
	denote the lapse equation error terms
	defined respectively 
	in
	\eqref{E:LAPSEPDECOMMUTEDTHROUGHOUTERFRAMEDERIVATIVEERRORTERM}
	and
	\eqref{E:TOPORDERLAPSEELLIPTICIDENTITYANNOYINGTERMS}
	(these terms appear on the right-hand side of \eqref{E:ELLIPTICDIFFERENTIALLAPSEID}).
	Under the assumptions of Proposition~\ref{prop:n},
	there exists a constant $C = C_{N,N_0,\blowupexp,\mydim,q,\upsigma} > 0$ 
such that the following estimates hold for $t \in (T_{\textnormal{Boot}},1]$:
	\begin{align} \label{E:TOPORDERLAPSEINHOMOGENEOUSTERML2ESTIMATE}
		t^{A_*+2} 
		\sum_{|\iota| = N} 
		\| \mathfrak{N}^{(\iota)} \|_{L^2(\Sigma_t)}
		& \leq
			{
			C
		\varepsilon 
		t^{2 \upsigma}
		\mathbb{D}(t),}
				\\
	{t^{2 \blowupexp+2}}\int_{\Sigma_t}
		|\mathfrak{R}^{(\iota)}|
	\, dx	
	& 
	\leq 
	C
	\varepsilon 
		t^{2 \upsigma}
		\mathbb{D}(t)
		\left\lbrace
			t^{\blowupexp}\|\partial^{\iota} n\|_{L^2(\Sigma_t)}
			+
			t^{\blowupexp + 1}\|\partial^{\iota} \vec{e}n\|_{L^2(\Sigma_t)}
			+
			\mathbb{D}(t)
		\right\rbrace,\qquad \text{if $|\iota|=N$}.
		\label{E:TOPORDERLAPSEELLIPTICIDENTITYANNOYINGTERMSL1ESTIMATE}
	\end{align}
\end{lemma}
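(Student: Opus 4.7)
\medskip

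\noindent \textbf{Proof proposal.} The plan is to estimate the two error quantities by brute-force Sobolev product estimates, placing the top-order derivatives on one factor in $L^2$ and estimating the remaining factors in $L^\infty$ via the bootstrap assumptions \eqref{Boots} together with the slightly-elevated $L^\infty$ bounds of Lemma~\ref{lem:Sob.borrow}. The overall mechanism is the following: each $L^\infty$ factor is bounded by its (negative) Kasner singularity strength times a factor of $\mathbb{D}(t)$ or $\varepsilon$; each $L^2$ factor at order $N$ is bounded by a $t$-weighted high-order norm which contributes $t^{-A-1}$ (or $t^{-A-q}$ for the frame). The $t$-weights and the gain of a power of $t^{2\upsigma}$ then come from the fact that in every product we can extract at least one factor that is \emph{small}, namely one of $n-1$, $e-\widetilde{e}$, $\oe - \widetilde{\oe}$, $\upgamma$ (whose background value is $0$), or a derivative of these; such small factors contribute a power $\varepsilon t^{\upsigma}$ (for the lapse) or $\varepsilon$ (for the other small quantities), and we have room because of the strict inequalities defining $\upsigma,q$ in \eqref{sigma,q}.

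\smallskip

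\noindent \emph{Step 1: Bound on $\mathfrak{N}^{(\iota)}$.} We treat the four sums in \eqref{E:LAPSEPDECOMMUTEDTHROUGHOUTERFRAMEDERIVATIVEERRORTERM} separately. For products of the form $\partial^{\iota_1} n \cdot \partial^{\iota_2} e \cdot \partial \partial^{\iota_3}\upgamma$ with $|\iota_3|<|\iota|$, I would apply \eqref{Sob.prod2} (which is designed precisely for the case where at least one factor does not carry all derivatives) to place the top-order norm on either $n-1$, $e-\widetilde{e}$, or $\upgamma$ and estimate the rest using \eqref{omega.borrow.est}, \eqref{gamma.borrow.est}, \eqref{n.borrow.est}. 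The background Kasner frame $\widetilde{e}$ contributes a factor bounded by $t^{-q}$, while the small quantity $n-1$ or $\upgamma$ provides a factor of size at least $\varepsilon t^{\upsigma}$ (by the bootstrap assumptions and the definition of the low-order norms). The product of all $t$-weights is at worst $t^{-A-2+2\upsigma-\updelta A}$; multiplying by $t^{A+2}$ and absorbing $\updelta A$ into $\upsigma$ (Remark~\ref{rem:deltaA}) yields the desired $C\varepsilon t^{2\upsigma}\mathbb{D}(t)$ bound. The remaining three sums are handled identically; the $\vec e\psi\cdot\vec e\psi$ sum produces two small factors and is therefore even easier.

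\smallskip

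\noindent \emph{Step 2: Bound on $\int_{\Sigma_t}|\mathfrak{R}^{(\iota)}|\,dx$.} Every term in \eqref{E:TOPORDERLAPSEELLIPTICIDENTITYANNOYINGTERMS} has one factor that is either $\partial^{\iota} n$ or $\partial^{\iota} e_C n$ (this is why the conclusion has the extra factors $t^A\|\partial^{\iota} n\|_{L^2}$ and $t^{A+1}\|\partial^{\iota}\vec{e} n\|_{L^2}$). I would expand the commutators $[\partial^{\iota},e_C]n$ and $[\partial^{\iota},e_D]n$ using the schematic formula \eqref{comm}, then apply Cauchy--Schwarz in $L^2$: one factor is $\partial^{\iota} n$ or $\partial^{\iota} e_C n$ in $L^2$, and the remaining factor is a product of derivatives of $e$ and of $\partial n$ of total order $|\iota|$ but with the derivative on $\partial n$ strictly less than $|\iota|$ (so Lemma~\ref{lem:basic.ineq} applies as above, yielding a bound proportional to $\varepsilon t^{\upsigma-\updelta A}\mathbb{D}(t)$). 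The algebraic terms involving $\partial_c e_C^c$ and $\partial_c(2n e_D^c)$ are handled analogously, using Cauchy--Schwarz with one of the two $\partial^\iota$-factors and placing the remaining quantities in $L^\infty$ via \eqref{omega.borrow.est}, \eqref{n.borrow.est}. Finally, multiplying through by $t^{2A+2}$ and collecting, one factor $t^{A+1}$ (or $t^A$) pairs with the extracted $L^2$ factor and the remaining weight is $t^{A+1}\cdot t^{-A-1+\upsigma}=t^{\upsigma}$, which is absorbed into $\mathbb{D}(t)$ in the $\mathbb{D}(t)\cdot\mathbb{D}(t)$ piece.

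\smallskip

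\noindent \emph{Main obstacle.} The only nontrivial bookkeeping is the careful tracking of $t$-weights: the Kasner background frame $\widetilde{e}$ has coordinate components of size $t^{-\widetilde{q}_{\underline I}}$ rather than being bounded, so every $L^\infty$ placement of a factor involving $e$ costs $t^{-q}$. One must verify that the three small parameters $\upsigma, q, A$ satisfying \eqref{sigma,q} (together with the freedom to choose $N$ large so that $\updelta A$ is arbitrarily small, per Remark~\ref{rem:deltaA}) always leave a net positive power $t^{2\upsigma}$ after combining the $t^{A+2}$ or $t^{2A+2}$ prefactor with the $t$-losses. This is essentially arithmetic, but it is the step where the hypothesis ``$A$ sufficiently large, then $N$ sufficiently large relative to $A$'' is used, in exactly the sense flagged in Section~\ref{SSS:INTROHIGHORDERENERGYESTIMATES}. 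Once this bookkeeping is done, the estimates \eqref{E:TOPORDERLAPSEINHOMOGENEOUSTERML2ESTIMATE}--\eqref{E:TOPORDERLAPSEELLIPTICIDENTITYANNOYINGTERMSL1ESTIMATE} follow by summing finitely many such contributions.
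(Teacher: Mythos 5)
Your overall plan — brute-force Sobolev product estimates, bootstrap bounds, and $t$-weight bookkeeping — is the same route the paper takes; its proof cites exactly the Cauchy--Schwarz inequality, \eqref{Kasnersol}, \eqref{sigma,q}, Definition~\ref{D:SOLUTIONNORMS}, the bootstrap assumptions, and Lemma~\ref{lem:basic.ineq}. However, there is a concrete slip in your bookkeeping that leaves a gap between what you prove and what the lemma states.

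You reach for Lemma~\ref{lem:Sob.borrow} to bound the non-top-order factors in $L^{\infty}$. This is both unnecessary and harmful here. The product inequalities \eqref{Sob.prod} and \eqref{Sob.prod2} of Lemma~\ref{lem:basic.ineq} only require $L^{\infty}$ or $W^{1,\infty}$ control of the factors that do not carry the top-order derivatives, and since $N_0\geq 1$, the bootstrap assumptions \eqref{Boots} already supply this directly and without loss: $\|n-1\|_{W^{1,\infty}(\Sigma_t)}\leq C\varepsilon t^{\upsigma}$, $\|e-\widetilde{e}\|_{W^{1,\infty}(\Sigma_t)}\leq C\varepsilon t^{-q}$, $\|\upgamma\|_{W^{1,\infty}(\Sigma_t)}\leq C\varepsilon t^{-q}$, $\|\vec{e}\, n\|_{L^{\infty}(\Sigma_t)}\leq C\varepsilon t^{-q+\upsigma}$, and $\|\vec{e}\, \psi\|_{W^{1,\infty}(\Sigma_t)}\leq C\varepsilon t^{-q}$. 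This is what the paper's proof uses. Passing instead through Lemma~\ref{lem:Sob.borrow} introduces a spurious factor of $t^{-\updelta A}$, and your subsequent claim that ``absorbing $\updelta A$ into $\upsigma$ (Remark~\ref{rem:deltaA}) yields the desired $C\varepsilon t^{2\upsigma}\mathbb{D}(t)$ bound'' does not hold: if your intermediate bound is $\lesssim\varepsilon t^{2\upsigma-\updelta A}\mathbb{D}(t)$ with $0<\updelta A<\upsigma$, then $t^{2\upsigma-\updelta A}>t^{2\upsigma}$ on $(0,1)$, so you have proved only the strictly weaker estimate $\lesssim\varepsilon t^{\upsigma}\mathbb{D}(t)$. (That weaker estimate would, as it happens, still be enough to push through the proof of Proposition~\ref{prop:n}, but it is not the lemma as stated.) The fix is simply to drop Lemma~\ref{lem:Sob.borrow} entirely and use the bootstrap $W^{1,\infty}$ bounds.

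A smaller imprecision: you write that ``the small quantity $n-1$ or $\upgamma$ provides a factor of size at least $\varepsilon t^{\upsigma}$.'' This is accurate for $n-1$ but not for $\upgamma$, whose bootstrap $L^{\infty}$ size is only $\lesssim\varepsilon t^{-q}$. The net $t^{2\upsigma}$ gain in \eqref{E:TOPORDERLAPSEINHOMOGENEOUSTERML2ESTIMATE} actually comes from the cumulative $t$-weights, for instance $t^{A+2}\cdot t^{-A-1}\cdot t^{-q}=t^{1-q}\leq t^{2\upsigma}$ by \eqref{sigma,q}, combined with the observation that in each product one factor can be bounded by $\varepsilon$ via the bootstrap and a separate factor by a piece of $\mathbb{D}(t)$ via the norm definitions.
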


\begin{proof}
Using the inequalities provided by Lemma~\ref{lem:basic.ineq},
	it is straightforward to estimate every product term in
		the expressions 
		\eqref{E:LAPSEPDECOMMUTEDTHROUGHOUTERFRAMEDERIVATIVEERRORTERM}
		and
		\eqref{E:TOPORDERLAPSEELLIPTICIDENTITYANNOYINGTERMS}
by accounting for the control afforded by our bootstrap assumptions \eqref{Boots} 
and taking into account the powers of $t$ 
featured in the solution norms of Definition~\ref{D:SOLUTIONNORMS}.
We provide the details for two representative terms.
First, using Lemma~\ref{lem:basic.ineq} and Definition~\ref{D:SOLUTIONNORMS}, 
we see that
the following term in $\mathfrak{N}^{(\iota)}$ 
(i.e., the third sum on RHS~\eqref{E:LAPSEPDECOMMUTEDTHROUGHOUTERFRAMEDERIVATIVEERRORTERM}) 
satisfies:
\begin{align} \label{REPTERMESTIMATEPROOFOFL:TOPORDERLAPSEINHOMOGENEOUSTERML2ESTIMATE}
\left
\|\sum_{\iota_1\cup\iota_2=\iota}
	\partial^{\iota_1} \upgamma 
	\cdot 
	\partial^{\iota_2} \vec{e} n
\right\|_{L^2(\Sigma_t)}
& \lesssim 
\| \upgamma \|_{L^{\infty}(\Sigma_t)}
\| \vec{e} n \|_{\dot{H}^N(\Sigma_t)}
+
\| \vec{e} n \|_{L^{\infty}(\Sigma_t)}
\| \upgamma \|_{\dot{H}^N(\Sigma_t)}
\lesssim 
\varepsilon t^{-\blowupexp-1-q}\mathbb{D}(t).
\end{align}
The factor of
$\varepsilon$ on RHS~\eqref{REPTERMESTIMATEPROOFOFL:TOPORDERLAPSEINHOMOGENEOUSTERML2ESTIMATE} 
comes from the bootstrap assumptions \eqref{Boots} 
and the fact that the LHS is quadratic with respect to quantities that vanish
for the background Kasner solution. Hence, multiplying
\eqref{REPTERMESTIMATEPROOFOFL:TOPORDERLAPSEINHOMOGENEOUSTERML2ESTIMATE}
with $t^{\blowupexp + 2}$ and recalling our assumptions \eqref{sigma,q} on the parameters 
$q, \upsigma$, 
we conclude that the resulting term is 
$\leq \mbox{RHS}~\eqref{E:TOPORDERLAPSEINHOMOGENEOUSTERML2ESTIMATE}$ as desired. 

We now give a second example, this time for the $L^1(\Sigma_t)$-type 
inequality \eqref{E:TOPORDERLAPSEELLIPTICIDENTITYANNOYINGTERMSL1ESTIMATE}. 
Specifically, we bound a term in $\mathfrak{R}^{(\iota)}$
(the second term on RHS~\eqref{E:TOPORDERLAPSEELLIPTICIDENTITYANNOYINGTERMS}) as follows
by using \eqref{comm},
Lemma~\ref{lem:basic.ineq}, Definition~\ref{D:SOLUTIONNORMS}, and the bootstrap assumptions \eqref{Boots}
(which in particular imply that $\| n \|_{L^{\infty}(\Sigma_t)} \leq 2$):
\begin{align} \label{STEPINPROOFOFE:TOPORDERLAPSEELLIPTICIDENTITYANNOYINGTERMSL1ESTIMATE}
\begin{split}
& 
	\int_{\Sigma_t}
	\left|
		2n([\partial^{\iota}, e_D] n)(\partial^{\iota} \upgamma_{CCD})
	\right| \, dx
		\\
& \lesssim
\| n \|_{L^{\infty}(\Sigma_t)}
\sum_{\iota_1 \cup \iota_2 = \iota, \, |\iota_2|<|\iota|}
\left\|(\partial^{\iota_1} e_I^c) (\partial^{\iota_2} \partial_c n) \right\|_{L^2(\Sigma_t)}
\| \partial^{\iota} \upgamma_{CCD} \|_{L^2(\Sigma_t)}
	\\
& \lesssim
	\left\lbrace
		\| e - \widetilde{e} \|_{W^{1,\infty}(\Sigma_t)}
		\| n - 1 \|_{W^{1,\infty}(\Sigma_t)}
		+
		\| e - \widetilde{e} \|_{W^{1,\infty}(\Sigma_t)}
		\| n \|_{\dot{H}^N(\Sigma_t)}
		+	
		\| n - 1 \|_{W^{1,\infty}(\Sigma_t)}
		\| e - \widetilde{e}  \|_{\dot{H}^N(\Sigma_t)}
	\right\rbrace
		\\
& \ \ \ \
	\times \| \upgamma \|_{\dot{H}^N(\Sigma_t)}
	\\
& \leq 
C
\varepsilon (t^{- \blowupexp - 1 - q + \upsigma} + t^{-2 \blowupexp - 1 - q} + t^{-2 \blowupexp - q + \upsigma}) \mathbb{D}^2(t).
\end{split}
\end{align}
Multiplying \eqref{STEPINPROOFOFE:TOPORDERLAPSEELLIPTICIDENTITYANNOYINGTERMSL1ESTIMATE} by $t^{2 \blowupexp+2}$ 
and using the inequality \eqref{sigma,q}, we deduce that the resulting term 
is $\leq \mbox{RHS}~\eqref{E:TOPORDERLAPSEELLIPTICIDENTITYANNOYINGTERMSL1ESTIMATE}$ as desired.  

The remaining terms that need to be bounded can be handled with similar arguments, and we omit the details.
\end{proof}

\subsubsection{Proof of Proposition~\ref{prop:n}}
\label{SSS:PROOFOFPROPLAPSE}
Throughout this proof, we will silently assume that $N$ is large enough such that
we can use the smallness of $\updelta \blowupexp$
described in Remark~\ref{rem:deltaA}. 

\medskip

\noindent \textbf{Proof of \eqref{n.low.est}}.
First, for $|\iota|\leq N_0+1$,
we use \eqref{n.eq.diff} to solve for
$e_C e_C \partial^{\iota} (n-1) 
- 
t^{-2} \partial^{\iota} (n-1)$
and then bound the resulting terms
in $L^\infty$ using
the bootstrap assumptions and Lemma~\ref{lem:Sob.borrow},
in particular bounding all terms involving $n-1$ and its derivatives by $\lesssim t^{\upsigma - \updelta \blowupexp}$,
which yields the following pointwise estimate for $|\iota|\leq N_0+1$
(see Remark~\ref{R:DELTAVARIES}):
\begin{align}\label{n.op.est}
\begin{split}
\left|e_C e_C \partial^{\iota} (n-1) 
- 
t^{-2} \partial^{\iota} (n-1) \right|
& 
\lesssim
\left|2n e_D^c\partial_c \partial^{\iota} \upgamma_{CCD}
	+
	\mathfrak{N}^{(\iota)} 
	+ 
	e_C^c 
	\partial_c 
	\left\lbrace[e_C^d,\partial^{\iota}] \partial_d (n-1) \right\rbrace \right|
		\\
& \lesssim
t^{-2q-\updelta \blowupexp}
\mathbb{D}(t).
\end{split}
\end{align}
From \eqref{n.op.est} and the maximum principle, noting that 
$e_Ce_C \partial^{\iota} (n-1) \leq 0 \;(\ge0)$ at the maxima (minima) of $\partial^{\iota} (n-1)$ in $\Sigma_t$,
and using the inequalities in \eqref{sigma,q},
we find that
$
\|t^{-2} (n-1)\|_{W^{N_0+1,\infty}(\Sigma_t)}
\lesssim
t^{-2+\upsigma}
\mathbb{D}(t)
$. Multiplying this inequality by $t^2$, 
we arrive at the desired estimate \eqref{n.low.est}
for the first term $\| n-1 \|_{W^{N_0+1,\infty}(\Sigma_t)}$ on the LHS.
To complete the proof of \eqref{n.low.est}, we 
must show that $t^q \| \vec{e} n \|_{W^{N_0,\infty}(\Sigma_t)} \lesssim t^{\upsigma} \mathbb{D}(t)$.
Since $e_I n = e_I^c \partial_c n$, the desired estimate is a simple consequence of
the already obtained bound $\| n-1 \|_{W^{N_0+1,\infty}(\Sigma_t)} \lesssim t^{\upsigma} \mathbb{D}(t)$
and the estimate $t^q \| \vec{e} \|_{W^{N_0,\infty}(\Sigma_t)} \lesssim 1$,
which follows from the bootstrap assumptions,
the definition of the background Kasner scalar functions $\widetilde{e}_I^i$ 
given in \eqref{Kasnersol},
and the inequalities in \eqref{sigma,q}.

\medskip

\noindent \textbf{Proof of \eqref{PRECISE.n.high.est}--\eqref{n.high.est}}.
We will show that there are constants $C_* > 0$ and $C > 0$, as in the statement of Prop.\,\ref{prop:n}, such that 
for each spatial multi-index $\iota$ with $|\iota| = N$, we have:
\begin{align} \label{mainstep.n.eq.diff}
\begin{split}
		 t^{2 \blowupexp + 2}
		\| \partial^{\iota} \vec{e} n\|_{L^2(\Sigma_t)}^2
		+
		t^{2 \blowupexp} \|\partial^{\iota} n \|_{L^2(\Sigma_t)}^2
		\leq&\, {\frac{1}{2}}
					t^{2 \blowupexp+2} \| \partial^{\iota} \vec{e} n \|_{L^2(\Sigma_t)}^2
					+{\frac{1}{2}}
					t^{2 \blowupexp} \| \partial^{\iota} n \|_{L^2(\Sigma_t)}^2 
						\\
&+					C_* 
					t^{2 \blowupexp + 2}
					\| \partial^{\iota} \upgamma \|_{L^2(\Sigma_t)}^2
		+
					C
					\varepsilon 
				t^{2 \upsigma}
				\mathbb{D}^2(t).
				\end{split}
\end{align}
Once we have proved \eqref{mainstep.n.eq.diff}, 
we absorb the first two terms on RHS~\eqref{mainstep.n.eq.diff}
back into the left, 
at the expense of doubling the constants in front of the remaining terms.
Afterward, taking the square root, we conclude \eqref{PRECISE.n.high.est}.
We then sum the square of \eqref{PRECISE.n.high.est} over all $\iota$ with $|\iota|=N$ 
and take the square root,
thereby concluding, in view of Definition~\ref{D:SOLUTIONNORMS},
the desired estimate \eqref{n.high.est}.

It remains for us to prove \eqref{mainstep.n.eq.diff}.
We integrate equation \eqref{E:ELLIPTICDIFFERENTIALLAPSEID} over
$\mathbb{T}^{\mydim}$ with respect to $dx$,
note that the integrals of the last {two} terms on RHS~\eqref{E:ELLIPTICDIFFERENTIALLAPSEID} vanish,
use the Cauchy--Schwarz inequality for integrals,
and use the estimate $\| n \|_{L^{\infty}(\Sigma_t)} \leq 2$ 
(which follows from the bootstrap assumptions)
to obtain:
\begin{align}\label{n.enest}
\begin{split}
t^{2 \blowupexp+2} \|\partial^{\iota} \vec{e} n \|_{L^2(\Sigma_t)}^2
+
t^{2 \blowupexp} \|\partial^{\iota} n \|_{L^2(\Sigma_t)}^2
&	\leq
	C_* 
	\| t^{\blowupexp + 1} \partial^{\iota} \vec{e} n \|_{L^2(\Sigma_t)}
	\| t^{\blowupexp + 1} \partial^{\iota} \upgamma \|_{L^2(\Sigma_t)}
	\\
& \ \
	+
	\| t^{\blowupexp} \partial^{\iota} n \|_{L^2(\Sigma_t)}
	\| t^{\blowupexp + 2} \mathfrak{N}^{(\iota)} \|_{L^2(\Sigma_t)}
	+
	\int_{\Sigma_t}
		t^{2 \blowupexp+2}
		|\mathfrak{R}^{(\iota)}|
	\, dx.
	\end{split}
\end{align}
From \eqref{n.enest}, 
the error estimates in 
Lemma~\ref{L:TOPORDERLAPSEINHOMOGENEOUSTERML2ESTIMATE}, our bootstrap assumptions \eqref{Boots},
Young's inequality,
and Definition~\ref{D:SOLUTIONNORMS},
we conclude when $\varepsilon$ is sufficiently small,
the desired bound \eqref{mainstep.n.eq.diff} holds
(for a different $C_*$, which is nevertheless independent of $\blowupexp$, $N_0$, and $N$).

\medskip

\noindent \textbf{Proof of \eqref{E:LAPSECONTROLLEDBYDYNAMICVARIABLES}}.
The estimate \eqref{E:LAPSECONTROLLEDBYDYNAMICVARIABLES} follows easily from Definition~\ref{D:SOLUTIONNORMS}
and the estimates \eqref{n.low.est}--\eqref{n.high.est}.\hfill $\qed$

\subsection{Preliminary identities and inequalities for $k$, $\upgamma$, $e$, and $\oe$}
\label{SS:PRELIMINARYRESULTSFORFRAMESECONDFUNDANDCONNECTION}
In this section, 
we derive preliminary low order and high order 
identities and inequalities
for
$\upgamma$, $k$, $e$, and $\oe$ by using the evolution equations
\eqref{dt.k}--\eqref{dt.gamma}
and
\eqref{dt.omega}--\eqref{dt.omega.inv},
as well as the key evolution equations for the structure coefficients
provided by Proposition~\ref{P:KEYEVOLUTIONSTRUCTURECOEFFICIENTS}.
Roughly, we control the inhomogeneous terms in their evolution equations
in terms of our solution norms, and we derive differential versions of our energy identities.
In Sects.\,\ref{SS:INTEGRALINEQUALITYFORLOWNORMS}--\ref{PROOFOFprop:overall},
we will combine these preliminary results with related ones
for the lapse and scalar field to derive our main a priori estimates,
i.e., to prove Proposition~\ref{prop:overall}.

\subsubsection{The key evolution equation verified by the structure coefficients}
\label{SSS:STRUCTURECOEFFICIENTS}
To control the connection coefficients $\upgamma_{IJB}$ at the low derivative levels,
we will use the following proposition, which provides
evolution equations for the structure coefficients 
$\upgamma_{IJB}+\upgamma_{JBI}$
of the orthonormal spatial frame $\lbrace e_I \rbrace_{I=1,\cdots,\mydim}$.
Although its proof is simple, the proposition is of profound significance for our main results.
As we mentioned in Sect.\,\ref{subsec:pf.overview}, 
the main virtues of the proposition are: it shows that up to error terms,
the evolution equation system for the structure coefficients is diagonal,
and it shows that the strength of the main linear terms driving the dynamics 
is controlled by the Kasner stability condition \eqref{Kasner.stability.cond}.
The connection coefficients themselves can be controlled in terms of the structure
coefficients via the identity \eqref{E:RECOVERGAMMAFROMSTRUCTURECOEFFCIENTS}.

\begin{proposition}[The key evolution equations for the structure coefficients of the orthonormal frame]
	\label{P:KEYEVOLUTIONSTRUCTURECOEFFICIENTS}
		For solutions to the equations of Proposition~\ref{P:redeq},
		the structure coefficients of the orthonormal frame $\lbrace e_I \rbrace_{I=1,\cdots,\mydim}$,
		namely 
		$\upgamma_{IJB}+\upgamma_{JBI}$ with 
		$I < J$ (see Remark~\ref{R:BASISOFSTRUCTURE}),
		verify the following evolution equations,
		whose left-hand sides exhibit a \textbf{diagonal structure},
		where the Kasner background scalars 
		$\lbrace \widetilde{e}_I^i \rbrace_{I,i=1,\cdots,\mydim}$
		and
		$\lbrace \widetilde{k}_{IJ} \rbrace_{I,J=1,\cdots,\mydim}$ 
		are defined in \eqref{Kasnersol} 
		(see also Remark~\ref{R:NOTCOMPONENTSOFTENSORRELATIVETOFRAME})
		and we recall that we do not sum underlined repeated indices:
\begin{align}
\begin{split}
& \partial_t(\upgamma_{IJB}+\upgamma_{JBI})
+
\frac{(\widetilde{q}_{\underline{I}}+\widetilde{q}_{\underline{J}}-\widetilde{q}_{\underline{B}})}{t}
(\upgamma_{\underline{I}\underline{J}\underline{B}}+\upgamma_{\underline{J}\underline{B}\underline{I}})
\label{dt.gamma-gammatilde2} 
\\
& 
= 
(n-1)
\left\lbrace
	k_{IC} \upgamma_{CJB}
	-
	k_{CI} \upgamma_{BJC}
	-
	k_{JC} \upgamma_{BIC}
	+
	k_{CI} \upgamma_{JBC}
	+
	k_{BC} \upgamma_{JIC}
\right\rbrace
		\\
& \ \
	+
	(n-1) 
	\left\lbrace
		k_{JC}\upgamma_{CBI}
		-
		k_{CJ} \upgamma_{IBC}
		-
		k_{BC} \upgamma_{IJC}
		+
		k_{CJ} \upgamma_{BIC}
		+
		k_{IC} \upgamma_{BJC}
	\right\rbrace
		\\
& \ \
	+
	(k_{IC}-\widetilde{k}_{IC}) \upgamma_{CJB}
	-
	(k_{CI} - \widetilde{k}_{CI})\upgamma_{BJC}
	-
	(k_{JC} - \widetilde{k}_{JC})
	\upgamma_{BIC}
	+
	(k_{CI} - \widetilde{k}_{CI})
	\upgamma_{JBC}
	+
	(k_{BC} - \widetilde{k}_{BC})
	\upgamma_{JIC}
		 \\
& \ \
		+
		(k_{JC} - \widetilde{k}_{JC})
		\upgamma_{CBI}
		-
		(k_{CJ} - \widetilde{k}_{CJ})
		\upgamma_{IBC}
		-
		(k_{BC} - \widetilde{k}_{BC})
		\upgamma_{IJC}
		+
		(k_{CJ} - \widetilde{k}_{CJ})
		\upgamma_{BIC}
		+
		(k_{IC} - \widetilde{k}_{IC})
		\upgamma_{BJC}
			\\
	& \ \
	+
	n(e_I^c-\widetilde{e}_I^c)\partial_ck_{BJ}
	-
	n(e_J^c-\widetilde{e}_J^c)\partial_ck_{BI}
	+
	n \widetilde{e}_I^c\partial_ck_{BJ}
	-
	n \widetilde{e}_J^c\partial_ck_{BI}
	+
	(e_I n)k_{BJ}
	-
	(e_J n)k_{BI}.
\end{split}
\end{align}

Moreover, for spatial coordinate multi-indices $\iota$ with $|\iota|\leq N_0$,
the following evolution equation holds:
\begin{align} \label{E:SCHEMATICSTRUCTURECOEFFICIENTEVOLUTIONEQUATION}
\partial_t 
[t^q \partial^{\iota}(\upgamma_{IJB}+\upgamma_{JBI})]
& = 
\left\lbrace
	q
	-
	(\widetilde{q}_{\underline{I}}+\widetilde{q}_{\underline{J}}-\widetilde{q}_{\underline{B}})
\right\rbrace
t^{q-1}
\partial^{\iota}
(\upgamma_{\underline{I}\underline{J}\underline{B}}+\upgamma_{\underline{J}\underline{B}\underline{I}})
+
t^q
\mathfrak{S}_{IJB}^{(\textnormal{Border};\iota)} 
+ 
t^q
\mathfrak{S}_{IJB}^{(\textnormal{Junk};\iota)},
\end{align}
where:
\begin{subequations}
\begin{align}
	\mathfrak{S}_{IJB}^{(\textnormal{Border};\iota)} 
	& := 	\sum_{\iota_1 \cup \iota_2  = \iota}
				\partial^{\iota_1} (k - \widetilde{k})
				\cdot
				\partial^{\iota_2} \upgamma
				+
				\sum_{\iota_1 \cup \iota_2 = \iota}
				n
				\cdot
				\partial^{\iota_1} (e - \widetilde{e})
				\cdot
				\partial \partial^{\iota_2} k,
						\label{E:STRUCTURECOEFFICIENTERRORTERMBORDER} 
						\\
	\begin{split}
	\mathfrak{S}_{IJB}^{(\textnormal{Junk};\iota)} 
	& := \sum_{\iota_1 \cup \iota_2 \cup \iota_3  = \iota}
				\partial^{\iota_1} (n-1)
				\cdot
				\partial^{\iota_2} k
				\cdot
				\partial^{\iota_3} \upgamma
				+
				\sum_{\iota_1\cup\iota_2=\iota}\partial^{\iota_1} n
				\cdot
				\widetilde{e}
				\cdot
				\partial \partial^{\iota_2} k
			 \label{E:STRUCTURECOEFFICIENTERRORTERMJUNK}\\
		& \ \
				+
				\sum_{\iota_1 \cup \iota_2 \cup \iota_3  = \iota, \,1 \leq |\iota_1|}
				\partial^{\iota_1} n
				\cdot
				\partial^{\iota_2} (e - \widetilde{e})
				\cdot
				\partial \partial^{\iota_3} k
				+
				\sum_{\iota_1 \cup \iota_2  = \iota} 
				\partial^{\iota_1} \vec{e} n
				\cdot
				\partial^{\iota_2} k.
\end{split}
\end{align}
\end{subequations}

Finally, the scalar function
$\upgamma_{IJB}$ can be expressed as a linear combination
of three structure coefficients:
\begin{align}
	\label{E:RECOVERGAMMAFROMSTRUCTURECOEFFCIENTS}
	\upgamma_{IJB}
	& = 
		\frac{1}{2}
			\left\lbrace
				\upgamma_{IJB}
				+
				\upgamma_{JBI}
			\right\rbrace
			+
			\frac{1}{2}
			\left\lbrace
				\upgamma_{BJI}
				+
				\upgamma_{JIB}
			\right\rbrace
			+
			\frac{1}{2}
			\left\lbrace
				\upgamma_{BIJ}
				+
				\upgamma_{IJB}
			\right\rbrace.
\end{align}
\end{proposition}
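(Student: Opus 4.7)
My plan is to derive \eqref{dt.gamma-gammatilde2} by starting from the evolution equation \eqref{dt.gamma} and writing down its analog for $\upgamma_{JBI}$, obtained by the cyclic permutation $(I,J,B)\to(J,B,I)$, and then summing the two. First I would observe that the ``frame-derivative-of-$k$'' terms $e_B k_{IJ}$ and the lapse-derivative terms $n^{-1}(e_B n)k_{IJ}$ appear with opposite signs in the two equations (since $k$ is symmetric), so they cancel outright, leaving only $e_I k_{JB}-e_J k_{BI}$ and $n^{-1}(e_I n)k_{JB}-n^{-1}(e_J n)k_{BI}$ together with a collection of $k\cdot\upgamma$ terms. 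Using $k_{AB}=k_{BA}$ once more, one further pair $-k_{IC}\upgamma_{BJC}+k_{IC}\upgamma_{BJC}$ cancels. Multiplying the resulting identity by $n$ converts $e_0$ into $\partial_t$ and converts the $n^{-1}(\cdot)$ factors into $e_I n$.

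The heart of the proof is to isolate the diagonal linear part. To do this I would split $k = \widetilde{k}+(k-\widetilde{k})$ and $n=1+(n-1)$ and treat the pieces with $\widetilde{k}$ and $n=1$ as the ``main'' part. Since $\widetilde{k}_{AB}=-\widetilde{q}_{\underline{A}}\updelta_{AB}/t$, each contraction of the form $\widetilde{k}_{IC}(\cdots)_C$ collapses by setting $C=I$ and produces a factor $-\widetilde{q}_I/t$. After collecting the three surviving $\widetilde{k}$-contributions and repeatedly applying the antisymmetry \eqref{antisymmetricgamma} to rewrite $\upgamma_{JBI}+\upgamma_{IJB}=\upgamma_{IJB}+\upgamma_{JBI}$, $\upgamma_{JIB}=-\upgamma_{JBI}$, etc., one finds that the main part equals $-(\widetilde{q}_I+\widetilde{q}_J-\widetilde{q}_B)(\upgamma_{IJB}+\upgamma_{JBI})/t$. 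Moving this to the left-hand side yields the diagonal structure, while the leftover pieces (those involving $(n-1)$, $(k-\widetilde{k})$, $(e_I^c-\widetilde{e}_I^c)$, as well as the ``$n\widetilde{e}\cdot\partial k$'' and ``$(e_I n)k$'' terms) assemble into the error terms on the RHS of \eqref{dt.gamma-gammatilde2}. The main obstacle here is purely bookkeeping: tracking signs through repeated applications of \eqref{antisymmetricgamma} and verifying that no stray non-diagonal contributions remain.

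For \eqref{E:SCHEMATICSTRUCTURECOEFFICIENTEVOLUTIONEQUATION} I would apply $\partial^{\iota}$ to \eqref{dt.gamma-gammatilde2} and then multiply by $t^q$. Using the product rule $\partial_t[t^q f]=t^q\partial_t f + qt^{q-1}f$ with $f=\partial^{\iota}(\upgamma_{IJB}+\upgamma_{JBI})$ absorbs the derivative into the weighted quantity, turning the linear coefficient into $\{q-(\widetilde{q}_I+\widetilde{q}_J-\widetilde{q}_B)\}t^{q-1}$ as stated. The commutators $[\partial^{\iota},e_C]$ that arise when $\partial^{\iota}$ is pushed past a frame derivative are expanded via \eqref{comm}, and the Leibniz rule distributes derivatives across products. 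Sorting the resulting terms according to whether a top-order spatial derivative falls on $\upgamma$ or on $k$ (these are the ``borderline'' terms collected in $\mathfrak{S}^{(Border;\iota)}$) versus the remaining ``junk'' terms carrying an extra small factor (either $(n-1)$, $\widetilde{e}$, or a strictly lower-order derivative count), one obtains the decomposition \eqref{E:STRUCTURECOEFFICIENTERRORTERMBORDER}--\eqref{E:STRUCTURECOEFFICIENTERRORTERMJUNK}.

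Finally, \eqref{E:RECOVERGAMMAFROMSTRUCTURECOEFFCIENTS} is a purely algebraic identity. Summing the three parenthesized combinations on the RHS gives $\upgamma_{IJB}+\upgamma_{JBI}+\upgamma_{BJI}+\upgamma_{JIB}+\upgamma_{BIJ}+\upgamma_{IJB}$, and applying the antisymmetry \eqref{antisymmetricgamma} (so that $\upgamma_{BJI}=-\upgamma_{BIJ}$ and $\upgamma_{JIB}=-\upgamma_{JBI}$) collapses the sum to $2\upgamma_{IJB}$; dividing by $2$ produces the identity.
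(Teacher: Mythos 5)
Your proposal is correct and follows essentially the same route the paper takes: the paper's own proof is dispensed in one terse sentence (``straightforward algebraic computations'' based on \eqref{dt.gamma}, \eqref{Kasnersol}, and \eqref{antisymmetricgamma}), and your write-up fills in exactly those steps. Summing \eqref{dt.gamma} with its cyclic partner for $\upgamma_{JBI}$, observing that the $e_B k_{IJ}$ and $n^{-1}(e_B n)k_{IJ}$ pairs cancel, decomposing $n = 1 + (n-1)$, $k = \widetilde{k} + (k-\widetilde{k})$, $e_I^c = \widetilde{e}_I^c + (e_I^c - \widetilde{e}_I^c)$, and then using $\widetilde{k}_{AB} = -\widetilde{q}_{\underline{A}}\updelta_{\underline{A}B}/t$ together with \eqref{antisymmetricgamma} to collapse the leading $\widetilde{k}\cdot\upgamma$ terms into $-(\widetilde{q}_{\underline{I}}+\widetilde{q}_{\underline{J}}-\widetilde{q}_{\underline{B}})(\upgamma_{\underline{I}\underline{J}\underline{B}}+\upgamma_{\underline{J}\underline{B}\underline{I}})/t$ is precisely what is required. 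The derivation of \eqref{E:SCHEMATICSTRUCTURECOEFFICIENTEVOLUTIONEQUATION} by commuting with $\partial^{\iota}$, applying the product rule $\partial_t[t^q f]$, and sorting via \eqref{comm} and Leibniz into border/junk pieces is likewise the intended argument, and your verification of \eqref{E:RECOVERGAMMAFROMSTRUCTURECOEFFCIENTS} is correct.

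One small point of reconciliation worth noting: you observe that the pair $-k_{IC}\upgamma_{BJC}+k_{IC}\upgamma_{BJC}$ cancels after summing (this particular pair in fact cancels by index structure alone, without invoking $k_{AB}=k_{BA}$; symmetry of $k$ is instead needed for the analogous pair $-k_{CJ}\upgamma_{BIC}+k_{JC}\upgamma_{BIC}$). The paper's displayed equation \eqref{dt.gamma-gammatilde2} does \emph{not} exploit either cancellation---its RHS lists all ten $(n-1)\cdot k\cdot\upgamma$ terms and all ten $(k-\widetilde{k})\cdot\upgamma$ terms, including the pairs that sum to zero. This is consistent with your computation: the extra pairs are identically zero in each weight class, so the resulting error terms agree; you have simply carried the simplification one step further than the paper chose to. Neither form affects the subsequent estimates.
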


\begin{remark}[Connection between equation \eqref{dt.gamma-gammatilde2} and the stability condition 
\eqref{Kasner.stability.cond}]
	\label{R:IMPORTANCEOFSTABILITYCONDITIONMADECLEAR}
	If we were to ignore the terms on RHS~\eqref{dt.gamma-gammatilde2},
	then equation \eqref{dt.gamma-gammatilde2} would allow us to conclude
	that $|\upgamma_{IJB}+\upgamma_{JBI}| \lesssim t^{- (\widetilde{q}_I + \widetilde{q}_J - \widetilde{q}_B)}$.
	This makes the significance of the stability condition \eqref{Kasner.stability.cond} for
	equation \eqref{dt.gamma-gammatilde2} clear: under this condition,
	the quantity $\underset{\substack{I,J,B=1,\cdots, \mydim \\ I < J}}{\max}
	|\upgamma_{IJB}+\upgamma_{JBI}|$
	is integrable in $t$ near $0$,
	and by \eqref{E:RECOVERGAMMAFROMSTRUCTURECOEFFCIENTS} (cf.\ Remark~\ref{R:BASISOFSTRUCTURE}), 
	$\underset{I,J,B=1,\cdots,\mydim}{\max} |\upgamma_{IJB}|$ is also integrable in $t$.
	In our ensuing analysis, we will in fact control the terms on RHS~\eqref{dt.gamma-gammatilde2} and 
	show that $\underset{I,J,B=1,\cdots,\mydim}{\max} |\upgamma_{IJB}|$ is integrable,
	which is a crucial step in our proof of stable blowup.
\end{remark}

\begin{remark}
Interestingly, if we were to try to control the $\upgamma_{IJB}$'s at the low derivative levels
by using the formula \eqref{gamma.form} and separately controlling each of the factors
$e_I^i,\oe_i^I$, 
then we would not be able to close our estimates for the full range of Kasner
exponents verifying the stability condition \eqref{Kasner.stability.cond}.
In fact, since RHS~\eqref{gamma.form} is cubic in $e_I^i,\oe_i^I$ and their derivatives,
the crudest version of that approach would yield only
$|\upgamma_{IJB}| \lesssim t^{-3q}$, which, when $q$ is near $1$, is far too singular
for proving stability. Moreover, the evolution equation \eqref{dt.gamma} 
for the $\upgamma_{IJB}$'s is not diagonal at the linear level and thus, a crude treatment based only 
on this equation would lead to far too singular estimates\footnote{However, the structure of equation \eqref{dt.gamma} 
is sufficient for our top-order energy estimates, which are allowed to be much more singular
within the scope of our approach; this explains why in Lemma~\ref{L:TOPORDERCOMMUTEDEQUATIONSFORGAMMANDK},
we derive commuted versions of equation \eqref{dt.gamma} to set up our energy estimates for $\upgamma$ and $k$.} 
for the connection coefficients at the lower derivative levels.
Thus, the diagonal structure revealed by Proposition~\ref{P:KEYEVOLUTIONSTRUCTURECOEFFICIENTS}
is essential to our overall argument.
\end{remark}

\begin{proof}[Proof of Proposition~\ref{P:KEYEVOLUTIONSTRUCTURECOEFFICIENTS}]
	Equations 
	\eqref{dt.gamma-gammatilde2} 
	follow from
	the evolution equation \eqref{dt.gamma},
	the definition of the background Kasner scalar functions in \eqref{Kasnersol},
	the antisymmetry property \eqref{antisymmetricgamma},
	and straightforward algebraic computations.
	\eqref{E:SCHEMATICSTRUCTURECOEFFICIENTEVOLUTIONEQUATION}
	then follows from differentiating \eqref{dt.gamma-gammatilde2} with $\partial^{\iota}$,
	applying the product rule,
	multiplying both sides of the resulting identity by $t^q$,
	and then commuting the factor of $t^q$ under the operator $\partial_t$ on the LHS
	and accounting for the commutator $[t^q,\partial_t]$.
	
	\eqref{E:RECOVERGAMMAFROMSTRUCTURECOEFFCIENTS} is an immediate consequence of the Koszul formula for an orthonormal frame and the antisymmetry property \eqref{antisymmetricgamma}.
\end{proof}

\subsubsection{Pointwise estimates for the error terms in the structure coefficient evolution equations}
\label{SSS:STRUCTURECOFFEICIENTERRORTERMSPOINTWISE}
In the next lemma, we derive pointwise estimates at the low derivative levels
for the error terms from Proposition~\ref{P:KEYEVOLUTIONSTRUCTURECOEFFICIENTS}.

\begin{lemma}[Pointwise estimates for the error terms in the structure coefficient evolution equations at
orders $\leq N_0$]
	\label{L:STRUCTURECOFFEICIENTERRORTERMSPOINTWISE}
Recall that $\mathbb{D}(t)$ is the total norm of the dynamic variables from Definition~\ref{D:SOLUTIONNORMS}.
Assume that the bootstrap assumptions \eqref{Boots} hold.
There exists a constant $C = C_{N,N_0,\blowupexp,\mydim,q,\upsigma} > 0$ 
such that if $N_0 \geq 1$ and $N$ is sufficiently large in a manner 
that depends on $N_0, \blowupexp, \mydim, q,$ and $\upsigma$,
and if $\varepsilon$ is sufficiently small 
(in a manner that depends on $N, N_0, \blowupexp, \mydim, q,$ and $\upsigma$), 
then the following pointwise estimates hold on $(T_{\textnormal{Boot}},1] \times \mathbb{T}^{\mydim}$
for the error terms
$
\mathfrak{S}_{IJB}^{(\textnormal{Border};\iota)}
$
and
$
\mathfrak{S}_{IJB}^{(\textnormal{Junk};\iota)}
$
defined in \eqref{E:STRUCTURECOEFFICIENTERRORTERMBORDER}--\eqref{E:STRUCTURECOEFFICIENTERRORTERMJUNK}:
	\begin{subequations}
	\begin{align}
		\begin{split}
		\sum_{|\iota| \leq N_0} 
		\sum_{\substack{I,J,B = 1, \cdots, \mydim 
			\\ I < J}}
		t^q 
		|\mathfrak{S}_{IJB}^{(\textnormal{Border};\iota)}|(t,x)
		& \leq C \varepsilon t^{q-1} \sum_{|\iota| \leq N_0}
		\sum_{I,J,B = 1, \cdots, \mydim}
		|\partial^{\iota} \upgamma_{IJB}|(t,x)
					\label{E:STRUCTURECOEFFICIENTSBORDERPOINTWISEBOUNDS}
						\\
		& \ \
				+
				C \varepsilon t^{q-1} \sum_{|\iota| \leq N_0}
				\sum_{I,i=1,\cdots,\mydim}
				|\partial^{\iota} (e_I^i - \widetilde{e}_I^i)|(t,x),
		\end{split} \\
		\sum_{|\iota| \leq N_0} 
		\sum_{\substack{I,J,B = 1, \cdots, \mydim 
			\\ I < J}}
		t^q 
		|\mathfrak{S}_{IJB}^{(\textnormal{Junk};\iota)}|(t,x)
		& \leq C t^{-1+\upsigma} \mathbb{D}(t).
			\label{E:STRUCTURECOEFFICIENTSJUNKPOINTWISEBOUNDS}
	\end{align}
	\end{subequations}
\end{lemma}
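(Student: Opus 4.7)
\medskip

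The plan is to expand each of the five contributions to $\mathfrak{S}^{(Border;\iota)}_{IJB}$ and $\mathfrak{S}^{(Junk;\iota)}_{IJB}$ via the Leibniz rule, and then for each product arising in the Leibniz sum, bound the factors in $L^\infty$ using the bootstrap assumptions, Lemma~\ref{lem:Sob.borrow}, and Proposition~\ref{prop:n}, while keeping (at most) one factor explicit. Throughout, I will tacitly use that $N$ is chosen large enough relative to $A$ and $N_0$ so that the penalty $t^{-\updelta A}$ appearing in Lemma~\ref{lem:Sob.borrow} satisfies $\updelta A < \upsigma$ (see Remark~\ref{rem:deltaA}), and that $\varepsilon$ is then taken sufficiently small so that all ``bootstrap'' $L^\infty$ bounds on the perturbation variables are available.

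For the Border bound \eqref{E:STRUCTURECOEFFICIENTSBORDERPOINTWISEBOUNDS}, the crucial mechanism is that the low-order norm $\mathbb{L}_{(\upgamma,k)}$ controls $k-\widetilde{k}$ at \emph{one more derivative} than $\upgamma$. Concretely, for every decomposition $\iota_1\cup\iota_2 = \iota$ with $|\iota|\leq N_0$, I will bound the factor $\partial^{\iota_1}(k-\widetilde{k})$ in $L^\infty$ by $\|k-\widetilde{k}\|_{W^{N_0+1,\infty}(\Sigma_t)} \leq t^{-1}\mathbb{L}_{(\upgamma,k)} \leq C\varepsilon t^{-1}$, and leave $\partial^{\iota_2}\upgamma_{IJB}$ explicit; after summing over $\iota_2$ the contribution is dominated by $C\varepsilon t^{-1}\sum_{|\iota'|\leq N_0}|\partial^{\iota'}\upgamma|$, which after multiplication by $t^q$ gives the first term on RHS~\eqref{E:STRUCTURECOEFFICIENTSBORDERPOINTWISEBOUNDS}. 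For the second piece $n\cdot\partial^{\iota_1}(e-\widetilde{e})\cdot\partial\partial^{\iota_2}k$, the same idea works: bound $n\leq 2$ and $\partial\partial^{\iota_2}k = \partial\partial^{\iota_2}(k-\widetilde{k})$ (since spatial derivatives of the background $\widetilde{k}$ vanish) in $L^\infty$ by $\|k-\widetilde{k}\|_{W^{N_0+1,\infty}(\Sigma_t)}\leq C\varepsilon t^{-1}$, leaving $\partial^{\iota_1}(e-\widetilde{e})$ explicit and producing the second term on RHS~\eqref{E:STRUCTURECOEFFICIENTSBORDERPOINTWISEBOUNDS}.

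For the Junk bound \eqref{E:STRUCTURECOEFFICIENTSJUNKPOINTWISEBOUNDS}, the task is to extract a factor $t^\upsigma$ in each of the five Junk terms. The three available sources are: (i) $\|n-1\|_{W^{N_0+1,\infty}}\lesssim t^\upsigma \mathbb{D}$ and $\|\vec{e}n\|_{W^{N_0,\infty}}\lesssim t^{-q+\upsigma}\mathbb{D}$ via Proposition~\ref{prop:n}; (ii) $|\widetilde{e}|\lesssim t^{-\max_I|\widetilde{q}_I|}\leq t^{-q+2\upsigma}$ by the parameter inequality \eqref{sigma,q}; and (iii) the $t^{-\updelta A}$-penalized embeddings of Lemma~\ref{lem:Sob.borrow} at orders $\leq N_0+2$. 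In the first Junk term $\partial^{\iota_1}(n-1)\cdot\partial^{\iota_2}k\cdot\partial^{\iota_3}\upgamma$, I place $(n-1)$ and either $k$ or $\upgamma$ in $L^\infty$, producing the $t^\upsigma$ from source (i). In $\partial^\iota n\cdot\widetilde{e}\cdot\partial k$, the $\widetilde{e}$ factor delivers $t^\upsigma$ via source (ii); for $|\iota|\geq 1$, $\partial^\iota n=\partial^\iota(n-1)$ offers an additional $t^\upsigma$, and for $|\iota|=0$, using only $|\widetilde{e}|\lesssim t^{-q+2\upsigma}$, $|\partial k|\lesssim \varepsilon t^{-1}$, and $n\leq 2$ yields $C\varepsilon t^{-q+2\upsigma-1}$, which multiplied by $t^q$ is absorbed by $Ct^{-1+\upsigma}\mathbb{D}$ as $\mathbb{D}\geq c\mathbb{L}_{(\upgamma,k)}$ captures a $\varepsilon$ factor. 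The same reasoning handles $n\cdot\widetilde{e}\cdot\partial\partial^\iota k$. The fourth Junk term has $|\iota_1|\geq 1$, hence $\partial^{\iota_1}n=\partial^{\iota_1}(n-1)$ contributes $t^\upsigma$. The last term $\partial^{\iota_1}\vec{e}n\cdot\partial^{\iota_2}k$ uses $|\vec{e}n|\lesssim t^{-q+\upsigma}\mathbb{D}$, so bounding $\partial^{\iota_2}k$ by $\|k\|_{W^{N_0,\infty}}\lesssim t^{-1}$ and using Lemma~\ref{lem:Sob.borrow} on $\vec{e}n$ (at a slight $\updelta A$ cost) yields the claim.

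The main technical obstacle is cleanly tracking, for each decomposition in each of the five sums, which factor carries the explicit derivatives, which factor supplies the $t^\upsigma$ gain, and which factors are bounded in $L^\infty$; in particular one must verify the $|\iota|=0$ edge cases where $\partial^\iota n = n$ fails to produce smallness and one must instead rely on the $\widetilde{e}$-factor. A secondary bookkeeping issue is to ensure, via Lemma~\ref{lem:Sob.borrow} and Remark~\ref{rem:deltaA}, that intermediate derivatives of order between $N_0$ and $N_0+2$ can be absorbed into a universal $Ct^{-1+\upsigma}\mathbb{D}$ factor once $N$ has been chosen large enough that $\updelta A < \upsigma$. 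Once these cases are laid out systematically, the desired pointwise inequalities follow.
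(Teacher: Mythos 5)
Your proof is correct and follows essentially the same term-by-term route as the paper's one-paragraph sketch, including the single observation the paper itself singles out: that the low-order norm controls $k-\widetilde{k}$ at derivative level $N_0+1$, and that spatial derivatives of the background $\widetilde{k}$ vanish, so that $\partial\partial^{\iota_2}k = \partial\partial^{\iota_2}(k-\widetilde{k})$ is admissible at orders $\leq N_0 + 1$.

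One simplification: the appeals to Lemma~\ref{lem:Sob.borrow} and to Remark~\ref{rem:deltaA} are unnecessary here, and so is the entire discussion of derivatives ``of order between $N_0$ and $N_0+2$.'' In every product occurring in $\mathfrak{S}^{(Border;\iota)}_{IJB}$ and $\mathfrak{S}^{(Junk;\iota)}_{IJB}$ with $|\iota|\leq N_0$, the derivative count on $k-\widetilde{k}$ never exceeds $N_0+1$, and the derivative count on $\upgamma$, $e-\widetilde{e}$, $n-1$, and $\vec{e}n$ never exceeds $N_0$. All of these are already within the direct scope of the low-order norms of Definition~\ref{D:SOLUTIONNORMS} together with Proposition~\ref{prop:n}; there is no $t^{-\updelta A}$ penalty to absorb, and hence no constraint on $N$ is needed beyond what is required to invoke Proposition~\ref{prop:n}. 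The paper's proof accordingly cites only \eqref{Kasnersol}, the inequalities \eqref{sigma,q}, Definition~\ref{D:SOLUTIONNORMS}, and the bootstrap assumptions. Your closing paragraph about ``intermediate derivatives of order between $N_0$ and $N_0+2$'' can therefore be deleted. A minor stylistic point: in the $|\iota|=0$ case of the second Junk term, it is cleaner to bound $|\partial k| = |\partial(k-\widetilde{k})| \leq t^{-1}\mathbb{L}_{(\upgamma,k)}(t) \leq t^{-1}\mathbb{D}(t)$ directly rather than first writing $\lesssim \varepsilon t^{-1}$ and then appealing to $\mathbb{D}\gtrsim \mathbb{L}_{(\upgamma,k)}$; the conclusion $t^q\cdot 2\cdot t^{-q+2\upsigma}\cdot t^{-1}\mathbb{D}(t) \leq C t^{-1+\upsigma}\mathbb{D}(t)$ then falls out immediately.
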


\begin{proof}
	Based on equations \eqref{E:STRUCTURECOEFFICIENTERRORTERMBORDER}--\eqref{E:STRUCTURECOEFFICIENTERRORTERMJUNK},
	the estimates
	\eqref{E:STRUCTURECOEFFICIENTSBORDERPOINTWISEBOUNDS}--\eqref{E:STRUCTURECOEFFICIENTSJUNKPOINTWISEBOUNDS}
	follow as straightforward consequences of
	\eqref{Kasnersol},
	the inequalities in \eqref{sigma,q},
	Definition~\ref{D:SOLUTIONNORMS},
	the bootstrap assumptions \eqref{Boots}, 
	and the already derived low order estimates \eqref{n.low.est} for $n$.
	Note in particular that we have used the fact that
	the low order norm \eqref{norms.low} controls $k-\widetilde{k}$
	at derivative levels $\leq N_0+1$ (see Remark \ref{rem:N0+1});
	for example, for $|\iota| \leq N_0$,
	this allows us to pointwise bound the magnitude of the sum
	$\sum_{\iota_1 \cup \iota_2 = \iota}
				n
				\cdot
				\partial^{\iota_1} (e - \widetilde{e})
				\cdot
				\partial \partial^{\iota_2} k
	$ on RHS~\eqref{E:STRUCTURECOEFFICIENTERRORTERMBORDER}
	by $\lesssim \varepsilon t^{-1} \sum_{|\iota| \leq N_0}|\partial^{\iota} (e - \widetilde{e})|(t,x)$.
\end{proof}

\subsubsection{Absence of certain structure coefficients in polarized $U(1)$-symmetry}	
In the next lemma, we show that for polarized $U(1)$-symmetric metrics with $\mydim=3$,
relative to an orthonormal spatial frame of the type provided by Lemma~\ref{lem:U1},
all structure coefficients with three distinct indices vanish.
As we explained in Remark~\ref{R:IDENTIFYOBSTRUCTION}, 
this vanishing is crucial for
the proof of our main results in the case of the
Einstein-vacuum equations in $1+3$ dimensions
under polarized $U(1)$-symmetry.

\begin{lemma}[The vanishing of key variables in polarized $U(1)$-symmetry]\label{lem:gamma.U(1)}
Suppose that $\mydim = 3$ and that
${\bf g}$ is a polarized $U(1)$-symmetric metric
satisfying the hypotheses and conclusions of Lemma~\ref{lem:U1}.
Moreover, let $\lbrace e_1,e_2,e_3 \rbrace$ be an orthonormal spatial frame satisfying 
the hypotheses and conclusions of Lemma~\ref{lem:U1}.
In particular, $e_3=(g_{33})^{-\frac{1}{2}}\partial_3$ 
and $\mathcal{L}_{\partial_3} e_I = 0$ for $I=1,2,3$,
where $\partial_3$ is the hypersurface-orthogonal Killing vectorfield. Then the following spatial connection coefficients vanish:
\begin{align}\label{gamma.U(1)}
\upgamma_{123}=\upgamma_{231}=\upgamma_{312}=0.
\end{align}
Moreover, under the same assumptions, we have:
\begin{align} \label{E:U1VANISHINGSTRUCTURECOEFFICIENTS}
		\upgamma_{IJB}
		+
		\upgamma_{JBI}
		& = 
		\begin{cases}
			0, & \mbox{ if } I = J,
				\\
			0, & \mbox{ if } I,J,B \mbox{ are distinct}.
		\end{cases}
\end{align}
\end{lemma}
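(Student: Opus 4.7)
The plan is to reduce everything to the Koszul formula \eqref{Koszul} and to exploit two complementary geometric facts: the frame is $\partial_3$-invariant (from Lemma~\ref{lem:U1}), and $e_1, e_2$ are purely tangent to the ``non-symmetric'' $(\partial_1,\partial_2)$-plane. With these in hand, all three commutators $[e_1,e_2]$, $[e_2,e_3]$, $[e_3,e_1]$ will be shown to lie in degenerate subspaces, and the Koszul formula then forces $\upgamma_{123}=\upgamma_{231}=\upgamma_{312}=0$.

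First I would show $[e_I,e_3]\parallel \partial_3$ for $I=1,2$. Since $e_3=(g_{33})^{-1/2}\partial_3$ and $[\partial_3,e_I]=\mathcal{L}_{\partial_3}e_I=0$ by Lemma~\ref{lem:U1}, the Leibniz rule for Lie brackets gives
\begin{align*}
[e_I,e_3]=[e_I,(g_{33})^{-1/2}\partial_3]=\bigl(e_I(g_{33})^{-1/2}\bigr)\partial_3-(g_{33})^{-1/2}[\partial_3,e_I]=\bigl(e_I(g_{33})^{-1/2}\bigr)\partial_3,
\end{align*}
which is proportional to $e_3$. Consequently $g([e_I,e_3],e_J)=0$ for any $J\in\{1,2\}$, killing two of the three terms in the Koszul expansions of $\upgamma_{123}$, $\upgamma_{231}$, $\upgamma_{312}$.

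Second I would show $g([e_1,e_2],e_3)=0$, the remaining Koszul term. The $\partial_3$-invariance of the frame (expressed through $\mathcal{L}_{\partial_3}e_I^i=0$) combined with the orthogonality $g(e_I,\partial_3)=0$ for $I=1,2$, together with the polarization identities $g_{13}=g_{23}=0$ (from Lemma~\ref{L:PROPOFSYM}), imply that the coordinate components $e_I^3$ vanish for $I=1,2$ and that the remaining components $e_I^i(t,x^1,x^2)$ do not depend on $x^3$. Expanding $[e_1,e_2]=(e_1 e_2^i-e_2 e_1^i)\partial_i$ and observing that $e_1$ and $e_2$ only differentiate in the $\partial_1,\partial_2$ directions, the $\partial_3$-component of $[e_1,e_2]$ vanishes, and hence $g([e_1,e_2],\partial_3)=0$, which gives $g([e_1,e_2],e_3)=0$.

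Combining the two steps via \eqref{Koszul} yields \eqref{gamma.U(1)}. The second assertion \eqref{E:U1VANISHINGSTRUCTURECOEFFICIENTS} is then immediate from \eqref{gamma.U(1)} and the antisymmetry \eqref{antisymmetricgamma}: when $I=J$, $\upgamma_{IJB}+\upgamma_{JBI}=\upgamma_{IIB}+\upgamma_{IBI}=\upgamma_{IIB}-\upgamma_{IIB}=0$, while when $I,J,B$ are distinct the two terms are (up to sign) among $\upgamma_{123},\upgamma_{231},\upgamma_{312}$, all of which vanish. The only mildly subtle step is the justification that $e_I^3\equiv0$ and $\partial_3 e_I^i\equiv0$ for $I=1,2$ on all of $(T,1]\times\mathbb{T}^3$ (not just on $\Sigma_1$); this is where Lemma~\ref{lem:U1} is invoked crucially, since Fermi--Walker transport together with the Killing property of $\partial_3$ propagates both the orthogonality to $\partial_3$ and the $\partial_3$-invariance of the frame from $\Sigma_1$ to later slices.
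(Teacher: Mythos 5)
Your proof is correct and takes essentially the same approach as the paper: both reduce the claim to the Koszul formula \eqref{Koszul} (the paper works with its component form \eqref{gamma.form}) and feed in the same vanishing facts about the frame ($e_1^3=e_2^3=e_3^1=e_3^2=0$, $e_3 e_I^i=0$, $g_{13}=g_{23}=0$). Your organization into the two invariant statements $[e_I,e_3]\parallel\partial_3$ and $g([e_1,e_2],e_3)=0$ is a mildly cleaner packaging of the same term-by-term cancellations that the paper carries out directly from \eqref{gamma.form}, but it is not a different route.
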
 
\begin{proof}
Under the assumptions and conclusions of Lemma~\ref{lem:U1},
$\partial_3$ is parallel to $e_3$ and orthogonal to $\partial_1$ and $\partial_2$,
and we have $e_1^3=e_2^3 = e_3^1 = e_3^2 = \oe_3^1 = \oe_3^2 = \oe_1^3 = \oe_2^3 = e_3 e_I^i=0$. 
Hence, using \eqref{gamma.form} we compute:
\begin{align*}
\upgamma_{123}=&\,\frac{1}{2}\left\lbrace\oe_l^3(e_1e_2^l-e_2e_1^l)
-\oe_l^1(e_2e_3^l-e_3e_2^l)
+\oe_l^2(e_3e_1^l-e_1e_3^l)\right\rbrace=0,\\
\upgamma_{231}=&\,\frac{1}{2}\left\lbrace\oe_l^1(e_2e_3^l-e_3e_2^l)
-\oe_l^2(e_3e_1^l-e_1e_3^l)
+\oe_l^3(e_1e_2^l-e_2e_1^l)\right\rbrace=0,\\
\upgamma_{312}=&\,\frac{1}{2}\left\lbrace\oe_l^2(e_3e_1^l-e_1e_3^l)
-\oe_l^3(e_1e_2^l-e_2e_1^l)
+\oe_l^1(e_2e_3^l-e_3e_2^l)\right\rbrace=0,
\end{align*}
which yields \eqref{gamma.U(1)}. A more conceptual justification of the above computations is that in the present setting, 
$g([e_I,e_J],e_B)=0$
whenever $I,J,B$ are distinct indices;
using this fact and the Koszul formula, we conclude that $\upgamma_{IJB} = 0$ 
whenever $I,J,B$ are distinct indices, as desired.

\eqref{E:U1VANISHINGSTRUCTURECOEFFICIENTS} follows from \eqref{gamma.U(1)} 
and the antisymmetry property \eqref{antisymmetricgamma}.
\end{proof}
\begin{remark}[The role of polarized $U(1)$-symmetry]\label{rem:gamma.U1}
In proving our stable Big Bang formation results for the Einstein-vacuum equations in $1+3$ dimensions, 
there is precisely one way in which our polarized $U(1)$-symmetry assumption is important for our analysis: 
it allows us to use the results of Lemma~\ref{lem:gamma.U(1)};
see also Remark~\ref{R:IDENTIFYOBSTRUCTION}
and the end of the proof of Proposition~\ref{prop:low}.
Put differently, if we were to assume the conclusions 
\eqref{gamma.U(1)}--\eqref{E:U1VANISHINGSTRUCTURECOEFFICIENTS} of the lemma, then 
the rest of our proof of stable Big Bang formation would go through.
\end{remark}

\subsubsection{Commuted evolution equations for $e$ and $\oe$}
\label{SSS:FRAMEANDCOFRAMEEQUATIONS}
In this section, we provide the evolution equations that we will use to control
the scalar functions
$\lbrace e_{I}^i \rbrace_{I,i=1,\cdots,\mydim}$ 
and
$\lbrace \oe_i^I \rbrace_{I,i=1,\cdots,\mydim}$
as well as their derivatives.

\begin{lemma}[Evolution equations for 
$\lbrace e_{I}^i \rbrace_{I,i=1,\cdots,\mydim}$,
$\lbrace \oe_i^I \rbrace_{I,i=1,\cdots,\mydim}$, 
and their derivatives]
\label{L:EVOLUTIONEQUATIONSFORFRAMEANDCOFRAME}
The evolution equations \eqref{dt.omega}--\eqref{dt.omega.inv}
can be rewritten as follows,
where the Kasner background scalars 
$\lbrace \widetilde{e}_I^i \rbrace_{I,i=1,\cdots,\mydim}$,
$\lbrace \widetilde{\oe}_i^I \rbrace_{I,i=1,\cdots,\mydim}$,
and
$\lbrace \widetilde{k}_{IJ} \rbrace_{I,J=1,\cdots,\mydim}$ 
are defined in \eqref{Kasnersol} 
(see also Remark~\ref{R:NOTCOMPONENTSOFTENSORRELATIVETOFRAME}),
and we recall that we do not sum over repeated underlined indices:
\begin{subequations}
\begin{align}\label{dt.omega-omegatilde}
\begin{split}
\partial_t({e_I^i}-\widetilde{e}_I^i)
+
\frac{\widetilde{q}_{\underline{I}}}{t}({e_{\underline{I}}^i}-\widetilde{e}_{\underline{I}}^i)
& = 
(n-1) k_{IC} (e_C^i - \widetilde{e}_C^i)
+
(k_{IC}-\widetilde{k}_{IC})(e_C^i - \widetilde{e}_C^i)
	\\
& \ \
+
(n-1) \widetilde{k}_{IC} \widetilde{e}_C^i
+
n
(k_{IC}-\widetilde{k}_{IC})
\widetilde{e}_C^i,
\end{split}
	\\
\begin{split}
 \partial_t(\oe_i^I-\widetilde{\oe}_i^I)
-
\frac{\widetilde{q}_{\underline{I}}}{t}
(\oe_i^{\underline{I}}-\widetilde{\oe}_i^{\underline{I}})
& 
= 
-
(n-1) k_{IC} (\oe_i^C - \widetilde{\oe}_i^C)
-
(k_{IC}-\widetilde{k}_{IC})(\oe_i^C - \widetilde{\oe}_i^C)
\label{dt.omega-omegatilde.inv}
	\\
& \ \ 
-
(n-1) \widetilde{k}_{IC} \widetilde{\oe}_i^C
-
n
(k_{IC}-\widetilde{k}_{IC})
\widetilde{\oe}_i^C.
\end{split}
\end{align}
\end{subequations}
Moreover, let $\iota$ be a spatial multi-index with $|\iota| \leq N$,
and let $\Pow \geq 0$ be a real number.
Then the following equations hold:
\begin{subequations}
\begin{align} 
\label{E:COMMUTEDFRAMEEVOLUTION}
	\partial_t
	[t^{\Pow}\partial^{\iota}({e_I^i}-\widetilde{e}_I^i)]
	& = 
		(\Pow - \widetilde{q}_{\underline{I}}) 
		t^{\Pow-1} \partial^{\iota} ({e_{\underline{I}}^i}-\widetilde{e}_{\underline{I}}^i)
		+
		t^{\Pow}
		\mathfrak{E}_I^{i;(\textnormal{Border};\iota)}
		+
		t^{\Pow}
		\mathfrak{E}_I^{i;(\textnormal{Junk};\iota)},\\
	\partial_t [t^{\Pow}\partial^{\iota}(\oe^I_i-\widetilde{\oe}^I_i)]
	& = 
		(\Pow + \widetilde{q}_{\underline{I}}) 
		t^{\Pow-1} \partial^{\iota} (\oe^I_i-\widetilde{\oe}^I_i)
		+
		t^{\Pow}
		\mathfrak{O}_I^{i;(\textnormal{Border};\iota)}
		+
		t^{\Pow}
		\mathfrak{O}_I^{i;(\textnormal{Junk};\iota)},
	\label{E:COMMUTEDCOFRAMEEVOLUTION}	
\end{align}
\end{subequations}
where:
\begin{subequations}
\begin{align}
	\mathfrak{E}_I^{i;(\textnormal{Border};\iota)}
	& :=\sum_{\iota_1 \cup \iota_2 \cup \iota_3 = \iota}
				\partial^{\iota_1} (n - 1)
				\cdot
				\partial^{\iota_2} k
				\cdot
				\partial^{\iota_3} (e - \widetilde{e})
				+
			\sum_{\iota_1 \cup \iota_2 = \iota}
			\partial^{\iota_1} (k - \widetilde{k})
			\cdot
			\partial^{\iota_2} (e - \widetilde{e}),
				\label{E:FRAMEBORDERERRORTERMS} \\
\mathfrak{E}_I^{i;(\textnormal{Junk};\iota)}
& :=\partial^{\iota} (n-1) \cdot \widetilde{k} \cdot \widetilde{e}
	+
	\sum_{\iota_1 \cup \iota_2 = \iota}
		\partial^{\iota_1} n
		\cdot
		\partial^{\iota_2} (k - \widetilde{k})
		\cdot
		\widetilde{e},
			\label{E:FRAMEJUNKERRORTERMS}
			\\
	\mathfrak{O}_I^{i;(\textnormal{Border};\iota)}
	& :=\sum_{\iota_1 \cup \iota_2 \cup \iota_3 = \iota}
				\partial^{\iota_1} (n - 1)
				\cdot
				\partial^{\iota_2} k
				\cdot
				\partial^{\iota_3} (\oe - \widetilde{\oe})
				+
			\sum_{\iota_1 \cup \iota_2 = \iota}
			\partial^{\iota_1} (k - \widetilde{k})
			\cdot
			\partial^{\iota_2} (\oe - \widetilde{\oe}),
				\label{E:COFRAMEBORDERERRORTERMS} \\
\mathfrak{O}_I^{i;(\textnormal{Junk};\iota)}
& :=\partial^{\iota} (n-1) \cdot \widetilde{k} \cdot \widetilde{\oe}
	+
	\sum_{\iota_1 \cup \iota_2 = \iota}
		\partial^{\iota_1} n
		\cdot
		\partial^{\iota_2} (k - \widetilde{k})
		\cdot
		\widetilde{\oe}.
			\label{E:COFRAMEJUNKERRORTERMS}
\end{align}
\end{subequations}
\end{lemma}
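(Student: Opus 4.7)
My approach will be purely algebraic: both pairs of equations follow by expanding the nonlinear right-hand sides of the transport equations \eqref{dt.omega}-\eqref{dt.omega.inv} around the Kasner background and then applying $\partial^\iota$ via the Leibniz rule.

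First, I would derive \eqref{dt.omega-omegatilde}. Starting from \eqref{dt.omega}, I rewrite $e_0 e_I^i = n^{-1}\partial_t e_I^i$, so that $\partial_t e_I^i = n k_{IC} e_C^i$. The Kasner background variables \eqref{Kasnersol} satisfy $\partial_t \widetilde{e}_I^i = -\widetilde{q}_{\underline{I}} t^{-\widetilde{q}_{\underline{I}}-1}\updelta_{\underline{I}}^i = \widetilde{k}_{\underline{I}C}\widetilde{e}_C^i$, i.e., exactly the same ODE with $n=1$, $k=\widetilde{k}$, $e=\widetilde{e}$. Subtracting and splitting $n = 1 + (n-1)$, $k = \widetilde{k} + (k-\widetilde{k})$, $e = \widetilde{e} + (e-\widetilde{e})$ into the eight-term expansion, the pure background term cancels, and the term $\widetilde{k}_{IC}(e_C^i-\widetilde{e}_C^i)$ — using $\widetilde{k}_{IC} = -\widetilde{q}_{\underline{I}}t^{-1}\updelta_{IC}$ — collapses to $-\frac{\widetilde{q}_{\underline{I}}}{t}(e_{\underline{I}}^i - \widetilde{e}_{\underline{I}}^i)$, which I move to the left. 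The remaining six terms regroup exactly into the four displayed combinations on the right of \eqref{dt.omega-omegatilde}. The derivation of \eqref{dt.omega-omegatilde.inv} is identical except that \eqref{dt.omega.inv} produces an overall sign flip, consistent with $\partial_t\widetilde{\oe}_i^I = +\frac{\widetilde{q}_{\underline{I}}}{t}\widetilde{\oe}_i^{\underline{I}}$.

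Next, for the commuted equations, I differentiate \eqref{dt.omega-omegatilde} by $\partial^\iota$. Since the transported spatial coordinate derivatives $\partial_i$ commute with $\partial_t$, the left-hand side becomes $\partial_t \partial^\iota(e_I^i - \widetilde{e}_I^i) + \frac{\widetilde{q}_{\underline{I}}}{t}\partial^\iota(e_{\underline{I}}^i - \widetilde{e}_{\underline{I}}^i)$ (the factor $\widetilde{q}_{\underline{I}}/t$ is a function only of $t$, so it commutes through $\partial^\iota$). Multiplying by $t^\Pow$ and using $\partial_t[t^\Pow f] = \Pow t^{\Pow-1} f + t^\Pow \partial_t f$ produces exactly the diagonal linear term $(\Pow - \widetilde{q}_{\underline{I}}) t^{\Pow-1} \partial^\iota(e_{\underline{I}}^i - \widetilde{e}_{\underline{I}}^i)$ claimed on RHS~\eqref{E:COMMUTEDFRAMEEVOLUTION}.

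The error terms then come from applying $\partial^\iota$ to the four terms on RHS~\eqref{dt.omega-omegatilde} via the Leibniz rule. The two "cubic" terms $(n-1)k_{IC}(e_C^i - \widetilde{e}_C^i)$ and $(k-\widetilde{k})(e-\widetilde{e})$ produce the schematic sums in \eqref{E:FRAMEBORDERERRORTERMS} (classified as "Border" because each factor can carry the full bulk of derivatives against a perturbation), while the two "background" terms $(n-1)\widetilde{k}\widetilde{e}$ and $n(k-\widetilde{k})\widetilde{e}$ produce \eqref{E:FRAMEJUNKERRORTERMS} (classified as "Junk" because $\widetilde{e}$ and $\widetilde{k}$ depend only on $t$ and so never absorb spatial derivatives). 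Equation \eqref{E:COMMUTEDCOFRAMEEVOLUTION} follows by the identical argument applied to \eqref{dt.omega-omegatilde.inv}; the sign flip in the linear term gives $(\Pow + \widetilde{q}_{\underline{I}})$ instead of $(\Pow - \widetilde{q}_{\underline{I}})$. There is no genuine obstacle here — the only care required is the bookkeeping needed to ensure every one of the eight expansion terms is placed into the correct Border/Junk schematic bucket, which is essentially forced once one declares that "Border" means "every factor is a perturbation (or has at least one $\partial$) and no factor is pure $\widetilde{e}$ or $\widetilde{k}$."
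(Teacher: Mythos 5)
Your proof is correct and follows the same route the paper takes: expand RHS of \eqref{dt.omega}-\eqref{dt.omega.inv} around the Kasner background (the pure background term cancels against $\partial_t\widetilde{e}_I^i=\widetilde{k}_{IC}\widetilde{e}_C^i$, and the surviving $\widetilde{k}_{IC}(e_C^i-\widetilde{e}_C^i)$ becomes the diagonal linear term), then commute with $\partial^{\iota}$ and reweight by $t^{\Pow}$. The paper's proof is simply a one-line appeal to "straightforward algebraic computations" plus Leibniz, so your write-up is a more explicit version of the identical argument.
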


\begin{proof}
	\eqref{dt.omega-omegatilde}--\eqref{dt.omega-omegatilde.inv}
	follow from equations \eqref{dt.omega}--\eqref{dt.omega.inv}
	and straightforward algebraic computations.
	\eqref{E:COMMUTEDFRAMEEVOLUTION}--\eqref{E:COMMUTEDCOFRAMEEVOLUTION}
	then follow from differentiating \eqref{dt.omega-omegatilde}--\eqref{dt.omega-omegatilde.inv}
	with $\partial^{\iota}$,
	using the Leibniz rule,
	multiplying both sides of the resulting equations by $t^{\Pow}$,
	and commuting the factors of $t^{\Pow}$ under the operator $\partial_t$ on the LHSs
	and accounting for the commutator $[t^{\Pow},\partial_t]$.
\end{proof}

\subsubsection{Pointwise estimates for the error terms in the frame component evolution equations}
\label{SSS:FRAMECOMPONENTERRORTERMSPOINTWISE}
In this section, at the low derivative levels,
we derive pointwise estimates for the error terms in the evolution equations of 
Lemma~\ref{L:EVOLUTIONEQUATIONSFORFRAMEANDCOFRAME}.

\begin{lemma}[Pointwise estimates for the error terms in the evolution equations for 
	$\partial^{\leq N_0} (e-\widetilde{e})$ and $\partial^{\leq N_0} (\oe-\widetilde{\oe})$]
Recall that $\mathbb{D}(t)$ is the total norm of the dynamic variables from Definition~\ref{D:SOLUTIONNORMS}.
Assume that the bootstrap assumptions \eqref{Boots} hold.
There exists a constant $C = C_{N,N_0,\blowupexp,\mydim,q,\upsigma} > 0$ 
such that if $N_0 \geq 1$ and $N$ is sufficiently large in a manner 
that depends on $N_0, \blowupexp, \mydim, q,$ and $\upsigma$,
and if $\varepsilon$ is sufficiently small (in a manner that depends on $N, N_0, \blowupexp, \mydim, q,$ and $\upsigma$), 
then the error terms
$\mathfrak{E}_I^{i;(\textnormal{Border};\iota)}$,
$\mathfrak{E}_I^{i;(\textnormal{Junk};\iota)}$, 
$\mathfrak{O}_I^{i;(\textnormal{Border};\iota)}$, 
and $\mathfrak{O}_I^{i;(\textnormal{Junk};\iota)}$
defined in
\eqref{E:FRAMEBORDERERRORTERMS}--\eqref{E:COFRAMEJUNKERRORTERMS}
verify the following pointwise estimates for $(t,x) \in (T_{\textnormal{Boot}},1] \times \mathbb{T}^{\mydim}$,
where the Kasner background scalars
$\lbrace \widetilde{e}_I^i \rbrace_{I,i=1,\cdots,\mydim}$
and
$\lbrace \widetilde{\oe}_i^I \rbrace_{I,i=1,\cdots,\mydim}$
are defined in \eqref{Kasnersol}:
	\begin{subequations}
	\begin{align} 
		\sum_{|\iota| \leq N_0} 
		\sum_{I,i = 1, \cdots, \mydim}
		t^q |\mathfrak{E}_I^{i;(\textnormal{Border};\iota)}|(t,x)
		& \leq C \varepsilon t^{q-1}{}{\sum_{|\iota| \leq N_0} 
		\sum_{I,i = 1, \cdots, \mydim}
		|\partial^{\iota}(e_I^i - \widetilde{e}_I^i)|(t,x)},
			\label{E:FRAMEBORDERPOINTWISEBOUNDS} \\
		\sum_{|\iota| \leq N_0} 
		\sum_{I,i = 1, \cdots, \mydim}
		t^q |\mathfrak{E}_I^{i;(\textnormal{Junk};\iota)}|(t,x)
		& \leq C t^{-1+\upsigma} \mathbb{D}(t),
			\label{E:FRAMEJUNKPOINTWISEBOUNDS}\\
			\sum_{|\iota| \leq N_0} 
		\sum_{I,i = 1, \cdots, \mydim}
		t^q |\mathfrak{O}_I^{i;(\textnormal{Border};\iota)}|(t,x)
		& \leq C \varepsilon t^{q-1}\sum_{|\iota| \leq N_0} 
		\sum_{I,i = 1, \cdots, \mydim}
		|\partial^{\iota}(\oe_i^I - \widetilde{\oe}_i^I)|(t,x),
			\label{E:COFRAMEBORDERPOINTWISEBOUNDS} \\
		\sum_{|\iota| \leq N_0} 
		\sum_{I,i = 1, \cdots, \mydim}
		t^q |\mathfrak{O}_I^{i;(\textnormal{Junk};\iota)}|(t,x)
		& \leq C t^{-1+\upsigma} \mathbb{D}(t).
			\label{E:COFRAMEJUNKPOINTWISEBOUNDS}
	\end{align}
	\end{subequations}
\end{lemma}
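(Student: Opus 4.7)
\medskip

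\noindent \textbf{Proof proposal.}
My plan is to prove all four pointwise bounds by term-by-term $L^{\infty}$-type analysis, following the pattern of the proof of Lemma~\ref{L:STRUCTURECOFFEICIENTERRORTERMSPOINTWISE}. For each summand in the definitions \eqref{E:FRAMEBORDERERRORTERMS}--\eqref{E:COFRAMEJUNKERRORTERMS}, I will bound all factors in $L^{\infty}$ using the low-order norms from Definition~\ref{D:SOLUTIONNORMS}, the lapse estimate \eqref{n.low.est} of Proposition~\ref{prop:n}, the bootstrap assumption $\mathbb{D}(t) \le \varepsilon$, and the explicit form \eqref{Kasnersol} of the background Kasner scalars. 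The core inputs are
$\|n-1\|_{W^{N_0+1,\infty}} \lesssim t^{\upsigma}\mathbb{D}(t)$,
$\|k-\widetilde{k}\|_{W^{N_0+1,\infty}} \lesssim t^{-1}\mathbb{D}(t)$,
$\|e-\widetilde{e}\|_{W^{N_0,\infty}} + \|\oe-\widetilde{\oe}\|_{W^{N_0,\infty}} \lesssim t^{-q}\mathbb{D}(t)$, and the fact that $\widetilde{k}, \widetilde{e}, \widetilde{\oe}$ depend only on $t$, so $\partial^{\iota}\widetilde{k} = \partial^{\iota}\widetilde{e} = \partial^{\iota}\widetilde{\oe} = 0$ whenever $|\iota|\ge1$. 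In particular $\partial \partial^{\iota}k = \partial \partial^{\iota}(k-\widetilde{k})$ for any multi-index $\iota$, which is crucial for controlling terms of the form $n\cdot\widetilde{e}\cdot\partial\partial^\iota k$ appearing in related error terms.

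For the borderline estimates \eqref{E:FRAMEBORDERPOINTWISEBOUNDS} and \eqref{E:COFRAMEBORDERPOINTWISEBOUNDS}, I will isolate the factor $\partial^{\iota_{\mathrm{top}}}(e-\widetilde{e})$ (respectively $\partial^{\iota_{\mathrm{top}}}(\oe-\widetilde{\oe})$) in each summand and bound all remaining factors in $L^{\infty}$. For the first sum in \eqref{E:FRAMEBORDERERRORTERMS}, this yields
$|\partial^{\iota_1}(n-1)|\cdot|\partial^{\iota_2}k| \lesssim (t^{\upsigma}\mathbb{D})\cdot t^{-1} \lesssim \varepsilon t^{-1+\upsigma}$,
while for the second sum $|\partial^{\iota_1}(k-\widetilde{k})| \lesssim \varepsilon t^{-1}$ (using also that $|\iota_j|\le N_0+1$ whenever at least one derivative order is strictly less than $|\iota|$, which fits Remark~\ref{rem:N0+1}). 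Since $t^{\upsigma} \le 1$, both scenarios produce a clean prefactor $\varepsilon t^{-1}$ multiplied by some $|\partial^{\iota'}(e-\widetilde{e})|$ with $|\iota'|\le N_0$, and multiplying by $t^q$ gives the desired $\varepsilon t^{q-1}$ bound. The argument for \eqref{E:COFRAMEBORDERPOINTWISEBOUNDS} is verbatim after replacing $e-\widetilde{e}$ by $\oe-\widetilde{\oe}$.

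For the junk estimates \eqref{E:FRAMEJUNKPOINTWISEBOUNDS} and \eqref{E:COFRAMEJUNKPOINTWISEBOUNDS}, the key additional ingredient is the observation that the inequalities \eqref{sigma,q} imply $|\widetilde{q}_I| < q - 2\upsigma$, and hence
\[
	|\widetilde{e}_I^i|(t) + |\widetilde{\oe}_i^I|(t) \;\le\; t^{-|\widetilde{q}_{\underline{I}}|} \;\le\; t^{-q+2\upsigma}, \qquad t \in (0,1].
\]
Using this improved bound, the first summand of \eqref{E:FRAMEJUNKERRORTERMS} is controlled by $|\partial^{\iota}(n-1)|\cdot|\widetilde{k}|\cdot|\widetilde{e}| \lesssim (t^{\upsigma}\mathbb{D})\cdot t^{-1}\cdot t^{-q+2\upsigma} = t^{-1-q+3\upsigma}\mathbb{D}$; after multiplying by $t^q$ and absorbing $t^{2\upsigma}$ into $t^{\upsigma}$, I obtain $t^{-1+\upsigma}\mathbb{D}$. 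The second summand is treated by splitting $n = 1 + (n-1)$: the ``$1$'' contribution gives $|\partial^{\iota_2}(k-\widetilde{k})|\cdot|\widetilde{e}| \lesssim (t^{-1}\mathbb{D})\cdot t^{-q+2\upsigma}$, and the ``$n-1$'' contribution is quadratically small in $\mathbb{D}$ and hence absorbable using $\mathbb{D}\le\varepsilon$. The same scheme handles \eqref{E:COFRAMEJUNKPOINTWISEBOUNDS} with $\widetilde{\oe}$ in place of $\widetilde{e}$.

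The only subtlety — which I would flag as the main point requiring care rather than a true obstacle — is the extraction of the \emph{extra} power $t^{\upsigma}$ appearing on the right-hand sides of the junk estimates. This is not produced by any single factor's low-order norm, but emerges from the sub-criticality gap $q - |\widetilde{q}_I| > 2\upsigma$ applied to the Kasner background frame/co-frame components. Once this observation is in place, the remainder of the proof is a routine case analysis of the three or four schematic summands in each of \eqref{E:FRAMEBORDERERRORTERMS}--\eqref{E:COFRAMEJUNKERRORTERMS}, followed by summing over $|\iota| \le N_0$ and the finite index range $I,i=1,\dots,\mydim$.
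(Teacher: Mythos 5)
Your proof is correct and takes essentially the same approach as the paper's (one-line) proof: term-by-term $L^{\infty}$ bounds using the low-order norms of Definition~\ref{D:SOLUTIONNORMS}, the lapse estimate \eqref{n.low.est}, the explicit Kasner formulas \eqref{Kasnersol}, and the parameter inequalities \eqref{sigma,q}, without any appeal to the interpolation Lemma~\ref{lem:Sob.borrow}. You have also correctly identified and made explicit the slightly hidden step, namely that the sub-criticality margin $|\widetilde{q}_I| < q - 2\upsigma$ from \eqref{sigma,q} yields $|\widetilde{e}_I^i|,|\widetilde{\oe}_i^I| \lesssim t^{-q+2\upsigma}$, which is what supplies the extra $t^{\upsigma}$ gain in the Junk estimates; this matches the paper's intent, which it communicates only by citing \eqref{sigma,q} and \eqref{Kasnersol} without spelling out the computation.
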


\begin{proof}
	The lemma follows from the expressions
	\eqref{E:FRAMEBORDERERRORTERMS}--\eqref{E:COFRAMEJUNKERRORTERMS},
	the bootstrap assumptions,
	the definition of the lower order norms \eqref{norms.low}, 
	the explicit formulas \eqref{Kasnersol}, 
	the inequalities in \eqref{sigma,q}, and the already derived low order estimates \eqref{n.low.est} for $n$.\footnote{Note in particular that we do not use the interpolation inequalities of 
	Lemma~\ref{lem:Sob.borrow} in this proof.}
\end{proof}

\subsubsection{$L^2$-control of the error terms in the top-order commuted frame component evolution equations}
\label{SSS:FRAMECOMPONENTERRORTERMSTOPORDERL2CONTROL}
In this section, at the top-order derivative level,
we derive $L^2$ estimates for the error terms in the evolution equations of 
Lemma~\ref{L:EVOLUTIONEQUATIONSFORFRAMEANDCOFRAME}.

\begin{lemma}[$L^2$-control of the error terms in the top-order commuted frame component evolution equations]
	\label{L:FRAMEL2CONTROLOFERRORTERMS}
	Recall that 
$\mathbb{H}_{(\upgamma,k)}(t)$,
$\mathbb{H}_{(e,\oe)}(t)$,
and $\mathbb{D}(t)$ are norms from Definition~\ref{D:SOLUTIONNORMS},
and assume that the bootstrap assumptions \eqref{Boots} hold.
There exists a constant $C = C_{N,N_0,\blowupexp,\mydim,q,\upsigma} > 0$ 
such that if $N_0 \geq 1$ and $N$ is sufficiently large in a manner 
that depends on $N_0, \blowupexp, \mydim, q,$ and $\upsigma$,
and if $\varepsilon$ is sufficiently small (in a manner that depends on $N, N_0, \blowupexp, \mydim, q,$ and $\upsigma$), 
then the error terms
$\mathfrak{E}_I^{i;(\textnormal{Border};\iota)}$,
$\mathfrak{E}_I^{i;(\textnormal{Junk};\iota)}$,
$\mathfrak{O}_I^{i;(\textnormal{Border};\iota)}$, 
and $\mathfrak{O}_I^{i;(\textnormal{Junk};\iota)}$
defined in
\eqref{E:FRAMEBORDERERRORTERMS}--\eqref{E:COFRAMEJUNKERRORTERMS}
verify the following $L^2$ estimates for
$t \in (T_{\textnormal{Boot}},1]$:
	\begin{subequations}
	\begin{align} 
	\label{E:FRAMEL2CONTROLBORDERLINEERROR}
		t^{\blowupexp + q}
		\sqrt{
		\sum_{|\iota| = N}
		\sum_{I,i=1,\cdots,\mydim}
		\| \mathfrak{E}_I^{i;(\textnormal{Border};\iota)} \|_{L^2(\Sigma_t)}^2}
		& \leq C \varepsilon t^{-1} \mathbb{H}_{(\upgamma,k)}(t)
				+
				C \varepsilon t^{-1} \mathbb{H}_{(e,\oe)}(t)
				+
				C t^{-1 + \upsigma} \mathbb{D}(t),
			\\
		t^{\blowupexp + q}
		\sqrt{
		\sum_{|\iota| = N}
		\sum_{I,i=1,\cdots,\mydim}
		\| \mathfrak{E}_I^{i;(\textnormal{Junk};\iota)} \|_{L^2(\Sigma_t)}^2}
		& \leq
				C t^{-1 + \upsigma} \mathbb{D}(t),
			\label{E:FRAMEL2CONTROLJUNKERROR}\\
			\label{E:COFRAMEL2CONTROLBORDERLINEERROR}
		t^{\blowupexp + q}
		\sqrt{
		\sum_{|\iota| = N}
		\sum_{I,i=1,\cdots,\mydim}
		\| \mathfrak{O}_I^{i;(\textnormal{Border};\iota)} \|_{L^2(\Sigma_t)}^2}
		& \leq C \varepsilon t^{-1} \mathbb{H}_{(\upgamma,k)}(t)
				+
				C \varepsilon t^{-1} \mathbb{H}_{(e,\oe)}(t)
				+
				C t^{-1 + \upsigma} \mathbb{D}(t),
			\\
		t^{\blowupexp + q}
		\sqrt{
		\sum_{|\iota| = N}
		\sum_{I,i=1,\cdots,\mydim}
		\| \mathfrak{O}_I^{i;(\textnormal{Junk};\iota)} \|_{L^2(\Sigma_t)}^2}
		& \leq
				C t^{-1 + \upsigma} \mathbb{D}(t).
			\label{E:COFRAMEL2CONTROLJUNKERROR}
	\end{align}
	\end{subequations}
	
	\end{lemma}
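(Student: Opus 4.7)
The plan is to prove each of the four bounds by a case analysis on how the $N$ derivatives in $\partial^\iota$ distribute among the factors of each summand in the definitions \eqref{E:FRAMEBORDERERRORTERMS}--\eqref{E:COFRAMEJUNKERRORTERMS}. For each distribution I will place the single factor carrying the most derivatives in $L^2$ (using the high-order norms of Definition~\ref{D:SOLUTIONNORMS} directly, or, when $\partial^\iota n$ is the offending factor, the top-order lapse estimate \eqref{n.high.est} from Proposition~\ref{prop:n}) and the remaining factors in $L^\infty$, using either the low-order bootstrap assumptions \eqref{Boots} or the slight derivative-boost afforded by Lemma~\ref{lem:Sob.borrow} at the cost of a $t^{-\updelta A}$ loss that is negligible by Remark~\ref{rem:deltaA}. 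Mixed splittings will be handled via the Sobolev product inequalities \eqref{Sob.prod}--\eqref{Sob.prod2}. Used repeatedly are the pointwise bounds $\|\widetilde{k}\|_{L^\infty(\Sigma_t)}\lesssim t^{-1}$ and $\|\widetilde{e}\|_{L^\infty(\Sigma_t)}+\|\widetilde{\oe}\|_{L^\infty(\Sigma_t)}\lesssim t^{-q+2\upsigma}$ supplied by \eqref{Kasnersol}--\eqref{sigma,q}, together with the consequences $\|n-1\|_{L^\infty}\lesssim \varepsilon t^\upsigma$, $\|k-\widetilde{k}\|_{L^\infty}\lesssim \varepsilon t^{-1}$, and $\|e-\widetilde{e}\|_{L^\infty}+\|\oe-\widetilde{\oe}\|_{L^\infty}\lesssim \varepsilon t^{-q}$ of the bootstrap assumptions.

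For the borderline errors $\mathfrak{E}_I^{i;(Border;\iota)}$ and the structurally identical $\mathfrak{O}_I^{i;(Border;\iota)}$, I inspect the three representative extremal splittings. If all $N$ derivatives hit $e-\widetilde{e}$, then the factor $(n-1)\cdot k$ or $(k-\widetilde{k})$ contributes an $L^\infty$-bound of strength $\varepsilon t^{-1+\upsigma}$ or $\varepsilon t^{-1}$, and pairing with $\|\partial^\iota(e-\widetilde{e})\|_{L^2}\lesssim t^{-A-q}\mathbb{H}_{(e,\oe)}$ yields, after multiplication by $t^{A+q}$, the $\varepsilon t^{-1}\mathbb{H}_{(e,\oe)}$ contribution on RHS~\eqref{E:FRAMEL2CONTROLBORDERLINEERROR}. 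If all derivatives hit $k$, then the low-order factor $\|e-\widetilde{e}\|_{L^\infty}\lesssim\varepsilon t^{-q}$ supplies the needed $\varepsilon$ and produces $\varepsilon t^{-1}\mathbb{H}_{(\upgamma,k)}$. If all derivatives hit $n-1$, the elliptic bound \eqref{n.high.est} gives $\|\partial^\iota n\|_{L^2}\lesssim t^{-A}\bigl(C_*\mathbb{H}_{(\upgamma,k)}+C t^\upsigma\mathbb{D}\bigr)$, and the accompanying low-order factor $\|e-\widetilde{e}\|_{L^\infty}\lesssim\varepsilon t^{-q}$ again supplies the $\varepsilon$. The junk errors $\mathfrak{E}_I^{i;(Junk;\iota)}$ and $\mathfrak{O}_I^{i;(Junk;\iota)}$ are simpler: the $x$-independent summand $\partial^\iota(n-1)\cdot\widetilde{k}\cdot\widetilde{e}$ is controlled by $|\widetilde{k}||\widetilde{e}|\cdot\|\partial^\iota(n-1)\|_{L^2}$ via \eqref{n.high.est}, while the mixed summand $\partial^{\iota_1}n\cdot\partial^{\iota_2}(k-\widetilde{k})\cdot\widetilde{e}$ repeats the borderline analysis with either $\|k-\widetilde{k}\|_{L^\infty}\lesssim\varepsilon t^{-1}$ or $\|n-1\|_{L^\infty}\lesssim\varepsilon t^\upsigma$ available to absorb the extra $\mathbb{H}_{(\upgamma,k)}$ into a $t^{-1+\upsigma}\mathbb{D}$ term.

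The argument contains no deep obstacle and amounts almost entirely to bookkeeping: for every splitting one must verify that either (i) a low-order smallness factor extracts the $\varepsilon$ needed to match a borderline $\varepsilon t^{-1}\mathbb{H}$ term, or (ii) a residual $t^\upsigma$ survives to produce a junk $t^{-1+\upsigma}\mathbb{D}$ term. The only place where this is not automatic is when $\partial^\iota n$ is produced at the top order, since \eqref{n.high.est} injects a $C_*\mathbb{H}_{(\upgamma,k)}$ piece that is not itself small; in every such occurrence, however, one of the other factors in the product is necessarily a perturbation difference ($n-1$, $k-\widetilde{k}$, $e-\widetilde{e}$, or $\oe-\widetilde{\oe}$) and hence supplies the requisite $\varepsilon$ from the bootstrap. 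All remaining splittings at intermediate derivative orders are covered by \eqref{Sob.prod2} paired with Lemma~\ref{lem:Sob.borrow}, where choosing $N$ sufficiently large relative to $N_0$ and $A$ (Remark~\ref{rem:deltaA}) ensures $\updelta A<\upsigma$ so the resulting $t^{-\updelta A}$ losses are absorbed into $t^\upsigma$.
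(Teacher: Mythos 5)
Your proposal is correct and follows essentially the same route as the paper's own proof: a factor-by-factor case analysis using the product inequality \eqref{Sob.prod} to place the most-differentiated factor in $L^2$ (via Definition~\ref{D:SOLUTIONNORMS} or, for $\partial^{\iota}n$, via \eqref{n.high.est}) and the rest in $L^\infty$ via the bootstrap assumptions, together with the Kasner bounds from \eqref{Kasnersol} and \eqref{sigma,q}. One small note: in the junk-term splitting where $\partial^{\iota}$ falls on $k-\widetilde{k}$, the factor that downgrades $\mathbb{H}_{(\upgamma,k)}$ to $t^{-1+\upsigma}\mathbb{D}$ is not a smallness factor but the $t^{2\upsigma}$ gain in $\|\widetilde{e}\|_{L^\infty}\lesssim t^{-q+2\upsigma}$ (which you correctly record at the outset); aside from this slip in the accompanying description, the argument is sound, and the invocations of \eqref{Sob.prod2} and Lemma~\ref{lem:Sob.borrow} are unnecessary since \eqref{Sob.prod} alone covers every intermediate derivative split here.
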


	\begin{proof}
		The lemma follows from the 
		expressions \eqref{E:FRAMEBORDERERRORTERMS}--\eqref{E:COFRAMEJUNKERRORTERMS},
		the explicit formulas \eqref{Kasnersol},
		the inequalities in \eqref{sigma,q},
		Definition~\ref{D:SOLUTIONNORMS},
		the bootstrap assumptions,
		the product inequality \eqref{Sob.prod},
		and the already derived estimates \eqref{n.low.est}--\eqref{E:LAPSECONTROLLEDBYDYNAMICVARIABLES} for $n$.
	\end{proof}

\subsubsection{Commuted equations for $k$ and $\upgamma$}
\label{SSS:METRICCOMMUTEDEQUATIONS}
In this section, we provide the evolution equations that we will use to control
the scalar functions
$\lbrace k_{IJ} \rbrace_{I,J=1,\cdots,\mydim}$
and
$\lbrace \upgamma_{IJB} \rbrace_{I,J,B=1,\cdots,\mydim}$ 
as well as their derivatives.

\begin{lemma}[$\partial^{\iota}$-commuted equations for $\upgamma$ and $k$]
\label{L:TOPORDERCOMMUTEDEQUATIONSFORGAMMANDK}
		Let $\iota$ be a spatial multi-index with $|\iota| \leq N$,
		and let $\Pow \geq 0$ be a real number.
		Then for solutions to the equations of Proposition~\ref{P:redeq},
		the following evolution equations hold,
		where the Kasner background scalars $\lbrace \widetilde{k}_{IJ} \rbrace_{I,J=1,\cdots,\mydim}$ 
		and $\widetilde{\psi}$
		are defined in \eqref{Kasnersol} 
		(see also Remark~\ref{R:NOTCOMPONENTSOFTENSORRELATIVETOFRAME}):
\begin{subequations}
\begin{align}
\begin{split}
\partial_t [t^{\Pow} \partial^{\iota} (k_{IJ} - \widetilde{k}_{IJ})]
& = (\Pow-1) t^{\Pow-1} \partial^{\iota} (k_{IJ} - \widetilde{k}_{IJ})
		+
		t^{\Pow} n e_C \partial^{\iota} \upgamma_{IJC}
		- 
		t^{\Pow} n e_I \partial^{\iota} \upgamma_{CJC}
		-
		t^{\Pow} e_I \partial^{\iota} e_J n
			\label{E:COMMUTEDKEQUATION} 
			\\
	& \ \
		+
		t^{\Pow-1}\mathfrak{K}_{IJ}^{(\textnormal{Border};\iota)}
		+
		t^{\Pow} \mathfrak{K}_{IJ}^{(\textnormal{Junk};\iota)},
\end{split} \\
\begin{split}
\partial_t (t^{\Pow} \partial^{\iota} \upgamma_{IJB})
& = \Pow t^{\Pow-1} \partial^{\iota} \upgamma_{IJB}
		+
		t^{\Pow} n e_B \partial^{\iota} k_{JI} 
		- 
		t^{\Pow} n e_J \partial^{\iota} k_{BI} 
		\label{E:COMMUTEDCONNECTIONCOEFFICIENTEQUATION} \\
& \ \
		+
		t^{\Pow} \mathfrak{G}_{IJB}^{(\textnormal{Border};\iota)}
		+
		t^{\Pow} \mathfrak{G}_{IJB}^{(\textnormal{Junk};\iota)},
	\end{split} \\
	t^{\Pow} e_C \partial^{\iota} k_{CI}
	& =
	t^{\Pow} \mathfrak{M}_I^{(\textnormal{Border};\iota)}
	+
	t^{\Pow} \mathfrak{M}_I^{(\textnormal{Junk};\iota)},
	\label{E:COMMUTEDMOMENUTMCONSTRAINT}
\end{align}
\end{subequations}
where:
\begin{subequations}
\begin{align}
\mathfrak{K}_{IJ}^{(\textnormal{Border};\iota)}
& :=	
	\partial^{\iota} (n-1) \cdot \widetilde{k}
	+
	\sum_{\iota_1 \cup \iota_2= \iota}
	\partial^{\iota_1} (n-1) 
	\cdot 
	\partial^{\iota_2} (k - \widetilde{k}),
			\label{E:SECONDFUNDBORDER} \\
	\begin{split}
	\mathfrak{K}_{IJ}^{(\textnormal{Junk};\iota)}
	& := 
			\sum_{\iota_1 \cup \iota_2= \iota,\,|\iota_2|<|\iota|}
			\partial^{\iota_1} e
			\cdot
			\partial \partial^{\iota_2} \vec{e} n
		+
		\sum_{\iota_1 \cup \iota_2= \iota,\,|\iota_2|<|\iota|}
			\partial^{\iota_1} \upgamma 
			\cdot 
			\partial^{\iota_2} \vec{e} n
					\label{E:SECONDFUNDJUNK} \\
	& \ \
		+
		\sum_{\iota_1 \cup \iota_2 \cup \iota_3= \iota,\,|\iota_3|<|\iota|}
			\partial^{\iota_1} n
			\cdot
			\partial^{\iota_2} e
			\cdot 
			\partial^{\iota_3} \partial \upgamma 	
		+
		\sum_{v \in \lbrace \upgamma, \vec{e} \psi \rbrace}
		\sum_{\iota_1 \cup \iota_2 \cup \iota_3= \iota}
			\partial^{\iota_1} n
			\cdot
			\partial^{\iota_2} v 
			\cdot 
			\partial^{\iota_3} v,
	\end{split}
		\\
	\mathfrak{G}_{IJB}^{(\textnormal{Border};\iota)}
	& := 
			n
			\cdot
			\widetilde{k}
			\cdot
			\partial^{\iota} \upgamma
			+
			\sum_{\iota_1 \cup \iota_2 = \iota}
			n
			\partial^{\iota_1} (k - \widetilde{k}) 
			\cdot
			\partial^{\iota_2} \upgamma
				+
	\widetilde{k} \cdot \partial^{\iota} \vec{e} n
	+
	\sum_{\iota_1 \cup \iota_2= \iota}
	\partial^{\iota_1} (k - \widetilde{k}) 
	\cdot 
	\partial^{\iota_2} \vec{e} n,
		\label{E:CONNECTIONCOEFFICIENTBORDER} \\
\mathfrak{G}_{IJB}^{(\textnormal{Junk};\iota)}
& := 
			\sum_{\iota_1 \cup \iota_2 \cup \iota_3= \iota,\,|\iota_1| \geq 1}
			\partial^{\iota_1} n
			\cdot
			\partial^{\iota_2} k
			\cdot 
			\partial^{\iota_3} \upgamma
		+
		\sum_{\iota_1 \cup \iota_2 \cup \iota_3= \iota,\,|\iota_3| < |\iota|}
			\partial^{\iota_1} n
			\cdot
			\partial^{\iota_2} e
			\cdot 
			\partial^{\iota_3} \partial k,
	\label{E:CONNECTIONCOEFFICIENTJUNK}  \\
\mathfrak{M}_I^{(\textnormal{Border};\iota)}
& :=
\widetilde{k} \cdot \partial^{\iota} \upgamma
+
\sum_{\iota_1 \cup \iota_2= \iota}
 \partial^{\iota_1} (k - \widetilde{k}) 
\cdot 
\partial^{\iota_2}
\upgamma
+
\partial_t \widetilde{\psi} \cdot \partial^{\iota} \vec{e} \psi
+
\sum_{\iota_1 \cup \iota_2= \iota}
 \partial^{\iota_1} (e_0 \psi - \partial_t \widetilde{\psi}) 
\cdot 
\partial^{\iota_2}
\vec{e} \psi,
	\label{E:MOMENTUMBORDER} \\
\mathfrak{M}_I^{(\textnormal{Junk};\iota)}
& :=
\sum_{\iota_1 \cup \iota_2= \iota, \, |\iota_2| < |\iota|}
\partial^{\iota_1} e
\cdot
\partial \partial^{\iota_2} k.
\label{E:MOMENTUMJUNK}
\end{align}
\end{subequations}
\end{lemma}

\begin{proof}
	Equations \eqref{E:COMMUTEDKEQUATION}--\eqref{E:COMMUTEDCONNECTIONCOEFFICIENTEQUATION} 
	follow from straightforward computations
	based on first multiplying equations \eqref{dt.k}--\eqref{dt.gamma} by $n$, 
	using that $\partial_t = n e_0$,
	differentiating the resulting equations with $\partial^{\iota}$,
	applying the Leibniz rule,
	multiplying both sides of the resulting identities by $t^{\Pow}$,
	and then commuting the factor of $t^{\Pow}$ under the operator $\partial_t$ on the LHSs
	and accounting for the commutator $[t^{\Pow},\partial_t]$.
	Similarly, equation \eqref{E:COMMUTEDMOMENUTMCONSTRAINT}
	follows from
	differentiating equation \eqref{momconst} with $\partial^{\iota}$,
	applying the Leibniz rule,
	and then multiplying both sides of the resulting identity by $t^{\Pow}$.
\end{proof}

\subsubsection{Pointwise estimates for the error terms in the spatial metric evolution equations}
\label{SSS:SPATIALMETRICERRORTERMSPOINTWISE}
In this section, 
we derive pointwise estimates for the error terms in the equations of 
Lemma~\ref{L:TOPORDERCOMMUTEDEQUATIONSFORGAMMANDK}
that we will later use to control $k-\widetilde{k}$
at derivative levels $\leq N_0+1$.

\begin{lemma}[Pointwise estimates for the error terms in the evolution equations for 
$\partial^{\leq N_0+1} (k-\widetilde{k})$]
\label{L:SPATIALMETRICERRORTERMSPOINTWISE}
Recall that $\mathbb{D}(t)$ is the total norm of the dynamic variables from Definition~\ref{D:SOLUTIONNORMS}.
Assume that the bootstrap assumptions \eqref{Boots} hold.
There exists a constant $C = C_{N,N_0,\blowupexp,\mydim,q,\upsigma} > 0$ 
such that if $N_0 \geq 1$ and $N$ is sufficiently large in a manner 
that depends on $N_0, \blowupexp, \mydim, q,$ and $\upsigma$,
and if $\varepsilon$ is sufficiently small 
(in a manner that depends on $N, N_0, \blowupexp, \mydim, q,$ and $\upsigma$), 
then the following pointwise estimates hold for $(t,x) \in (T_{\textnormal{Boot}},1] \times \mathbb{T}^{\mydim}$,
where $\mathfrak{K}_{IJ}^{(\textnormal{Border};\iota)}$ and $\mathfrak{K}_{IJ}^{(\textnormal{Junk};\iota)}$
are defined in \eqref{E:SECONDFUNDBORDER}--\eqref{E:SECONDFUNDJUNK}:
	\begin{subequations}
	\begin{align}
		\sum_{|\iota| \leq N_0+1} 
		\sum_{I,J=1,\cdots,\mydim}
		{t}\left|
			 n e_C \partial^{\iota} \upgamma_{IJC}
			- 
			 n e_I \partial^{\iota} \upgamma_{CJC}
			-
			 e_I \partial^{\iota} e_J n
		\right|(t,x)
		& \leq C t^{-1+\upsigma} \mathbb{D}(t){,}	
			\label{E:SECONDFUNDPOINTWISEERRORTERMESTIMATE1} 
			\\
			\sum_{|\iota| \leq N_0+1} 
			\sum_{I,J=1,\cdots,\mydim}
				|\mathfrak{K}_{IJ}^{(\textnormal{Border};\iota)}|(t,x)
		+
		\sum_{|\iota| \leq N_0+1}
		\sum_{I,J=1,\cdots,\mydim} 
			t |\mathfrak{K}_{IJ}^{(\textnormal{Junk};\iota)}|(t,x)
		& \leq C t^{-1+\upsigma} \mathbb{D}(t).
			\label{E:SECONDFUNDPIONTWISEERRORTERMESTIMATE2}
	\end{align}
	\end{subequations}
\end{lemma}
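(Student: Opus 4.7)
The plan is to reduce both claimed estimates to purely pointwise arithmetic on $t$-powers: I will bound each factor appearing in the error expressions in $L^\infty$, multiply, and then check that the product of $t$-powers fits inside $C t^{-1+\upsigma}\mathbb{D}(t)$ via the strict inequalities in \eqref{sigma,q}. The three sources of pointwise control are \textbf{(i)} the low-order portion of $\mathbb{D}(t)$ together with the bootstrap assumptions \eqref{Boots}, \textbf{(ii)} the lapse bound \eqref{n.low.est} from Proposition~\ref{prop:n}, and \textbf{(iii)} the ``borrowed'' $L^\infty$ control at orders $N_0+2$ and $N_0+3$ provided by Lemma~\ref{lem:Sob.borrow}. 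The last of these is needed because the outer $e_C,e_I$ factors and Leibniz expansions push the derivative count above $N_0$, and each application of Lemma~\ref{lem:Sob.borrow} costs a $t^{-\updelta A}$ that I will absorb into the $t^\upsigma$ slack by making $\updelta A$ much smaller than $\upsigma$ (Remark~\ref{rem:deltaA}).

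\textbf{Treatment of \eqref{E:SECONDFUNDPOINTWISEERRORTERMESTIMATE1}.} For $|\iota|\le N_0+1$, after expanding $e_C\partial^\iota\upgamma = e_C^c\partial_c\partial^\iota\upgamma$ and similarly unrolling $e_I\partial^\iota e_J n$ via the Leibniz rule, each of the three expressions is controlled by a product of $\|e\|_{L^\infty}$ (or $\|e\|_{L^\infty}^2$) with either $\|\upgamma\|_{W^{N_0+2,\infty}(\Sigma_t)}$ or $\|n-1\|_{W^{N_0+3,\infty}(\Sigma_t)}$, up to strictly lower-order Leibniz remainders. Using the bootstrap bound $\|e\|_{L^\infty}\lesssim t^{-q}$ together with \eqref{gamma.borrow.est} and \eqref{n.borrow.est}, every term is pointwise $\lesssim t^{-2q+\upsigma-\updelta A}\mathbb{D}(t)$. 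Multiplying by the prefactor $t$ in the statement gives $t^{1-2q+\upsigma-\updelta A}\mathbb{D}(t)\le C t^{-1+\upsigma}\mathbb{D}(t)$, where the last inequality uses $q<1-2\upsigma$ from \eqref{sigma,q} and the smallness of $\updelta A$.

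\textbf{Treatment of \eqref{E:SECONDFUNDPIONTWISEERRORTERMESTIMATE2}.} The two Border summands in \eqref{E:SECONDFUNDBORDER} are, after \eqref{Kasnersol} and Remark~\ref{rem:N0+1}, bounded respectively by $|\partial^{\le N_0+1}(n-1)|\cdot|\widetilde{k}|\lesssim t^\upsigma\cdot t^{-1}\,\mathbb{D}(t)$ (using \eqref{n.low.est}) and by $|\partial^{\le N_0+1}(n-1)|\cdot|\partial^{\le N_0+1}(k-\widetilde{k})|\lesssim t^\upsigma\cdot t^{-1}\,\mathbb{D}^2(t)$, both of which are $\lesssim t^{-1+\upsigma}\mathbb{D}(t)$ once $\varepsilon$ is small. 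The Junk summands in \eqref{E:SECONDFUNDJUNK} each combine at most two frame- or scalar-field-type factors of size $\lesssim t^{-q}$ (via \eqref{omega.borrow.est}, \eqref{psi.borrow.est}) with at least one lapse or $\vec e n$ factor of size $\lesssim t^{\upsigma-\updelta A}$ or $\lesssim t^{-q+\upsigma-\updelta A}$ (via \eqref{n.borrow.est} and \eqref{n.low.est}); a short case-by-case inspection of the four listed families shows that the worst resulting power of $t$ is $t^{-2q+\upsigma-\updelta A}$, so after multiplying by the outside $t$ in the statement one obtains $t\cdot|\mathfrak{K}^{(Junk;\iota)}_{IJ}|\lesssim t^{1-2q+\upsigma-\updelta A}\mathbb{D}(t)\le C t^{-1+\upsigma}\mathbb{D}(t)$ by the same exponent inequality as before.

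\textbf{Principal difficulty.} There is no conceptually hard step here, only the bookkeeping of $t$-powers: the lemma is a routine payoff of the low-order machinery, and it is essentially the statement that the worst terms on the right-hand side of the $k-\widetilde{k}$ evolution system are already captured by the crucial spatial Ricci bound \eqref{FRAMEheur.crit} and the diagonal structure of the structure-coefficient equation \eqref{dt.gamma-gammatilde2}. The one point requiring care is the cumulative $t^{-\updelta A}$ losses produced when Lemma~\ref{lem:Sob.borrow} is invoked more than once in the same product (e.g.\ in the quartic piece $n\cdot e\cdot \partial\vec{e}n$ of \eqref{E:SECONDFUNDJUNK}); this forces the parameter hierarchy \eqref{E:PARAMETERINEQUALITIES} --- first $A$, then $N_0$, then $N$ large enough relative to all of these so that $\updelta A\ll\upsigma$. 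Once this ordering is respected, every exponent arithmetic closes and both \eqref{E:SECONDFUNDPOINTWISEERRORTERMESTIMATE1} and \eqref{E:SECONDFUNDPIONTWISEERRORTERMESTIMATE2} follow.
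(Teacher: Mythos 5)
Your proof is correct and follows essentially the same route as the paper: pointwise bounds from the bootstrap assumptions, the lapse estimate \eqref{n.low.est}, and Lemma~\ref{lem:Sob.borrow}, combined with the exponent inequalities \eqref{sigma,q} and the smallness of $\updelta A$ from Remark~\ref{rem:deltaA}. One minor slip worth noting: you claim the worst power in \eqref{E:SECONDFUNDPOINTWISEERRORTERMESTIMATE1} and in the Junk terms is $t^{-2q+\upsigma-\updelta A}$, but the pieces $ne_C\partial^\iota\upgamma$, $n\cdot e\cdot\partial\upgamma$ and $n\cdot v\cdot v$ carry no $t^{\upsigma}$ gain from the lapse and are only $\lesssim t^{-2q-\updelta A}\mathbb{D}(t)$; the conclusion is unaffected since $q<1-2\upsigma$ and $\updelta A\ll\upsigma$ still give $t^{1-2q-\updelta A}\le t^{-1+\upsigma}$, but the cited inequality should be checked against the correct exponent.
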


\begin{proof}
	The lemma follows from
	the explicit formulas \eqref{Kasnersol}, 
	the inequalities in \eqref{sigma,q},
	Definition~\ref{D:SOLUTIONNORMS},
	the bootstrap assumptions,
	the interpolation estimates of Lemma~\ref{lem:Sob.borrow} (see Remark \ref{rem:deltaA}), 
	and the already derived lower order estimate \eqref{n.low.est} for $n-1$.
\end{proof}

\begin{remark}[On the meaning of ``Borderline'']
\label{R:MEANINGOFBORDERLINE}
Quantities featuring the superscript ``Borderline'' contain terms that are
either borderline with respect to our low order estimates \emph{or} our high order estimates (or both).
For example, the estimate \eqref{E:SECONDFUNDPIONTWISEERRORTERMESTIMATE2} reveals that at the lower orders,
$\mathfrak{K}_{IJ}^{(\textnormal{Border};\iota)}$ is not a borderline term (see also \eqref{SUBSTEP2MAINSTEPlow.est}), 
while the presence of the $C_*$-involving term on RHS~\eqref{E:KTOPORDERBORDERTERMSL2ESTIMATE}
in Lemma~\ref{L:METRICL2CONTROLOFTOPORDERERRORTERMS} below shows that
$\mathfrak{K}_{IJ}^{(\textnormal{Border};\iota)}$ is indeed borderline in the context
of our top-order energy estimates. Similar remarks apply to other quantities
featuring the superscript ``Borderline."
\end{remark}

\subsubsection{Differential energy identity for the second fundamental form and connection coefficients}
\label{SSS:METRICDIFFERENTIALENERGYIDENTITY}
We will derive our top-order energy estimates for the second fundamental form and connection
coefficients by integrating the differential identity provided by the following lemma.

\begin{lemma}[Top-order differential energy identity for $\lbrace k_{IJ} \rbrace_{I,J=1,\cdots,\mydim}$ 
and $\lbrace \upgamma_{IJB} \rbrace_{I,J,B=1,\cdots,\mydim}$]
	\label{L:TOPORDERDIFFERENTIALENERGYIDENTITYFORGAMMAANDK}
	Let $\iota$ be a top-order spatial multi-index,
	i.e., $|\iota|=N$.
	Then for solutions to the $\partial^{\iota}$-commuted equations
	\eqref{E:COMMUTEDKEQUATION}--\eqref{E:COMMUTEDMOMENUTMCONSTRAINT} with $\Pow := \blowupexp + 1$, 
	the following differential energy identity holds, where 
	the error terms
	$\mathfrak{K}_{IJ}^{(\textnormal{Border};\iota)}$, 
	$\mathfrak{K}_{IJ}^{(\textnormal{Junk};\iota)}$, 
	$\mathfrak{G}_{IJB}^{(\textnormal{Border};\iota)}$, 
	$\mathfrak{G}_{IJB}^{(\textnormal{Junk};\iota)}$, 
	$\mathfrak{M}_I^{(\textnormal{Border};\iota)}$, 
	and
	$\mathfrak{M}_I^{(\textnormal{Junk};\iota)}$ 
	are defined in \eqref{E:SECONDFUNDBORDER}--\eqref{E:MOMENTUMJUNK}:
	\begin{align} \label{E:TOPORDERDIFFERENTIALENERGYIDENTITYFORGAMMAANDK} 
	\begin{split}	
		&
		\partial_t
			\left\lbrace
				(t^{\blowupexp + 1}\partial^{\iota}k_{IJ}) (t^{\blowupexp + 1} \partial^{\iota}k_{IJ})
			\right\rbrace
			+
			\frac{1}{2}
			\partial_t
			\left\lbrace
				(t^{\blowupexp + 1} \partial^{\iota}\upgamma_{IJB})
				(t^{\blowupexp + 1} \partial^{\iota}\upgamma_{IJB})
			\right\rbrace
				\\
	& =  \frac{2 \blowupexp}{t} (t^{\blowupexp + 1} \partial^{\iota} k_{IJ}) (t^{\blowupexp + 1}  \partial^{\iota}k_{IJ})
			+
			\frac{(\blowupexp + 1)}{t} (t^{\blowupexp + 1} \partial^{\iota} \upgamma_{IJB}) (t^{\blowupexp + 1} \partial^{\iota} \upgamma_{IJB})
				\\
& \ \
		+ 2 (t^{\blowupexp + 1} \partial^{\iota} k_{IJ}) 
			\left(t^{\blowupexp} \mathfrak{K}_{IJ}^{(\textnormal{Border};\iota)} + t^{\blowupexp + 1} \mathfrak{K}_{IJ}^{(\textnormal{Junk};\iota)} \right)
				\\
& \ \
		+ (t^{\blowupexp + 1} \partial^{\iota} \upgamma_{IJB}) 
			\left(t^{\blowupexp + 1} \mathfrak{G}_{IJB}^{(\textnormal{Border};\iota)} + t^{\blowupexp + 1} \mathfrak{G}_{IJB}^{(\textnormal{Junk};\iota)} \right)
						 \\
	& \ \
		+
		2  
		(t^{\blowupexp + 1} \partial^{\iota} e_J n)
		\left(t^{\blowupexp + 1} \mathfrak{M}_J^{(\textnormal{Border};\iota)} + t^{\blowupexp + 1} \mathfrak{M}_J^{(\textnormal{Junk};\iota)} \right)
			\\
&  \ \	
		+
		2  
		n 
		(t^{\blowupexp + 1} \partial^{\iota} \upgamma_{CJC})
		\left(t^{\blowupexp + 1} \mathfrak{M}_J^{(\textnormal{Border};\iota)} + t^{\blowupexp + 1} \mathfrak{M}_J^{(\textnormal{Junk};\iota)} \right)
			 \\
	& \ \
		+
		2
		(\partial_c e_I^c) 
		(t^{\blowupexp + 1} \partial^{\iota} e_J n) 	
		(t^{\blowupexp + 1} \partial^{\iota} k_{IJ})
			\\
&  \ \
		+
		2
		\left\lbrace
			\partial_c(n e_I^c)
		\right\rbrace
		(t^{\blowupexp + 1}\partial^{\iota} k_{IJ}) 
		(t^{\blowupexp + 1}\partial^{\iota} \upgamma_{CJC})
		-
		2 
		\left\lbrace
			\partial_c (n e_C^c)
		\right\rbrace
		(t^{\blowupexp + 1} \partial^{\iota} k_{IJ})  
		(t^{\blowupexp + 1}\partial^{\iota} \upgamma_{IJC})
			\\
	& \ \
		-
		2 
		\partial_c
		\left\lbrace
			t^{2 \blowupexp+2} e_I^c (\partial^{\iota} e_J n) \partial^{\iota} k_{IJ}
		\right\rbrace
		- 
		2
		\partial_c
		\left\lbrace
			t^{2 \blowupexp+2} e_I^c n (\partial^{\iota} k_{IJ}) \partial^{\iota} \upgamma_{CJC}
		\right\rbrace
			\\
	& \ \
		+
		2 
		\partial_c
		\left\lbrace
			t^{2 \blowupexp+2}
			n e_C^c
			(\partial^{\iota} k_{IJ})  \partial^{\iota} \upgamma_{IJC}
		\right\rbrace.
	\end{split}
	\end{align}
\end{lemma}

\begin{proof}
	The proof is a calculation that, although lengthy, is 
	straightforward; hence, we only explain the main steps.
	We first note that $\partial^{\iota} \widetilde{k}_{IJ} = 0$
	and thus we can ignore the formal presence of this term on LHS~\eqref{E:COMMUTEDKEQUATION}.
	Next, we expand LHS~\eqref{E:TOPORDERDIFFERENTIALENERGYIDENTITYFORGAMMAANDK} using the Leibniz rule.
	When $\partial_t$ falls on $t^{\blowupexp + 1} \partial^{\iota}k_{IJ}$,
	we plug in \eqref{E:COMMUTEDKEQUATION} with $\Pow := \blowupexp + 1$.
	When $\partial_t$ falls on $t^{\blowupexp + 1} \partial^{\iota}\upgamma_{IJB}$,
	we plug in \eqref{E:COMMUTEDCONNECTIONCOEFFICIENTEQUATION} with $\Pow := \blowupexp + 1$.
	We then differentiate the resulting terms by parts. 
	Next, we use the (differentiated) momentum constraint \eqref{E:COMMUTEDMOMENUTMCONSTRAINT}
	with $\Pow := \blowupexp + 1$
	to substitute for the terms $t^{\blowupexp + 1} e_I \partial^{\iota} k_{IJ}$ in the product
	$2 (t^{\blowupexp + 1} \partial^{\iota} e_J n) \cdot t^{\blowupexp + 1} e_I \partial^{\iota} k_{IJ}$
	(which is ``present'' in the sense that it is needed to cancel a corresponding
	product obtained from 
	expanding the third-to-last term
	$
		-
		2 
		\partial_c
		\left\lbrace
			t^{2 \blowupexp+2} e_I^c (\partial^{\iota} e_J n) \partial^{\iota} k_{IJ}
		\right\rbrace
	$
	on RHS~\eqref{E:TOPORDERDIFFERENTIALENERGYIDENTITYFORGAMMAANDK}).
	Similarly,
	we use \eqref{E:COMMUTEDMOMENUTMCONSTRAINT}
	with $\Pow := \blowupexp + 1$
	to substitute for the terms $t^{\blowupexp + 1} e_I \partial^{\iota} k_{IJ}$ in the product
	$2 n t^{\blowupexp + 1} e_I \partial^{\iota} k_{IJ} \cdot t^{\blowupexp + 1} \partial^{\iota} \upgamma_{CJC}$
	(which is ``present'' in the sense that it is needed to cancel a corresponding
	product obtained from 
	expanding the next-to-last term
	$
	- 
		2
		\partial_c
		\left\lbrace
			t^{2 \blowupexp+2} e_I^c n (\partial^{\iota} k_{IJ}) \partial^{\iota} \upgamma_{CJC}
		\right\rbrace
	$
	on RHS~\eqref{E:TOPORDERDIFFERENTIALENERGYIDENTITYFORGAMMAANDK}).
\end{proof}
\begin{remark}[Comments tied to the momentum constraint and well-posedness in CMC-transported spatial coordinates]\label{rem:mom.const.use}
	The (differentiated) momentum constraint \eqref{E:COMMUTEDMOMENUTMCONSTRAINT} 
	plays a crucial role in our proof of Lemma~\ref{L:TOPORDERDIFFERENTIALENERGYIDENTITYFORGAMMAANDK};
	without this constraint equation, the corresponding differential energy identity would have featured terms 
	involving one too many derivatives of $k_{IJ}$, which in turn would have led to a fatal loss of one derivative
	in the top-order estimates. 
	An alternate way to overcome the derivative loss
	is to use spatial harmonic coordinates on each time slice $\Sigma_t$, as in \cite{lAvM2003}. 
	However, such coordinates lead to the presence of a non-zero shift vector in the coordinate expression for the spacetime
	metric, and it is not currently known whether the corresponding error terms are compatible with a proof of 
	stable Big Bang formation. We also emphasize that for similar reasons, 
	the momentum constraint equation plays a crucial role
	in proving local well-posedness for Einstein's equations in
	CMC-transported spatial coordinates; see \cite[Theorem~14.1]{RodSp2}.
	Moreover, we also highlight that while energy identities such as \eqref{E:TOPORDERDIFFERENTIALENERGYIDENTITYFORGAMMAANDK} 
	can be used to derive a priori energy estimates for solutions to the \emph{nonlinear} 
	reduced equations 
	(where by ``reduced,'' we roughly mean gauge-dependent equations in the spirit of the ones stated in Proposition~\ref{P:redeq}),
	the proof of local well-posedness given by \cite[Theorem~14.1]{RodSp2}
	relies on a modified system, which can be shown to be equivalent to the nonlinear reduced equations
	(and hence, by the ``if and only if'' aspect of Proposition~\ref{P:redeq}, equivalent to Einstein's equations too)
	for initial data that satisfy the constraints and the CMC condition \eqref{INTROtrk} at $t=1$. 
	The key advantage of the modified system is that 
	it does not involve constraint equations; this allows one to show that solutions to
	linearized versions of the modified system also enjoy good energy estimates, 
	which is important for the standard iteration/contraction mapping schemes that are used
	in proofs of local well-posedness for quasilinear equations.
\end{remark}

\subsubsection{Control of the error terms in the top-order commuted spatial metric equations}
\label{SSS:TOPORDERSPATIALMETRICINHOMOGENEOUSTERML2ESTIMATE}
In this section, at the top-order derivative level,
we derive $L^2$ estimates for the error terms in the equations of 
Lemma~\ref{L:TOPORDERCOMMUTEDEQUATIONSFORGAMMANDK}.

\begin{lemma} [$L^2$-control of the error terms in the top-order commuted evolution equations for $k$ and $\upgamma$]
	\label{L:METRICL2CONTROLOFTOPORDERERRORTERMS}
		Recall that $\mathbb{H}_{(\upgamma,k)}$,
$\mathbb{H}_{(\psi)}$,
and $\mathbb{D}(t)$ are norms from Definition~\ref{D:SOLUTIONNORMS},
and assume that the bootstrap assumptions \eqref{Boots} hold.
	Recall that the error terms
	$\mathfrak{K}_{IJ}^{(\textnormal{Border};\iota)}$,
	$\mathfrak{K}_{IJ}^{(\textnormal{Junk};\iota)}$,
	$\mathfrak{G}_{IJB}^{(\textnormal{Border};\iota)}$,
	$\mathfrak{G}_{IJB}^{(\textnormal{Junk};\iota)}$,
	$\mathfrak{M}_I^{(\textnormal{Border};\iota)}$,
	and
	$\mathfrak{M}_I^{(\textnormal{Junk};\iota)}$
	are defined in \eqref{E:SECONDFUNDBORDER}--\eqref{E:MOMENTUMJUNK}.
There exists a constant $C_* > 0$ \underline{independent of $N, N_0,$ and $\blowupexp$}
and a constant $C = C_{N,N_0,\blowupexp,\mydim,q,\upsigma} > 0$ 
such that if $N$ is sufficiently large in a manner 
that depends on $N_0, \blowupexp, \mydim, q,$ and $\upsigma$,
and if $\varepsilon$ is sufficiently small (in a manner that depends on $N, N_0, \blowupexp, \mydim, q,$ and $\upsigma$), 
then the following estimates hold for $t \in (T_{\textnormal{Boot}},1]$:
	\begin{subequations}
	\begin{align}
		t^{\blowupexp}
		\sqrt{
		\sum_{|\iota| = N}
		\sum_{I,J=1,\cdots,\mydim}
		\|  \mathfrak{K}_{IJ}^{(\textnormal{Border};\iota)}\|_{L^2(\Sigma_t)}^2}
		& \leq 
				C_* t^{-1} \mathbb{H}_{(\upgamma,k)}(t)
					+
				C t^{-1 + \upsigma} \mathbb{D}(t),
			\label{E:KTOPORDERBORDERTERMSL2ESTIMATE}	\\
		t^{\blowupexp + 1}
		\sqrt{
		\sum_{|\iota| = N}
		\sum_{I,J,B=1,\cdots,\mydim}
		\|  \mathfrak{G}_{IJB}^{(\textnormal{Border};\iota)}\|_{L^2(\Sigma_t)}^2}
		& \leq 
				C_* t^{-1} \mathbb{H}_{(\upgamma,k)}(t)
				+
				C t^{-1 + \upsigma} \mathbb{D}(t),
					\label{E:CONNECTIONTOPORDERBORDERTERMSL2ESTIMATE} \\
		t^{\blowupexp + 1}
		\sqrt{
		\sum_{|\iota| = N}
		\sum_{I=1,\cdots,\mydim}
		\|  \mathfrak{M}_I^{(\textnormal{Border};\iota)}\|_{L^2(\Sigma_t)}^2}
		& \leq 
			C_* t^{-1} \mathbb{H}_{(\upgamma,k)}(t)
				+
				C_* t^{-1} \mathbb{H}_{(\psi)}(t)
				+
				C t^{-1 + \upsigma} \mathbb{D}(t),
				\label{E:CONSTRAINTTOPORDERBORDERTERMSL2ESTIMATE}
	\end{align}
	\end{subequations}
	
	\begin{subequations}
	\begin{align}
		t^{\blowupexp + 1}
		\sqrt{
		\sum_{|\iota| = N}
		\sum_{I,J=1,\cdots,\mydim}
		\| \mathfrak{K}_{IJ}^{(\textnormal{Junk};\iota)} \|_{L^2(\Sigma_t)}^2}
		& \leq C t^{-1 + \upsigma} \mathbb{D}(t),	
			\label{E:KTOPORDERJUNKTERMSL2ESTIMATE} 
				\\
		t^{\blowupexp + 1}
		\sqrt{
		\sum_{|\iota| = N}
		\sum_{I,J,B=1,\cdots,\mydim}
		\| \mathfrak{G}_{IJB}^{(\textnormal{Junk};\iota)} \|_{L^2(\Sigma_t)}^2}
		& \leq C t^{-1 + \upsigma} \mathbb{D}(t),	
			\label{E:CONNECTIONTOPORDERJUNKTERMSL2ESTIMATE} 
				\\
		t^{\blowupexp + 1}
		\sqrt{
		\sum_{|\iota| = N}
		\sum_{I=1,\cdots,\mydim}
		\| \mathfrak{M}_I^{(\textnormal{Junk};\iota)} \|_{L^2(\Sigma_t)}^2}
		& \leq C t^{-1 + \upsigma} \mathbb{D}(t).
		\label{E:CONSTRAINTTOPORDERJUNKTERMSL2ESTIMATE}
	\end{align}
	\end{subequations}
\end{lemma}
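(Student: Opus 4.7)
The plan is to bound each of the error terms in $L^2$ using the Sobolev product inequality \eqref{Sob.prod}, combined with the low-order $L^\infty$ control furnished by the bootstrap assumptions \eqref{Boots} and the slightly-above-$N_0$ interpolation bounds of Lemma \ref{lem:Sob.borrow}, together with the Kasner background formulas \eqref{Kasnersol} (in particular $|\widetilde{k}|, |\partial_t\widetilde{\psi}| \leq C_* t^{-1}$) and the top-order lapse estimate \eqref{PRECISE.n.high.est}. The essential dichotomy to track is: a term is borderline precisely when after placing one factor in $L^2$ at order $N$, the remaining factors contribute an $\mathcal{O}(t^{-1})$ weight with a coefficient $C_*$ that is independent of $N, N_0, A$ (i.e.\ no interpolation is performed); a term is junk when at least one factor carries either an extra $\varepsilon$-smallness, an extra $t^\upsigma$-power, or both, uniformly on $(T_{Boot},1]$.

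First I would dispatch the borderline estimates \eqref{E:KTOPORDERBORDERTERMSL2ESTIMATE}--\eqref{E:CONSTRAINTTOPORDERBORDERTERMSL2ESTIMATE}. Inspection of \eqref{E:SECONDFUNDBORDER} shows that the only summand of $\mathfrak{K}_{IJ}^{(Border;\iota)}$ which must be estimated without interpolation is $\partial^\iota(n-1)\cdot \widetilde{k}$: bound $\widetilde{k}$ pointwise by $C_* t^{-1}$, estimate $t^A\|\partial^\iota(n-1)\|_{L^2(\Sigma_t)}$ via \eqref{PRECISE.n.high.est}, and note that the lapse source $C_*\|\partial^\iota \upgamma\|_{L^2(\Sigma_t)}$ is of exactly the form $C_* t^{-1}\mathbb{H}_{(\upgamma,k)}(t)$ after multiplication by $t^{-1}$, while the remainder $Ct^\upsigma\mathbb{D}(t)$ from \eqref{PRECISE.n.high.est} produces the allowed junk contribution $Ct^{-1+\upsigma}\mathbb{D}(t)$. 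The remaining sum $\sum_{\iota_1\cup\iota_2=\iota}\partial^{\iota_1}(n-1)\cdot\partial^{\iota_2}(k-\widetilde{k})$ is strictly sub-borderline: apply \eqref{Sob.prod}, bound the $L^\infty$ factor via the bootstrap $\| n-1\|_{L^\infty}\leq C\varepsilon t^\upsigma$ or $\| k-\widetilde{k}\|_{L^\infty}\leq C\varepsilon t^{-1}$ (using Lemma \ref{lem:Sob.borrow} at intermediate orders), and collect the resulting $\varepsilon$-factor into the junk bound. The treatment of $\mathfrak{G}_{IJB}^{(Border;\iota)}$ and $\mathfrak{M}_I^{(Border;\iota)}$ is completely analogous: the leading pieces $n\widetilde{k}\,\partial^\iota\upgamma$, $\widetilde{k}\,\partial^\iota\vec{e}n$, $\widetilde{k}\,\partial^\iota\upgamma$, and $\partial_t\widetilde{\psi}\,\partial^\iota\vec{e}\psi$ are directly bounded using the pointwise bounds on the tilde-factors and (for the $\vec{e}n$ factor) the lapse estimate \eqref{PRECISE.n.high.est}, producing $C_*t^{-1}\mathbb{H}_{(\upgamma,k)}(t)$ or $C_*t^{-1}\mathbb{H}_{(\psi)}(t)$; all remaining summands involve a difference $k-\widetilde{k}$ or $e_0\psi-\partial_t\widetilde{\psi}$, whose $L^\infty$ norm carries an $\varepsilon$-smallness that sends them into the junk category.

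For the junk estimates \eqref{E:KTOPORDERJUNKTERMSL2ESTIMATE}--\eqref{E:CONSTRAINTTOPORDERJUNKTERMSL2ESTIMATE} the proof is mechanical: every summand appearing in $\mathfrak{K}_{IJ}^{(Junk;\iota)}$, $\mathfrak{G}_{IJB}^{(Junk;\iota)}$, $\mathfrak{M}_I^{(Junk;\iota)}$ contains at least one factor drawn from the list $\{n-1,\vec{e}n,\vec{e}\psi, e-\widetilde{e},\partial\upgamma,\partial k\text{ with a second derivative outside}\}$ each of which, at the lowest derivative level and under the bootstrap assumptions, decays strictly better than the Kasner background; in particular Lemma \ref{lem:Sob.borrow} supplies the bound $t^{-q-\updelta A}(\mathbb{L}+\mathbb{H})$ at intermediate orders, and the already-established lapse bound \eqref{n.low.est} supplies $Ct^{\upsigma}\mathbb{D}(t)$ for $n-1$ and its first frame-derivatives. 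Using \eqref{Sob.prod} to place all derivatives on one factor (the one taken in $L^2$) and the remaining ones in $L^\infty$, and carefully matching the $t$-powers against the prefactors $t^{A+1}$, one checks term by term that the aggregate power of $t$ is at least $-1+\upsigma$ (here the inequalities \eqref{sigma,q} and the smallness of $\updelta A$ from Remark \ref{rem:deltaA} are used), which produces the claimed uniform bound $Ct^{-1+\upsigma}\mathbb{D}(t)$.

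The main obstacle is the bookkeeping required to verify that the constant $C_*$ in the borderline estimates is truly independent of $N$, $N_0$, and $A$. In particular, one must never invoke Lemma \ref{lem:Sob.borrow} on the factor carrying the $\dot{H}^N$-norm in a borderline term, since its application introduces an interpolation constant $\updelta = \updelta(N,\mydim)$ that would corrupt the sharp form $C_* t^{-1}\mathbb{H}_{(\upgamma,k)}(t)$ and in turn break the eventual absorption against the favorable integral $-A\int_t^1 s^{-1}\mathbb{E}^2(s)\, ds$ in Proposition \ref{P:TOPORDERENERGYINEQUALITY}. Concretely, the only route for generating the borderline coefficient is through Kasner-background factors $\widetilde{k}, \partial_t\widetilde\psi, n$ and the source $2n e_D\partial^\iota \upgamma_{CCD}$ appearing on RHS~\eqref{n.eq.diff}, whose coefficients are genuinely structural constants of Einstein's equations; all other contributions must be shown to absorb into the junk term $Ct^{-1+\upsigma}\mathbb{D}(t)$ via the extra $\varepsilon$-smallness of the perturbation or the extra $t^\upsigma$-gain afforded by the sub-criticality inequalities \eqref{sigma,q}.
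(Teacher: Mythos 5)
Your proposal is correct and follows essentially the same route as the paper: multiply the error-term definitions by the appropriate $t$-weight, take the $L^2(\Sigma_t)$ norm, use the Sobolev product inequality \eqref{Sob.prod} together with the bootstrap assumptions \eqref{Boots} and the explicit Kasner formulas \eqref{Kasnersol} to split each summand, isolate the borderline $\partial^{\iota}(n-1)\cdot\widetilde{k}$, $n\cdot\widetilde{k}\cdot\partial^{\iota}\upgamma$, $\widetilde{k}\cdot\partial^{\iota}\vec{e}n$, $\widetilde{k}\cdot\partial^{\iota}\upgamma$, and $\partial_t\widetilde{\psi}\cdot\partial^{\iota}\vec{e}\psi$ pieces whose only ``big'' factor is the Kasner background, and convert the top-order lapse contributions via Proposition~\ref{prop:n}. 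The only cosmetic divergences from the paper are that you cite \eqref{PRECISE.n.high.est} where the paper uses the equivalent summed form \eqref{n.high.est}, and you reference Lemma~\ref{lem:Sob.borrow} for intermediate-order $L^\infty$ bounds whereas the paper's proof of the junk estimates invokes only Lemma~\ref{lem:basic.ineq}; neither difference changes the structure of the argument, and your observation that $C_*$ must be protected from interpolation constants is exactly the point the paper is making by distinguishing $C_*$ from $C$.
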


\begin{proof}
 We will give the proofs of \eqref{E:KTOPORDERBORDERTERMSL2ESTIMATE}
	and \eqref{E:KTOPORDERJUNKTERMSL2ESTIMATE}.
	The remaining estimates can be proved using similar arguments, and we omit the details.
	To prove \eqref{E:KTOPORDERBORDERTERMSL2ESTIMATE}, 
	we let $\iota$ be any spatial multi-index with $|\iota| = N$.
	We multiply both sides of \eqref{E:SECONDFUNDBORDER} by $t^{\blowupexp}$ and take the
	$\| \cdot \|_{L^2(\Sigma_t)}$ norm.
	Using the bootstrap assumptions, 
	the explicit formulas \eqref{Kasnersol},
	the inequalities in \eqref{sigma,q},
	Definition~\ref{D:SOLUTIONNORMS},
	and the product estimate \eqref{Sob.prod},
	we find that
	$
	t^{\blowupexp} \|  \mathfrak{K}_{IJ}^{(\textnormal{Border};\iota)}\|_{L^2(\Sigma_t)}
	\leq 
	C_* t^{A_*-1} \| \partial^{\iota} n \|_{L^2(\Sigma_t)}
	+
	C \varepsilon t^{A_*-1} \| n \|_{\dot{H}^N(\Sigma_t)}
	+
	C \varepsilon t^{-1+\upsigma} \mathbb{D}(t)
	$.
	We then square this estimate, sum over all $\iota$ with $|\iota| = N$,
	sum over all $1 \leq I,J \leq \mydim$, and then take the square root.
	We find that
	$\mbox{LHS~\eqref{E:KTOPORDERBORDERTERMSL2ESTIMATE}}
	\leq (C_* + C \varepsilon) t^{A_*-1} \| n \|_{\dot{H}^N(\Sigma_t)}
	+
	C \varepsilon t^{-1+\upsigma} \mathbb{D}(t)
	\leq 
	C_* t^{A_*-1} \| n \|_{\dot{H}^N(\Sigma_t)}
	+
	C \varepsilon t^{-1+\upsigma} \mathbb{D}(t)
	$.
	From this bound and the already derived high order estimate \eqref{n.high.est} for $n$,
	we arrive at the desired bound \eqref{E:KTOPORDERBORDERTERMSL2ESTIMATE}.
	
	The estimate \eqref{E:KTOPORDERJUNKTERMSL2ESTIMATE}
	can be proved by multiplying equation \eqref{E:SECONDFUNDJUNK}
	by $t^{\blowupexp + 1}$ and combing arguments similar to the ones we used above
	with the estimates of Lemma~\ref{lem:basic.ineq}.
\end{proof}

\subsection{Preliminary identities and inequalities for the scalar field $\psi$}
\label{SS:SCALARFIELDESTIMATES}
This section is an analog of Sect.\,\ref{SS:PRELIMINARYRESULTSFORFRAMESECONDFUNDANDCONNECTION}
for the scalar field $\psi$. That is,
we derive preliminary low order and high order 
identities and inequalities for $\psi$ by using the wave equation \eqref{boxpsi}. 
In order to avoid the time derivative of $n$ appearing as an error term in the equations
(which would unnecessarily complicate our derivation of the main estimates), 
we treat $e_0\psi$, $\lbrace e_I\psi \rbrace_{I=1,\cdots,\mydim}$ 
as separate variables satisfying a first-order system derived from the wave equation, 
cf.\ \cite{Sp}. 
Roughly, we bound the inhomogeneous terms in the evolution equations
in terms of our solution norms, and we derive an energy identity in differential form.
In Sects.\,\ref{SS:INTEGRALINEQUALITYFORLOWNORMS}--\ref{PROOFOFprop:overall},
we will combine these preliminary results with related ones
for $n$, $\lbrace k_{IJ} \rbrace_{I,J=1,\cdots,\mydim}$, 
$\lbrace \upgamma_{IJB} \rbrace_{I,J,B=1,\cdots,\mydim}$, 
$\lbrace e_I^i \rbrace_{I,i=1,\cdots,\mydim}$,
and
$\lbrace \oe_i^I \rbrace_{I,i=1,\cdots,\mydim}$
to derive our main a priori estimates,
i.e., to prove Proposition~\ref{prop:overall}.

\subsubsection{Commuted evolution equations for $e_0 \psi$ and $e_I \psi$}
\label{SSS:SCALARFIELDEQUATIONS}
In this section, we provide the first-order evolution equations that we will use to control
the scalar functions
$e_0 \psi$
and
$\lbrace e_I \psi \rbrace_{I=1,\cdots,\mydim}$
as well as their derivatives.

\begin{lemma}[The first-order evolution system for $e_0 \psi$, $\lbrace e_I \psi \rbrace_{I=1,\cdots,\mydim}$, and their derivatives]
\label{lem:psi.syst}
For solutions to the equations of Proposition~\ref{P:redeq},
the ${\bf g}$-orthonormal frame derivatives of $\psi$, 
namely $e_0\psi$ and $\lbrace e_I \psi \rbrace_{I=1,\cdots,\mydim}$, 
satisfy the following first-order symmetric hyperbolic system,
where the Kasner background scalars 
$\widetilde{\psi}=\widetilde{B}\log t$
and
$\lbrace \widetilde{k}_{IJ} \rbrace_{I,J=1,\cdots,\mydim}$
are defined in \eqref{Kasnersol}
(see also Remark~\ref{R:NOTCOMPONENTSOFTENSORRELATIVETOFRAME}),
and we recall that
we do not sum over repeated underlined indices:
\begin{subequations}
\begin{align}
\label{dt.e0psi}
\partial_t [t(e_0\psi-\partial_t \widetilde{\psi})]
& 
=
t n e_C e_C \psi
-
t n \upgamma_{CCD} e_D \psi
+ 
t (e_C n) e_C \psi
-
(n-1) \partial_t \widetilde{\psi} -
(n-1) (e_0 \psi - \partial_t \widetilde{\psi}),
	\\
\begin{split}
\label{dt.eipsi}\partial_t e_I \psi
&
=
-
\frac{\widetilde{q}_{\underline{I}}}{t} e_{\underline{I}} \psi
+
n e_I e_0 \psi
+
(n - 1) k_{IC} e_C \psi
+
(k_{IC} - \widetilde{k}_{IC}) e_C \psi
+
(e_I n) 
\partial_t \widetilde{\psi}\\
&	\ \
+
(e_I n) (e_0 \psi - \partial_t \widetilde{\psi}).
\end{split}
\end{align}
\end{subequations}

Moreover, if $\iota$ is a spatial coordinate multi-index
and $\Pow \geq 0$ is any real number, 
then the following equations hold:
\begin{subequations}
\begin{align} \label{dt.e0psi.diff}
\begin{split}
\partial_t [t^{\Pow}\partial^{\iota} (e_0\psi-\partial_t\widetilde{\psi})]
& =
(\Pow - 1) [t^{\Pow-1}\partial^{\iota} (e_0\psi-\partial_t\widetilde{\psi})]
+
t^{\Pow} n e_C \partial^{\iota} e_C \psi	
	 \\
& \ \
+
t^{\Pow-1}
\mathfrak{P}^{(\textnormal{Border};\iota)}
+
t^{\Pow}
\mathfrak{P}^{(\textnormal{Junk};\iota)},
\end{split} \\
\label{dt.eipsi.diff}
\partial_t (t^{\Pow} \partial^{\iota} e_I \psi)
& = 
	(\Pow - \widetilde{q}_{\underline{I}}) t^{\Pow-1} \partial^{\iota} e_{\underline{I}} \psi
	+
	t^{\Pow} n e_I \partial^{\iota} e_0 \psi
	+
t^{\Pow}
\mathfrak{Q}_I^{(\textnormal{Border};\iota)}
+
t^{\Pow}
\mathfrak{Q}_I^{(\textnormal{Junk};\iota)},
\end{align}
\end{subequations}
where:
\begin{subequations}
\begin{align}
	\mathfrak{P}^{(\textnormal{Border};\iota)}
	& := 	
				\partial^{\iota} (n-1) \cdot \partial_t \widetilde{\psi}
				+
				\sum_{\iota_1 \cup \iota_2 = \iota}
				\partial^{\iota_1} (n-1)
				\cdot
				\partial^{\iota_2} (e_0 \psi - \partial_t \widetilde{\psi}),
		\label{E:SCALARFIELDTIMEBORDER} \\
\begin{split}
\mathfrak{P}^{(\textnormal{Junk};\iota)}
& :=
	\sum_{\iota_1\cup \iota_2\cup \iota_3= \iota,\,|\iota_3|<|\iota|}
	\partial^{\iota_1}n 
	\cdot
	\partial^{\iota_2}e
	\cdot
	\partial \partial^{\iota_3} \vec{e} \psi
		\label{E:SCALARFIELDTIMEJUNK}  \\
& \ \
	+
	\sum_{\iota_1\cup \iota_2\cup \iota_3= \iota}
	\partial^{\iota_1}n 
	\cdot
	\partial^{\iota_2} \upgamma
	\cdot
	\partial^{\iota_3} \vec{e} \psi
	+
	\sum_{\iota_1 \cup \iota_2 = \iota}
	\partial^{\iota_1} \vec{e} n
	\cdot
	\partial^{\iota_2} \vec{e} \psi,	
		\end{split} \\
\mathfrak{Q}_I^{(\textnormal{Border};\iota)}
& :=		
\sum_{\iota_1 \cup \iota_2 = \iota}
\partial^{\iota_1}(k - \widetilde{k}) \cdot \partial^{\iota_2} \vec{e} \psi
+
\partial^{\iota}  \vec{e} n 
\cdot
\partial_t \widetilde{\psi}
+
\sum_{\iota_1 \cup \iota_2 = \iota}
\partial^{\iota_1} \vec{e} n 
\cdot 
\partial^{\iota_2} (e_0 \psi - \partial_t \widetilde{\psi})
				\label{E:SCALARFIELDSPACEBORDER}  
					\\
& \ \ 
	+ 
	\sum_{\iota_1\cup \iota_2 = \iota,\,|\iota_2|<|\iota|}
	\partial^{\iota_1} e
	\cdot
	\partial \partial^{\iota_2} e_0 \psi,
	\notag
	\\
\mathfrak{Q}_I^{(\textnormal{Junk};\iota)}
& :=	
	\sum_{\iota_1\cup \iota_2\cup \iota_3= \iota,\,|\iota_3|<|\iota|}
	\partial^{\iota_1} (n-1) 
	\cdot
	\partial^{\iota_2} e
	\cdot
	\partial \partial^{\iota_3} e_0 \psi
	+
	\sum_{\iota_1 \cup \iota_2 \cup \iota_3 = \iota}
	\partial^{\iota_1} 
	(n - 1) 
	\cdot
	\partial^{\iota_2} k
	\cdot 
	\partial^{\iota_3} \vec{e} \psi.
				\label{E:SCALARFIELDSPACEJUNK} 
\end{align}
\end{subequations}

\end{lemma}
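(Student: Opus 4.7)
\medskip

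\noindent\textbf{Proof proposal.} The two undifferentiated equations \eqref{dt.e0psi}--\eqref{dt.eipsi} are obtained directly from the wave equation \eqref{boxpsi} and the commutator formula \eqref{comm[dt,ei]}; the two commuted equations \eqref{dt.e0psi.diff}--\eqref{dt.eipsi.diff} are then obtained by applying $\partial^{\iota}$ and sorting the resulting terms into the ``Border'' and ``Junk'' buckets, guided by the heuristic that any term containing the Kasner factor $\partial_t \widetilde{\psi} = \widetilde{B}/t$ or a bare power of $\widetilde{k} \sim 1/t$ is borderline, while every term with an additional factor of $n-1$, $k - \widetilde{k}$, $\upgamma$, $\vec{e}\psi$, or $\vec{e}n$ gains extra smallness in $t$ and is junk.

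For \eqref{dt.e0psi}, the plan is to multiply the wave equation \eqref{boxpsi} by $nt$, using $\partial_t = n e_0$ to rewrite $n e_0 e_0 \psi = \partial_t e_0 \psi$, giving
\[
t \partial_t(e_0 \psi) \;=\; tn \, e_C e_C \psi \;-\; n\, e_0 \psi \;+\; t(e_C n) e_C \psi \;-\; tn\, \upgamma_{CCD} e_D \psi.
\]
Next I would use $\partial_t[t(e_0\psi - \partial_t \widetilde{\psi})] = t \partial_t(e_0\psi) + (e_0\psi) - \partial_t[t \partial_t \widetilde{\psi}]$, and observe that since $\widetilde{\psi} = \widetilde{B} \log t$, we have $t \partial_t \widetilde{\psi} = \widetilde{B}$ so that $\partial_t[t \partial_t \widetilde{\psi}] = 0$. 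After splitting $(1-n) e_0 \psi = -(n-1) \partial_t \widetilde{\psi} - (n-1)(e_0 \psi - \partial_t \widetilde{\psi})$, the identity \eqref{dt.e0psi} falls out immediately.

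For \eqref{dt.eipsi}, I would apply the commutation formula \eqref{comm[dt,ei]} to write $\partial_t(e_I \psi) = e_I(\partial_t \psi) + n k_{IC} e_C \psi$, substitute $\partial_t \psi = n e_0 \psi$, and expand the factor $e_I(n e_0 \psi) = n e_I e_0 \psi + (e_I n) e_0 \psi$. The key algebraic step is the splittings $n = 1 + (n-1)$, $k_{IC} = \widetilde{k}_{IC} + (k_{IC} - \widetilde{k}_{IC})$, and $e_0 \psi = \partial_t \widetilde{\psi} + (e_0 \psi - \partial_t \widetilde{\psi})$, together with the identity $\widetilde{k}_{IC} e_C \psi = -\frac{\widetilde{q}_{\underline I}}{t} e_{\underline I} \psi$ coming from the diagonal form of $\widetilde{k}$ in \eqref{Kasnersol}. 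These manipulations isolate the main linear-in-$1/t$ term and assemble the remaining products exactly as written on RHS~\eqref{dt.eipsi}.

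For the commuted equations \eqref{dt.e0psi.diff}--\eqref{dt.eipsi.diff}, the plan is to multiply each undifferentiated identity by $t^{\Pow-1}$ (or $t^{\Pow}$), apply $\partial^{\iota}$, and then use the Leibniz rule together with the commutator identity $[\partial^{\iota}, e_I] = \sum_{\iota_1 \cup \iota_2 = \iota,\,|\iota_2|<|\iota|} (\partial^{\iota_1} e_I^c) \partial_c \partial^{\iota_2}$ from \eqref{comm} to move $\partial^{\iota}$ past every frame vector field. The chain rule on the $t^{\Pow}$ weight produces the linear coefficients $(\Pow - 1)t^{\Pow-1}$ and $(\Pow - q_{\underline I}) t^{\Pow-1}$ on the right-hand sides. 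The remaining pieces collect into the four error terms; sorting them is essentially bookkeeping, with the assignment dictated by whether a factor of $\partial_t \widetilde{\psi}$ appears (borderline) or whether every factor carries an improvement like $n-1$, $k-\widetilde{k}$, or a $\vec{e}$-derivative (junk). The only mildly subtle point, and what I would expect to be the main obstacle, is being careful that the product $n e_C \partial^{\iota} e_C \psi$ in \eqref{dt.e0psi.diff} (and similarly $n e_I \partial^{\iota} e_0 \psi$ in \eqref{dt.eipsi.diff}) is kept explicit rather than absorbed into the junk: these are the only terms that couple $e_0 \psi$ and $\vec{e}\psi$ at top order, and they need to be visible in order to set up the symmetric-hyperbolic energy identity (analogous to Lemma~\ref{L:TOPORDERDIFFERENTIALENERGYIDENTITYFORGAMMAANDK}) in the subsequent section. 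Once this split is enforced, the explicit expressions \eqref{E:SCALARFIELDTIMEBORDER}--\eqref{E:SCALARFIELDSPACEJUNK} follow by unwinding the Leibniz rule term by term.
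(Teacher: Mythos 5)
Your proof is correct and follows exactly the same strategy as the paper: multiply \eqref{boxpsi} by $nt$ and use $\partial_t(t\partial_t\widetilde{\psi})=0$ for \eqref{dt.e0psi}, use $\partial_t=ne_0$ together with the commutator \eqref{comm[dt,ei]} for \eqref{dt.eipsi}, and then apply $\partial^{\iota}$ with the Leibniz rule for the commuted equations. The paper's own proof is just a terse version of the same calculation, and your detailed bookkeeping of the Border/Junk split is consistent with the definitions \eqref{E:SCALARFIELDTIMEBORDER}--\eqref{E:SCALARFIELDSPACEJUNK}.
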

\begin{proof}
Equation \eqref{dt.e0psi} follows from multiplying both sides of \eqref{boxpsi} by $nt$, 
using that $\partial_t (t\partial_t \widetilde{\psi}) = 0$,
and carrying out straightforward algebraic computations.
\eqref{dt.eipsi} follows from the identity
$\partial_t = n e_0$,
the commutation identity \eqref{comm[dt,ei]},
and straightforward algebraic computations.
\eqref{dt.e0psi.diff}--\eqref{dt.eipsi.diff} then follow from 
differentiating \eqref{dt.e0psi}--\eqref{dt.eipsi} with $\partial^{\iota}$,
using the Leibniz rule,
multiplying both sides of the resulting equations by $t^{\Pow-1}$ and $t^{\Pow}$ respectively,
commuting the factors of $t^{\Pow-1}$ and $t^{\Pow}$ under the operator $\partial_t$ on the LHSs,
and accounting for the commutators $[t^{\Pow-1},\partial_t]$ and $[t^{\Pow},\partial_t]$.
\end{proof}

\subsubsection{Pointwise estimates for the error terms in the scalar field evolution equations}
\label{SSS:SCALARFIELDERRORTERMSPOINTWISE}
In this section, 
we derive the pointwise estimates for the error terms in the equations of 
Lemma~\ref{lem:psi.syst}
that we will later use to control $e_0\psi-\partial_t\widetilde{\psi}$
at derivative levels $\leq N_0+1$
and 
$e_I \psi$
at derivative levels $\leq N_0$.

\begin{lemma}[Pointwise estimates for the error terms in the evolution equations for 
	$\partial^{\leq N_0+1} (e_0\psi-\partial_t\widetilde{\psi})$ and 
	$\lbrace \partial^{\leq N_0} e_I \psi \rbrace_{I=1,\cdots,\mydim}$]
	Recall that $\mathbb{D}(t)$ is the total norm of the dynamic variables from Definition~\ref{D:SOLUTIONNORMS},
	and assume that the bootstrap assumptions \eqref{Boots} hold.
	Recall that the error terms
	$\mathfrak{P}^{(\textnormal{Border};\iota)}$,
	$\mathfrak{P}^{(\textnormal{Junk};\iota)}$,
	$\mathfrak{Q}_I^{(\textnormal{Border};\iota)}$,
	and
	$\mathfrak{Q}_I^{(\textnormal{Junk};\iota)}$
	are defined in \eqref{E:SCALARFIELDTIMEBORDER}--\eqref{E:SCALARFIELDSPACEJUNK}.
There exists a constant $C = C_{N,N_0,\blowupexp,\mydim,q,\upsigma} > 0$ 
such that if $N$ is sufficiently large in a manner 
that depends on $N_0, \blowupexp, \mydim, q,$ and $\upsigma$,
and if $\varepsilon$ is sufficiently small (in a manner that depends on $N, N_0, \blowupexp, \mydim, q,$ and $\upsigma$), 
then the following pointwise 
estimates hold for $(t,x) \in (T_{\textnormal{Boot}},1] \times \mathbb{T}^{\mydim}$:
	\begin{subequations}
	\begin{align} 
			\sum_{|\iota| \leq  N_0+1}
			t|n e_C \partial^{\iota} e_C \psi|(t,x)
			& \leq C t^{-1 + \upsigma} \mathbb{D}(t),
			 \label{E:TIMEFRAMEDERIVATIVESCALARFIELDDERIVATIVELOSSTERMPOINTWISEBOUNDS} 
				\\
		\sum_{|\iota| \leq N_0+1}
		|\mathfrak{P}^{(\textnormal{Border};\iota)}|(t,x)
		+
		\sum_{|\iota| \leq N_0+1}
		t
		|\mathfrak{P}^{(\textnormal{Junk};\iota)}|(t,x)
		& \leq C t^{-1 + \upsigma} \mathbb{D}(t),
			\label{E:TIMEDERIVATIVESCALARFIELDBORDERANDJUNKPOINTWISEBOUNDS} 
			\\
		\begin{split}
		\sum_{|\iota| \leq N_0} 
		\sum_{I = 1, \cdots, \mydim}
		t^q
		\left|
			 n e_I \partial^{\iota} e_0 \psi
		\right|(t,x)
		& \leq 
			C \varepsilon 
			\sum_{|\iota| \leq N_0} 
			\sum_{I,i=1,\cdots,\mydim}
			t^{q-1} |\partial^{\iota} (e_I^i - \widetilde{e}_I^i)|(t,x)
				\label{E:SPATIALFRAMEDERIVATIVESCALARFIELDANNOYINGDERIVATIVELOSSTERMPOINTWISEBOUNDS}  
					\\
		& \ \
			+
			C t^{-1 + \upsigma} \mathbb{D}(t),
	\end{split}	
			\\
		\begin{split}
		\sum_{|\iota| \leq N_0} 
		\sum_{I = 1,\cdots, \mydim}
		t^q |\mathfrak{Q}_I^{(\textnormal{Border};\iota)}|(t,x)
		& \leq C \varepsilon 
		\sum_{|\iota|\leq N_0}
		\sum_{I=1,\cdot, \mydim} 
		t^{q-1} |\partial^{\iota} e_I \psi|(t,x)
				\label{E:SPATIALFRAMEDERIVATIVESCALARFIELDBORDERPOINTWISEBOUNDS} 
				\\
& \ \ 				
		+
		C \varepsilon 
		\sum_{|\iota| \leq N_0} 
		\sum_{I,i=1,\cdots,\mydim}
		t^{q-1} |\partial^{\iota}(e_I^i - \widetilde{e}_I^i)|(t,x)
			\\
& \ \
		+
		C t^{-1 + \upsigma} \mathbb{D}(t),
	\end{split}
			\\
		\sum_{|\iota| \leq N_0} 
		\sum_{I = 1, \cdots, \mydim}
		t^q |\mathfrak{Q}_I^{(\textnormal{Junk};\iota)}|(t,x)
		& \leq C t^{-1+\upsigma} \mathbb{D}(t).
			\label{E:SPATIALFRAMEDERIVATIVESCALARFIELDJUNKPOINTWISEBOUNDS}
	\end{align}
	\end{subequations}
\end{lemma}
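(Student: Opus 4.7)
The strategy is to proceed term-by-term on each of the error expressions \eqref{E:SCALARFIELDTIMEBORDER}--\eqref{E:SCALARFIELDSPACEJUNK} and the derivative-loss terms $n e_A \partial^{\iota} e_A\psi$ and $n e_I\partial^{\iota}e_0\psi$, following the template of the analogous lemmas already established for $\upgamma$, $k$, $e$, and $\oe$ (Lemmas~\ref{L:STRUCTURECOFFEICIENTERRORTERMSPOINTWISE} and \ref{L:SPATIALMETRICERRORTERMSPOINTWISE}). The toolkit is identical: the bootstrap assumptions \eqref{Boots}, the Kasner formulas \eqref{Kasnersol} (yielding $|\partial_t\widetilde{\psi}| = |\widetilde{B}|/t$, $|\widetilde{e}| \lesssim t^{-\widetilde{q}_{\underline{I}}}$, $|\widetilde{k}|\lesssim t^{-1}$), the already-proved lapse bound \eqref{n.low.est}, the interpolation bound \eqref{psi.borrow.est} at derivative levels exceeding $N_0$, and the parameter inequality \eqref{sigma,q} which implies $t^{q - |\widetilde{q}_{\underline{I}}|} \leq t^{2\upsigma}$ and $t^{1-2q} \leq t^{-1+4\upsigma}$. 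Throughout, we use that $\mathbb{D}(t) \leq \varepsilon \leq 1$, that $\partial_t\widetilde{\psi}$ is annihilated by any purely spatial derivative (so $\partial^{\iota}e_0\psi = \partial^{\iota}(e_0\psi - \partial_t\widetilde{\psi})$ whenever $|\iota|\geq 1$), and the key derivative-count asymmetry built into $\mathbb{L}_{(\psi)}$: namely, $e_0\psi - \partial_t\widetilde{\psi}$ is controlled up to $N_0+1$ derivatives, while $\vec{e}\psi$ is controlled directly only up to $N_0$ (cf.\ Remark~\ref{rem:N0+1}).

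The derivative-loss estimates \eqref{E:TIMEFRAMEDERIVATIVESCALARFIELDDERIVATIVELOSSTERMPOINTWISEBOUNDS} and \eqref{E:SPATIALFRAMEDERIVATIVESCALARFIELDANNOYINGDERIVATIVELOSSTERMPOINTWISEBOUNDS} are the most interesting pieces. For \eqref{E:TIMEFRAMEDERIVATIVESCALARFIELDDERIVATIVELOSSTERMPOINTWISEBOUNDS}, I expand $e_A\partial^{\iota}e_A\psi = e_A^c \partial_c\partial^{\iota}(e_A^d\partial_d\psi)$, which involves at most $N_0 + 2$ spatial derivatives of $\psi$ (equivalently, $N_0 + 1$ derivatives of the solution variable $e_A\psi$). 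The interpolation bound \eqref{psi.borrow.est} then yields $|\partial^{\leq N_0+1}\vec{e}\psi| \lesssim t^{-q-\updelta A}\mathbb{D}(t)$, each remaining frame factor contributes $|e|\lesssim t^{-q}$, and $|n|\leq 2$, so the product is $\lesssim t^{-2q-\updelta A}\mathbb{D}(t)$. Multiplying by $t$ and invoking \eqref{sigma,q} together with the smallness of $\updelta A$ (Remark~\ref{rem:deltaA}) gives the desired $t^{-1+\upsigma}\mathbb{D}(t)$. For \eqref{E:SPATIALFRAMEDERIVATIVESCALARFIELDANNOYINGDERIVATIVELOSSTERMPOINTWISEBOUNDS}, I decompose $e_I^c = (e_I^c-\widetilde{e}_I^c) + \widetilde{e}_I^c$ and note that $\partial_c\partial^{\iota}e_0\psi = \partial_c\partial^{\iota}(e_0\psi - \partial_t\widetilde{\psi})$ sits at derivative level at most $N_0+1$, hence is bounded by $t^{-1}\mathbb{D}(t)$ through the kinetic portion of $\mathbb{L}_{(\psi)}$. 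The background piece $\widetilde{e}_I^c \cdot t^{-1}\mathbb{D}(t)$ yields $t^{-\widetilde{q}_I-1}\mathbb{D}(t)$; after multiplication by $t^q$ and using $q - |\widetilde{q}_I| \geq 2\upsigma$, this is absorbed into $t^{-1+\upsigma}\mathbb{D}(t)$. The perturbation piece yields $t^{q-1}|e_I^c - \widetilde{e}_I^c|\mathbb{D}(t) \leq \varepsilon\cdot t^{q-1}|e_I^c - \widetilde{e}_I^c|$, which is precisely the isolated borderline contribution on the right-hand side of \eqref{E:SPATIALFRAMEDERIVATIVESCALARFIELDANNOYINGDERIVATIVELOSSTERMPOINTWISEBOUNDS}, to be absorbed later (via Gronwall) when controlling $e-\widetilde{e}$ itself.

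For the remaining estimates \eqref{E:TIMEDERIVATIVESCALARFIELDBORDERANDJUNKPOINTWISEBOUNDS}, \eqref{E:SPATIALFRAMEDERIVATIVESCALARFIELDBORDERPOINTWISEBOUNDS}, \eqref{E:SPATIALFRAMEDERIVATIVESCALARFIELDJUNKPOINTWISEBOUNDS}, I process each summand directly. In $\mathfrak{P}^{(Border;\iota)}$, the product $\partial^{\iota}(n-1)\partial_t\widetilde{\psi}$ is bounded via \eqref{n.low.est} by $t^{\upsigma}\mathbb{D}(t)\cdot t^{-1}$; the other summand is controlled by $t^{\upsigma}\mathbb{D}(t)\cdot t^{-1}\mathbb{D}(t)\leq \varepsilon t^{-1+\upsigma}\mathbb{D}(t)$. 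For $\mathfrak{P}^{(Junk;\iota)}$, every summand carries either two $e$-factors or one $\vec{e}n$-factor, each of which costs a power $t^{-q}$; after multiplying by $t$ and using \eqref{sigma,q}, all reduce to $t^{-1+\upsigma}\mathbb{D}(t)$. In $\mathfrak{Q}_I^{(Border;\iota)}$, the products involving $k-\widetilde{k}$ and $\vec{e}\psi$ are bounded pointwise by $t^{-1-q}\mathbb{D}^2(t)$, giving $\varepsilon t^{-1}\mathbb{D}(t)$ after multiplication by $t^q$; the term $\partial^{\iota}\vec{e}n \cdot \partial_t\widetilde{\psi}$ uses \eqref{n.low.est} and yields $t^{-1+\upsigma}\mathbb{D}(t)$ directly; and the remaining $\vec{e}n \cdot (e_0\psi - \partial_t\widetilde{\psi})$ term gives $\varepsilon t^{-1+\upsigma}\mathbb{D}(t)$. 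I expect no serious obstacle: the analysis is mechanical and already-established estimates suffice. The only delicate accounting is the derivative-count bookkeeping in $\mathfrak{Q}_I^{(Junk;\iota)}$, where the term $\partial^{\iota_1} n \cdot \partial^{\iota_2}e \cdot \partial\partial^{\iota_3}e_0\psi$ with $|\iota_3| < |\iota| \leq N_0$ produces at most $N_0$ derivatives of $e_0\psi$ (equivalently $N_0 -1 \leq N_0+1$ derivatives of $e_0\psi - \partial_t\widetilde{\psi}$), comfortably inside the scope of $\mathbb{L}_{(\psi)}$, so no appeal to Lemma~\ref{lem:Sob.borrow} is needed there.
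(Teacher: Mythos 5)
Your approach is the right one and mirrors the paper's reference to Lemmas~\ref{L:STRUCTURECOFFEICIENTERRORTERMSPOINTWISE} and \ref{L:SPATIALMETRICERRORTERMSPOINTWISE}: term-by-term estimation using the bootstrap, the explicit background rates, and the $N_0{+}1$ control of $e_0\psi - \partial_t\widetilde{\psi}$. Your treatment of \eqref{E:TIMEFRAMEDERIVATIVESCALARFIELDDERIVATIVELOSSTERMPOINTWISEBOUNDS}, \eqref{E:TIMEDERIVATIVESCALARFIELDBORDERANDJUNKPOINTWISEBOUNDS}, and in particular \eqref{E:SPATIALFRAMEDERIVATIVESCALARFIELDANNOYINGDERIVATIVELOSSTERMPOINTWISEBOUNDS} -- where you correctly split $e_I^c = \widetilde{e}_I^c + (e_I^c - \widetilde{e}_I^c)$ and keep the $t^{q-1}|e_I^c - \widetilde{e}_I^c|$ factor as a borderline contribution rather than overestimating it -- is right.

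However, there is a genuine gap in your handling of the first summand of $\mathfrak{Q}_I^{(Border;\iota)}$, namely $\sum_{\iota_1\cup\iota_2=\iota}\partial^{\iota_1}(k-\widetilde{k})\cdot\partial^{\iota_2}\vec{e}\psi$. You bound it by $t^{-1-q}\mathbb{D}^2(t)$, which after multiplication by $t^q$ gives $\varepsilon t^{-1}\mathbb{D}(t)$. This is \emph{not} bounded by the right-hand side of \eqref{E:SPATIALFRAMEDERIVATIVESCALARFIELDBORDERPOINTWISEBOUNDS}: $\varepsilon t^{-1}\mathbb{D}(t)$ is a strictly \emph{weaker} upper bound than the lemma's stated $C\varepsilon\sum t^{q-1}|\partial^{\iota}e_I\psi| + Ct^{-1+\upsigma}\mathbb{D}(t)$, because $\mathbb{D}(t)$ includes many contributions besides $t^q\|\vec{e}\psi\|_{W^{N_0,\infty}}$, and $\varepsilon t^{-1}$ is not controlled by $Ct^{-1+\upsigma}$ uniformly as $t\downarrow 0$. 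The correct move is the one you already made for \eqref{E:SPATIALFRAMEDERIVATIVESCALARFIELDANNOYINGDERIVATIVELOSSTERMPOINTWISEBOUNDS}: bound $|\partial^{\iota_1}(k-\widetilde{k})|\lesssim \varepsilon t^{-1}$ but \emph{retain} the factor $|\partial^{\iota_2}\vec{e}\psi|$, producing exactly $C\varepsilon t^{q-1}|\partial^{\iota_2}\vec{e}\psi|$, i.e., the first term on the lemma's right-hand side. This distinction is not cosmetic. In the proof of \eqref{SUBSTEP5MAINSTEPlow.est}, the borderline integrand $\varepsilon s^{-1}[s^q\partial^{\iota}e_I\psi]^2$ arising from the sharp form is absorbed by the favorable-sign coefficient $(q - \widetilde{q}_{\underline{I}}) > 2\upsigma$ produced by the leading term of \eqref{dt.eipsi.diff}. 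Your crude bound would instead generate, after Young's inequality, an integral of the form $\varepsilon\int_t^1 s^{-1}\mathbb{D}^2(s)\,ds$, which is not matched by any negative-sign term in \eqref{MAINSTEPlow.est} and would spoil the Gronwall closure (it introduces an uncontrolled $t^{-C\varepsilon}$ growth factor). The same caution is warranted for the third summand of $\mathfrak{Q}_I^{(Border;\iota)}$, which is where the isolated $C\varepsilon\sum_{|\iota|\leq N_0}|\partial^{\iota}e_0\psi|$ contribution on the lemma's right-hand side originates.
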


\begin{proof}
 We apply the same arguments we used in the proof of 
	Lemmas~\ref{L:STRUCTURECOFFEICIENTERRORTERMSPOINTWISE}
	and
	Lemma~\ref{L:SPATIALMETRICERRORTERMSPOINTWISE},
	taking into account the structure of the terms
	on RHS~\eqref{E:SCALARFIELDTIMEBORDER}--\eqref{E:SCALARFIELDSPACEJUNK}
	and the fact that the low order norm \eqref{norms.low}
	controls $e_0 \psi$ at up to derivative level $N_0+1$ 
	(in particular, we use this fact to derive 
	\eqref{E:TIMEDERIVATIVESCALARFIELDBORDERANDJUNKPOINTWISEBOUNDS}--\eqref{E:SPATIALFRAMEDERIVATIVESCALARFIELDANNOYINGDERIVATIVELOSSTERMPOINTWISEBOUNDS}).
	We also clarify that to obtain \eqref{E:SPATIALFRAMEDERIVATIVESCALARFIELDANNOYINGDERIVATIVELOSSTERMPOINTWISEBOUNDS},
	we use the triangle inequality to bound the summand on the LHS by 
	$
	\leq
	t^q
	|n (e_I^c - \widetilde{e}_I^c) \partial_c \partial^{\iota} e_0 \psi|
	+
	t^q
	|n \widetilde{e}_I^c \partial_c \partial^{\iota} e_0 \psi|
	$
	and then bound (rather inefficiently) the first product by the term
	$
	C 
	\varepsilon 
			\sum_{|\iota| \leq N_0} 
			\sum_{I,i=1,\cdots,\mydim}
			t^{q-1} |\partial^{\iota} (e_I^i - \widetilde{e}_I^i)|(t,x)
	$
	on RHS~\eqref{E:SPATIALFRAMEDERIVATIVESCALARFIELDANNOYINGDERIVATIVELOSSTERMPOINTWISEBOUNDS}
	and, with the help of \eqref{Kasnersol} and \eqref{sigma,q}, the second product by the term
	$
	C t^{-1 + \upsigma} \mathbb{D}(t)
	$
	on RHS~\eqref{E:SPATIALFRAMEDERIVATIVESCALARFIELDANNOYINGDERIVATIVELOSSTERMPOINTWISEBOUNDS}.
\end{proof}

\subsubsection{Differential energy identity for the scalar field}
\label{SSS:SCALARFIELDDIFFERENTIALENERGYIDENTITY}
We will derive our top-order energy estimates for the scalar field by integrating the differential identity provided by the following lemma.

\begin{lemma}[Top-order differential energy identity for $e_0 \psi$ and $\lbrace e_I \psi \rbrace_{I=1,\cdots,\mydim}$]
	\label{L:TOPORDERDIFFERENTIALENERGYIDENTITYFORSCALARFIELD}
	Let $\iota$ be a top-order spatial multi-index,
	i.e., $|\iota|=N$.
	Then for solutions to the $\partial^{\iota}$-commuted equations
	\eqref{dt.e0psi.diff}--\eqref{dt.eipsi.diff}
	with $\Pow := \blowupexp + 1$,
	the following differential energy identity holds,
	where the error terms
	$\mathfrak{P}^{(\textnormal{Border};\iota)}$,
	$\mathfrak{P}^{(\textnormal{Junk};\iota)}$,
	$\mathfrak{Q}_I^{(\textnormal{Border};\iota)}$,
	and
	$\mathfrak{Q}_I^{(\textnormal{Junk};\iota)}$
	are defined in \eqref{E:SCALARFIELDTIMEBORDER}--\eqref{E:SCALARFIELDSPACEJUNK}:
	\begin{align} \label{E:TOPORDERDIFFERENTIALENERGYIDENTITYFORSCALARFIELD}
	\begin{split}
		&
		\partial_t
			\left\lbrace
				(t^{\blowupexp + 1}\partial^{\iota} e_0 \psi)^2
			\right\rbrace
			+
			\partial_t
			\left\lbrace
				(t^{\blowupexp + 1} \partial^{\iota} e_I \psi)
				(t^{\blowupexp + 1} \partial^{\iota} e_I \psi)
			\right\rbrace
				\\
	& =  \frac{2 \blowupexp}{t} (t^{\blowupexp + 1} \partial^{\iota} e_0\psi)^2
			+
			\frac{2 (\blowupexp + 1 - \widetilde{q}_I)}{t} 
			(t^{\blowupexp + 1} \partial^{\iota} e_I \psi)
			(t^{\blowupexp + 1} \partial^{\iota} e_I \psi)
				 \\
& \ \
		+ 2 (t^{\blowupexp + 1} \partial^{\iota} e_0 \psi) 
			\left(t^{\blowupexp} \mathfrak{P}^{(\textnormal{Border};\iota)} + t^{\blowupexp + 1} \mathfrak{P}^{(\textnormal{Junk};\iota)} \right)
				\\
& \ \
	+ 2 (t^{\blowupexp + 1} \partial^{\iota} e_I \psi) 
				\left(t^{\blowupexp + 1} \mathfrak{Q}_I^{(\textnormal{Border};\iota)} + t^{\blowupexp + 1} \mathfrak{Q}_I^{(\textnormal{Junk};\iota)} \right)
						\\
	& \ \
		-
		2
		\left\lbrace
			\partial_c
			(n e_C^c)
		\right\rbrace
		(t^{\blowupexp + 1}\partial^{\iota} e_C \psi) 
		(t^{\blowupexp + 1}\partial^{\iota} e_0 \psi)
		+ 
		2
		t^{2 \blowupexp+2} 
		\partial_c
		\left\lbrace
			n e_C^c (\partial^{\iota} e_C \psi) (\partial^{\iota} e_0 \psi)
		\right\rbrace.
	\end{split}
	\end{align}
\end{lemma}

\begin{proof}
	This lemma follows from straightforward calculation, so we only explain the main steps.
	We first note that $\partial^{\iota} \partial_t \widetilde{\psi} = 0$
	and thus we can ignore the formal presence of this term on LHS~\eqref{dt.e0psi.diff}.
	Next, we expand LHS~\eqref{E:TOPORDERDIFFERENTIALENERGYIDENTITYFORSCALARFIELD} using the Leibniz rule.
	When $\partial_t$ falls on $t^{\blowupexp + 1}\partial^{\iota} e_0 \psi$,
	we plug in \eqref{dt.e0psi.diff} with $\Pow := \blowupexp + 1$.
	When $\partial_t$ falls on $t^{\blowupexp + 1} \partial^{\iota} e_I \psi$,
	we plug in \eqref{dt.eipsi.diff} with $\Pow := \blowupexp + 1$.
	Also differentiating by parts,
	we arrive at the desired identity \eqref{E:TOPORDERDIFFERENTIALENERGYIDENTITYFORSCALARFIELD}.
\end{proof}

\subsubsection{Control of the error terms in the top-order commuted scalar field evolution equations}
\label{SSS:TOPORDERCALARFIELDINHOMOGENEOUSTERML2ESTIMATE}
In this section, at the top-order derivative level,
we derive $L^2$ estimates for the error terms in the evolution equations of 
Lemma~\ref{lem:psi.syst}.

\begin{lemma}[$L^2$-control of the error terms in the top-order commuted scalar field equations]
	\label{L:SCALARFIELDL2CONTROLOFERRORTERMS}
	Recall that 
	$\mathbb{H}_{(\upgamma,k)}$,
	$\mathbb{H}_{(\psi)}$,
and $\mathbb{D}(t)$ are norms from Definition~\ref{D:SOLUTIONNORMS},
and assume that the bootstrap assumptions \eqref{Boots} hold.
	Recall that the error terms
	$\mathfrak{P}^{(\textnormal{Border};\iota)}$,
	$\mathfrak{P}^{(\textnormal{Junk};\iota)}$,
	$\mathfrak{Q}_I^{(\textnormal{Border};\iota)}$,
	and
	$\mathfrak{Q}_I^{(\textnormal{Junk};\iota)}$
	are defined in \eqref{E:SCALARFIELDTIMEBORDER}--\eqref{E:SCALARFIELDSPACEJUNK}.
There exists a constant $C_* > 0$ \underline{independent of $N, N_0,$ and $\blowupexp$}
and a constant $C = C_{N,N_0,\blowupexp,\mydim,q,\upsigma} > 0$ 
such that if $N$ is sufficiently large in a manner 
that depends on $N_0, \blowupexp, \mydim, q,$ and $\upsigma$,
and if $\varepsilon$ is sufficiently small (in a manner that depends on $N, N_0, \blowupexp, \mydim, q,$ and $\upsigma$), 
then the following estimates hold for $t \in (T_{\textnormal{Boot}},1]$:
	\begin{subequations}
	\begin{align}
	\begin{split}	
		&
		t^{\blowupexp}
		\sqrt{
		\sum_{|\iota| = N}
		\| \mathfrak{P}^{(\textnormal{Border};\iota)} \|_{L^2(\Sigma_t)}^2}
		+
		t^{\blowupexp + 1}
		\sqrt{
		\sum_{|\iota| = N}
		\sum_{I=1,\cdots,\mydim}
		\| \mathfrak{Q}_I^{(\textnormal{Border};\iota)} \|_{L^2(\Sigma_t)}^2}
			\\
		 & \leq C_* t^{-1} \mathbb{H}_{(\upgamma,k)}(t)
				+
				C_* t^{-1} \mathbb{H}_{(\psi)}(t)
				+
				C t^{-1 + \upsigma} \mathbb{D}(t),
	\end{split}
	\end{align}
	
	\begin{align}
	\begin{split}
		&
		t^{\blowupexp + 1}
		\sqrt{
		\sum_{|\iota| = N}
		\| \mathfrak{P}^{(\textnormal{Junk};\iota)} \|_{L^2(\Sigma_t)}^2}
		+
		t^{\blowupexp + 1}
		\sqrt{
		\sum_{|\iota| = N}
		\sum_{I=1,\cdots,\mydim}
		\| \mathfrak{Q}_I^{(\textnormal{Junk};\iota)} \|_{L^2(\Sigma_t)}^2}
			\\
		 & \leq C t^{-1 + \upsigma} \mathbb{D}(t).
	\end{split}
	\end{align}
	\end{subequations}
\end{lemma}

\begin{proof}
	We apply the same arguments that we used
	in the proof of Lemma~\ref{L:METRICL2CONTROLOFTOPORDERERRORTERMS}
	to the terms on RHSs~\eqref{E:SCALARFIELDTIMEBORDER}--\eqref{E:SCALARFIELDSPACEJUNK}.
\end{proof}

\subsection{Integral inequality for the low order solution norms}
\label{SS:INTEGRALINEQUALITYFORLOWNORMS}
In the next proposition, we combine some of the results derived earlier in Sect.\,\ref{sec:mainest}
to obtain an integral inequality for the low order solution norms. 
In Sect.\,\ref{SS:TOPORDERENRGYINTEGRALINEQUALITIES}, we will derive 
a related integral inequality for the high order solution norms.
Then, in Sect.\,\ref{PROOFOFprop:overall},
we will combine the two integral inequalities 
and carry out the proof of our main a priori estimates.

\begin{proposition}[Integral inequality for the low order solution norms]
\label{prop:low}
Recall that $\mathbb{L}_{(e,\oe,\upgamma,k,\psi)}(t)$ is a low order norm and that
$\mathbb{D}(t)$ is the total norm of the dynamic solution variables
(see Definition~\ref{D:SOLUTIONNORMS}).
Under the assumptions of Proposition~\ref{prop:overall},
including the bootstrap assumptions \eqref{Boots},
there exists a constant $C = C_{N,N_0,\blowupexp,\mydim,q,\upsigma} > 0$ 
such that if $N_0 \geq 1$ and $N$ is sufficiently large in a manner 
that depends on $N_0, \blowupexp, \mydim, q,$ and $\upsigma$,
and if $\varepsilon$ is sufficiently small (in a manner that depends on $N, N_0, \blowupexp, \mydim, q,$ and $\upsigma$), 
then the following estimate holds for $t \in (T_{\textnormal{Boot}},1]$:
\begin{align}\label{low.est}
\mathbb{L}_{(e,\oe,\upgamma,k,\psi)}^2(t)
& \leq 
C
\mathbb{L}_{(e,\oe,\upgamma,k,\psi)}^2(1)
+
C
\int_t^1
	s^{-1 + \upsigma}
	\mathbb{D}^2(s)
\, ds.
\end{align}
\end{proposition}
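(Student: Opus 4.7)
\textbf{Proof proposal for Proposition \ref{prop:low}.} The plan is to derive a pointwise-in-$x$, integrated-in-$t$ bound for each of the low-order quantities
$\upgamma_{IJB}+\upgamma_{JBI}$, $e_I^i-\widetilde e_I^i$, $\oe_i^I-\widetilde\oe_i^I$, $k_{IJ}-\widetilde k_{IJ}$, $e_I\psi$ and $e_0\psi-\partial_t\widetilde\psi$ (together with their spatial derivatives up to order $N_0$ or $N_0+1$, as appropriate for each variable), then take $L^\infty(\Sigma_t)$ norms, square, and sum. For each such quantity $V$ and each multi-index $\iota$ within the order allowed by $\mathbb{L}_{(e,\oe,\upgamma,k,\psi)}$, the commuted evolution equation from Proposition~\ref{P:KEYEVOLUTIONSTRUCTURECOEFFICIENTS} or Lemmas~\ref{L:EVOLUTIONEQUATIONSFORFRAMEANDCOFRAME}, \ref{L:TOPORDERCOMMUTEDEQUATIONSFORGAMMANDK}, or \ref{lem:psi.syst} has the schematic form
\[
\partial_t\bigl[t^{c_V}\partial^\iota V\bigr] \;=\; t^{c_V}\bigl(\mathfrak{E}^{(Border;\iota)}_V+\mathfrak{E}^{(Junk;\iota)}_V\bigr),
\]
where $c_V$ is the natural integrating-factor exponent (i.e.\ $c_V=\widetilde q_I+\widetilde q_J-\widetilde q_B$ for the structure coefficient $\upgamma_{IJB}+\upgamma_{JBI}$, $c_V=\widetilde q_I$ for $e_I^i-\widetilde e_I^i$, $c_V=-\widetilde q_I$ for $\oe_i^I-\widetilde\oe_i^I$, and $c_V=1$ for $k-\widetilde k$, $e_0\psi-\partial_t\widetilde\psi$; the $e_I\psi$ case is similar, with $c_V=\widetilde q_I$, using Lemma~\ref{lem:psi.syst}). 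Integrating from $t$ to $1$ at each $x$ and taking absolute values produces a pointwise bound for $t^{c_V}|\partial^\iota V|$ in terms of the initial value and a time integral of the error terms.

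The most delicate case is $\upgamma$. Here one does \emph{not} try to estimate $\upgamma_{IJB}$ directly from \eqref{dt.gamma} (whose linear part is not diagonal) but rather works with the structure coefficients $S_{IJB}=\upgamma_{IJB}+\upgamma_{JBI}$ for $I<J$, which by Proposition~\ref{P:KEYEVOLUTIONSTRUCTURECOEFFICIENTS} satisfy the diagonal equation \eqref{E:SCHEMATICSTRUCTURECOEFFICIENTEVOLUTIONEQUATION} with coefficient $c_{IJB}:=\widetilde q_I+\widetilde q_J-\widetilde q_B$. One then recovers each $\upgamma_{IJB}$ via the algebraic identity \eqref{E:RECOVERGAMMAFROMSTRUCTURECOEFFCIENTS}. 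In the non-symmetric regime, the stability condition \eqref{Kasner.stability.cond}, strengthened by the buffer $2\upsigma$ in \eqref{sigma,q}, guarantees $q-c_{IJB}>2\upsigma>0$ for \emph{every} index triple arising; in the polarized $U(1)$ case where \eqref{Kasner.stability.cond} can fail, Lemma~\ref{lem:gamma.U(1)} shows that the only $S_{IJB}$ which do not identically vanish are those for which $c_{IJB}\in\{\pm\widetilde q_I,\pm\widetilde q_J\}$, and \eqref{sigma,q} again gives $q-c_{IJB}>2\upsigma$. Multiplying the pointwise bound by $t^{q-c_{IJB}}$, which is $\leq 1$ and tends to $0$ as $t\downarrow 0$, yields the $t^q$-weighted $L^\infty$ control demanded by $\mathbb{L}_{(\upgamma,k)}$.

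For the junk integrand $t^{c_V}|\mathfrak{E}^{(Junk;\iota)}_V|$ one uses the pointwise estimates
\eqref{E:STRUCTURECOEFFICIENTSJUNKPOINTWISEBOUNDS}, \eqref{E:FRAMEJUNKPOINTWISEBOUNDS}, \eqref{E:COFRAMEJUNKPOINTWISEBOUNDS}, \eqref{E:SECONDFUNDPIONTWISEERRORTERMESTIMATE2}, \eqref{E:TIMEDERIVATIVESCALARFIELDBORDERANDJUNKPOINTWISEBOUNDS}, \eqref{E:SPATIALFRAMEDERIVATIVESCALARFIELDJUNKPOINTWISEBOUNDS}
(all of which, after appropriate repowering, take the form $s^{c_V-1-q+\upsigma}\mathbb{D}(s)$ for $V$ with intrinsic weight $t^q$, or $s^{-1+\upsigma}\mathbb{D}(s)$ when $c_V=1$). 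After multiplying the pointwise inequality by the compensating weight $t^{q-c_V}$ (respectively $t^{1-c_V}$), squaring, and applying Cauchy--Schwarz to $\int_t^1 s^{c_V-1-q+\upsigma}\mathbb{D}(s)\,ds$ with the split $s^{c_V-q-\frac{1-\upsigma}{2}}\cdot s^{-\frac{1-\upsigma}{2}}\mathbb{D}(s)$, the divergent-at-zero factor $t^{2(c_V-q)+\upsigma}$ is precisely cancelled by $t^{2(q-c_V)}$, leaving a harmless $t^\upsigma$ times $\int_t^1 s^{-1+\upsigma}\mathbb{D}^2\,ds$. This is the core calculation.

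The remaining borderline pieces $\mathfrak{E}^{(Border;\iota)}_V$, controlled in \eqref{E:STRUCTURECOEFFICIENTSBORDERPOINTWISEBOUNDS}, \eqref{E:FRAMEBORDERPOINTWISEBOUNDS}, \eqref{E:COFRAMEBORDERPOINTWISEBOUNDS} by $\varepsilon s^{-1}$ times the (undifferentiated appearances of) low-order solution quantities themselves, couple the low-order norms to each other. These are absorbed by a simultaneous bootstrap on the sum $\mathbb{L}_{(e,\oe,\upgamma,k,\psi)}^2(t)$: writing the integrated inequality as $\mathbb{L}^2(t)\leq C\mathbb{L}^2(1)+C\varepsilon\int_t^1 s^{-1}\mathbb{L}^2(s)\,ds + C\int_t^1 s^{-1+\upsigma}\mathbb{D}^2(s)\,ds$ and applying Gronwall's inequality to the first two terms, the factor $\varepsilon^C$ produced is harmless for $\varepsilon$ small, and the remaining junk integral inherits the claimed form \eqref{low.est}. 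The main obstacle is the treatment of the connection coefficients in paragraph two: without either the sub-criticality condition or the $U(1)$ cancellation of three-index structure coefficients, the exponent $q-c_{IJB}$ would be non-positive and the argument would break down—this is exactly the point where the paper's sharpness of the stability condition manifests in the low-order proof.
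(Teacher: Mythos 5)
Your proposal captures the main architecture of the paper's argument — the pointwise‑in‑$x$ integration in $t$, the reliance on the diagonal evolution of the structure coefficients $S_{IJB}=\upgamma_{IJB}+\upgamma_{JBI}$ together with the algebraic identity \eqref{E:RECOVERGAMMAFROMSTRUCTURECOEFFCIENTS}, the splitting into \textbf{border} and \textbf{junk} error terms, and the role of Lemma~\ref{lem:gamma.U(1)} in the polarized $U(1)$ case. The core difference, which also leads to a genuine gap, is the choice of integrating factor.

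\emph{What the paper does.} The paper does not use the ``exact'' integrating factor $t^{c_V}$; it deliberately uses $t^q$ (for the $t^q$‑weighted variables) and $t^1$ (for $k-\widetilde k$ and $e_0\psi-\partial_t\widetilde\psi$). With this choice, the diagonal linear term is \emph{not} annihilated, and after multiplying \eqref{E:SCHEMATICSTRUCTURECOEFFICIENTEVOLUTIONEQUATION} by $2[t^q\partial^\iota S]$ and integrating, it produces a \emph{monotone negative} contribution $-2(q-c_{IJB})\int_t^1 s^{-1}[s^q\partial^\iota S]^2\,ds\le -4\upsigma\int_t^1 s^{-1}[s^q\partial^\iota S]^2\,ds$, thanks to $q-c_{IJB}>2\upsigma$ from \eqref{sigma,q}. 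The paper then adds up all the squared weighted quantities into a single scalar $Q^2(t,x)$, so that the $C\varepsilon$-multiplied borderline error integrals (which couple $S$, $e-\widetilde e$, $\oe-\widetilde\oe$ to one another) land on the right with the same structure $\int_t^1 s^{-1}(\cdots)\,ds$ as the $-4\upsigma$ gain, and the combined coefficient $C\varepsilon-4\upsigma<0$ allows the entire coupled negative integral to be discarded. This yields \eqref{low.est} with a $t$-uniform constant, with no Gronwall needed.

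\emph{Where your version breaks.} By choosing the exact integrating factor $t^{c_V}$, you kill the diagonal term entirely and therefore lose the $-4\upsigma$ cushion. You are then left to absorb the borderline error by Gronwall. But the integral inequality you write down, $\mathbb{L}^2(t)\le C\mathbb{L}^2(1)+C\varepsilon\int_t^1 s^{-1}\mathbb{L}^2(s)\,ds+\cdots$, produces under Gronwall the factor $\exp\!\big(C\varepsilon\int_t^1 s^{-1}ds\big)=t^{-C\varepsilon}$ — not the ``$\varepsilon^C$'' you claim. This is a loss that diverges as $t\downarrow 0$ and is not compatible with the statement of \eqref{low.est}, whose constant $C$ must be independent of $t$. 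The argument could be repaired — since $q-c_V>2\upsigma$, the leftover weight $t^{q-c_V}$ that you multiply by at the end dominates $t^{-C\varepsilon}$ once $\varepsilon<2\upsigma/C$ — but you do not say this, and the coupling among the quantities (the borderline error for $S$ involves $e-\widetilde e$, and conversely) makes a quantity-by-quantity Gronwall with $t$-dependent weights delicate to carry out consistently. As written, the absorption step is incorrect, and this is precisely the step the paper's $t^q$-integrating-factor + aggregate-$Q$ mechanism is designed to handle cleanly.
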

\begin{proof}
The polarized $U(1)$-symmetric case will require an additional observation, which we provide at the end of the proof.

\medskip

\noindent{\bf The proof except for the polarized $U(1)$-symmetric case.} Recall \eqref{E:RECOVERGAMMAFROMSTRUCTURECOEFFCIENTS} and Remark~\ref{R:BASISOFSTRUCTURE}.
We define the scalar function $Q(t,x) \geq 0$
as follows, where the background Kasner scalars are defined in
Sect.\,\ref{SS:BACKGROUNDVARIABLES} and we suppress the $(t,x)$
arguments on RHS~\eqref{Q}:
\begin{align} \label{Q}
\begin{split}
	Q^2
	=
	Q^2(t,x)
	& := 
			\sum_{|\iota| \leq N_0}
			\mathop{\sum_{I,J,B=1,\cdots,\mydim}}_{I < J}
			\left[t^q \partial^{\iota}(\upgamma_{IJB} + \upgamma_{JBI})\right]^2
			+
			\sum_{|\iota| \leq N_0+1}
			\sum_{I,J=1,\cdots,\mydim}
			\left[t \partial^{\iota}(k_{IJ} - \widetilde{k}_{IJ}) \right]^2 
		\\
	& \ \
			+
			\sum_{|\iota| \leq N_0}
			\sum_{I,i=1,\cdots,\mydim}
			\left[t^q\partial^{\iota}(e_I^i - \widetilde{e}_I^i) \right]^2
			+
			\sum_{|\iota| \leq N_0}
			\sum_{I,i=1,\cdots,\mydim}
			\left[t^q\partial^{\iota}(\oe_i^I - \widetilde{\oe}_i^I) \right]^2
				 \\
	& \ \
			+
			\sum_{|\iota| \leq N_0+1}
			\left[t\partial^{\iota} (e_0 \psi - \partial_t \widetilde{\psi}) \right]^2
			+
			\sum_{|\iota| \leq N_0}
			\sum_{I=1,\cdots,\mydim}
			[t^q\partial^{\iota} e_I \psi]^2.
\end{split}
\end{align}
Throughout the proof, 
we will silently use the estimates
$C^{-1}\| Q \|_{L^{\infty}(\Sigma_t)} \leq \mathbb{L}_{(e,\oe,\upgamma,k,\psi)}(t) \leq C \| Q \|_{L^{\infty}(\Sigma_t)}$
and $\| Q \|_{L^{\infty}(\Sigma_t)} \leq C \mathbb{D}(t)$,
which follow easily from the definitions of the quantities involved
and the identity \eqref{E:RECOVERGAMMAFROMSTRUCTURECOEFFCIENTS}.
In particular, to prove \eqref{low.est},
it suffices to derive the following pointwise bound for $Q^2(t,x)$:
\begin{align} \label{E:SUFFICIENTBOUND.low.est}
	Q^2(t,x)
	& \lesssim
		\mathbb{L}_{(e,\oe,\upgamma,k,\psi)}^2(1)
		+
		\int_t^1
			s^{-1 + \upsigma}
			\mathbb{D}^2(s)
		\, ds.
\end{align}
To prove \eqref{E:SUFFICIENTBOUND.low.est},
we will derive the following pointwise bound for $(t,x) \in (T_{\textnormal{Boot}},1] \times \mathbb{T}^{\mydim}$:
\begin{align} \label{MAINSTEPlow.est}
	\begin{split}
	Q^2(t,x)
	& \leq 
		C \mathbb{L}_{(e,\oe,\upgamma,k,\psi)}^2(1)
		+
		(C \varepsilon - 4 \upsigma)
		\int_t^1
			s^{-1}
			\sum_{|\iota| \leq N_0}
			\mathop{\sum_{I,J,B=1,\cdots,\mydim}}_{I < J}
			\left[s^q \partial^{\iota}(\upgamma_{IJB} + \upgamma_{JBI})(s,x) \right]^2
		\, ds
			\\
	& \ \
		+
		(C \varepsilon - 4 \upsigma)
		\int_t^1
			s^{-1}
			\sum_{|\iota| \leq N_0}
			\sum_{I=1,\cdots,\mydim}
			\left[s^q \partial^{\iota} e_I \psi(s,x) \right]^2
		\, ds
			\\
	& \ \
		+
		(C \varepsilon - 4 \upsigma)
		\int_t^1
			s^{-1}
			\sum_{|\iota| \leq N_0}
			\sum_{I,i=1,\cdots,\mydim}
			\left[s^q \partial^{\iota}(e_I^i - \widetilde{e}_I^i)(s,x) \right]^2
		\, ds
		\\
	& \ \
		+
		(C \varepsilon - 4 \upsigma)
		\int_t^1
			s^{-1}
			\sum_{|\iota| \leq N_0}
			\sum_{I,i=1,\cdots,\mydim}
			\left[s^q \partial^{\iota}(\oe_i^I - \widetilde{\oe}_i^I)(s,x) \right]^2
		\, ds
			\\
	& \ \
		+
		C
		\int_t^1
			s^{-1 + \upsigma}
			\mathbb{D}^2(s)
		\, ds.
\end{split}
\end{align}
Then for $\varepsilon$ sufficiently small, the first four integrals on RHS~\eqref{MAINSTEPlow.est} are
negative, and we can discard them; the desired bound \eqref{E:SUFFICIENTBOUND.low.est} then follows.

It remains for us to prove \eqref{MAINSTEPlow.est}. 
We will show that the following pointwise estimates hold
for $(t,x) \in (T_{\textnormal{Boot}},1] \times \mathbb{T}^{\mydim}$,
where to condense the notation, we omit the arguments
$(t,x)$ on the LHSs and the integrand arguments $(s,x)$
on the RHSs:
\begin{align} \label{SUBSTEP1MAINSTEPlow.est}
\begin{split}	
	\sum_{|\iota| \leq N_0}
	\mathop{\sum_{I,J,B=1,\cdots,\mydim}}_{I < J}
	\left[t^q \partial^{\iota}(\upgamma_{IJB} + \upgamma_{JBI}) \right]^2
	& \leq 
		C \mathbb{L}_{(\upgamma,k)}^2(1)
			\\
	& \ \
		+
		(C \varepsilon - 4 \upsigma)
		\int_t^1
			s^{-1}
			\sum_{|\iota| \leq N_0}
			\mathop{\sum_{I,J,B=1,\cdots,\mydim}}_{I < J}
			\left[s^q \partial^{\iota}(\upgamma_{IJB} + \upgamma_{JBI}) \right]^2
		\, ds
		\\
	& \ \
		+
		C \varepsilon
		\int_t^1
			s^{-1}
			\sum_{|\iota| \leq N_0}
			\sum_{I,i=1,\cdots,\mydim}
			\left[s^q \partial^{\iota} (e_I^i - \widetilde{e}_I^i) \right]^2
		\, ds
		\\
	& \ \
		+
		C
		\int_t^1
			s^{-1 + \upsigma}
			\mathbb{D}^2(s)
		\, ds,
	\end{split}
			\\
	\begin{split}
	\sum_{|\iota| \leq N_0+1}
	\sum_{I,J=1,\cdots,\mydim}
	\left[t \partial^{\iota}(k_{IJ} - \widetilde{k}_{IJ}) \right]^2
	& \leq 
		C \mathbb{L}_{(\upgamma,k)}^2(1)
			\label{SUBSTEP2MAINSTEPlow.est}	 \\
	& \ \
		+
		C
		\int_t^1
			s^{-1 + \upsigma}
			\mathbb{D}^2(s)
		\, ds,
	\end{split}
			\\
	\begin{split}
		\sum_{|\iota| \leq N_0}
		\sum_{I,i=1,\cdots,\mydim}
		\left[t^q\partial^{\iota} (e_I^i - \widetilde{e}_I^i) \right]^2
	& \leq 
			C \mathbb{L}_{(e,\oe)}^2(1)
			\label{SUBSTEP3MAINSTEPlow.est}
				\\
	& \ \
		+
		(C \varepsilon - 4 \upsigma)
		\int_t^1
			s^{-1}
			\sum_{|\iota| \leq N_0}
			\sum_{I,i=1,\cdots,\mydim}
			\left[s^q\partial^{\iota}(e_I^i - \widetilde{e}_I^i) \right]^2
		\, ds
			\\
	& \ \
		+
		C
		\int_t^1
			s^{-1 + \upsigma}
			\mathbb{D}^2(s)
		\, ds,
	\end{split}
				\\
	\begin{split}
	\sum_{|\iota| \leq N_0}
		\sum_{I,i=1,\cdots,\mydim}
		\left[t^q\partial^{\iota} (\oe_i^I - \widetilde{\oe}_i^I) \right]^2
	& \leq 
			C \mathbb{L}_{(e,\oe)}^2(1)
			\label{ANOTHERSUBSTEP3MAINSTEPlow.est}
				\\
	& \ \
		+
		(C \varepsilon - 4 \upsigma)
		\int_t^1
			s^{-1}
			\sum_{|\iota| \leq N_0}
			\sum_{I,i=1,\cdots,\mydim}
			\left[s^q\partial^{\iota}(\oe_i^I - \widetilde{\oe}_i^I) \right]^2
		\, ds
			\\
	& \ \
		+
		C
		\int_t^1
			s^{-1 + \upsigma}
			\mathbb{D}^2(s)
		\, ds,
	\end{split}
				\\
	\begin{split}
	\sum_{|\iota| \leq N_0+1}
	\left[t \partial^{\iota}(e_0 \psi - \partial_t \widetilde{\psi}) \right]^2
	& \leq 
		C \mathbb{L}_{(\psi)}^2(1)
			\label{SUBSTEP4MAINSTEPlow.est}	 
			\\
	& \ \
		+
		C
		\int_t^1
			s^{-1 + \upsigma}
			\mathbb{D}^2(s)
		\, ds,
	\end{split}
			\\
	\begin{split}
	\sum_{|\iota| \leq N_0}
		\sum_{I=1,\cdots,\mydim}
		[t^q\partial^{\iota} e_I \psi]^2
	& \leq 
			C \mathbb{L}_{(\psi)}^2(1)
			\label{SUBSTEP5MAINSTEPlow.est}
				\\
	& \ \
		+ 
		(C \varepsilon - 4 \upsigma)
		\int_t^1
			s^{-1}
			\sum_{|\iota| \leq N_0}
			\sum_{I=1,\cdots,\mydim}
			[s^q \partial^{\iota} e_I \psi]^2
		\, ds
				\\
	& \ \
		+
		C \varepsilon
		\int_t^1
			s^{-1}
			\sum_{|\iota| \leq N_0}
			\sum_{I,i=1,\cdots,\mydim}
			\left[s^q\partial^{\iota}(e_I^i - \widetilde{e}_I^i) \right]^2
		\, ds
				\\
	& \ \
		+
		C
		\int_t^1
			s^{-1 + \upsigma}
			\mathbb{D}^2(s)
		\, ds.
\end{split}
\end{align}
Then adding \eqref{SUBSTEP1MAINSTEPlow.est}--\eqref{SUBSTEP5MAINSTEPlow.est},
we arrive at \eqref{MAINSTEPlow.est}.

To prove \eqref{SUBSTEP1MAINSTEPlow.est}, we first multiply 
equation \eqref{E:SCHEMATICSTRUCTURECOEFFICIENTEVOLUTIONEQUATION}
by $2 [t^q \partial^{\iota}(\upgamma_{IJB}+\upgamma_{JBI})]$
to obtain the evolution equation 
$\partial_t \left\lbrace [t^q \partial^{\iota}(\upgamma_{IJB}+\upgamma_{JBI})]^2 \right\rbrace
= 2 [t^q \partial^{\iota}(\upgamma_{IJB}+\upgamma_{JBI})] 
\times \mbox{RHS~\eqref{E:SCHEMATICSTRUCTURECOEFFICIENTEVOLUTIONEQUATION}}$.
We then integrate this equation in time over $[t,1]$ with respect to $ds$,
apply the fundamental theorem of calculus,
and then sum the resulting identity  
over all $\iota$ with $|\iota| \leq N_0$
and over all
$I,J,B=1,\cdots,\mydim$ with $I < J$.
In the resulting identity, we place the term
$
\sum_{|\iota| \leq N_0}
\underset{I < J}{\underset{I,J,B=1,\cdots,\mydim}{\sum}}
\left[t^q \partial^{\iota}(\upgamma_{IJB} + \upgamma_{JBI})(t,x) \right]^2
$
on the left-hand side (as the only term on LHS~\eqref{SUBSTEP1MAINSTEPlow.est}),
while the resulting initial data term (on $\Sigma_1$)
is $\leq$ the term $C \mathbb{L}_{(\upgamma,k)}^2(1)$ on RHS~\eqref{SUBSTEP1MAINSTEPlow.est}.
Next, noting that the first term
$\left\lbrace
	q
	-
	(\widetilde{q}_{\underline{I}}+\widetilde{q}_{\underline{J}}-\widetilde{q}_{\underline{B}})
\right\rbrace
t^{q-1}
\partial^{\iota}
(\upgamma_{\underline{I}\underline{J}\underline{B}}+\upgamma_{\underline{J}\underline{B}\underline{I}})
$
on RHS~\eqref{E:SCHEMATICSTRUCTURECOEFFICIENTEVOLUTIONEQUATION}
generates the integrals
$
	- 2
	\left\lbrace
		q
		-
		(\widetilde{q}_{\underline{I}}+\widetilde{q}_{\underline{J}}-\widetilde{q}_{\underline{B}})
	\right\rbrace
	\int_t^1
			s^{-1}
			\sum_{|\iota| \leq N_0}
			\underset{I < J}{\underset{I,J,B=1,\cdots,\mydim}{\sum}}
			[s^q \partial^{\iota}(\upgamma_{IJB} + \upgamma_{JBI})]^2
	\, ds
$
(where the overall minus sign in front of these integrals
is correct because $t \in (T_{\textnormal{Boot}},1]$
and $
\sum_{|\iota| \leq N_0}
\underset{I < J}{\underset{I,J,B=1,\cdots,\mydim}{\sum}}
\left[t^q \partial^{\iota}(\upgamma_{IJB} + \upgamma_{JBI})(t,x) \right]^2
$ is on LHS~\eqref{SUBSTEP1MAINSTEPlow.est}),
we can use \eqref{sigma,q} to bound these integrals
by $\leq$ the negative-definite term 
$$
	- 4 \upsigma
		\int_t^1
			s^{-1}
			\sum_{|\iota| \leq N_0}
			\underset{I < J}{\underset{I,J,B=1,\cdots,\mydim}{\sum}}
			[s^q \partial^{\iota}(\upgamma_{IJB} + \upgamma_{JBI})]^2
		\, ds
$$
on RHS~\eqref{SUBSTEP1MAINSTEPlow.est}.
Finally, with the help of the identity \eqref{E:RECOVERGAMMAFROMSTRUCTURECOEFFCIENTS},
the error term estimates 
\eqref{E:STRUCTURECOEFFICIENTSBORDERPOINTWISEBOUNDS}--\eqref{E:STRUCTURECOEFFICIENTSJUNKPOINTWISEBOUNDS},
and Young's inequality,
we see that the terms generated by the remaining terms
on RHS~\eqref{E:SCHEMATICSTRUCTURECOEFFICIENTEVOLUTIONEQUATION}
are $\leq$ the sum of the remaining terms 
on RHS~\eqref{SUBSTEP1MAINSTEPlow.est}
as desired.

The estimate \eqref{SUBSTEP2MAINSTEPlow.est} follows
from a similar argument based on equation \eqref{E:COMMUTEDKEQUATION}
with $\Pow :=1$ and $|\iota| \leq N_0 + 1$
and the error term estimates 
\eqref{E:SECONDFUNDPOINTWISEERRORTERMESTIMATE1}--\eqref{E:SECONDFUNDPIONTWISEERRORTERMESTIMATE2}.

The estimate \eqref{SUBSTEP3MAINSTEPlow.est} follows
from a similar argument based on equation \eqref{E:COMMUTEDFRAMEEVOLUTION}
with $\Pow := q$
and the error term estimates
\eqref{E:FRAMEBORDERPOINTWISEBOUNDS}--\eqref{E:FRAMEJUNKPOINTWISEBOUNDS}.
The estimate \eqref{ANOTHERSUBSTEP3MAINSTEPlow.est}
can be proved via similar arguments based on equation \eqref{E:COMMUTEDCOFRAMEEVOLUTION}
with $\Pow := q$
and the error term estimates
\eqref{E:COFRAMEBORDERPOINTWISEBOUNDS}--\eqref{E:COFRAMEJUNKPOINTWISEBOUNDS}. 

The estimate \eqref{SUBSTEP4MAINSTEPlow.est} follows
from a similar argument based on equation \eqref{dt.e0psi.diff}
with $\Pow := 1$ and $|\iota| \leq N_0 + 1$
and the error term estimates
\eqref{E:TIMEFRAMEDERIVATIVESCALARFIELDDERIVATIVELOSSTERMPOINTWISEBOUNDS}--\eqref{E:TIMEDERIVATIVESCALARFIELDBORDERANDJUNKPOINTWISEBOUNDS}.

Finally, the estimate \eqref{SUBSTEP5MAINSTEPlow.est} follows
from a similar argument based on equation \eqref{dt.eipsi.diff}
with $\Pow := q$
and the error term estimates
\eqref{E:SPATIALFRAMEDERIVATIVESCALARFIELDANNOYINGDERIVATIVELOSSTERMPOINTWISEBOUNDS}--\eqref{E:SPATIALFRAMEDERIVATIVESCALARFIELDJUNKPOINTWISEBOUNDS}.
This completes the proof except in the polarized $U(1)$-symmetric case.

\medskip

\noindent{\bf The proof in the polarized $U(1)$-symmetric case.}
By \eqref{E:U1VANISHINGSTRUCTURECOEFFICIENTS}, 
in polarized $U(1)$-symmetry with $\mydim = 3$,
the structure coefficient $\upgamma_{IJB}+\upgamma_{JBI}$
vanishes unless $I=B \neq J$
(in which case \eqref{antisymmetricgamma} implies $\upgamma_{IJB}+\upgamma_{JBI} = \upgamma_{IJB}$ -- though this identity is not needed
for our results)
or $B=J \neq I$.
The key point is that for the non-zero structure coefficients, when $I=B$,
the factor $\frac{\widetilde{q}_{\underline{I}}+\widetilde{q}_{\underline{J}}-\widetilde{q}_{\underline{B}}}{t}$
on LHS~\eqref{dt.gamma-gammatilde2}
reduces to
$\frac{\widetilde{q}_{\underline{J}}}{t}$,
and similarly, when $B=J$,
the factor $\frac{\widetilde{q}_{\underline{I}}+\widetilde{q}_{\underline{J}}-\widetilde{q}_{\underline{B}}}{t}$
on LHS~\eqref{dt.gamma-gammatilde2}
reduces to
$\frac{\widetilde{q}_{\underline{I}}}{t}$.
Hence, using the definition \eqref{sigma,q} of $q$ 
in the polarized $U(1)$-symmetric case, we can repeat the proof of \eqref{SUBSTEP1MAINSTEPlow.est}
given above in the non-symmetric case 
-- but making the change 
$\frac{(\widetilde{q}_{\underline{I}}+\widetilde{q}_{\underline{J}}-\widetilde{q}_{\underline{B}})}{t}
\rightarrow
\frac{\widetilde{q}_{\underline{J}}}{t}
$
or
$\frac{(\widetilde{q}_{\underline{I}}+\widetilde{q}_{\underline{J}}-\widetilde{q}_{\underline{B}})}{t}
\rightarrow
\frac{\widetilde{q}_{\underline{I}}}{t}
$
in the relevant spots -- to derive the desired estimates.
\end{proof}

\subsection{Integral inequality for the high order solution norms}
\label{SS:TOPORDERENRGYINTEGRALINEQUALITIES}
In the next proposition, we combine some of the results derived earlier in Sect.\,\ref{sec:mainest}
to obtain an integral inequality for the high order solution norms. 

\begin{proposition}[Top-order energy integral inequalities]
\label{P:TOPORDERENERGYINEQUALITY}
Recall that $\mathbb{H}_{(\upgamma,k)}$,
$\mathbb{H}_{(\psi)}$,
$\mathbb{H}_{(e,\oe)}$,
and $\mathbb{D}(t)$ are norms from Definition~\ref{D:SOLUTIONNORMS}.
Under the assumptions of Proposition~\ref{prop:overall},
including the bootstrap assumptions \eqref{Boots},
there exists a constant $C_* > 0$ \underline{independent of $N, N_0,$ and $\blowupexp$}
and a constant $C = C_{N,N_0,\blowupexp,\mydim,q,\upsigma} > 0$
such that if $N_0 \geq 1$ and $N$ is sufficiently large in a manner 
that depends on $N_0, \blowupexp, \mydim, q,$ and $\upsigma$,
and if $\varepsilon$ is sufficiently small (in a manner that depends on $N, N_0, \blowupexp, \mydim, q,$ and $\upsigma$), 
then the following estimates hold for $t \in (T_{\textnormal{Boot}},1]$:
\begin{subequations}
\begin{align}
\label{E:TOPORDERENERGYINEQUALITYFORSPATIALMETRIC}
\begin{split}
\mathbb{H}_{(\upgamma,k)}^2(t)
 & \leq
	C \mathbb{H}_{(\upgamma,k)}^2(1)
		\\
& \ \
+
(C_* - \blowupexp) 
\int_t^1
	s^{-1} \mathbb{H}_{(\upgamma,k)}^2(s)
\, ds
+ 
C_* 
\int_t^1
	s^{-1} \mathbb{H}_{(\psi)}^2(s)
\, ds
+
C 
\int_t^1
	s^{-1+\upsigma} \mathbb{D}^2(s)
\, ds,
\end{split} \\
\begin{split}
\mathbb{H}_{(\psi)}^2(t)
 & \leq
	C \mathbb{H}_{(\psi)}^2(1)
\label{E:TOPORDERENERGYINEQUALITYFORSCALARFIELD} \\
&  \ \
+
C_*
\int_t^1
	s^{-1} \mathbb{H}_{(\upgamma,k)}^2(s)
\, ds
+ 
(C_* - \blowupexp)  
\int_t^1
	s^{-1} \mathbb{H}_{(\psi)}^2(s)
\, ds
+
C 
\int_t^1
	s^{-1+\upsigma} \mathbb{D}^2(s)
\, ds,
	\end{split} \\
\begin{split}
\mathbb{H}_{(e,\oe)}^2(t)
& \leq
C \mathbb{H}_{(e,\oe)}^2(1)
\label{E:TOPORDERENERGYINEQUALITYFORSPATIALFRAME}\\
& \ \
+
C_*
\int_t^1
	s^{-1} \mathbb{H}_{(\upgamma,k)}^2(s)
\, ds
+
(C_* - \blowupexp) 
\int_t^1
	s^{-1} \mathbb{H}_{(e,\oe)}^2(s)
\, ds
+
C 
\int_t^1
	s^{-1+\upsigma} \mathbb{D}^2(s)
\, ds.
\end{split}
\end{align}
\end{subequations}
\end{proposition}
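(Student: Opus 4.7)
The plan is to derive each of the three integral inequalities by integrating an appropriate differential energy identity over $[t,1]\times\mathbb{T}^{\mydim}$, summing over top-order multi-indices $|\iota|=N$, and then plugging in the $L^2$ error estimates that have already been assembled. For $\mathbb{H}_{(\upgamma,k)}$ I will integrate the identity \eqref{E:TOPORDERDIFFERENTIALENERGYIDENTITYFORGAMMAANDK}, for $\mathbb{H}_{(\psi)}$ the identity \eqref{E:TOPORDERDIFFERENTIALENERGYIDENTITYFORSCALARFIELD}, and for $\mathbb{H}_{(e,\oe)}$ I will proceed directly from the commuted transport equations \eqref{E:COMMUTEDFRAMEEVOLUTION}-\eqref{E:COMMUTEDCOFRAMEEVOLUTION} (with $\Pow=A+q$) multiplied by $t^{A+q}\partial^{\iota}(e-\widetilde{e})$ and $t^{A+q}\partial^{\iota}(\oe-\widetilde{\oe})$ respectively, using orthonormality on $\mathbb{T}^{\mydim}$ to discard the perfect-divergence terms.

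The $-A$ in the coefficient $(C_*-A)$ is the crucial ingredient, and it arises as follows. On the RHS of \eqref{E:TOPORDERDIFFERENTIALENERGYIDENTITYFORGAMMAANDK} the terms $\frac{2A+1}{t}(t^{A+1}\partial^{\iota}k)^2$ and $\frac{A+1}{t}(t^{A+1}\partial^{\iota}\upgamma)^2$ appear with a $+$ sign; when one applies the fundamental theorem of calculus in time from $t$ to $1$ and isolates the energy at time $t$, these terms migrate to the other side with a $-$ sign, producing a contribution bounded above by $-A \int_t^1 s^{-1}\mathbb{H}_{(\upgamma,k)}^2(s)\,ds$ after throwing away the $O(1)$ shifts (which can be absorbed into $C_*$). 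The same mechanism produces the $-A$ terms in the $\psi$ and $(e,\oe)$ estimates via the coefficients $\frac{2A}{t}$, $\frac{2(A+1-q_I)}{t}$, and $\frac{2(A+q-\widetilde{q}_I)}{t}$ respectively; in each case the factor $q_I$ or $\widetilde{q}_I$ is bounded in absolute value by $q<1$ and can be absorbed into the order-unity constant $C_*$. The positive $C_*$ part of the coefficient $(C_*-A)$ then arises when we apply Cauchy--Schwarz to products of the form $2(t^{A+1}\partial^{\iota}k)\cdot(t^A \mathfrak{K}^{(Border;\iota)})$ appearing on the RHS of the identity and use the borderline bound \eqref{E:KTOPORDERBORDERTERMSL2ESTIMATE}, which yields a $C_* t^{-1}\mathbb{H}_{(\upgamma,k)}$ factor; integration and Young's inequality then produce $C_*\int_t^1 s^{-1}\mathbb{H}_{(\upgamma,k)}^2(s)\,ds$.

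The inter-norm coupling terms appear through essentially two mechanisms. First, the momentum constraint contributions $(t^{A+1}\partial^{\iota}e_J n)\cdot t^{A+1}\mathfrak{M}_J^{(Border;\iota)}$ and $n(t^{A+1}\partial^{\iota}\upgamma_{CJC})\cdot t^{A+1}\mathfrak{M}_J^{(Border;\iota)}$ on RHS \eqref{E:TOPORDERDIFFERENTIALENERGYIDENTITYFORGAMMAANDK}, together with the bound \eqref{E:CONSTRAINTTOPORDERBORDERTERMSL2ESTIMATE}, couple $\mathbb{H}_{(\upgamma,k)}$ to $\mathbb{H}_{(\psi)}$ (the dangerous $\vec e n$ factor is controlled in terms of $\upgamma$ and $\vec e\psi$ by the lapse estimate \eqref{n.high.est}, which is what ultimately turns it into a $C_* \int s^{-1}(\mathbb{H}_{(\upgamma,k)}^2+\mathbb{H}_{(\psi)}^2)$ contribution). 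Second, the scalar-field borderline errors $\mathfrak{P}^{(Border;\iota)}$ and $\mathfrak{Q}_I^{(Border;\iota)}$ depend on $\partial^{\iota}(n-1)$ and $\partial^{\iota}\vec{e}n$, which by the lapse estimate are again controlled by $\mathbb{H}_{(\upgamma,k)}$, explaining the $C_* \int s^{-1}\mathbb{H}_{(\upgamma,k)}^2$ term in \eqref{E:TOPORDERENERGYINEQUALITYFORSCALARFIELD}. All remaining error contributions, generated by the Junk pieces or by products with small $C\varepsilon$ coefficients, are controlled by Lemma~\ref{lem:basic.ineq} and fold into the harmless $C\int s^{-1+\upsigma}\mathbb{D}^2(s)\,ds$ integral.

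The main technical obstacle I anticipate is keeping an honest accounting of which constants are structural (absorbed into $C_*$ and \emph{independent} of $A$ and $N$) versus which depend on $N$ and $A$ (absorbed into $C$). In particular, for the borderline momentum-constraint term, the factor $(t^{A+1}\partial^{\iota}e_J n)$ must be handled carefully: one applies Cauchy--Schwarz, uses the lapse estimate \eqref{n.high.est} to replace $t^{A+1}\|\partial^{\iota}\vec e n\|_{L^2(\Sigma_t)}$ by $C_*\mathbb{H}_{(\upgamma,k)} + C t^{\upsigma}\mathbb{D}$, and then pairs this with $t^{A+1}\|\mathfrak{M}_J^{(Border;\iota)}\|_{L^2}$ bounded as in \eqref{E:CONSTRAINTTOPORDERBORDERTERMSL2ESTIMATE}. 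Only the product of two genuinely structural $C_*$'s may appear as the coefficient of a $\int s^{-1}\mathbb{H}^2$ term; cross-products of a $C_*$ with a $C\varepsilon$ factor yield a negligible $C\varepsilon \int s^{-1}\mathbb{H}^2$ that, by smallness of $\varepsilon$, can be absorbed into the $(C_*-A)$ integral after renaming $C_*$. Once this accounting is done carefully for each of the three identities, \eqref{E:TOPORDERENERGYINEQUALITYFORSPATIALMETRIC}-\eqref{E:TOPORDERENERGYINEQUALITYFORSPATIALFRAME} follow.
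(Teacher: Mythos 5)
Your proposal follows the paper's own proof essentially step by step: integrate \eqref{E:TOPORDERDIFFERENTIALENERGYIDENTITYFORGAMMAANDK} and \eqref{E:TOPORDERDIFFERENTIALENERGYIDENTITYFORSCALARFIELD} over the slab, sum over $|\iota|=N$, extract the favorable $-A$ weight from the $\frac{2A+1}{t}$, $\frac{A+1}{t}$, $\frac{2A}{t}$, $\frac{2(A+1-q_I)}{t}$, $\frac{2(A+q\mp\widetilde q_I)}{t}$ coefficients, and absorb the borderline products via Cauchy--Schwarz and the $L^2$ estimates of Lemmas~\ref{L:METRICL2CONTROLOFTOPORDERERRORTERMS}, \ref{L:SCALARFIELDL2CONTROLOFERRORTERMS}, and \ref{L:FRAMEL2CONTROLOFERRORTERMS}, together with \eqref{PRECISE.n.high.est}/\eqref{n.high.est} for the top-order lapse factors; this is exactly the paper's argument, including the derivation of \eqref{E:TOPORDERENERGYINEQUALITYFORSPATIALFRAME} by setting $\Pow=A+q$ in \eqref{E:COMMUTEDFRAMEEVOLUTION}--\eqref{E:COMMUTEDCOFRAMEEVOLUTION}. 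One trivial slip: there are no perfect-divergence terms to discard in the frame energy identity (the commuted frame equations are pure ODEs in $t$), and in any case it is the boundarylessness of $\mathbb{T}^{\mydim}$, not orthonormality, that eliminates the divergences present in the $(k,\upgamma)$ and $\psi$ identities.
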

\begin{proof}
	We stress that throughout the proof, $C$ and $C_*$ 
	denote constants that have the properties stated in Sect.\,\ref{SS:NOTATIONANDCONVENTIONS},
	and that these constants can vary from line to line. In particular,
	the ``final constants'' appearing in 
	\eqref{E:TOPORDERENERGYINEQUALITYFORSPATIALMETRIC}--\eqref{E:TOPORDERENERGYINEQUALITYFORSPATIALFRAME}
	do not have to coincide with the constants appearing in the proof.
	
	To prove \eqref{E:TOPORDERENERGYINEQUALITYFORSPATIALMETRIC},
	we first integrate the differential energy identity \eqref{E:TOPORDERDIFFERENTIALENERGYIDENTITYFORGAMMAANDK}
	over $[t,1] \times \mathbb{T}^{\mydim}$ with respect to $ds \, dx$, 
	note that the integrals of the last three (perfect-spatial-derivative) 
	terms on RHS~\eqref{E:TOPORDERDIFFERENTIALENERGYIDENTITYFORGAMMAANDK} vanish,
	sum the resulting identity over all $\iota$ with $|\iota|= N$, 
	use \eqref{PRECISE.n.high.est} to control the top-order derivatives
	of the lapse,
	use the estimates 
	$\| n - 1 \|_{W^{1,\infty}(\Sigma_t)} \lesssim t^{\upsigma}$
	and $\| e \|_{W^{1,\infty}(\Sigma_t)} \lesssim t^{-1 + 2 \upsigma}$
	(which are simple consequences of 
		\eqref{Kasnersol},
		the inequalities in \eqref{sigma,q},
		and the bootstrap assumptions),
	and use the Cauchy--Schwarz inequality for integrals and sums
	and Young's inequality
	to deduce that the following estimate holds for $t \in (T_{Boot)},1]$,
	where $C_* > 0$ and $C > 0$ are as in the statement of the proposition:
	\begin{align} \label{E:FIRSTSTEPTOPORDERENERGYINEQUALITYFORSPATIALMETRIC}
	\begin{split}
			&
			\sum_{|\iota| = N}
			\sum_{I,J=1,\cdots,\mydim}
			t^{2 \blowupexp+2}
			\|  \partial^{\iota} k_{IJ} \|_{L^2(\Sigma_t)}^2
			+
			\frac{1}{2}
			t^{2 \blowupexp+ 2} 
			\sum_{|\iota| = N}
			\sum_{I,J,B=1,\cdots,\mydim}
			\|  \partial^{\iota} \upgamma_{IJB} \|_{L^2(\Sigma_t)}^2
				\\
		& 
		\leq 
		C \mathbb{H}_{(\upgamma,k)}^2(1)
			\\
& \ \		
		+
		(C_* - \blowupexp)
		\int_t^1
			\left\lbrace
				\sum_{|\iota| = N}
				\sum_{I,J=1,\cdots,\mydim}
				s^{2 \blowupexp+ 1} \|  \partial^{\iota} k_{IJ} \|_{L^2(\Sigma_s)}^2
				+
				\sum_{|\iota| = N}
			\sum_{I,J,B=1,\cdots,\mydim}
				s^{2 \blowupexp+ 1} \|  \partial^{\iota} \upgamma_{IJB} \|_{L^2(\Sigma_s)}^2
			\right\rbrace
		\, ds
			\\
	& \ \
		+
		\sum_{|\iota| = N}
		\sum_{I,J=1,\cdots,\mydim}
		\int_t^1
			s^{2\blowupexp + 1} 
				\left\| \mathfrak{K}_{IJ}^{(\textnormal{Border};\iota)} \right\|_{L^2(\Sigma_s)}^2
		\, ds
			 \\
		& \ \
		+
		\sum_{|\iota| = N}
		\sum_{I,J,B=1,\cdots,\mydim}
		\int_t^1
			s^{2 \blowupexp+3} 
			\left\| \mathfrak{G}_{IJB}^{(\textnormal{Border};\iota)}  \right\|_{L^2(\Sigma_s)}^2
		\, ds
		 \\
		& \ \
		+
		\sum_{|\iota| = N}
		\sum_{J=1,\cdots,\mydim}
		\int_t^1
			s^{2 \blowupexp+3} 
			\left\| \mathfrak{M}_J^{(\textnormal{Border};\iota)} \right\|_{L^2(\Sigma_s)}^2
		\, ds
				\\
	& \ \
		+
		C
		\sum_{|\iota| = N}
		\sum_{I,J=1,\cdots,\mydim}
		\int_t^1
				\| s^{\blowupexp + 1} \partial^{\iota} k_{IJ} \|_{L^2(\Sigma_s)}
				\left\| s^{\blowupexp + 1} \mathfrak{K}_{IJ}^{(\textnormal{Junk};\iota)} \right\|_{L^2(\Sigma_s)}
		\, ds
			\\
	& \ \
		+
		C
		\sum_{|\iota| = N}
		\sum_{I,J,B=1,\cdots,\mydim}
		\int_t^1
			\| s^{\blowupexp + 1} \partial^{\iota} \upgamma_{IJB} \|_{L^2(\Sigma_s)}
			\left\| s^{\blowupexp + 1} \mathfrak{G}_{IJB}^{(\textnormal{Junk};\iota)}  \right\|_{L^2(\Sigma_s)}
		\, ds
			 \\
	& \ \
		+
		C
		\sum_{|\iota| = N}
		\sum_{I,J,B,E=1,\cdots,\mydim}
		\int_t^1
			\left\lbrace
				\| s^{\blowupexp + 1} \partial^{\iota} \upgamma_{IJB} \|_{L^2(\Sigma_s)} + s^{\upsigma} \mathbb{D}(s)
			\right\rbrace
			\left\| s^{\blowupexp + 1} \mathfrak{M}_E^{(\textnormal{Junk};\iota)} \right\|_{L^2(\Sigma_s)}
		\, ds 
				\\
	& \ \
		+
		C
		\sum_{|\iota| = N}
			\sum_{I,J,B,E,F=1,\cdots,\mydim}
		\int_t^1
			s^{-1+\upsigma}
			\| s^{\blowupexp + 1} \partial^{\iota} k_{IJ} \|_{L^2(\Sigma_s)}
			\left\lbrace
				\| s^{\blowupexp + 1} \partial^{\iota} \upgamma_{BEF} \|_{L^2(\Sigma_s)} 
				+ 
				s^{\upsigma} \mathbb{D}(s)
			\right\rbrace
		\, ds
			\\
	& \ \
		+
		C 
		\int_t^1
			s^{-1+\upsigma} \mathbb{D}^2(s)
		\, ds.
	\end{split}
	\end{align}
	Using Lemma~\ref{L:METRICL2CONTROLOFTOPORDERERRORTERMS},
	we deduce that the three integrals involving
	the borderline terms 
	$\mathfrak{K}_{IJ}^{(\textnormal{Border};\iota)}$,
	$\mathfrak{G}_{IJB}^{(\textnormal{Border};\iota)}$,
	and $\mathfrak{M}_J^{(\textnormal{Border};\iota)}$
	are bounded by:
	\begin{align*}
	& \leq 
		C_* 
	\int_t^1
		s^{-1}
		\left\lbrace
			\mathbb{H}_{(\upgamma,k)}^2(s)
			+
			\mathbb{H}_{(\psi)}^2(s)
		\right\rbrace
	\, ds
	+
		C \int_t^1
				s^{-1 + \upsigma} \mathbb{D}^2(s)
			\, ds,
	\end{align*}
	and that (in view of Definition~\ref{D:SOLUTIONNORMS})
	the three integrals involving
	the terms $\mathfrak{K}_{IJ}^{(\textnormal{Junk};\iota)}$,
	$\mathfrak{G}_{IJB}^{(\textnormal{Junk};\iota)}$,
	and $\mathfrak{M}_J^{(\textnormal{Junk};\iota)}$
	are bounded by 
		$
		\leq
		C \int_t^1
				s^{-1 + \upsigma} \mathbb{D}^2(s)
			\, ds
	$.
	Moreover, appealing to Definition~\ref{D:SOLUTIONNORMS},
	we see that the integrals
	$$
	C
		\sum_{|\iota| = N}
			\sum_{I,J,B,E,F=1,\cdots,\mydim}
		\int_t^1
			s^{-1+\upsigma}
			\| s^{\blowupexp + 1} \partial^{\iota} k_{IJ} \|_{L^2(\Sigma_s)}
			\left\lbrace
				\| s^{\blowupexp + 1} \partial^{\iota} \upgamma_{BEF} \|_{L^2(\Sigma_s)} 
				+ 
				s^{\upsigma} \mathbb{D}(s)
			\right\rbrace
		\, ds
	$$
	on the next-to-last line of RHS~\eqref{E:FIRSTSTEPTOPORDERENERGYINEQUALITYFORSPATIALMETRIC}
	are bounded by
		$
		\leq
		C \int_t^1
				s^{-1 + \upsigma} \mathbb{D}^2(s)
			\, ds
	$.
	From these estimates,
	we arrive,
	in view of Definition~\ref{D:SOLUTIONNORMS},
	at the desired estimate \eqref{E:TOPORDERENERGYINEQUALITYFORSPATIALMETRIC}.
	
	The inequality \eqref{E:TOPORDERENERGYINEQUALITYFORSCALARFIELD}
	follows from a similar argument based on the
	scalar field differential energy identity \eqref{E:TOPORDERDIFFERENTIALENERGYIDENTITYFORSCALARFIELD}
	and the error term estimates of Lemma~\ref{L:SCALARFIELDL2CONTROLOFERRORTERMS};
	we omit the details.
	
	To prove \eqref{E:TOPORDERENERGYINEQUALITYFORSPATIALFRAME},
	we first set $\Pow := \blowupexp + q$ in equation \eqref{E:COMMUTEDFRAMEEVOLUTION} 
	and multiply it by $2[t^{\blowupexp + q}\partial^{\iota}({e_I^i}-\widetilde{e}_I^i)]$
	to deduce:
	\begin{align} \label{E:COMMUTEDFRAMEEVOLUTION2}
	\begin{split}
	\partial_t
		\left\lbrace
			\left[t^{\blowupexp + q}\partial^{\iota}({e_I^i}-\widetilde{e}_I^i) \right]^2
		\right\rbrace
	& = 
		\frac{2 (\blowupexp + q - \widetilde{q}_{\underline{I}})}{t}
		\left[t^{\blowupexp + q} \partial^{\iota} ({e_{\underline{I}}^i}-\widetilde{e}_{\underline{I}}^i) \right]^2
			\\
& \ \
		+
		2
		(t^{\blowupexp + q} \mathfrak{E}_{\underline{I}}^{\underline{i};(\textnormal{Border};\iota)})
		\left[t^{\blowupexp + q} 
			\partial^{\iota} ({e_{\underline{I}}^{\underline{i}}}-\widetilde{e}_{\underline{I}}^{\underline{i}}) \right]
				\\
& \ \
		+
		2
		(t^{\blowupexp + q} \mathfrak{E}_{\underline{I}}^{\underline{i};(\textnormal{Junk};\iota)})
		\left[t^{\blowupexp + q} 
			\partial^{\iota} ({e_{\underline{I}}^{\underline{i}}}-\widetilde{e}_{\underline{I}}^{\underline{i}}) \right].
\end{split}
\end{align}
We then argue as in the proof of \eqref{E:TOPORDERENERGYINEQUALITYFORSPATIALMETRIC},
but using \eqref{E:COMMUTEDFRAMEEVOLUTION2}
in place of \eqref{E:TOPORDERDIFFERENTIALENERGYIDENTITYFORGAMMAANDK} 
and the error term estimates of	Lemma~\ref{L:FRAMEL2CONTROLOFERRORTERMS}
in place of those of Lemma~\ref{L:METRICL2CONTROLOFTOPORDERERRORTERMS}.
Summing the resulting inequality over $I,i=1,\cdots,\mydim$ and
also noting that $C \varepsilon \leq C_*$, we deduce 
that the following estimate holds for $t \in (T_{Boot)},1]$:
\begin{align} \label{E:PROOFTOPORDERENERGYINEQUALITYFORSPATIALFRAME}
\begin{split}
t^{2(\blowupexp + q)}\|e\|_{\dot{H}^N(\Sigma_t)}^2
& \leq 
\| e \|_{\dot{H}^N(\Sigma_1)}^2
+
C_*
\int_t^1
	s^{-1} \mathbb{H}_{(\upgamma,k)}^2(s)
\, ds
+ 
C_*
\int_t^1
	s^{-1} \mathbb{H}_{(e,\upomega)}^2(s)
\, ds	\\
& \ \
- 
\blowupexp
\int_t^1
	s^{-1} \left\lbrace s^{2(\blowupexp + q)} \| e \|_{\dot{H}^N(\Sigma_s)}^2 \right\rbrace
\, ds
+
C 
\int_t^1
	s^{-1+\upsigma} \mathbb{D}^2(s)
\, ds.
\end{split}
\end{align}
Next, we note that the one-form components $\lbrace \oe_i^I \rbrace_{I,i=1,\cdots,\mydim}$
satisfy the same inequality, that is, \eqref{E:PROOFTOPORDERENERGYINEQUALITYFORSPATIALFRAME}
holds with $\oe$ in place of $e$; to see this, one argues as in the proof of
\eqref{E:PROOFTOPORDERENERGYINEQUALITYFORSPATIALFRAME}, but using the 
evolution equation \eqref{E:COMMUTEDCOFRAMEEVOLUTION} with $\Pow :=\blowupexp + q$
and the last two error term estimates in Lemma~\ref{L:FRAMEL2CONTROLOFERRORTERMS}.
Adding this top-order energy inequality
for the $\lbrace \oe_i^I \rbrace_{I,i=1,\cdots,\mydim}$
to the inequality
\eqref{E:PROOFTOPORDERENERGYINEQUALITYFORSPATIALFRAME},
and considering the definition \eqref{norms.high}
of $\mathbb{H}_{(e,\oe)}(t)$,
we arrive at the desired estimate \eqref{E:TOPORDERENERGYINEQUALITYFORSPATIALFRAME}.
We have therefore proved the proposition.
\end{proof}

\subsection{Proof of Proposition~\ref{prop:overall}} 
\label{PROOFOFprop:overall}
We start by adding the integral inequalities 
\eqref{low.est}
and
\eqref{E:TOPORDERENERGYINEQUALITYFORSPATIALMETRIC}--\eqref{E:TOPORDERENERGYINEQUALITYFORSPATIALFRAME}
to obtain, in view of Definition~\ref{D:SOLUTIONNORMS} and \eqref{E:INITIALNORMOFDYNAMICVARIABLES},
the following inequality for $t \in (T_{\textnormal{Boot}},1]$,
valid under largeness/smallness assumptions on the parameters that we describe just below
(and we again stress that constants labeled ``$C_*$'' -- though we allow them 
to vary from line to line -- are always independent of $N_0, N$ and $\blowupexp$):
\begin{align} \label{MAINSTEPoverall.est}
	\mathbb{D}^2(t)
	& \leq 
				C \mathring{\upepsilon}^2
				+
				(C_* - \blowupexp)
				\int_t^1
					s^{-1} \mathbb{H}_{(e,\oe,\upgamma,k,\psi)}^2(s)
				\, ds
				+
				C
				\int_t^1
					s^{-1 + \upsigma} \mathbb{D}^2(s)
				\, ds.
\end{align}
We now fix $\blowupexp$ to be sufficiently large so that
the factor $C_* - \blowupexp$ on {RHS~\eqref{MAINSTEPoverall.est}} is negative.
For this fixed value of $\blowupexp$ and any fixed integer $N_0 \geq 1$, we choose
$N$ to be sufficiently large (in a manner that depends on $N_0, \blowupexp, \mydim, q,$ and $\upsigma$)
and then $\varepsilon$ to be sufficiently small
(in a manner that depends on $N, N_0, \blowupexp, \mydim, q,$ and $\upsigma$)
such that all of the previous estimates proved in the paper hold true.
For this fixed value of $\blowupexp$, this justifies inequality \eqref{MAINSTEPoverall.est}.
We now note that the negativity
of the factor $C_* - \blowupexp$ ensures that we can discard the first time integral on RHS~\eqref{MAINSTEPoverall.est},
that is, for $t \in (T_{\textnormal{Boot}},1]$, 
we have
$
\mathbb{D}^2(t)
	\leq 
				C \mathring{\upepsilon}^2
				+
				C
				\int_t^1
					s^{-1 + \upsigma} \mathbb{D}^2(s)
				\, ds
$.
From this inequality and Gr\"{o}nwall's lemma, we deduce that
$\mathbb{D}^2(t) \leq C \mathring{\upepsilon}^2$.
From this estimate and \eqref{E:LAPSECONTROLLEDBYDYNAMICVARIABLES}, 
we conclude the desired bound \eqref{overall.est}.
\hfill $\qed$

\subsection{Existence of perturbed solutions on the entire half-slab $(0,1]\times\mathbb{T}^{\mydim}$}
In the next proposition, we use the a priori estimates of
Proposition~\ref{prop:overall} and standard local well-posedness/continuation results 
to show that the perturbed solution exists on $(0,1] \times\mathbb{T}^{\mydim}$.
\begin{proposition}[Existence of perturbed solutions on the entire half-slab ${(0,1]}\times\mathbb{T}^{\mydim}$]
\label{P:EXISTENCEONHALFSLAB}
Let $(\Sigma_1=\mathbb{T}^{\mydim},\mathring{g},\mathring{k},\mathring{\psi},\mathring{\phi})$ 
be geometric initial data (see Sect.\,\ref{SS:CAUCHYPROBLEM}) 
for the Einstein-scalar field equations verifying the constraint equations \eqref{eq:hamconst}--\eqref{eq:momconst} and the CMC condition $\mathrm{tr}k=-1$ (see Remark \ref{rem:CMC}), 
and let $\lbrace \mathring{e}_I \rbrace_{I=1,\cdots,\mydim}$
be the initial orthonormal frame (on $\Sigma_1$)
constructed in Sect.\,\ref{SS:CONSTRUCTIONOFTHEINTIALORTHONORMALFRAME}.
Recall that 
$\mathbb{L}_{(n)}(t),
\mathbb{H}_{(n)}(t)$,
and $\mathbb{D}(t)$ are norms from Definition~\ref{D:SOLUTIONNORMS}
and that $\mathring{\upepsilon} := \mathbb{D}(1)$ (see \eqref{E:INITIALNORMOFDYNAMICVARIABLES}).
Assume that the following conditions are satisfied:
\begin{itemize}
	\item $N_0 \geq 1$.
	\item $\blowupexp \geq 1$ is sufficiently large.
	\item $N$ is sufficiently large in a manner 
		that depends on $N_0, \blowupexp, \mydim, q,$ and $\upsigma$.
	\item The norm $\mathring{\upepsilon}$ defined in \eqref{E:INITIALNORMOFDYNAMICVARIABLES}
		is sufficiently small 
		in a manner that depends on $N, N_0, \blowupexp, \mydim, q,$ and $\upsigma$.
\end{itemize}
Then there exists a constant $C_{N,N_0,\blowupexp,\mydim,q,\upsigma} > 0$
such these data launch a perturbed solution 
$$(n,k_{IJ},\upgamma_{IJB},e_I^i,\oe_i^I,\psi)_{I,J,B,i=1,\cdots,\mydim}$$
to the reduced equations of Proposition~\ref{P:redeq}
that exists classically on $(0,1]\times\mathbb{T}^{\mydim}$ 
and satisfies the following estimate for $t \in (0,1]$:
\begin{align}\label{overall.est.[0,1]}
\mathbb{D}(t) + \mathbb{L}_{(n)}(t) + \mathbb{H}_{(n)}(t)
& \leq C_{N,N_0,\blowupexp,\mydim,q,\upsigma} \mathring{\upepsilon}.
\end{align}

Moreover, if we define $g_{ij}$ and ${\bf{g}}$ in terms of the reduced variables by
$g_{ij} := \oe_i^A \oe_j^A$
and
${\bf g} := - n^2 dt \otimes dt + g_{ab} dx^a \otimes dx^b$
(where $t$ is the CMC time function and 
$\lbrace x^i \rbrace_{i=1,\cdots,\mydim}$ are the transported spatial coordinates),
then the tensorfields $({\bf g},\psi)$
are also classical solutions to the Einstein-scalar field system \eqref{EE}--\eqref{SF}
on $(0,1]\times\mathbb{T}^{\mydim}$.
\end{proposition}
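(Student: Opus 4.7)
The plan is to combine the a priori estimates of Proposition~\ref{prop:overall} with a standard local well-posedness / continuation argument via a bootstrap continuity argument. First, I would appeal to local well-posedness for the Einstein-scalar field system in the CMC-transported spatial coordinate gauge, following the approach of \cite[Theorem~14.1]{RodSp2}, to produce a classical solution $(n,g_{ij},k_{ij},\psi)$ on a small slab $(T_*,1]\times\mathbb{T}^{\mydim}$ for some $T_*<1$. Starting from the initial orthonormal frame $\{\mathring{e}_I\}$ built in Sect.\,\ref{SS:CONSTRUCTIONOFTHEINTIALORTHONORMALFRAME}, I would then propagate it by the Fermi--Walker transport equations \eqref{frame.prop}, and define $\{e_I^i\}$, $\{\oe_i^I\}$, $\{k_{IJ}\}$, $\{\upgamma_{IJB}\}$ from the geometric variables; Proposition~\ref{P:redeq} (together with the fact that the Hamiltonian and momentum constraints are propagated by the Einstein flow) shows that these scalar functions satisfy the reduced system on $(T_*,1]\times\mathbb{T}^{\mydim}$.

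Next I would run a continuity argument. Define
\[
T_{Boot}:=\inf\{T\in[0,1)\ :\ \text{the reduced solution exists classically on $(T,1]\times\mathbb{T}^{\mydim}$ and \eqref{Boots} holds there}\}.
\]
Cauchy stability of the local solution, combined with the assumption that $\mathring{\upepsilon}$ is much smaller than $\varepsilon$, guarantees that this set is non-empty, so $T_{Boot}<1$. On $(T_{Boot},1]$, Proposition~\ref{prop:overall} applies and yields
\[
\mathbb{D}(t)+\mathbb{L}_{(n)}(t)+\mathbb{H}_{(n)}(t)\leq C_{N,N_0,A,\mydim,q,\upsigma}\,\mathring{\upepsilon}<\tfrac{\varepsilon}{2},
\]
once $\mathring{\upepsilon}$ is small enough relative to $\varepsilon$; this is the strict improvement of the bootstrap assumptions.

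The main obstacle is to upgrade this strict improvement into the conclusion $T_{Boot}=0$. Suppose for contradiction that $T_{Boot}>0$. The improved estimate \eqref{overall.est.[0,1]}, combined with the explicit Kasner background formulas \eqref{Kasnersol} and the low-order bounds in \eqref{norms.low}, shows that on $(T_{Boot},1]$ the coordinate components of the spatial metric $g_{ij}=\oe_i^A\oe_j^A$ remain a uniformly nondegenerate (though $t$-weighted) perturbation of the Kasner metric, that $n$ stays uniformly close to $1$, and that every Sobolev norm appearing in the local well-posedness/continuation criterion of \cite[Theorem~14.1]{RodSp2} is \emph{finite} at $t=T_{Boot}$, because the weights $t^{A+1}$, $t^{A+q}$, $t^A$ are bounded below by $T_{Boot}^{A+1}>0$. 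The delicate point here is translating our orthonormal-frame norms into the coordinate-component norms used in the standard continuation criterion: one inverts the dictionary via the uniform $L^\infty$-smallness of $e-\widetilde{e}$ and $\oe-\widetilde{\oe}$, which controls the change-of-frame matrix between the transported coordinate frame and the Fermi--Walker frame. One can therefore apply the local well-posedness result at $t=T_{Boot}$ to extend the solution classically to a slightly larger slab $(T_{Boot}-\eta,1]\times\mathbb{T}^{\mydim}$, and by continuity in time the strictly improved bound $\mathbb{D}(t)+\mathbb{L}_{(n)}(t)+\mathbb{H}_{(n)}(t)<\varepsilon$ persists on this extended slab, contradicting the minimality of $T_{Boot}$. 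Hence $T_{Boot}=0$ and \eqref{overall.est.[0,1]} holds on all of $(0,1]\times\mathbb{T}^{\mydim}$.

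Finally, having a classical solution of the reduced system \eqref{dt.k}-\eqref{n.eq} together with the constraints \eqref{Hamconst}-\eqref{momconst} on $(0,1]\times\mathbb{T}^{\mydim}$, one reassembles ${\bf g}:=-n^2\,dt\otimes dt+g_{ab}\,dx^a\otimes dx^b$ with $g_{ij}=\oe_i^A\oe_j^A$ and verifies that $({\bf g},\psi)$ solves \eqref{EE}-\eqref{SF}. This is done by reversing the derivations in the proof of Proposition~\ref{P:redeq}: the evolution equations \eqref{dt.k}-\eqref{dt.gamma} together with \eqref{dt.omega}-\eqref{dt.omega.inv} and the lapse equation \eqref{n.eq} imply, via the Gauss and Codazzi relations and the identity \eqref{gamma.form}, that every frame component of ${\bf Ric}_{\mu\nu}-\partial_\mu\psi\,\partial_\nu\psi$ vanishes, and \eqref{boxpsi} is exactly $\square_{{\bf g}}\psi=0$; the constraint components are preserved by the evolution because they hold at $t=1$ by hypothesis. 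This completes the proof outline.
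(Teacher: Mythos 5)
Your proposal is correct and follows essentially the same continuity/bootstrap argument as the paper's proof: invoke local well-posedness to get a local solution, set up a bootstrap time (your $T_{\mathrm{Boot}}$, the paper's $T_{\mathrm{Max}}$), use Proposition~\ref{prop:overall} to strictly improve the bootstrap assumptions, and conclude via a continuation criterion that the solution extends to $(0,1]$. The only cosmetic difference is that the paper phrases the final step as a dichotomy (either $T_{\mathrm{Max}}=0$ or the bootstrap is saturated, the latter being excluded by the a priori estimate), whereas you run the logically equivalent contradiction argument directly; you also spell out somewhat more explicitly the finiteness of the $t$-weighted norms at the bootstrap time and the reassembly of $({\bf g},\psi)$, both of which the paper defers to references.
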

\begin{proof}
We first fix $N_0 \geq 1$, $\blowupexp$ sufficiently large, and $N$ sufficiently large
such that if the bootstrap smallness parameter $\varepsilon$ is sufficiently small, then
all of the estimates proved in the previous subsections hold true
on $(T_{\textnormal{Boot}},1] \times \mathbb{T}^{\mydim}$, 
as long as the bootstrap assumption \eqref{Boots} holds for $t \in (T_{\textnormal{Boot}},1]$.
By standard local well-posedness, if $\mathring{\upepsilon}$ is
sufficiently small and $C$ is sufficiently large, 
then there exists a minimal time 
$T_{\textnormal{Min}} \in [0,1)$, such that the solution 
$(n,k,\upgamma,e,\oe,n,\psi)$ 
to the reduced equations of Proposition~\ref{P:redeq}
exists classically for $(t,x) \in (T_{\textnormal{Min}},1] \times\mathbb{T}^{\mydim}$ and such that the
bootstrap assumptions \eqref{Boots} hold with $T_{\textnormal{Boot}}= T_{\textnormal{Min}}$ and $\varepsilon := C \mathring{\upepsilon}$. 
By enlarging $C$ if necessary,
we can assume that $C \ge 2C_{N,N_0,\blowupexp,\mydim,q,\upsigma}$, where $C_{N,N_0,\blowupexp,\mydim,q,\upsigma}$ 
is the constant on RHS~\eqref{overall.est}. For the reader's convenience, we now comment on
the ``standard local well-posedness'' mentioned above. Specifically, readers
can consult \cite{lAvM2003} for the main ideas behind the proof of local well-posedness
	in a similar but distinct elliptic-hyperbolic gauge for Einstein's equations, 
	or \cite[Theorem~14.1]{RodSp2} for a sketch of a proof 
	of local well-posedness in CMC-transported spatial coordinates;
	local well-posedness for the equations of Proposition~\ref{P:redeq}
	can be proved via similar arguments.
	We emphasize that, as is stated in Proposition~\ref{P:redeq},
	solutions to the reduced equations
	(including the constraints)
	are also solutions to the Einstein-scalar field system \eqref{EE}--\eqref{SF},
	where the spacetime metric can be reconstructed from the reduced variables via the equations
	${\bf g} = - n^2 dt \otimes dt + g_{ab} dx^a \otimes dx^b$
	and
	$g_{ij} = g(\partial_i,\partial_j) = \oe_i^A \oe_j^A$ 
	(see \eqref{E:COORDINATEVECTORFIELDSINTERMSOFFRAMEVECTORFIELDS} and \eqref{E:ORTHONORMALSPATIALFRAMECONDITION}).
	Moreover, in view of the norms defined in Definition~\ref{D:SOLUTIONNORMS},
	it is a standard result (again, see \cite{lAvM2003} for the main ideas)
	that if $\varepsilon$ is sufficiently small, then either \textbf{i)} $T_{\textnormal{Min}}=0$ or 
	\textbf{ii)} $T_{\textnormal{Min}} \in (0,1)$ and
	the bootstrap assumptions 
	are saturated on the time interval $(T_{\textnormal{Min}},1]$, that is, 
\begin{align}
\sup_{t \in (T_{\textnormal{Min}},1]}
\left\lbrace
	\mathbb{D}(t) + \mathbb{L}_{(n)}(t) + \mathbb{H}_{(n)}(t)
\right\rbrace
& = \varepsilon.
\end{align}
The latter possibility is ruled out by inequality \eqref{overall.est} when $\mathring{\upepsilon}$ is small enough. 
Thus, $T_{\textnormal{Min}}= 0$. In particular, the solution exists classically for $(t,x)\in(0,1]\times\mathbb{T}^{\mydim}$, 
and the estimate \eqref{overall.est.[0,1]} holds for $t\in (0,1]$.
\end{proof}

\subsection{Construction of the initial orthonormal spatial frame}
\label{SS:CONSTRUCTIONOFTHEINTIALORTHONORMALFRAME}
Thus far, we have not explained how to construct
the initial orthonormal spatial frame $\lbrace \mathring{e}_I \rbrace_{I=1,\cdots,\mydim}$
on $\Sigma_1$. To achieve this away from symmetry, 
we simply apply the Gram--Schmidt process to the transported spatial coordinate vectorfield frame
$\lbrace \partial_i \rbrace_{i=1,\cdots,\mydim}$.
More precisely, with $\mathring{g}$ denoting the Riemannian metric on $\Sigma_1$,
we set:
\begin{subequations}
\begin{align} \label{E:FIRSTINITIALFRAMEVECTOR}
	\mathring{e}_1 & 
		:= \frac{\partial_1}{\sqrt{\mathring{g}_{11}}}
			= \frac{\partial_1}{\sqrt{\mathring{g}(\partial_1,\partial_1)}},
		\\
	\mathring{E}_{M+1} 
	& := \partial_{M+1}
			 -
			\sum_{L=1,\cdots,M} 
			\underbrace{\mathring{g}_{cd} \updelta_{M+1}^c \mathring{e}_L^d}_{\mathring{g}(\partial_{M+1},\mathring{e_L})}
			\mathring{e_L},
				\label{E:INITIALFRAMEINDUCTION} 
		&
		& M=1,\cdots,\mydim-1,
		\\
	\mathring{e}_{M+1}
	& := \frac{\mathring{E}_{M+1} }{\sqrt{\mathring{g}_{cd} \mathring{E}_{M+1}^c \mathring{E}_{M+1}^d}},
		&
		& M=1,\cdots,\mydim-1.
		\label{E:INITIALFRAMENORMALIZEDINDUCTION}
\end{align}
\end{subequations}
By construction, for $1 \leq I,J \leq \mydim$,
we have the desired identity 
$\mathring{g}(\mathring{e}_I,\mathring{e}_J) = \updelta_{IJ}$,
where $\updelta_{IJ}$ is the Kronecker delta.

In the polarized $U(1)$-symmetric case with $\mydim=3$, 
we proceed in a similar fashion,
but starting with
$
\mathring{e}_3
		:= \frac{\partial_3}{\sqrt{\mathring{g}_{33}}}
			= \frac{\partial_3}{\sqrt{\mathring{g}(\partial_3,\partial_3)}}
$.
Note that for metrics that are initially polarized and $U(1)$-symmetric in the 
sense described in Lemma~\ref{L:PROPOFSYM},
this Gram--Schmidt process leads to an initial frame that respects the $\partial_3$ symmetry:
$\mathcal{L}_{\partial_3} \mathring{e}_I = 0$ for $I=1,2,3$.
Hence, Lemma~\ref{lem:U1} ensures that throughout the classical evolution,
we have
$e_3 = \frac{\partial_3}{\sqrt{g_{33}}}$
and
$\mathcal{L}_{\partial_3} e_I = 0$ for $I=1,2,3$.

\subsection{The near-Kasner smallness condition on the geometric initial data}
\label{SS:SMALLNESSCONDITIONONDATA}
Before proving our main theorems, we will first define a norm of 
the ``geometric'' initial data 
$(\Sigma_1=\mathbb{T}^{\mydim},\mathring{g},\mathring{k},\mathring{\psi},\mathring{\phi})$
minus the background Kasner data.
The smallness of this difference will be sufficient for the validity of our main results. 
We highlight that the lapse $n$ is not among the geometric initial data;
it is a gauge-dependent quantity that can be controlled in terms of the geometric data.
Then, in Lemma~\ref{L:CONSEQUENCEOFNEARKASNERSMALLNESSCONDITIONGEOMETRICDATA},
we show that if the geometric data are sufficiently near-Kasner, 
then the full data norm 
$
\mathbb{D}(1)
+
\mathbb{L}_{(n)}(1)
+
\mathbb{H}_{(n)}(1)
$
is small, i.e., we have smallness not only for the geometric data,
but also for all of the gauge-dependent quantities such as $n-1$,
$e_I^i - \widetilde{e}_I^i$,
$k_{IJ} - \widetilde{k}_{IJ}$,
etc.

To proceed, we let $(\Sigma_1=\mathbb{T}^{\mydim},\mathring{g},\mathring{k},\mathring{\psi},\mathring{\phi})$
be a geometric initial data set, as described in Sect.\,\ref{SS:CAUCHYPROBLEM}. 
Recall that relative to standard coordinates on $\mathbb{T}^{\mydim}$,
the Kasner background data (on $\Sigma_1$) have the following components:
$\mathring{g}_{ij}^{\textnormal{KAS}} := \updelta_{ij}$, 
$\mathring{k}_{ij}^{\textnormal{KAS}} := - \widetilde{q}_{\underline{i}} \updelta_{\underline{i}j}$,
$\mathring{\psi}^{\textnormal{KAS}} := 0$, 
$\mathring{\phi}^{\textnormal{KAS}} := \widetilde{B}$,
where $\updelta_{ij}$ is the Kronecker delta, 
we do not sum over repeated underlined indices,
and by assumption, the Kasner exponent constraints \eqref{sumpi} are satisfied.
For $N \in\mathbb{N}$, we define the following norm which,
relative to the standard coordinates on $\mathbb{T}^{\mydim}$, measures the perturbation of the geometric initial data
set away from the Kasner background:
\begin{align} \label{E:INITIALCONTROLLINGNORM}
\begin{split}	
	\mathring{\upalpha}
	= \mathring{\upalpha}(N) 
	 :=&
	\sum_{i,j=1,\cdots,\mydim} \| \mathring{g}_{ij} - \updelta_{ij}  \|_{H^{N+1}(\mathbb{T}^{\mydim})}
	+
	\sum_{i,j=1,\cdots,\mydim} 
	\| \mathring{k}_{ij} + \widetilde{q}_{\underline{i}} \updelta_{\underline{i}j} \|_{H^N(\mathbb{T}^{\mydim})}\\
& \ \
	+
	 \| \mathring{\psi} \|_{H^{N+1}(\mathbb{T}^{\mydim})}
	+
	\| \mathring{\phi} - \widetilde{B} \|_{H^N(\mathbb{T}^{\mydim})}.
	\end{split}
\end{align}

In the next lemma, we show that the norms appearing in the bootstrap assumptions \eqref{Boots}
are initially small, provided $\mathring{\upalpha}$ is sufficiently small.

\begin{lemma}[A near-Kasner smallness condition on the geometric initial data implies smallness
of all reduced solution variables along $\Sigma_1$]
\label{L:CONSEQUENCEOFNEARKASNERSMALLNESSCONDITIONGEOMETRICDATA}
Recall that $\mathbb{D}(t)$ is the total norm of the dynamic variables
and that $\mathbb{L}_{(n)}$ and $\mathbb{H}_{(n)}$ are the norms of the lapse
(see Definition~\ref{D:SOLUTIONNORMS}).
For $N \in \mathbb{N}$, we define:
\begin{align}\label{E:INITIALNORMOALLREDUCEDSOLUTIONVARIABLES}
\mathring{\upeta}
=
\mathring{\upeta}(N)
& 
:= 
\mathbb{D}(1) 
+ 
\mathbb{L}_{(n)}(1)
+
\mathbb{H}_{(n)}(1).
\end{align}
Let $\mathring{\upalpha}$ be the norm of the perturbation of the geometric initial data away from the Kasner data,
as defined in \eqref{E:INITIALCONTROLLINGNORM}. Let
$\lbrace \mathring{e}_I \rbrace_{I=1,\cdots,\mydim}$
be the initial orthonormal frame
constructed in Sect.\,\ref{SS:CONSTRUCTIONOFTHEINTIALORTHONORMALFRAME},
and let the initial lapse $\mathring{n} := n|_{\Sigma_1}$ be the solution
to the elliptic PDE~\eqref{n.eq} (with $t = 1$).
Fix $N_0 \geq 1$.
There exists a constant $C=C_{N,N_0,\mydim} > 0$
such that if $N$ is sufficiently large in a manner that depends on $N_0$ and $\mydim$,
and if $\mathring{\upalpha}$ is sufficiently small,
then: 
\begin{align} \label{E:FULLINITIALNORMCONTROLLEDBYGEOMETRICDATANORM}
	\mathring{\upeta}
	& \leq C \mathring{\upalpha}.
\end{align}
In particular, since the initial norm $\mathring{\upepsilon}$ of the dynamic variables
defined in \eqref{E:INITIALNORMOFDYNAMICVARIABLES} satisfies $\mathring{\upepsilon} \leq \mathring{\upeta}$, 
it follows from \eqref{E:FULLINITIALNORMCONTROLLEDBYGEOMETRICDATANORM} that:
\begin{align} \label{E:DYNAMICINITIALNORMCONTROLLEDBYGEOMETRICDATANORM}
	\mathring{\upepsilon}
	& \leq 
	C 
	\mathring{\upalpha}.
\end{align}

\end{lemma}

\begin{proof}[Sketch of the proof]
	This is a standard result, so we will only sketch the proof.
	Throughout, we will assume that
	$\mathring{\upalpha}$ is sufficiently small.
	From \eqref{E:INITIALCONTROLLINGNORM}, we see that 
	the $\mydim \times \mydim$ matrix $\mathring{g}_{ij}$ 
	is equal to the identity matrix up to an error matrix whose components are bounded
	in the norm $\| \cdot \|_{H^{N+1}(\mathbb{T}^{\mydim})}$
	by $\lesssim \mathring{\upalpha}$.
	From this fact,
	the Gram--Schmidt process described in Sect.\,\ref{SS:CONSTRUCTIONOFTHEINTIALORTHONORMALFRAME},
	and the standard Sobolev calculus (i.e., estimates of the type appearing in Lemma~\ref{lem:basic.ineq}),
	it follows that for $1 \leq I,i \leq \mydim$,
	we have
	$\| \mathring{e}_I^i - \updelta_I^i \|_{H^{N+1}(\mathbb{T}^{\mydim})} \lesssim \mathring{\upalpha}$,
	where $\updelta_I^i$ denotes the Kronecker delta.
	To complete the proof of \eqref{E:FULLINITIALNORMCONTROLLEDBYGEOMETRICDATANORM},
	we must show that when $t=1$,
	the remaining norms in Definition~\ref{D:SOLUTIONNORMS} are all $\lesssim \mathring{\upalpha}$.
	This can be achieved by working relative to the
	standard spatial coordinates $\lbrace x^i \rbrace_{i=1,\cdots,\mydim}$ on $\Sigma_1$
	and using the definition of $\mathring{\upalpha}$,
	the definitions of the quantities appearing in the norms of Definition~\ref{D:SOLUTIONNORMS},
	the standard Sobolev calculus, and elliptic estimates for the lapse, similar to the ones we used to prove
	Proposition~\ref{prop:n}. As one example, we will show that
	$\| \upgamma_{IJB} \|_{H^N(\Sigma_1)} \lesssim \mathring{\upalpha}$.
	First, we note that
	$
	\upgamma_{IJB}|_{\Sigma_1} 
	= 
	\mathring{g}_{ab}\mathring{e}_I^c (\partial_c \mathring{e}_J^a) \mathring{e}_B^b
	+
	\mathring{e}_I^i \mathring{e}_J^j \mathring{e}_B^b \mathring{\Gamma}_{ibj}
	$,
	where $\mathring{\Gamma}_{ibj} 
	= \frac{1}{2} 
	\left\lbrace 
	\partial_i \mathring{g}_{bj} 
	+   
	\partial_j \mathring{g}_{ib}
	-
	\partial_b \mathring{g}_{ij}
	\right\rbrace$
	are the (lowered) Christoffel symbols of $\mathring{g}$ relative to the spatial coordinates 
	$\lbrace x^i \rbrace_{i=1,\cdots,\mydim}$ on $\Sigma_1$.
	Thus, from this expression for $\upgamma_{IJB}|_{\Sigma_1}$,
	definition~\eqref{E:INITIALCONTROLLINGNORM},
	the estimates $\| \mathring{e}_I^i - \updelta_I^i \|_{H^{N+1}(\mathbb{T}^{\mydim})} \lesssim \mathring{\upalpha}$
	and $\| \mathring{g}_{ij} - \updelta_{ij} \|_{H^{N+1}(\mathbb{T}^{\mydim})} \lesssim \mathring{\upalpha}$,
	and the standard Sobolev calculus,
	we conclude the desired bound $\| \upgamma_{IJB} \|_{H^N(\Sigma_1)} \leq C \mathring{\upalpha}$.
	This concludes our proof sketch.
\end{proof}

\section{The two stable blowup theorems}\label{sec:sol}
In this section, we prove our two main theorems.
The derivation of the a priori estimate \eqref{overall.est.[0,1]} 
was the difficult part of the proof, and based on this estimate,
the proofs of the main results will unfold in a natural fashion.

\subsection{Statement of the theorems}
\label{SS:STATEMENTOFMAINTHEOREMS}
In this section, we state the two theorems. 
The proofs are located in Sect.\,\ref{SS:PROOFOFMAINTHEOREMS}.
Before proving the theorems, 
we will first establish, 
in separate sections, some of their key aspects.
We start by stating our main theorem for solutions without symmetry.

\begin{theorem}[Precise version of stable Big Bang formation without symmetry assumptions]\label{thm:precise}
Let $\widetilde{\bf g} := -dt \otimes dt + \sum_{I=1}^{\mydim} t^{2 \widetilde{q}_I} dx^I \otimes dx^I$, 
$\widetilde{\psi} : =\widetilde{B}\log t$ be an explicit generalized Kasner solution on 
$(0,\infty)\times\mathbb{T}^{\mydim}$,
where the constants $\lbrace \widetilde{q}_I \rbrace_{I=1,\cdots,\mydim}$ and $\widetilde{B}$
satisfy the algebraic constraints
$\sum_{I=1}^{\mydim}\widetilde{q}_I=1$ and $\sum_{I=1}^{\mydim}\widetilde{q}_I^2=1-\widetilde{B}^2$
as well as the following \textbf{stability condition}:
\begin{align}\label{Kasner.cond2}
\mathop{\max_{I,J,B=1,\cdots,\mydim}}_{I < J}
\{\widetilde{q}_I+\widetilde{q}_J-\widetilde{q}_B\}<1.
\end{align}
Note that in the vacuum case, we have $\widetilde{B}=0$. 
As we discussed in Sect.\,\ref{subsec:models}, in the vacuum case,
the set of Kasner solutions satisfying the algebraic constraints
and the condition \eqref{Kasner.cond2} is non-empty
when $\mydim \geq 10$, while in the presence of a scalar field,
the set of Kasner solutions satisfying the algebraic constraints
and the condition \eqref{Kasner.cond2} is non-empty
when $\mydim \geq 3$. 
Let $\widetilde{k}_{IJ}= - \widetilde{q}_{\underline{I}} \updelta_{\underline{I}J} t^{-1}$ 
be the components of the second fundamental form of $\Sigma_t$ relative to the Kasner metric,
with respect to the background orthonormal frame vectors
$\widetilde{e}_I = t^{-\widetilde{q}_{\underline{I}}} \partial_{\underline{I}}$,
where we recall that we do not sum repeated underlined indices.
Let $(\Sigma_1=\mathbb{T}^{\mydim},\mathring{g},\mathring{k},\mathring{\psi},\mathring{\phi})$ 
be geometric initial data (see Sect.\,\ref{SS:CAUCHYPROBLEM}) 
for the Einstein-scalar field equations verifying the constraint equations \eqref{eq:hamconst}--\eqref{eq:momconst} and the CMC condition $\mathrm{tr}k=-1$ (see Remark \ref{rem:CMC}), 
and let $\lbrace \mathring{e}_I \rbrace_{I=1,\cdots,\mydim}$
be the initial orthonormal frame (on $\Sigma_1$)
constructed in Sect.\,\ref{SS:CONSTRUCTIONOFTHEINTIALORTHONORMALFRAME}.
Note that $\mathring{\psi} = \mathring{\phi} = 0$
corresponds to the Einstein-vacuum equations.
Let:
\begin{align} \label{E:AGAININITIALCONTROLLINGNORM}
	\begin{split}
	\mathring{\upalpha}
	 :=&
	\sum_{i,j=1,\cdots,\mydim} \| \mathring{g}_{ij} - \updelta_{ij}  \|_{H^{N+1}(\mathbb{T}^{\mydim})}
	+
	\sum_{i,j=1,\cdots,\mydim} 
	\| \mathring{k}_{ij} + \widetilde{q}_{\underline{i}} \updelta_{\underline{i}j} \|_{H^N(\mathbb{T}^{\mydim})}
		\\
& \ \ 
	+
	 \| \mathring{\psi} \|_{H^{N+1}(\mathbb{T}^{\mydim})}
	+
	\| \mathring{\phi} - \widetilde{B} \|_{H^N(\mathbb{T}^{\mydim})}
\end{split}
\end{align}
denote the norm of the perturbation of the geometric initial data away from the Kasner data,
as defined in \eqref{E:INITIALCONTROLLINGNORM}.
Assume that:
\begin{itemize}
\item $N_0 \geq 1$ is a fixed positive integer (we are free to choose $N_0$).
\item $\blowupexp$ is sufficiently large in a manner that depends on $\mydim$ and the parameters 
$q$ and $\upsigma$ fixed in \eqref{sigma,q}.
\item $N$ is sufficiently large in a manner that depends on $N_0, \blowupexp, \mydim, q,$ and $\upsigma$. 
\item $\mathring{\upalpha}$ is sufficiently small in a manner that depends on
$N, N_0, \blowupexp, \mydim, q,$ and $\upsigma$.
\end{itemize}
Then the following conclusions hold.

\bigskip

\noindent \underline{{\bf Existence and norm estimates on $(0,1]\times\mathbb{T}^{\mydim}$.}} 
 The initial data launch a unique
solution 
$$(n,k_{IJ},\upgamma_{IJB},e_I^i,\oe_i^I,\psi)_{I,J,B,i=1,\cdots,\mydim}$$
to the reduced Einstein-scalar field equations of Proposition~\ref{P:redeq}
existing on the slab $(t,x)\in(0,1]\times\mathbb{T}^{\mydim}$.
Moreover, if we define $g_{ij}$ and ${\bf{g}}$ in terms of the reduced variables by
$g_{ij} := \oe_i^A \oe_j^A$
and
${\bf g} := - n^2 dt \otimes dt + g_{ab} dx^a \otimes dx^b$
(where $t$ is the CMC time function and 
$\lbrace x^i \rbrace_{i=1,\cdots,\mydim}$ are the transported spatial coordinates),
then the tensorfields $({\bf g},\psi)$
are also classical solutions to the Einstein-scalar field system \eqref{EE}--\eqref{SF}
on $(0,1]\times\mathbb{T}^{\mydim}$.
In addition, there exists a constant $C = C_{N,N_0,\blowupexp,\mydim,q,\upsigma}$
such that the following estimates hold for $t \in (0,1]$:
\begin{subequations}
\begin{align}\label{overall.low.est}
\begin{split}
&
\sum_{I,i=1}^{\mydim} t^q \|e_I^i - \widetilde{e}_I^i\|_{W^{N_0,\infty}(\Sigma_t)}
+
\sum_{I,i=1}^{\mydim} t^q \|\upomega_i^I - \widetilde{\upomega}_i^I\|_{W^{N_0,\infty}(\Sigma_t)}
	\\
& \ \ 
+
\sum_{I,J,B=1}^{\mydim} t^q \| \upgamma_{IJB} \|_{W^{N_0,\infty}(\Sigma_t)}	
+
\sum_{I,J=1}^{\mydim} t \|k_{IJ}-\widetilde{k}_{IJ} \|_{W^{N_0+1,\infty}(\Sigma_t)}
	\\
& \ \
+
\sum_{I=1}^{\mydim} t^q  \|e_I \psi \|_{W^{N_0,\infty}(\Sigma_t)}
+
\|t \partial_t \psi-\widetilde{B} \|_{W^{N_0+1,\infty}(\Sigma_t)}
	\\
& \ \
+
t^{-\upsigma} \| n-1 \|_{W^{N_0+1,\infty}(\Sigma_t)}
+
\sum_I^{\mydim} t^{q-\upsigma} \| e_I n \|_{W^{N_0,\infty}(\Sigma_t)}
	\\
& \leq C \mathring{\upalpha},
\end{split}
	\\
\begin{split} \label{overall.HIGH.est.mainthm} 
&
\sum_{I,i=1}^{\mydim} t^{\blowupexp + q} \|e_I^i - \widetilde{e}_I^i\|_{\dot{H}^N(\Sigma_t)}
+
\sum_{I,i=1}^{\mydim} t^{\blowupexp + q} \|\upomega_i^I - \widetilde{\upomega}_i^I\|_{\dot{H}^N(\Sigma_t)}
	\\
& \ \ 
+
\sum_{I,J,B=1}^{\mydim} t^{\blowupexp + 1} \| \upgamma_{IJB} \|_{\dot{H}^N(\Sigma_t)}
+
\sum_{I,J=1}^{\mydim} t^{\blowupexp + 1} \|k_{IJ} \|_{\dot{H}^N(\Sigma_t)}
	\\
& \ \
+
\sum_{I=1}^{\mydim} t^{\blowupexp + 1} \|e_I \psi \|_{\dot{H}^N(\Sigma_t)}
+
t^{\blowupexp + 1} \| \partial_t \psi \|_{\dot{H}^N(\Sigma_t)}
	\\
& \ \
+
t^{\blowupexp} \| n \|_{\dot{H}^N(\Sigma_t)}
+ 
t^{\blowupexp + 1} \|\vec{e} n \|_{\dot{H}^N(\Sigma_t)}
	\\
& \leq C \mathring{\upalpha}.
\end{split} 
\end{align}
\end{subequations}

\bigskip

\noindent \underline{{\bf Kasner-like behavior.}} 
The scalar component functions $\left \lbrace t k_{IJ}(t,x) \right\rbrace_{I,J = 1,\cdots,\mydim}$ of the
normalized second fundamental form of $\Sigma_t$ with respect to the $g$-orthonormal 
frame $\lbrace e_I(t,x) \rbrace_{I=1,\cdots,\mydim}$, 
as well as the normalized time derivative $t\partial_t \psi(t,x)$ of the scalar field, 
have continuous $W^{N_0+1,\infty}(\mathbb{T}^{\mydim})$ limits, 
denoted respectively by\footnote{Here, we are slightly abusing notation by, for example, using the 
expression $\upkappa_{IJ}^{(\infty)}(x)$ to denote the function $x \rightarrow \upkappa_{IJ}^{(\infty)}(x)$.} 
$\left\lbrace \upkappa_{IJ}^{(\infty)}(x) \right\rbrace_{I,J = 1,\cdots,\mydim}$ 
and $B^{(\infty)}(x)$, as $t \downarrow 0$. 
Moreover, the following estimates hold for $t \in (0,1]$:
\begin{subequations}
\begin{align} 
		\sum_{I,J=1,\cdots,\mydim}
		\| t k_{IJ}(t,\cdot) 
			-
			\upkappa_{IJ}^{(\infty)}
		\|_{W^{N_0+1,\infty}(\mathbb{T}^{\mydim})}
	& \leq C \mathring{\upalpha} t^{\upsigma},
	&
	\| t \partial_t \psi(t,\cdot) - B^{(\infty)} \|_{W^{N_0+1,\infty}(\mathbb{T}^{\mydim})}
	& \leq C \mathring{\upalpha} t^{\upsigma},
		\label{E:MAINTHMCONVERGENCERATESOFLIMITSATSINGULARITY} \\
	\sum_{I,J=1,\cdots,\mydim}
	\| \upkappa_{IJ}^{(\infty)}
		+ \widetilde{q}_{\underline{I}} \updelta_{\underline{I}J} \|_{W^{N_0+1,\infty}(\mathbb{T}^{\mydim})}
	& \leq C \mathring{\upalpha},
	&
	\| B^{(\infty)} - \widetilde{B} \|_{W^{N_0+1,\infty}(\mathbb{T}^{\mydim})}
	& \leq C \mathring{\upalpha}.
		\label{E:MAINTHMLIMITINGSECONDFUNDAMENTALFORMCLOSETODATA}
\end{align}
\end{subequations}

In addition, for each $x \in \mathbb{T}^{\mydim}$,
the symmetric $\mydim \times \mydim$ matrix
$(-\upkappa^{(\infty)}_{IJ}(x))_{I,J=1,\cdots,\mydim}$ 
has $\mydim$ (possibly repeated) eigenvalues 
$q_I^{(\infty)}(x)$ 
-- which are the ``final'' Kasner exponents of the perturbed spacetime --
that can be ordered such that:
\begin{align}\label{qi:C01}
q_1^{(\infty)},\cdots,q_{\mydim}^{(\infty)} \in C^{0,1}(\mathbb{T}^{\mydim}),
\end{align}
where $C^{0,1}(\mathbb{T}^{\mydim})$ is the space of Lipschitz-continuous functions on $\mathbb{T}^{\mydim}$.
Moreover, the following estimate holds, where
$\| f \|_{C^{0,1}(\mathbb{T}^{\mydim})} := \| f \|_{L^{\infty}(\mathbb{T}^{\mydim})}
+ \sup_{x, y \in \mathbb{T}^{\mydim}, \, x \neq y} \frac{|f(x) - f(y)|}{d(x,y)}$,
and $d(x,y)$ is the Euclidean distance between $x$ and $y$ in $\mathbb{T}^{\mydim}$:
\begin{align}\label{E:ONEOVERDHOLDEREXPONENTFORFINALKASNEREXPONENTS}
\|q_I^{(\infty)} -\widetilde{q}_I\|_{C^{0,1}(\mathbb{T}^{\mydim})}
\leq C \mathring{\upalpha}.
\end{align}
Moreover, the $\left\lbrace q_I^{(\infty)}(x) \right\rbrace_{I=1,\cdots,\mydim}$ and $B^{(\infty)}(x)$ 
satisfy the following pointwise algebraic relations:
\begin{align}
\sum_{I=1}^{\mydim}q_I^{(\infty)}(x)& = 1,
& \sum_{I=1}^{\mydim}\left[q_I^{(\infty)}(x) \right]^2 & =1-\left[B^{(\infty)}(x) \right]^2.
\label{ALGEBRAqi,B}
\end{align}

\bigskip

\noindent \underline{{\bf Curvature-blowup.}}
The Kretschmann scalar of ${\bf g}$, 
namely ${\bf Riem}^{\alpha\mu\beta\nu}{\bf Riem}_{\alpha\mu\beta\nu}$,
blows up as $t \downarrow 0$, as 
is evident from the following pointwise estimate, valid 
for $(t,x) \in (0,1] \times \mathbb{T}^{\mydim}$:
\begin{align}\label{Krets.fin}
\begin{split}
{\bf Riem}^{\alpha\mu\beta\nu}{\bf Riem}_{\alpha\mu\beta\nu}(t,x)
& = 4 t^{-4} \left\lbrace
	\sum_{I=1}^{\mydim}\left[(q_I^{(\infty)}(x))^2-q_I^{(\infty)}(x) \right]^2+\sum_{1\leq I<J \leq \mydim}(q_I^{(\infty)}(x))^2 
	(q_J^{(\infty)}(x))^2 
\right\rbrace
		\\
& \ \
	+
	\mathcal{O}(\mathring{\upalpha} t^{-4 + \upsigma})
	 \\
& = 4t^{-4}
\left\lbrace
	\sum_{I=1}^{\mydim} \left[\widetilde{q}_I^2-\widetilde{q}_I \right]^2
	+\sum_{1\leq I<J \leq \mydim}\widetilde{q}_I^2\widetilde{q}_J^2 \right\rbrace
+
\mathcal{O}(\mathring{\upalpha} t^{-4}).
\end{split}
\end{align}

\bigskip

\noindent \underline{{\bf Inextendibility.}}
The spacetime is past-inextendible as a $C^2$ Lorentzian manifold. 
\end{theorem}
\begin{remark}[No regular limit is claimed for the orthonormal frame vectorfields]
	\label{R:NOLIMITFORFRAME}
	Despite the convergence of the
	normalized component functions $\left \lbrace t k_{IJ}(t,x) \right\rbrace_{I,J = 1,\cdots,\mydim}$,
	our proof does not yield (or require!) 
	that the component functions $\lbrace e_I^i(t,x) \rbrace_{I,i=1,\cdots,\mydim}$
	of the frame vectorfields with respect to the transported spatial coordinates
	can be rescaled by powers of $t$
	so as to have non-trivial, regular limits as $t \downarrow 0$.
\end{remark}	
\begin{remark}[Sharper asymptotics]
	\label{R:SHARPERASYMPTOTICS}
		Although Theorems~\ref{thm:precise} and \ref{thm:precise.U1} yield the most interesting and salient
		features of the stable blowup, by using the estimates provided by the theorems,
		one could try to derive sharper asymptotics for the solution 
		by treating the evolution equations
		as ODEs (with derivative-losing source terms), perhaps also employing
		a different gauge for the already constructed singular solution.
		In some symmetric regimes, gauges are known in which one can prove stability and sharp asymptotics
		at the level of the metric components.
		For example, in polarized $U(1)$-symmetry \cite{AF},\footnote{The work \cite{AF} is concerned with 
		the near-Schwarzschild black hole interior problem, where the symmetry class is called ``polarized axi-symmetry.''
		This regime has some analytical commonalities with the polarized $U(1)$-symmetric solutions that we  
		treat in Theorem~\ref{thm:precise.U1}; see the end of Sect.\,\ref{SSS:BigBangSymm}.}
		the authors proved Big Bang formation and
		derived sharp asymptotics for various solution variables by using 
		frames that are well-adapted to the different Kasner directions.
		See also the recent works \cite{ABIO,ABIO2} 
		on the Einstein-vacuum equations in three spatial dimensions,
		in which the authors used an areal time foliation
		to prove stability and sharper asymptotics at the level of the metric components 
		for a subset of the Kasner solutions 
		near their Big Bang singularities
		under polarized $\mathbb{T}^2$-symmetric perturbations of the initial data.
\end{remark}
\begin{remark}[In general, no additional regularity is claimed for the final Kasner exponents]
	\label{R:NOREGqI}
Although $\upkappa_{IJ}^{(\infty)} \in W^{N_0+1}(\mathbb{T}^{\mydim})$ (where we assume $N_0 \geq 1$), 
in general, the function space $C^{0,1}(\mathbb{T}^{\mydim})$ in \eqref{qi:C01}
and the norm $\| \cdot \|_{C^{0,1}(\mathbb{T}^{\mydim})}$ 
on LHS~\eqref{E:ONEOVERDHOLDEREXPONENTFORFINALKASNEREXPONENTS} are optimal 
(e.g., they cannot be improved to $C^m(\mathbb{T}^{\mydim})$ for any integer $m \geq 2$),
due to the fact that the background solution is allowed to have repeated Kasner exponents.
However, by analyzing the dependence of the characteristic polynomial of
the matrix $\left(t k_{IJ}(t,x) \right)_{I,J = 1,\cdots,\mydim}$ on the entries $t k_{IJ}$,
we could show that if $\widetilde{q}_I \neq \widetilde{q}_J$ 
for $1 \leq I < J \leq \mydim$, then the space $C^{0,1}(\mathbb{T}^{\mydim})$ in \eqref{qi:C01}
could be replaced with $C^{N_0+1}(\mathbb{T}^{\mydim})$
and
the norm $\| \cdot \|_{C^{0,1}(\mathbb{T}^{\mydim})}$ 
on LHS~\eqref{E:ONEOVERDHOLDEREXPONENTFORFINALKASNEREXPONENTS}
could be replaced with $\| \cdot \|_{W^{N_0+1,\infty}(\mathbb{T}^{\mydim})}$. 
This would require an additional smallness assumption on
$\mathring{\upalpha}$: $\mathring{\upalpha} \ll \underset{1 \leq I < J \leq \mydim}{\min}|\widetilde{q}_I - \widetilde{q}_J|$.
We thank one of the referees for providing helpful comments tied to this issue.
\end{remark}	
\begin{remark}\label{rem:fail.diag}
The eigen\emph{vectors} of the symmetric matrix $(\upkappa_{IJ}^{(\infty)}(x))_{I,J=1,\cdots,\mydim}$
might fail to be continuous in $x$, for example in the case
where the $q_I^{(\infty)}(x)$'s have contact points of infinite order; 
see \cite[Chapter 2, Example 5.3]{Kato}. 
\end{remark}

We now state our main theorem for polarized $U(1)$-symmetric solutions.
\begin{theorem}[Precise version of stable Big Bang formation for
polarized $U(1)$-symmetric Einstein-vacuum solutions in $1+3$ dimensions]
	\label{thm:precise.U1}
Let $\widetilde{\bf g}
=
-dt \otimes dt
+
t^{2\widetilde{q}_1}dx^1 \otimes dx^1
+
t^{2\widetilde{q}_2}dx^2 \otimes dx^2
+
t^{2\widetilde{q}_3}dx^3 \otimes dx^3$ 
be a ``background'' Kasner solution on $(0,\infty)\times\mathbb{T}^3$
with Kasner exponents satisfying:
\begin{align}\label{sumpi2}
\sum_{I=1}^3\widetilde{q}_I=\sum_{I=1}^3\widetilde{q}_I^2=1,\qquad \max_{I=1,2,3}\widetilde{q}_I<1.
\end{align}
Let $\widetilde{k}_{IJ} = - \widetilde{q}_{\underline{I}} \updelta_{\underline{I}J} t^{-1}$ 
be the components of the second fundamental form of $\Sigma_t$ relative to the Kasner metric,
with respect to the background orthonormal frame vectors
$\widetilde{e}_I = t^{-\widetilde{q}_{\underline{I}}} \partial_{\underline{I}}$.
Let $(\Sigma_1=\mathbb{T}^3,\mathring{g},\mathring{k})$ be polarized $U(1)$-symmetric initial data 
(see Sects.\,\ref{SS:CAUCHYPROBLEM} and \ref{subsubsec:U1.data}) for the Einstein-vacuum equations 
verifying the constraint equations \eqref{eq:hamconst}--\eqref{eq:momconst} 
(with $\mathring{\psi} = \mathring{\phi} \equiv 0$) 
and the CMC condition $\mathrm{tr} \mathring{k}=-1$ (see Remark \ref{rem:CMC}),
and such that $\mathring{X}:=\partial_3$ is the hypersurface-orthogonal Killing vectorfield of the data.
Let $\mathring{\upalpha}$ be the norm of the perturbation of the initial data away from the Kasner data,
as defined in \eqref{E:INITIALCONTROLLINGNORM} 
(where the scalar field data on RHS~\eqref{E:INITIALCONTROLLINGNORM} are vanishing by assumption).
Let $\lbrace e_I \rbrace_{I=1,2,3}$ be the $g$-orthonormal frame 
obtained by constructing the initial orthonormal frame as in
Sect.\,\ref{SS:CONSTRUCTIONOFTHEINTIALORTHONORMALFRAME}
and then using Lemma~\ref{lem:U1}
to ensure that throughout the evolution,
the corresponding frame solution to the Fermi--Walker transport equation \eqref{frame.prop}
verifies $e_3 = \frac{\partial_3}{\sqrt{g_{33}}}$
and $\mathcal{L}_{\partial_3} e_I = 0$ for $I=1,2,3$, where $\mathcal{L}$ denotes Lie differentiation.
Assume that the parameters $N$, $N_0$, $\blowupexp$, $q$, $\upsigma$, $\mathring{\upalpha}$ satisfy the assumptions of Theorem 
\ref{thm:precise}, where in polarized $U(1)$-symmetry, $q,\upsigma$ are fixed constants satisfying:
\begin{align}\label{q.U1}
0<2 \upsigma< 2 \upsigma+\max\{|\widetilde{q}_1|,|\widetilde{q}_2|,|\widetilde{q}_3|\}<q<1- 2 \upsigma.
\end{align}
Then the conclusions stated in Theorem~\ref{thm:precise} hold for the 
solution to the reduced equations of Proposition~\ref{P:redeq} 
(which also yields a solution to the Einstein-vacuum equations, i.e., \eqref{EE} with $\psi\equiv0$) 
that arises from the prescribed polarized $U(1)$-symmetric initial data $(\mathring{g},\mathring{k})$.
Moreover, the solution is polarized $U(1)$-symmetric
in the sense that relative to the transported spatial coordinates, 
$\partial_3$ is a hypersurface-orthogonal Killing vectorfield of the spacetime metric ${\bf g}$,
and ${\bf g}$ is of the form \eqref{polarizedmetric}.
\end{theorem}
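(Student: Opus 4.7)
My plan is to argue that the proof of Theorem~\ref{thm:precise} goes through essentially verbatim in the polarized $U(1)$-symmetric setting, with two modifications: the construction of the initial orthonormal frame is tailored to the symmetry, and the role of the sub-criticality condition \eqref{Kasner.stability.cond} in the low-order control of the structure coefficients is replaced by the vanishing identity \eqref{E:U1VANISHINGSTRUCTURECOEFFICIENTS}. Concretely, I would start by using the symmetry-adapted Gram--Schmidt procedure from Sect.\,\ref{SS:CONSTRUCTIONOFTHEINTIALORTHONORMALFRAME}, initialized so that $\mathring{e}_3:=(\mathring{g}_{33})^{-1/2}\partial_3$ is parallel to the Killing vector $\partial_3$ and the remaining frame is invariant under $\mathcal{L}_{\partial_3}$. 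Lemma~\ref{L:PROPOFSYM} then propagates the polarized $U(1)$-symmetry under the reduced flow of Proposition~\ref{P:redeq}, so the metric takes the polarized form \eqref{polarizedmetric} with all components independent of $x^3$ throughout the classical region. Lemma~\ref{lem:U1} guarantees that the Fermi--Walker transported frame keeps the form $e_3=(g_{33})^{-1/2}\partial_3$ with $\mathcal{L}_{\partial_3}e_I=0$.

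With this framework fixed, the next step is to rerun the a priori estimates of Proposition~\ref{prop:overall} with the parameters $(q,\upsigma)$ from \eqref{q.U1} replacing those of \eqref{sigma,q}. The only place the sub-criticality condition enters essentially is in the derivation of \eqref{SUBSTEP1MAINSTEPlow.est}, i.e.\ in integrating the structure coefficient evolution equation \eqref{dt.gamma-gammatilde2} against its weighted square, where the sign of the factor $q-(\widetilde{q}_I+\widetilde{q}_J-\widetilde{q}_B)$ must be controlled. Here Lemma~\ref{lem:gamma.U(1)} is decisive: by \eqref{E:U1VANISHINGSTRUCTURECOEFFICIENTS}, the only non-zero structure coefficients $\upgamma_{IJB}+\upgamma_{JBI}$ with $I<J$ are those with $I=B\ne J$ or $B=J\ne I$, and for these the coefficient $(\widetilde{q}_I+\widetilde{q}_J-\widetilde{q}_B)$ reduces algebraically to a single $\widetilde{q}_\ell$ with $|\widetilde{q}_\ell|\le \max_{I}|\widetilde{q}_I|<1$. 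This is precisely what \eqref{q.U1} is designed to handle, and the replacement argument is already written out at the end of the proof of Proposition~\ref{prop:low}. All other components of the proof of Proposition~\ref{prop:overall} -- the lapse estimates of Proposition~\ref{prop:n}, the frame and co-frame estimates, the second fundamental form estimates, the top-order energy identity of Lemma~\ref{L:TOPORDERDIFFERENTIALENERGYIDENTITYFORGAMMAANDK} together with Proposition~\ref{P:TOPORDERENERGYINEQUALITY} -- are insensitive to symmetry and carry over unchanged; the scalar field terms drop since $\psi\equiv 0$. Proposition~\ref{P:EXISTENCEONHALFSLAB} combined with Lemma~\ref{L:CONSEQUENCEOFNEARKASNERSMALLNESSCONDITIONGEOMETRICDATA} then furnishes the classical solution on $(0,1]\times\mathbb{T}^3$ together with the bounds \eqref{overall.low.est}-\eqref{overall.HIGH.est.mainthm} stated for Theorem~\ref{thm:precise}.

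The Kasner-like behavior and curvature blowup conclusions stated in Theorem~\ref{thm:precise} then follow by arguments that use only the a priori bounds, not the absence of symmetry. Specifically, \eqref{E:COMMUTEDKEQUATION} with $\Pow=1$ shows that $\partial_t(tk_{IJ})$ is $L^1$ in time (because the connection coefficients are controlled by $\lesssim t^{-q}$ with $q<1$, the lapse terms are $\mathcal{O}(t^{-1+\upsigma})$, and the spatial Ricci frame components satisfy the bound \eqref{FRAMEheur.crit} via the identity \eqref{E:RECOVERGAMMAFROMSTRUCTURECOEFFCIENTS} and \eqref{spRic}), so $tk_{IJ}$ has a continuous limit $\upkappa_{IJ}^{(\infty)}$ on $\mathbb{T}^3$ satisfying \eqref{E:MAINTHMCONVERGENCERATESOFLIMITSATSINGULARITY}-\eqref{E:MAINTHMLIMITINGSECONDFUNDAMENTALFORMCLOSETODATA}; diagonalizing the symmetric matrix $-\upkappa^{(\infty)}$ produces the final Kasner exponents $q_I^{(\infty)}$, and passing \eqref{trk} and the Hamiltonian constraint \eqref{Hamconst} to the limit yields \eqref{ALGEBRAqi,B}. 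The Kretschmann identity \eqref{Krets.fin} follows from Gauss \eqref{E:GAUSS} together with \eqref{R0i0j2} and the $L^\infty$ convergence of $tk_{IJ}$, and $C^2$-past-inextendibility is an immediate consequence of the unbounded scalar curvature invariant.

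The one conceptual difficulty is, as emphasized throughout Sect.\,\ref{subsec:pf.overview}, that for $1+3$ vacuum the inequality \eqref{Kasner.stability.cond} is always violated by \eqref{Kasner.cond.viol}, so there is no parameter $q<1$ that dominates \emph{all} sums $\widetilde{q}_I+\widetilde{q}_J-\widetilde{q}_B$. This is the only obstacle, and Lemma~\ref{lem:gamma.U(1)} is precisely what removes it: polarized $U(1)$-symmetry forces the three-distinct-index structure coefficients to vanish identically, leaving only those sums $\widetilde{q}_I+\widetilde{q}_J-\widetilde{q}_B$ with a cancellation, which are bounded by $\max_I|\widetilde{q}_I|<1$. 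It is worth checking that the vanishing \eqref{gamma.U(1)} is genuinely propagated by the reduced evolution rather than imposed only at $t=1$; this is built into Lemma~\ref{lem:U1} together with the Lie-symmetry conclusion $\mathcal{L}_{\partial_3}e_I=0$, since the formula \eqref{gamma.form} then forces $\upgamma_{123}=\upgamma_{231}=\upgamma_{312}=0$ at every time for which the symmetric solution exists. Once this point is verified, the rest of the proof is a symmetric-case bookkeeping exercise built on the non-symmetric machinery already established.
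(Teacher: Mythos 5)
Your proposal is correct and follows essentially the same route as the paper: reduce to Theorem~\ref{thm:precise} via Propositions~\ref{P:EXISTENCEONHALFSLAB}, \ref{prop:tk}, \ref{prop:curv}, propagate the symmetry with Lemmas~\ref{L:PROPOFSYM} and \ref{lem:U1}, and use Lemma~\ref{lem:gamma.U(1)} together with the parameter choice \eqref{q.U1} to replace the sub-criticality condition in the low-order structure-coefficient estimate \eqref{SUBSTEP1MAINSTEPlow.est} of Proposition~\ref{prop:low}. Your observation that propagation of the vanishing \eqref{gamma.U(1)} is built into Lemma~\ref{lem:U1} via \eqref{gamma.form} is exactly the intended reading of the paper's argument.
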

\subsection{Limiting functions and Kasner-like behavior}
\label{SS:LIMITINGFUNCTIONS}
In the next proposition, we show that the scalar functions
$\left\lbrace tk_{IJ}(t,\cdot) \right\rbrace_{I,J=1,\cdots,\mydim}$ 
and $t \partial_t \psi(t,\cdot)$ have limits in $W^{N_0+1}(\mathbb{T}^d)$,
as $t \downarrow 0$. Moreover, the limiting fields obey a limiting Hamiltonian constraint equation
and exhibit other ``Kasner-like'' properties.

\begin{proposition}[Asymptotic, Kasner-like limits]
\label{prop:tk}
Under the assumptions and conclusions of Proposition~\ref{P:EXISTENCEONHALFSLAB}, 
the scalar component functions $\left \lbrace t k_{IJ}(t,x) \right\rbrace_{I,J = 1,\cdots,\mydim}$ 
of the normalized second fundamental form of $\Sigma_t$
with respect to the $g$-orthonormal 
frame $\lbrace e_I(t,x) \rbrace_{I=1,\cdots,\mydim}$
and the normalized scalar field velocity 
$t\partial_t \psi(t,x)$ have continuous limits in $W^{N_0+1,\infty}(\mathbb{T}^{\mydim})$, 
denoted respectively by
$\left\lbrace \upkappa_{IJ}^{(\infty)}(x) \right\rbrace_{I,J = 1,\cdots,\mydim}$ 
and $B^{(\infty)}(x)$, as $t\downarrow 0$. 
Moreover, the following estimates hold:
\begin{subequations}
\begin{align} 
		\sum_{I,J=1,\cdots,\mydim}
		\| t k_{IJ}(t,\cdot) 
			-
			\upkappa_{IJ}^{(\infty)}
		\|_{W^{N_0+1,\infty}(\mathbb{T}^{\mydim})}
	& \lesssim \mathring{\upepsilon} t^{\upsigma},
	&
	\| t \partial_t \psi(t,\cdot) - B^{(\infty)} \|_{W^{N_0+1,\infty}(\mathbb{T}^{\mydim})}
	& \lesssim \mathring{\upepsilon} t^{\upsigma},
		\label{E:CONVERGENCERATESOFLIMITSATSINGULARITY} \\
	\sum_{I,J=1,\cdots,\mydim}
	\| \upkappa_{IJ}^{(\infty)} 
		+ 
		\widetilde{q}_{\underline{I}} \updelta_{\underline{I}J} \|_{W^{N_0+1,\infty}(\mathbb{T}^{\mydim})}
	& \lesssim \mathring{\upepsilon},
	&
	\| B^{(\infty)} - \widetilde{B} \|_{W^{N_0+1,\infty}(\mathbb{T}^{\mydim})}
	& \lesssim \mathring{\upepsilon}.
		\label{E:LIMITINGSECONDFUNDAMENTALFORMCLOSETODATA}
\end{align}
\end{subequations}

In addition, for each $x \in \mathbb{T}^{\mydim}$,
the symmetric $\mydim \times \mydim$ matrix
$(-\upkappa^{(\infty)}_{IJ}(x))_{I,J=1,\cdots,\mydim}$ 
has $\mydim$ (possibly repeated) eigenvalues 
$q_I^{(\infty)}(x)$ -- which are the ``final'' Kasner exponents of the perturbed spacetime --
that can be ordered such that $q_1^{(\infty)},\cdots,q_{\mydim}^{(\infty)} \in C^{0,1}(\mathbb{T}^{\mydim})$
(see the discussion surrounding \eqref{qi:C01} for the definition of this function space and the norm)
and such that the following estimate holds:
\begin{align}\label{qi-qi.tilde}
\sum_{I=1,\cdots,\mydim}
\|q_I^{(\infty)} - \widetilde{q}_I\|_{C^{0,1}(\mathbb{T}^{\mydim})}
& \lesssim \mathring{\upepsilon}.
\end{align}

Moreover, the $\left\lbrace q_I^{(\infty)}(x) \right\rbrace_{I=1,\cdots,\mydim}$ and $B^{(\infty)}(x)$ 
satisfy the following pointwise algebraic relations:
\begin{align}\label{qi,B.rel}
\sum_{I=1}^{\mydim} q_I^{(\infty)}(x)=1,\qquad \sum_{I=1}^{\mydim}\left[q_I^{(\infty)}(x) \right]^2=1-\left[B^{(\infty)}(x) \right]^2.
\end{align}
\end{proposition}
\begin{proof}
Let $\lbrace t_n \rbrace_{n=1}^{\infty} \subset (0,1]$ be a decreasing sequence of times
such that $\lim_{n \to \infty} t_n = 0$.
A straightforward modification of the proof of \eqref{SUBSTEP2MAINSTEPlow.est},
based on the evolution equation \eqref{E:COMMUTEDKEQUATION}
and the estimate \eqref{overall.est.[0,1]}, 
yields that when $0 < a < b \leq 1$, we have
$
\| a k_{IJ}(a,\cdot) - b k_{IJ}(b,\cdot) \|_{W^{N_0+1,\infty}(\mathbb{T}^{\mydim})}
\lesssim \mathring{\upepsilon} \int_a^b s^{-1 + \upsigma} \mathbb{D}(s) \, ds
\lesssim \mathring{\upepsilon} b^{\upsigma}
$.
Hence, $\lbrace t_n k_{IJ}(t_n,\cdot) \rbrace_{n=1}^{\infty}$ is a Cauchy sequence
in $W^{N_0+1,\infty}(\mathbb{T}^{\mydim})$, and its limit, which we denote by $\upkappa_{IJ}^{(\infty)}$,
verifies
$
\| \upkappa_{IJ}^{(\infty)} - t k_{IJ}(t,\cdot) \|_{W^{N_0+1,\infty}(\mathbb{T}^{\mydim})}
\lesssim \mathring{\upepsilon} t^{\upsigma}
$
for $t \in (0,1]$.
In particular,
$
\| \upkappa_{IJ}^{(\infty)} - k_{IJ}(1,\cdot) \|_{W^{N_0+1,\infty}(\mathbb{T}^{\mydim})}
\lesssim \mathring{\upepsilon}
$.
Since 
$
\|  k_{IJ}(1,\cdot) + \widetilde{q}_{\underline{I}} \updelta_{\underline{I}J} \|_{W^{N_0+1,\infty}(\mathbb{T}^{\mydim})}
=
\|  k_{IJ}(1,\cdot) - \widetilde{k}_{IJ} \|_{W^{N_0+1,\infty}(\mathbb{T}^{\mydim})}
\lesssim \mathring{\upepsilon}
$,
we infer from the triangle inequality that
$
\| \upkappa_{IJ}^{(\infty)} + \widetilde{q}_{\underline{I}} \updelta_{\underline{I}J} \|_{W^{N_0+1,\infty}(\mathbb{T}^{\mydim})}
\lesssim \mathring{\upepsilon}
$. We have therefore proved 
\eqref{E:CONVERGENCERATESOFLIMITSATSINGULARITY} 
and
\eqref{E:LIMITINGSECONDFUNDAMENTALFORMCLOSETODATA} for $\upkappa_{IJ}^{(\infty)}$.
Moreover, the symmetric matrix $(\upkappa_{IJ}^{(\infty)}(x))_{I,J=1,\cdots,\mydim}$ 
is $\mathcal{O}(\mathring{\upepsilon})$-close to the diagonal matrix 
$\mbox{\upshape diag}(-\widetilde{q}_1,\cdots,-\widetilde{q}_{\mydim})$. 
Thus, at each fixed $x$,
$- (\upkappa_{IJ}^{(\infty)}(x))_{I,J=1,\cdots,\mydim}$ is diagonalizable,
and by standard perturbation theory
(see \cite[Equation~(3.6) in Chapter IV]{SS}),
its (possibly repeated) eigenvalues $q_I(x)$ can be ordered such that
$q_1^{(\infty)}(x),\cdots,q_{\mydim}^{(\infty)}(x)\in C^{0,1}(\mathbb{T}^{\mydim})$
and such that the following pointwise estimate holds for all $x,y \in \mathbb{T}^{\mydim}$:
\begin{align} \label{pert.qI.est}
\sum_{I=1,\cdots,\mydim}
	\left|q_I^{(\infty)}(x) - \widetilde{q}_I \right|
& \lesssim
\max_{I,J=1,\cdots,\mydim}
\left|
	\upkappa_{IJ}^{(\infty)}(x) 
	+ 
	\widetilde{q}_{\underline{I}}\updelta_{\underline{I}J}
\right|,
\\
\sum_{I=1,\cdots,\mydim}
	\left|q_I^{(\infty)}(x) - q_I^{(\infty)}(y) \right|
& \lesssim
\max_{I,J=1,\cdots,\mydim}
\left|
	\upkappa_{IJ}^{(\infty)}(x) 
	- 
		\upkappa_{IJ}^{(\infty)}(y) 
\right|.
\label{PreHolder.qI.est}
\end{align}
From \eqref{pert.qI.est}--\eqref{PreHolder.qI.est}, 
the standard inequality 
$\left|
	\upkappa_{IJ}^{(\infty)}(x) 
	- 
		\upkappa_{IJ}^{(\infty)}(y) \right|
		\lesssim 
		\| \upkappa_{IJ}^{(\infty)} \|_{\dot{W}^{1,\infty}(\mathbb{T}^\mydim)}d(x,y)$
(where $d(x,y)$ is the Euclidean distance between $x$ and $y$ in $\mathbb{T}^{\mydim}$),
and the first estimate in \eqref{E:LIMITINGSECONDFUNDAMENTALFORMCLOSETODATA},
we conclude \eqref{qi-qi.tilde}.

The convergence results and estimates for $t \partial_t \psi$
can be proved in a similar fashion by making
straightforward modifications to the proof of \eqref{SUBSTEP4MAINSTEPlow.est}.

To derive the first equation in \eqref{qi,B.rel}, we employ the CMC condition \eqref{trk} 
and the estimate 
$
\| \upkappa_{IJ}^{(\infty)} - t k_{IJ}(t,\cdot) \|_{W^{N_0+1,\infty}(\mathbb{T}^{\mydim})}
\lesssim \mathring{\upepsilon} t^{\upsigma}
$
proved above to deduce the following pointwise estimate:
\begin{align}\label{qi,B.rel2}
-1=t\text{tr}k(t,x)=\mathcal{O}(\mathring{\upepsilon} t^{\upsigma})+\text{tr}\upkappa^{(\infty)}(x)
=
\mathcal{O}(\mathring{\upepsilon} t^{\upsigma})-\sum_{I=1}^{\mydim} q_I^{(\infty)}(x),
\end{align}
where to obtain the last equality, we used that the trace of the $\mydim \times \mydim$
matrix $\left(\upkappa_{IJ}^{(\infty)} \right)_{I,J=1,\cdots,\mydim}$ is the sum of its eigenvalues 
$-q_1^{(\infty)},\cdots,-q_{\mydim}^{(\infty)}$. Taking the limit $t\downarrow 0$
on RHS~\eqref{qi,B.rel2}, we obtain the desired equation.

To derive the second equation in \eqref{qi,B.rel}, 
we multiply the Hamiltonian constraint \eqref{Hamconst} by $t^2$
and use Definition~\ref{D:SOLUTIONNORMS},
the estimate \eqref{overall.est.[0,1]},
the inequalities in \eqref{sigma,q},
and the estimates
$
\| \upkappa_{IJ}^{(\infty)} - t k_{IJ}(t,\cdot) \|_{W^{N_0+1,\infty}(\mathbb{T}^{\mydim})}
\lesssim \mathring{\upepsilon} t^{\upsigma}
$
and
$
\| B^{(\infty)} - t \partial_t \psi(t,\cdot) \|_{W^{N_0+1,\infty}(\mathbb{T}^{\mydim})}
\lesssim \mathring{\upepsilon} t^{\upsigma}
$
noted above to deduce the following pointwise estimate:
\begin{align} \label{qi,B.rel3}
\begin{split}
1  
& 
=
t^2 k_{CD}(t,x) k_{CD}(t,x) 
-
t^2
\left\lbrace
	2e_C \upgamma_{DDC}
	-
	\upgamma_{CDE}\upgamma_{EDC}
	-
	\upgamma_{CCD}\upgamma_{EED}
\right\rbrace
(t,x)
	\\
&	\ \
	+
	n^{-2} \left[t\partial_t\psi(t,x) \right]^2
	+
	t^2 \left[e_C \psi(t,x) \right] e_C \psi(t,x)
	\\
& =  
\upkappa_{CD}^{(\infty)}(x) \upkappa_{CD}^{(\infty)}(x)
+ 
\left[B^{(\infty)}(x) \right]^2 + \mathcal{O}(\mathring{\upepsilon} t^{\upsigma})
=
\sum_{I=1}^{\mydim}\left[q_I^{(\infty)}(x) \right]^2 + \left[B^{(\infty)}(x) \right]^2 
+
\mathcal{O}(\mathring{\upepsilon} t^{\upsigma}),
\end{split}
\end{align}
where to obtain the last equality, we used the fact that
$\upkappa_{CD}^{(\infty)}(x) \upkappa_{CD}^{(\infty)}(x)$ 
is equal to the sum of the squares of eigenvalues of the matrix $\left(\upkappa_{IJ}^{(\infty)} \right)_{I,J=1,\cdots,\mydim}$.
The desired second equation in \eqref{qi,B.rel} 
now follows from taking the limit $t\downarrow 0$ on RHS~\eqref{qi,B.rel3}.
This completes the proof of the proposition.
\end{proof} 
\subsection{Monotonic blowup of curvature}
In the following proposition, 
we show that the Kretschmann scalars of the solutions studied in the present paper blow up like $t^{-4}$. 
\begin{proposition}[Monotonic blow up of the Kretschmann scalar]
\label{prop:curv}
Under the assumptions and conclusions of Proposition~\ref{P:EXISTENCEONHALFSLAB}, 
the Kretschmann scalar
$
{\bf Riem}^{\alpha\mu\beta\nu}{\bf Riem}_{\alpha\mu\beta\nu}
$
obeys the following pointwise estimate for $(t,x) \in (0,1] \times \mathbb{T}^{\mydim}$,
where the functions $\left\lbrace q_I^{(\infty)}(x) \right\rbrace_{I=1,\cdots,\mydim}$ are 
as in the conclusions of Proposition~\ref{prop:tk}: 
\begin{align} \label{Krets.est}
\begin{split}
{\bf Riem}^{\alpha\mu\beta\nu}{\bf Riem}_{\alpha\mu\beta\nu}(t,x)
& = 4 t^{-4} \left\lbrace
	\sum_{I=1}^{\mydim}\left[(q_I^{(\infty)}(x))^2-q_I^{(\infty)}(x) \right]^2+\sum_{1\leq I<J \leq \mydim}(q_I^{(\infty)}(x))^2 
	(q_J^{(\infty)}(x))^2 
\right\rbrace
		\\
& \ \
	+
	\mathcal{O}(\mathring{\upepsilon} t^{-4 + \upsigma})
	 \\
& = 4t^{-4}
\left\lbrace
	\sum_{I=1}^{\mydim} \left[\widetilde{q}_I^2-\widetilde{q}_I \right]^2
	+
	\sum_{1\leq I<J \leq \mydim}\widetilde{q}_I^2\widetilde{q}_J^2 \right\rbrace
+
\mathcal{O}(\mathring{\upepsilon} t^{-4}).
\end{split}
\end{align}
\end{proposition}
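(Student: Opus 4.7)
The plan is to compute $\mathbf{Riem}^{\alpha\mu\beta\nu}\mathbf{Riem}_{\alpha\mu\beta\nu}$ relative to the $\mathbf{g}$-orthonormal frame $\{e_0,e_1,\ldots,e_\mydim\}$, exploiting the diagonal Minkowski form of $\mathbf{g}$ in this frame together with the Riemann symmetries. Because raising a frame index by $\mathbf{m}^{\alpha\beta}$ introduces a sign only when the index equals $0$, and because the Riemann tensor vanishes when either of its antisymmetric index pairs coincides, the only surviving contractions are those with an even number of zero entries. A direct bookkeeping with the Riemann symmetries yields the standard decomposition
\begin{equation*}
\mathbf{Riem}^{\alpha\mu\beta\nu}\mathbf{Riem}_{\alpha\mu\beta\nu}
=4\sum_{I,J}R_{0I0J}^{2}-4\sum_{I,J,K}R_{0IJK}^{2}+\sum_{I,J,K,L}R_{IJKL}^{2},
\end{equation*}
where $R_{\alpha\beta\gamma\delta}:=\mathbf{Riem}(e_\alpha,e_\beta,e_\gamma,e_\delta)$. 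I will then express each of the three groups in terms of $k_{IJ}$, $\nabla k$, and the spatial Riemann tensor using the three identities already derived in the proof of Proposition~\ref{P:redeq}: Gauss \eqref{E:GAUSS} gives $R_{IJKL}=Riem_{IJKL}+k_{IK}k_{JL}-k_{IL}k_{JK}$, Codazzi \eqref{E:CODAZZI} gives $R_{0IJK}=(\nabla k)_{JKI}-(\nabla k)_{KJI}$, and the computation leading to \eqref{R0i0j}--\eqref{R0i0j2} gives $R_{0I0J}=-(e_I\psi)\,e_J\psi+Ric(e_I,e_J)-t^{-1}k_{IJ}-k_{IC}k_{JC}$.

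The second step is to isolate the leading $t^{-4}$ behavior and absorb all the rest into the error $\mathcal{O}(\mathring{\upepsilon}\,t^{-4+\upsigma})$. The bootstrap-level pointwise bounds from Proposition~\ref{P:EXISTENCEONHALFSLAB} together with the convergence statement of Proposition~\ref{prop:tk} yield
\begin{equation*}
k_{IJ}(t,x)=\frac{\upkappa^{(\infty)}_{IJ}(x)}{t}+\mathcal{O}(\mathring{\upepsilon}\,t^{-1+\upsigma}),\qquad
(e_I\psi)\,e_J\psi=\mathcal{O}(\mathring{\upepsilon}^{2}t^{-2q}),
\end{equation*}
while $|Ric(e_I,e_J)|\lesssim \mathring{\upepsilon}\,t^{-2+\upsigma}$ by Remark~\ref{R:CRUCIALSPATIALRICCIBOUND}. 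Substituting into the expression for $R_{0I0J}$ gives $R_{0I0J}=-t^{-2}\bigl[\upkappa^{(\infty)}+(\upkappa^{(\infty)})^{2}\bigr]_{IJ}+\mathcal{O}(\mathring{\upepsilon}\,t^{-2+\upsigma})$. For $R_{IJKL}$, one writes $R_{IJKL}=t^{-2}\bigl[\upkappa^{(\infty)}_{IK}\upkappa^{(\infty)}_{JL}-\upkappa^{(\infty)}_{IL}\upkappa^{(\infty)}_{JK}\bigr]+\mathcal{O}(\mathring{\upepsilon}\,t^{-2+\upsigma})$, using that $Riem_{IJKL}$ is bounded by the norms of $\vec e\,\upgamma$ and $\upgamma\cdot\upgamma$, each of which is $\mathcal{O}(\mathring{\upepsilon}\,t^{-2q})$ with $2q<2$ by \eqref{sigma,q}. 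For the cross terms $R_{0IJK}$, the Codazzi identity together with the pointwise bounds $|\vec e\,k|\lesssim\mathring{\upepsilon}\,t^{-1-q}$ and $|\upgamma|\,|k|\lesssim \mathring{\upepsilon}\,t^{-1-q}$ gives $|R_{0IJK}|\lesssim \mathring{\upepsilon}\,t^{-1-q}$, so $R_{0IJK}^{2}=\mathcal{O}(\mathring{\upepsilon}^{2}t^{-2-2q})=\mathcal{O}(\mathring{\upepsilon}^{2}t^{-4+(2-2q)})$, which sits inside the allowed error since $2-2q>4\upsigma$ by the strict inequality $q<1-2\upsigma$ in \eqref{sigma,q}. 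Squaring the leading parts of $R_{0I0J}$ and $R_{IJKL}$ produces only errors of the same size after using the uniform bounds $|\upkappa^{(\infty)}|\lesssim 1$ and Cauchy--Schwarz.

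The final step is purely algebraic. Since $\upkappa^{(\infty)}(x)$ is symmetric, it is diagonalizable with real eigenvalues $-q^{(\infty)}_1(x),\ldots,-q^{(\infty)}_\mydim(x)$ (the final perturbed Kasner exponents from Proposition~\ref{prop:tk}), and trace-of-power identities in an eigenbasis give the two frame-invariant formulas
\begin{equation*}
\sum_{I,J}\bigl[\upkappa^{(\infty)}+(\upkappa^{(\infty)})^{2}\bigr]_{IJ}^{2}
=\sum_{I}\bigl((q^{(\infty)}_{I})^{2}-q^{(\infty)}_{I}\bigr)^{2},
\end{equation*}
\begin{equation*}
\sum_{I,J,K,L}\bigl(\upkappa^{(\infty)}_{IK}\upkappa^{(\infty)}_{JL}-\upkappa^{(\infty)}_{IL}\upkappa^{(\infty)}_{JK}\bigr)^{2}
=2\bigl(\mathrm{tr}(\upkappa^{(\infty)})^{2}\bigr)^{2}-2\,\mathrm{tr}(\upkappa^{(\infty)})^{4}
=4\sum_{I<J}(q^{(\infty)}_{I})^{2}(q^{(\infty)}_{J})^{2}.
\end{equation*}
Plugging these identities into the Kretschmann decomposition yields the first equality in \eqref{Krets.est}. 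The second equality then follows immediately from the $W^{N_0+1,\infty}$-estimate $\|q^{(\infty)}_I-\widetilde{q}_I\|\lesssim \mathring{\upepsilon}$ in \eqref{qi-qi.tilde}, which introduces an additional error of size $\mathcal{O}(\mathring{\upepsilon}\,t^{-4})$. The main technical obstacle is the careful accounting of lapse, connection, and scalar-field error terms in the $R_{0I0J}$ and $R_{IJKL}$ formulas to verify that they all fit strictly below $t^{-4}$; this, however, is an exercise in combining the pointwise bootstrap bounds with the strict inequalities $2\upsigma+\mathop{\max}_{I<J}\{\widetilde q_I+\widetilde q_J-\widetilde q_B\}<q<1-2\upsigma$, and requires no new analytic input beyond Propositions~\ref{P:EXISTENCEONHALFSLAB} and \ref{prop:tk}.
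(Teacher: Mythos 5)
Your proposal is correct and follows essentially the same route as the paper's proof: the same signed frame decomposition of the Kretschmann scalar into $R_{0I0J}$, $R_{0IJK}$, and $R_{IJKL}$ blocks (the paper's \eqref{Krets}), the same use of the Gauss, Codazzi, and $\mathbf{Riem}(e_0,e_I,e_0,e_J)$ identities together with the pointwise bootstrap bounds and the convergence of $tk_{IJ}$ to isolate the $t^{-4}$-leading part, and the same trace-of-power algebra in an eigenbasis of $\upkappa^{(\infty)}$. The only stylistic difference is that you assemble the two leading contributions via $\mathrm{tr}\bigl[(K+K^2)^2\bigr]$ and $2\bigl(\mathrm{tr}\,K^2\bigr)^2-2\,\mathrm{tr}\,K^4$ separately, whereas the paper first gathers them into one braced expression (\eqref{Krets.est2}--\eqref{Krets.est3}) and reduces in a single step; the algebra is identical either way.
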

\begin{proof}
We first use the standard symmetries and antisymmetries of the Riemann curvature
tensor of ${\bf{g}}$ to compute the following identity for its Kretschmann scalar:
\begin{align}\label{Krets}
\begin{split}
{\bf Riem}^{\alpha\mu\beta\nu}{\bf Riem}_{\alpha\mu\beta\nu}
& =
{\bf Riem}(e_A,e_I,e_B,e_J){\bf Riem}(e_A,e_I,e_B,e_J)
	\\
& 
\ \
+
4{\bf Riem}(e_0,e_I,e_0,e_J){\bf Riem}(e_0,e_I,e_0,e_J)
	\\
& \ \
-
4
{\bf Riem}(e_A,e_I,e_0,e_J){\bf Riem}(e_A,e_I,e_0,e_J).
\end{split}
\end{align}
Next, using Gauss' equation \eqref{E:GAUSS},
\eqref{E:FRAMEORTHONORMALITY},
\eqref{E:DEIEJFRAMEEXPANSION},
Definition~\ref{D:SOLUTIONNORMS},
the estimate \eqref{overall.est.[0,1]},
the inequalities in \eqref{sigma,q},
and the convergence estimate \eqref{E:CONVERGENCERATESOFLIMITSATSINGULARITY} for $t k_{IJ}$,
we derive the following pointwise estimate:
\begin{align}
\begin{split} \label{GaussDOMINANT}
t^2 {\bf Riem}(e_A,e_I,e_B,e_J)
& = (tk_{IJ})(tk_{AB})-(tk_{AJ})(tk_{BI})
		+
		t^2 e_A \upgamma_{IJB} 
		-
		t^2 e_I \upgamma_{AJB}
		\\
& 	\	\
		-
		t^2 \upgamma_{AIC} \upgamma_{CJB}
		-
		t^2
		\upgamma_{IJC} \upgamma_{ABC}
		+
		t^2 \upgamma_{IAC} \upgamma_{CJB}
		+
		t^2
		\upgamma_{AJC} \upgamma_{IBC}
			\\
& = (tk_{IJ})(tk_{AB})-(tk_{AJ})(tk_{BI})
	+
	\mathcal{O}(\mathring{\upepsilon}) t^{\upsigma}
	= 
	\upkappa_{IJ}^{(\infty)} \upkappa_{AB}^{(\infty)} - \upkappa_{AJ}^{(\infty)} \upkappa_{BI}^{(\infty)}
	+
	\mathcal{O}(\mathring{\upepsilon}) t^{\upsigma}.
\end{split}
\end{align}
Similarly, with the help of the Codazzi equations \eqref{E:CODAZZI} and \eqref{E:DEIEJFRAMEEXPANSION},
we compute the following pointwise estimate:
\begin{align}\label{CodDOMINANT}
\begin{split}
t^2{\bf Riem}(e_A,e_I,e_0,e_J)
& =  
t^2
\left\lbrace
e_A^c \partial_c k_{IJ}
-
e_I^c\partial_c k_{AJ}
-
\upgamma_{AIB}k_{BJ}
-
\upgamma_{AJB}k_{IB}
+
\upgamma_{IAB}k_{BJ}
+
\upgamma_{IJB}k_{AB}
\right\rbrace
	\\
& =  
\mathcal{O}(\mathring{\upepsilon}) t^{\upsigma}.
\end{split}
\end{align}
Similarly, with the help of \eqref{R0i0j} and the evolution equation \eqref{dt.k},
we deduce the following pointwise estimate:
\begin{align}\label{R0i0j3DOMINANT}
\begin{split}
t^2{\bf Riem}(e_0,e_I,e_0,e_J)
& = 
- 
t k_{IJ}
-
(tk_{IC})(tk_{CJ})
	\\
& \ \
+
t^2 
\left\lbrace
	e_D^c\partial_c\upgamma_{IJD}
	-
	e_I^c\partial_c\upgamma_{DJD}
	-
	\upgamma_{DIC}\upgamma_{CJD}-\upgamma_{DDC}\upgamma_{IJC}
	-
	(e_I \psi) e_J \psi
\right\rbrace
	 \\
& = - 
		\upkappa_{IJ}^{(\infty)}
		- 
		\upkappa_{IC}^{(\infty)} \upkappa_{CJ}^{(\infty)}
		+
		\mathcal{O}(\mathring{\upepsilon}) t^{\upsigma}.
\end{split}
\end{align}
Inserting \eqref{GaussDOMINANT}--\eqref{R0i0j3DOMINANT} into \eqref{Krets}, 
we deduce the following pointwise estimate:
\begin{align}\label{Krets.est2}
\begin{split}
&
{\bf Riem}^{\alpha\mu\beta\nu}{\bf Riem}_{\alpha\mu\beta\nu}
	\\
&
=
t^{-4}
\Big\lbrace
	\left(\upkappa_{IJ}^{(\infty)}\upkappa_{AB}^{(\infty)}-\upkappa_{AJ}^{(\infty)}\upkappa_{BI}^{(\infty)} \right)
	\left(\upkappa_{IJ}^{(\infty)}\upkappa_{AB}^{(\infty)}-\upkappa_{AJ}^{(\infty)}\upkappa_{BI}^{(\infty)} \right)
		\\
& \ \ \ \ \ \ \ \ \ \ \ \ \ \ \ \
	+
	4 \left(\upkappa_{IJ}^{(\infty)}+\upkappa_{IB}^{(\infty)} \upkappa_{BJ}^{(\infty)} \right)
	\left(\upkappa_{IJ}^{(\infty)}+\upkappa_{IC}^{(\infty)} \upkappa_{CJ}^{(\infty)} \right)
\Big\rbrace
	\\
& \ \
+
\mathcal{O}(\mathring{\upepsilon} t^{-4+\upsigma}).
\end{split}
\end{align}
Consider now the symmetric matrix $K :=\left(\upkappa_{IJ}^{(\infty)} \right)_{I,J=1,\cdots,\mydim}$, 
whose eigenvalues are $-q_I^{(\infty)},\cdots,-q_{\mydim}^{(\infty)}$. 
Using that
for $m\in\mathbb{N}$, we have
$\text{tr}(K^m)=\sum_{I=1}^{\mydim} \left[-q_I^{(\infty)} \right]^m$,
we rewrite the expression in braces on RHS~\eqref{Krets.est2} as follows:
\begin{align}\label{Krets.est3}
\begin{split}
&
\left(\upkappa_{IJ}^{(\infty)}\upkappa_{AB}^{(\infty)}-\upkappa_{AJ}^{(\infty)}\upkappa_{BI}^{(\infty)} \right)
\left(\upkappa_{IJ}^{(\infty)}\upkappa_{AB}^{(\infty)}-\upkappa_{AJ}^{(\infty)}\upkappa_{BI}^{(\infty)} \right)
+
4
\left(\upkappa_{IJ}^{(\infty)}+\upkappa_{IB}^{(\infty)}\upkappa_{BJ}^{(\infty)} \right)
\left(\upkappa_{IJ}^{(\infty)}+\upkappa_{IC}^{(\infty)}\upkappa_{CJ}^{(\infty)} \right)
	\\
& = 
	2\left[\text{tr}(KK) \right]^2
	+
	4\text{tr}(KK)
	+
	8\text{tr}(KKK)
	+
	2\text{tr}(KKKK)
		\\
& = 
4\left\lbrace
	\sum_{I=1}^{\mydim} \left[(q_I^{(\infty)})^2-q_I^{(\infty)} \right]^2
	+
	\sum_{1\leq I<J \leq \mydim}(q_I^{(\infty)})^2 (q_J^{(\infty)})^2
	\right\rbrace.
\end{split}
\end{align}
Combining \eqref{Krets.est2}--\eqref{Krets.est3}, 
we arrive at the first equality stated in \eqref{Krets.est}.
To prove the second equality stated in \eqref{Krets.est},
we simply use \eqref{E:LIMITINGSECONDFUNDAMENTALFORMCLOSETODATA}
to replace all factors of 
$\upkappa_{IJ}^{(\infty)}$ on RHS~\eqref{Krets.est2} 
with $-\widetilde{q}_{\underline{I}} \updelta_{\underline{I}J}$
up to $\mathcal{O}(\mathring{\upepsilon})$ error terms
(which, in view of the factor of $t^{-4}$ in front of the braces in \eqref{Krets.est2},
leads to the error term $\mathcal{O}(\mathring{\upepsilon} t^{-4})$
on RHS~\eqref{Krets.est}).
This completes the proof of the proposition.
\end{proof}

\subsection{Proof of Theorems~\ref{thm:precise} and \ref{thm:precise.U1}}
\label{SS:PROOFOFMAINTHEOREMS}
We first prove Theorem~\ref{thm:precise}.
The conclusions regarding existence and norm estimates, generalized Kasner behavior, and blow up of curvature follow from Propositions~\ref{P:EXISTENCEONHALFSLAB}, \ref{prop:tk}, and \ref{prop:curv},
and the estimate \eqref{E:DYNAMICINITIALNORMCONTROLLEDBYGEOMETRICDATANORM}. 
The $C^2$-inextendibility is a direct consequence of the curvature-blowup. 

To prove Theorem~\ref{thm:precise.U1}, we simply note that
the polarized $U(1)$-symmetric solutions satisfy the same estimates
as the solutions from Theorem~\ref{thm:precise}.
Hence, the same arguments used to prove Theorem~\ref{thm:precise}
also yield Theorem~\ref{thm:precise.U1}.
Finally, we note that the symmetry properties of polarized $U(1)$-symmetric solutions relative
to CMC-transported spatial coordinates stated in the conclusions of the theorem
are provided by Lemma~\ref{L:PROPOFSYM}.

\hfill $\qed$

\end{document}